\definecolor{codegreen}{rgb}{0,0.6,0}
\definecolor{codegray}{rgb}{0.5,0.5,0.5}
\definecolor{codepurple}{rgb}{0.58,0,0.82}
\definecolor{backcolour}{rgb}{0.95,0.95,0.92}
\lstdefinestyle{mystyle}{
    backgroundcolor=\color{backcolour},   
    commentstyle=\color{codegreen},
    keywordstyle=\color{magenta},
    numberstyle=\tiny\color{codegray},
    stringstyle=\color{codepurple},
    basicstyle=\ttfamily\footnotesize,
    breakatwhitespace=false,         
    breaklines=true,                 
    captionpos=b,                    
    keepspaces=true,                 
    numbers=left,                    
    numbersep=5pt, 
    frame = single,                
    showspaces=false,                
    showstringspaces=false,
    showtabs=false,                  
    tabsize=1
}
\let\originalleft\left
\let\originalright\right
\renewcommand{\left}{\mathopen{}\mathclose\bgroup\originalleft}
\renewcommand{\right}{\aftergroup\egroup\originalright}
\numberwithin{equation}{section}
\theoremstyle{plain}
\newtheorem{theorem}{Theorem}[section]
\newtheorem{lemma}[theorem]{Lemma}
\newtheorem{proposition}[theorem]{Proposition}
\newtheorem{corollary}[theorem]{Corollary}
\theoremstyle{definition}
\newtheorem{definition}[theorem]{Definition}
\newtheorem{example}[theorem]{Example}
\theoremstyle{remark}
\newtheorem{remark}[theorem]{Remark}
\newcommand{\vol}{\mathrm{vol}}
\newcommand{\res}{\mathrm{res}}
\newcommand{\Gr}{\mathrm{Gr}}
\newcommand{\R}{\mathbb{R}}
\newcommand{\N}{\mathbb{N}}
\newcommand{\Z}{\mathbb{Z}}
\newcommand{\Q}{\mathbb{Q}}
\newcommand{\C}{\mathbb{C}}
\newcommand{\A}{\mathbb{A}}
\newcommand{\e}{\mathfrak{e}}
\newcommand{\reg}{\mathrm{reg}}
\newcommand{\hol}{\mathrm{hol}}
\newcommand{\cusp}{\mathrm{cusp}}
\renewcommand{\H}{\mathbb{H}}
\newcommand{\Mp}{{\mathrm{Mp}}}
\newcommand{\SL}{{\mathrm{SL}}}
\newcommand{\PSL}{{\mathrm{PSL}}}
\newcommand{\GL}{{\mathrm{GL}}}
\newcommand{\SO}{{\mathrm{SO}}}
\newcommand{\tr}{\mathrm{tr}}
\newcommand{\sgn}{\mathrm{sgn}}
\newcommand{\CT}{\mathrm{CT}}
\newcommand{\gen}{\mathrm{gen}}
\renewcommand{\O}{\mathrm{O}}
\renewcommand{\cosh}{\mathrm{cosh}}
\renewcommand{\det}{\mathrm{det}}
\renewcommand{\Re}{\mathrm{Re}}
\renewcommand{\Im}{\mathrm{Im}}
\newcommand{\1}{\mathbbm{1}}
\begin{document}
\title{Special values of Green's functions on hyperbolic $3$-space}

\author{S.~Herrero}
\address{Universidad de Santiago de Chile, Dept.~de Matem\'atica y Ciencia de la Computaci\'on, Av.~Libertador Bernardo O'Higgins 3363, Santiago, Chile, and ETH, Mathematics Dept., CH-8092, Z\"urich, Switzerland}
\email{sebastian.herrero.m@gmail.com}
\thanks{S.~Herrero's research is supported by ANID/CONICYT FONDECYT Iniciaci\'on grant 11220567 and by SNF grant 200021-185014}

\author{\"O.~Imamo\=glu}
\address{ETH, Mathematics Dept., CH-8092, Z\"urich, Switzerland}
\email{ozlem@math.ethz.ch}
\thanks{\"O.~Imamo\=glu's research  is supported by SNF grant 200021-185014}

\author{A.-M.~von Pippich}
\address{
University of Konstanz, Dept. of Mathematics and Statistics, Universit\"atsstra{\ss}e 10, 78464 Konstanz, Germany}
\email{anna.pippich@uni-konstanz.de}
\thanks{A.-M.~von Pippich's research is supported by the LOEWE research unit \emph{Uniformized Structures in Arithmetic and Geometry}}

\author{M.~Schwagenscheidt}
\address{ETH, Mathematics Dept., CH-8092, Z\"urich, Switzerland}
\email{mschwagen@ethz.ch}
\thanks{M.~Schwagenscheidt's research is supported by SNF grant PZ00P2{\_}202210}

\begin{abstract}
  	Gross, Kohnen and Zagier proved an averaged version of the algebraicity conjecture for special values of higher Green's functions on modular curves. In this work, we study an analogous problem for special values of Green's functions on hyperbolic $3$-space. We prove that their averages can be computed in terms of logarithms of primes and logarithms of units in real quadratic fields. Moreover, we study twisted averages of special values of Green's functions, which yield algebraic numbers instead of logarithms. 
 \end{abstract}


\subjclass[2020]{11F27, 11F55 (primary), 11F37 (secondary)}

\maketitle


\tableofcontents

\section{Introduction}

In a seminal paper, Gross and Zagier \cite{grosszagier} related the central values of derivatives of $L$-functions of elliptic curves to heights of Heegner points on modular curves. Moreover, they found a striking factorization of the norms of differences of singular moduli \cite{grosszagier-SM}. The proof uses the fact that the function $\log|j(\tau_1)-j(\tau_2)|$ is essentially given by the constant term at $s = 1$ of the automorphic Green's function 
\[
G^{\SL_2(\Z)}_s(\tau_1,\tau_2) = \sum_{\gamma \in \SL_2(\Z)}Q_{s-1}\left(1 + \frac{|\tau_1 - \gamma\tau_2|^2}{2\Im(\tau_1)\Im(\gamma \tau_2)}\right)
\]
on the modular curve $X(1) = \SL_2(\Z)\backslash \H$. Here $Q_{s-1}(x)$ denotes the Legendre function of the second kind and $\Im(\tau)$ denotes the imaginary part of the point $\tau$ in the complex upper-half plane $\H$. The Green's function converges absolutely for $s\in \C$ with $\Re(s) > 1$ and has a simple pole at $s = 1$. Moreover, it is an eigenfunction of the hyperbolic Laplacian in both variables, and has a logarithmic singularity along the diagonal in $X(1) \times X(1)$.

 At the end of \cite{grosszagier}, Gross and Zagier made a deep conjecture about the algebraicity properties of the values of the level $N\geq 1$ Green's function $G_s^{\Gamma_0(N)}(\tau_1,\tau_2)$ at positive integer values $s = k \geq 2$ and CM points $\tau_1,\tau_2$. They predicted that (under some technical conditions) the values at CM points $\tau_1,\tau_2$ of the ``higher Green's function'' $G_{k}^{\Gamma_0(N)}(\tau_1,\tau_2)$ are essentially given by logarithms of absolute values of algebraic numbers. There has been a lot of interesting work on this conjecture over the last years (see \cite{bruinierehlenyang,bruinierliyang,li1, mellit,viazovska1,viazovska2}). A major step in this direction  was taken by Li \cite{li2} who proved the conjecture in the case of level 1 and when one of the discriminants is fundamental. The general case   has recently been solved   by Bruinier, Li, and Yang \cite{bruinierliyang}.

In the present work we study an analogous problem for Green's functions on the hyperbolic $3$-space $\H^3$. A major obstacle in this case is the fact that $\H^3$ does not have a complex structure. In particular, the theory of complex multiplication, which played an important role in the proof of the Gross--Zagier algebraicity conjecture over modular curves, is not available on $\H^3$. Hence, it is not clear whether the special values of Green's functions on $\H^3$ still have good algebraic properties. In fact, our results suggest that the analogue of the Gross--Zagier conjecture for the individual special values of Green's functions on $\H^3$ might not be true; see Example~\ref{example 2 intro} below.

In order to obtain a convenient algebraicity result, we study an averaged version of the problem. Gross, Kohnen and Zagier \cite{grosskohnenzagier} proved an averaged version of the algebraicity conjecture over modular curves by summing $\tau_1$ and $\tau_2$ over all classes of CM points of fixed discriminants $d_1$ and $d_2$. They showed that these ``double traces'' are essentially given by logarithms of rational numbers. Inspired by this result, we consider the double traces of Green's functions on hyperbolic $3$-space. We show that they are given by algebraic linear combinations of logarithms of primes and logarithms of units in real quadratic fields. For the proof of our results we follow ideas of Bruinier, Ehlen, and Yang \cite{bruinierehlenyang}, who proved a partially averaged version of the algebraicity conjecture over modular curves: they fix $\tau_1$ and sum $\tau_2$ over all classes of discriminant $d_2$. It will become clear during our proof why we cannot fix one of the variables in the hyperbolic $3$-space case, but instead need to take the double trace.

Let us describe our results in some more detail. We use the setup of \cite{elstrodt}. Let $\Q(\sqrt{D})$ be an imaginary quadratic field of discriminant $D < 0$, and let $\mathcal{O}_D$ be its ring of integers. The group 
\[
\Gamma = \PSL_2(\mathcal{O}_D)
\]
acts on the hyperbolic $3$-space
\[
\H^3 = \{P = z + r j \, : \, z \in \C, r \in \R^+\}
\] 
(viewed as a subset of the quaternions $\R[i,j,k]$) by fractional linear transformations, and this action preserves the hyperbolic distance $d(P_1,P_2)$. For $P_1,P_2 \in \H^3$ which are not in the same $\Gamma$-orbit, and $s \in \C$ with $\Re(s) > 1$, the \textit{automorphic Green's function} for $\Gamma$ is defined by\footnote{We use a different normalization than \cite{elstrodt} and \cite{herreroimamogluvonpippichtoth} to obtain nicer algebraicity results.}
\begin{equation}\label{eq Green's function}
G_s(P_1,P_2) = \pi\sum_{\gamma \in \Gamma}\varphi_s\big(\cosh(d(P_1,\gamma P_2))\big),
\end{equation}
with the function
\begin{equation}\label{eq:varphi_s}
\varphi_s(t) = \left(t+\sqrt{t^2-1}\right)^{-s} (t^2-1)^{-1/2}.
\end{equation}
The Green's function converges absolutely for $\Re(s) > 1$, is $\Gamma$-invariant in both variables, and symmetric in $P_1,P_2$. Moreover, it satisfies 
\[
(\Delta_{P_1} - (1-s^2)) G_s(P_1,P_2) = 0,
\]
where $\Delta_{P_1}$ is the usual invariant Laplacian on $\H^3$, taken with respect to the variable $P_1$. The Green's function has meromorphic continuation to $s\in \C$ with $s = 1$ a simple pole, and singularities precisely at the points $P_2$ in the $\Gamma$-orbit of $P_1$.

We want to investigate the algebraic properties of the Green's function $G_s(P_1,P_2)$ at special points $P_1,P_2 \in \H^3$, and positive integer values $s \geq 2$. The special points we consider are defined as follows. For a number $m$ in $\frac{1}{|D|}\N$ (where $\N=\{1,2,\ldots\}$) we let
\begin{equation}\label{eq:L_m^+}
L_{m}^+ = \left\{X = \begin{pmatrix}a & b \\ \overline{b} & c \end{pmatrix} \, :\, a,c \in \N,\, b \in \mathfrak{d}_D^{-1}, \, \det(X)=m \right\}
\end{equation}
be the set of positive definite integral binary hermitian forms of determinant $m$ over $\Q(\sqrt{D})$. Here $\mathfrak{d}_D^{-1}$ denotes the inverse different of $\Q(\sqrt{D})$. The group $\Gamma$ acts on $L_{m}^+$ by $\gamma.X = \gamma X \overline{\gamma}^t$, and $\Gamma \backslash L_m^+$ has finitely many classes. To a form $X \in L_{m}^+$ we associate the \emph{special point}
\[
P_X = \frac{b}{c} + \frac{\sqrt{m}}{c}j \in \H^3.
\]
We may view it as an analogue of a CM point on $\H^3$. Moreover, we define the $m$-th \emph{trace} of the Green's function by
\[
\tr_{m}(G_{s}(\,\cdot\,,P)) = \sum_{X \in \Gamma \backslash L_{m}^+}\frac{1}{|\Gamma_X|}G_s(P_X,P),
\]
where $\Gamma_X$ denotes the stabilizer of $X$ in $\Gamma$. Here we need to be careful not to evaluate the Green's function at a singularity, so $P$ should not be in the same $\Gamma$-orbit as one of the special points $P_X$. We remark that, unlike the case of binary quadratic forms, the order $|\Gamma_X|$ really depends on the individual hermitian form $X$, not just on its determinant $m$. If we also take the trace in the second variable, we obtain the \emph{double trace}
\begin{equation}\label{def double trace}
 \tr_{m}\tr_{m'}(G_{s}) = \sum_{\substack{X \in \Gamma \backslash L_{m}^+ \\ Y \in \Gamma \backslash L_{m'}^+}}\frac{1}{|\Gamma_X|}\frac{1}{|\Gamma_Y|}G_s(P_X,P_Y).   
\end{equation}
In the following we will tacitly assume that $m$ and $m'$ are chosen in such a way that we do not evaluate the Green's function at a singularity.

From now on we will work with \emph{prime discriminants}, so either $D = -4,D=-8,$ or $D = -\ell$ for an odd prime $\ell \equiv 3 \pmod 4$. Moreover, for simplicity we will assume in the introduction that a certain space of cusp forms is trivial. Namely, for an odd positive integer $k$ we consider the space $S_{k}^+(\Gamma_0(|D|),\chi_{D})$ of cusp forms $f = \sum_{n > 0}a_f(n)q^n$ of weight $k$ for $\Gamma_0(|D|)$ and character $\chi_{D} = \left( \frac{D}{\cdot}\right)$ satisfying the ``plus space'' condition $a_f(n) = 0$ if $\chi_{D}(n) = -1$. Assuming $S_{k}^+(\Gamma_0(|D|),\chi_{D})=\{0\}$ we have the following algebraicity result. In the body of the paper we lift this restriction by taking suitable linear combinations of double traces of the Green's function; see Corollary~\ref{corollary that implies main theorem}. 

\begin{theorem}\label{main theorem}
	Let $D < 0$ be a prime discriminant and let $n \geq 1$ be a natural number such that $S_{1+2n}^+(\Gamma_0(|D|),\chi_{D}) = \{0\}$. 
	Let $m,m' \in \frac{1}{|D|}\N$ such that $mm'$ is not a rational square. 
	Then the double trace 
	\[
	\frac{1}{\sqrt{|D|mm'}}\tr_m \tr_{m'}(G_{2n})
	\]
	 of the Green's function at positive even $s = 2n$ is a rational linear combination of $\log(p)$ for some primes $p$ and of $\log(\varepsilon_\Delta)/\sqrt{\Delta}$ for some fundamental discriminants $\Delta > 0$, where $\varepsilon_\Delta$ denotes the smallest totally positive unit $> 1$ in $\Q(\sqrt{\Delta})$.
\end{theorem}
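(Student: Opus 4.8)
The strategy is to realize the double trace as a single Fourier coefficient of an explicit modular form and then to read off its arithmetic, following the method of Bruinier, Ehlen and Yang \cite{bruinierehlenyang}. The lattice $L$ of integral binary hermitian forms underlying the sets $L_m^+$ carries the determinant as a quadratic form of signature $(1,3)$, its Grassmannian of positive lines is exactly $\H^3$, and the accidental isomorphism $\PSL_2(\C)\cong\SO^+(1,3)$ identifies $\Gamma$-orbits of special points $P_X$ with $\Gamma$-orbits of determinant-$m$ vectors in $L$; the discriminant of $L$, governed by $b\in\mathfrak{d}_D^{-1}$, produces the level $|D|$ and the character $\chi_D$, and the local structure of $L$ produces the plus-space condition. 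I would first attach to $L$ its Siegel theta function $\Theta(\tau,P)$ valued in the Weil representation of $L$, and express the trace $\tr_{m'}(G_{2n}(\,\cdot\,,P))$ as a regularized theta lift $\int^{\reg}F_{m'}(\tau)\,\overline{\Theta(\tau,P)}\,d\mu(\tau)$ of a Maass--Poincaré series $F_{m'}$ of index $m'$, so that $G_{2n}$ is recovered as the resolvent kernel. The eigenvalue equation $(\Delta_{P_1}-(1-s^2))G_s=0$ and the explicit shape of $\varphi_s$ in \eqref{eq:varphi_s} are precisely what fix the spectral parameter to $s=2n$; inserting a harmonic polynomial of degree $2n$ into the theta kernel is what moves from the ``$s=1$'' Green's function in \eqref{eq Green's function} to higher $s=2n$ and what makes the relevant elliptic modular weight equal to $1+2n$.

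Taking the second trace, over the determinant-$m$ special points, then computes the special-point value of this theta lift. By the CM-value formula for theta lifts --- a seesaw identity combined with Siegel's mass formula --- the double trace $\tr_m\tr_{m'}(G_{2n})$ equals the regularized Petersson pairing of $F_{m'}$ with the \emph{genus} theta series of the rank-three negative-definite complements of the determinant-$m$ forms, a holomorphic modular form of weight $1+2n$ and character $\chi_D$ with rational Fourier coefficients. This is exactly the point at which one sees why a \emph{single} variable cannot be fixed: the orthogonal complement of one special point $P_X$ in the signature-$(1,3)$ lattice is a definite \emph{ternary} form, and in the absence of a complex structure, and hence of complex multiplication, on $\H^3$, its individual theta series carries no algebraicity --- in contrast to the binary (CM) complement of a CM point on a modular curve, which is what allowed \cite{bruinierehlenyang} to fix one variable there. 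Only after averaging over a full determinant class does Siegel's formula render the relevant theta series rational and Eisenstein, which is why tracing on both sides is forced upon us (compare the cautionary Example~\ref{example 2 intro}).

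I would then invoke the hypothesis $S_{1+2n}^+(\Gamma_0(|D|),\chi_D)=\{0\}$. Under the theta correspondence for the dual pair $(\SL_2,\SO(1,3))$ this space is the target of the cuspidal part of the lift, so its vanishing forces the genus theta series, and hence the pairing, to be purely Eisenstein; in the general case this is precisely the restriction removed in Corollary~\ref{corollary that implies main theorem} by passing to linear combinations of double traces on which the cuspidal projection vanishes. The double trace is thereby identified, up to the normalizing factor $1/\sqrt{|D|mm'}$ which absorbs the $\sqrt{\det}$ appearing in $\cosh(d(P_X,P_Y))$, with a single explicit Fourier coefficient of a half-integral weight Eisenstein series for the Weil representation of $L$.

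The last, and principal, step is to evaluate this Eisenstein coefficient. Its Fourier coefficients are products of local representation densities, equivalently Whittaker functions attached to $L$ and to the discriminant $\Delta$ of a pair of special points; because $G_{2n}$ is a resolvent value, these coefficients involve the derivatives of the local densities at the primes $p\mid|D|mm'$, producing rational multiples of $\log(p)$, together with the special value $L(1,\chi_\Delta)$ of the quadratic $L$-function carried by the good part of the Euler product. The discriminants $\Delta$ that occur are the fundamental discriminants associated, up to square factors, with the quantity $\langle X,Y\rangle^2-4mm'$ attached to a pair of special points, and the hypothesis that $mm'$ is not a rational square guarantees both that these $\Delta$ are positive and that $\langle X,Y\rangle\neq 2\sqrt{mm'}$, so that no special point collides with a singularity of $G_{2n}$. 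Finally, Dirichlet's class number formula rewrites $L(1,\chi_\Delta)=2h_\Delta\,\log(\varepsilon_\Delta)/\sqrt{\Delta}$ as an integral multiple of $\log(\varepsilon_\Delta)/\sqrt{\Delta}$, which together with the $\log(p)$ terms yields the asserted shape. I expect this final local and arithmetic computation --- pinning down the exact densities, their derivatives, and the precise set of primes and discriminants that arise --- to be the main obstacle, with the regularization of the theta integral and the justification of the seesaw and Siegel-mass-formula inputs being the chief secondary difficulties.
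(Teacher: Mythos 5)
Your outline does track the paper's proof in broad strokes: write one trace as a regularized theta lift of a Maass--Poincar\'e series (Theorem~\ref{theorem higher Green's function as theta lift}), take the second trace and apply the Siegel--Weil formula so that the ternary theta function is replaced by the Eisenstein series $E_{3/2,N^-}$ (Theorem~\ref{theorem splitting theta}), evaluate the resulting pairing, and finish with Dirichlet's class number formula. But there are two genuine gaps. The first is that you misattribute the role of the hypothesis $S_{1+2n}^+(\Gamma_0(|D|),\chi_D)=\{0\}$: you claim its vanishing ``forces the genus theta series, and hence the pairing, to be purely Eisenstein.'' The genus average is \emph{always} Eisenstein --- that is precisely the Siegel--Weil formula (Theorem~\ref{siegel weil formula}) and requires no hypothesis on cusp forms. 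The obstruction the hypothesis removes lives on the \emph{elliptic} side: the input $F_m$ to the theta lift is a harmonic Maass form of weight $1-2n$ whose shadow $\xi_{1-2n}F_m$ is a cusp form of weight $1+2n$ (in the plus space, via Lemma~\ref{lem vector-valued to scalar mfs}), and the hypothesis forces this shadow to vanish, i.e.\ forces $F_m$ to be \emph{weakly holomorphic}. This is indispensable at two points your proposal does not secure: (a) the Stokes'-theorem evaluation of the regularized pairing yields only the clean constant-term formula of Theorem~\ref{theorem evaluation Green's function} when $F_m$ is holomorphic on $\H$; for genuinely harmonic $F_m$ an extra, uncontrolled term involving its non-holomorphic part survives, paired against cusp forms whose arithmetic is unknown; and (b) weak holomorphy gives rationality of the coefficients of $F_m$ (via \cite{mcgraw}), which is what makes the final answer a \emph{rational} combination of logarithms.

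The second gap is the missing bridge from ``regularized Petersson pairing with an Eisenstein series'' to ``a Fourier coefficient of a half-integral weight Eisenstein series.'' The pairing with the holomorphic series $E_{3/2,N^-}$ is not literally one of its Fourier coefficients --- those are essentially class numbers and could never produce $\log p$. One must first construct a harmonic Maass form $\widetilde{E}_{1/2,N}$ of weight $1/2$ with $\xi_{1/2}\widetilde{E}_{1/2,N}=\tfrac{1}{2}E_{3/2,N^-}$ and then apply Stokes' theorem in the style of \cite{bruinierfunke}, which converts the pairing into the constant term of $f_{P\oplus N}\cdot\left[\Theta_{P},\widetilde{E}_{1/2,N}^+\right]_n$. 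The logarithms $\log(p)$ and the values $L(\chi_{\Delta_0},1)$ then come from the explicitly computed coefficients of the \emph{holomorphic part} of this weight $1/2$ harmonic Eisenstein series (Theorem~\ref{eisenstein series fourier expansion}, following \cite{bruinierkuss}), rather than from derivatives of local densities attached directly to the pairing. Two smaller corrections in the same vein: the genus theta series of the ternary complements has weight $3/2$, not $1+2n$ (the weight $1+2n$ is that of the obstruction space of cusp forms); and the reason one cannot fix a single special point is not that an individual ternary theta series fails to be rational (its coefficients are integers), but that its \emph{mock modular preimage} is not known to have algebraic coefficients --- which is exactly what the double trace plus Siegel--Weil repairs.
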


\begin{remark}\label{remark on fund disc and primes appearing}
The numbers $\log(\varepsilon_\Delta)/\sqrt{\Delta}$ in the theorem arise as special $L$-values $L(\chi_\Delta,1)$ via Dirichlet's class number formula. The fundamental discriminants that appear are among the discriminants of the real quadratic fields $\Q(\sqrt{(4mm'D^2-r^2)|D|})$ with $r\in \Z$ satisfying~$|r|<2|D|\sqrt{mm'}$. The logarithms of rational primes $p$ can appear only if $(4mm'D^2-r^2)|D|$ is a square for some $r\in \Z$, and if  $p$ is the only prime divisor of $2m'|D|$ satisfying $(-m',D)_p=-1$; see Remark \ref{rem:primes_in_traces}. Note that when $(4mm'D^2-r^2)|D|$ is a square, we have $(-m,D)_p=(-m',D)_p$
by properties of Hilbert symbols. In this case $(-m',D)_p=-1$ implies that $p$ also divides $2m|D|$, which shows that these conditions are symmetric in $m$ and $m'$.
\end{remark}

One can give an explicit evaluation of $\tr_m \tr_{m'}(G_{2n})$ using Theorem~\ref{theorem evaluation Green's function}. The following example is an illustration of such an evaluation.

\begin{example}\label{example 1 intro}
	Let $D = -4$, $m=1$ and $m' = 1/2$. For $n = 1$ we have $S_{3}^+(\Gamma_0(4),\chi_{-4}) = \{0\}$, hence there exists a weight $-1$ weakly holomorphic modular form~$f\in M_{-1}^{!}(\Gamma_0(4),\chi_{-4})$ with principal part $q^{-1}$. Using the formula given in Theorem~\ref{theorem evaluation Green's function} we find that
	\[
	\frac{1}{\sqrt{2}}\tr_1 \tr_{1/2}(G_{2}) = L(\chi_8,1) = \frac{\log(\varepsilon_8)}{\sqrt{8}},
	\]
	where $\varepsilon_8 = 3+\sqrt{8}$ is the smallest totally positive unit $> 1$ in $\Q(\sqrt{8})$. Modulo $\Gamma$, there are unique special points of determinants $1$ and $1/2$, given by $j$ and $\frac{1+i}{2}+ \frac{\sqrt{2}}{2}j$, with stabilizers of size $4$ and $12$, respectively. Hence, we find
	\[
	\frac{1}{\sqrt{2}}G_2\left( j, \frac{1+i}{2}+\frac{\sqrt{2}}{2}j\right) = 48L(\chi_8,1). 
	\]
	Similarly, we can compute
	\[
	\frac{1}{\sqrt{2}}G_4\left(j, \frac{1+i}{2}+\frac{\sqrt{2}}{2}j\right)=48\log(2)-48L(\chi_8,1).
	\]
	We refer to Example~\ref{example 1} for more details. Note that the fundamental discriminant $\Delta=8$ equals the discriminant of $\Q(\sqrt{4mm'|D|^2-r^2)|D|})$ when $r=0$. Moreover, when~$r=4$ the number~$(4mm'|D|^2-r^2)|D|$ is a square, and $2$ is the only prime dividing $2m'|D|$ satisfying $(-m',D)_2=-1$, in accordance with Remark~\ref{remark on fund disc and primes appearing}.
\end{example}

The proof of Theorem~\ref{main theorem} uses a method of Bruinier, Ehlen, and Yang \cite{bruinierehlenyang}, and consists of the following four major steps. 
\begin{enumerate}
	\item First, we show that the trace $\tr_m\big(G_{2n}(\,\cdot\,, P)\big)$ can be written as a regularized theta lift of a harmonic Maass form $F_m$ of weight $1-2n$. More explicitly, we have an integral representation
	\[
	\tr_m\big(G_{2n}(\,\cdot\,, P)\big) = \int_{\mathcal{F}}^{\reg}\left(R_{1-2n}^{n}F_m\right)(\tau)\Theta(\tau,P)d\mu(\tau),
	\]
	where the integral over the fundamental domain $\mathcal{F}$ for $\SL_2(\Z)$ has to be regularized as explained by Borcherds \cite{borcherds} or Bruinier \cite{bruinier}, $d\mu(\tau)$ denotes the usual invariant measure on $\H$, and $\Theta(\tau,P)$ is a real-analytic Siegel theta function associated with a suitable lattice of signature $(1,3)$. Here $R_{\kappa}^n = R_{\kappa+2n-2}\circ \dots \circ R_{\kappa}$ denotes the iterated version of the raising operator $R_{\kappa} = 2i\frac{\partial}{\partial \tau} + \kappa v^{-1}$ where $v=\Im(\tau)$. See Theorem~\ref{theorem higher Green's function as theta lift} for the details.	
	\item Now we evaluate $\tr_m\big(G_{2n}(\,\cdot\,, P_0)\big)$ at a special point $P_0$. Then the Siegel theta function $\Theta(\tau,P_0)$ essentially splits as a product of a holomorphic theta function of weight $1/2$ and the complex conjugate of a holomorphic theta function of weight $3/2$, 
	\[
	\Theta(\tau,P_0) = \Theta_{1/2}(\tau) \cdot \overline{\Theta_{3/2}(\tau)}v^{3/2}.
	\]
	If we now take the trace $\tr_{m'}$ over $P_0$, an application of the Siegel--Weil formula allows us to replace the theta function $\Theta_{3/2}(\tau)$ by an Eisenstein series $E_{3/2}(\tau)$, and we obtain the splitting
	\[
	\tr_{m'}(\Theta(\tau,\,\cdot \,)) = \Theta_{1/2}(\tau)\cdot \overline{E_{3/2}(\tau)}v^{3/2}.
	\]	
	We refer to Theorem~\ref{theorem splitting theta} for the precise statement.
	\item If we plug the splitting of $\tr_{m'}(\Theta(\tau,\,\cdot \,))$ into the  theta lift representation of $\tr_{m}(G_{2n})$ from step $(1)$, we obtain
	\begin{align*}
	\tr_{m}\tr_{m'}(G_{2n}) = \int_{\mathcal{F}}^{\reg}\left(R_{1-2n}^{n}F_m\right)(\tau)\Theta_{1/2}(\tau)\overline{E_{3/2}(\tau)}v^{3/2}d\mu(\tau).
	\end{align*}
	The right-hand side can be interpreted as the regularized Petersson inner product of the form $\left(R_{1-2n}^{n}F_m\right)(\tau)\Theta_{1/2}(\tau)$ and the Eisenstein series $E_{3/2}(\tau)$. By the fundamental results of Bruinier and Funke \cite{bruinierfunke}, there exists a harmonic Maass form $\widetilde{E}_{1/2}(\tau)$ of weight $1/2$ with shadow $E_{3/2}(\tau)$, and an application of Stokes' Theorem shows that the above Petersson inner product can be evaluated in terms of the coefficients of $F_m(\tau)$ and $\Theta_{1/2}(\tau)$, and the coefficients of the holomorphic part of $\widetilde{E}_{1/2}(\tau)$. Here we need the assumption~$S_{1+2n}^+(\Gamma_0(|D|),\chi_D) \neq \{0\}$, which implies that~$F_m$ is weakly holomorphic.
	\item Since the coefficients of $\Theta_{1/2}(\tau)$ and $F_m(\tau)$ are rational numbers (here we again use that $F_m$ is weakly holomorphic), the algebraic properties of $\tr_{m}\tr_{m'}(G_{2n})$ are controlled by the coefficients of the harmonic Maass form $\widetilde{E}_{1/2}(\tau)$. In general, the algebraic nature of the coefficients of harmonic Maass forms is a deep open problem, compare \cite{bruinierono}. However, since the shadow of $\widetilde{E}_{1/2}(\tau)$ is an Eisenstein series, $\widetilde{E}_{1/2}(\tau)$ itself can be constructed as a Maass Eisenstein series, and its Fourier coefficients can be computed very explicitly using a method of Bruinier and Kuss \cite{bruinierkuss}; see Theorem~\ref{eisenstein series fourier expansion}. It turns out that the coefficients of the holomorphic part of $\widetilde{E}_{1/2}(\tau)$ are given by simple multiples of logarithms of primes, or by the special value $L(\chi_{\Delta},1)$ of Dirichlet $L$-functions  for positive fundamental discriminants $\Delta > 0$. By Dirichlet's class number formula, the latter $L$-values are rational multiples of $\log(\varepsilon_\Delta)/\sqrt{\Delta}$. 
\end{enumerate}

\begin{remark}
	Step (4) explains why we need to take the double trace $\tr_{m}\tr_{m'}(G_{2n})$ to obtain a convenient algebraicity result. Steps (1) to (3) work without taking the trace $\tr_{m'}$, and show that, for each fixed special point $P_0$, we can interpret the single trace $\tr_{m}(G_{2n}(\,\cdot\,,P_0))$ as a regularized Petersson inner product of $\left(R_{1-2n}^{n}F_m\right)(\tau)\Theta_{1/2}(\tau)$ with a weight $3/2$ holomorphic theta function $\Theta_{3/2}(\tau)$ associated to an even lattice of rank $3$. However, it is believed that the harmonic Maass forms corresponding to these ternary theta functions in general do not have good algebraic properties. Taking the additional trace $\tr_{m'}$ allows us to apply the Siegel--Weil formula and replace $\Theta_{3/2}(\tau)$ by an Eisenstein series $E_{3/2}(\tau)$, whose corresponding harmonic Maass form $\widetilde{E}_{1/2}(\tau)$ has better algebraic properties. In contrast, in the modular curve case, the ternary theta function $\Theta_{3/2}$ is replaced with a binary theta function $\Theta_1$ of weight $1$. Using the theory of complex multiplication, Duke and Li \cite{dukeli} and Ehlen \cite{ehlen} proved that these binary theta functions possess corresponding harmonic Maass forms whose coefficients are essentially given by logarithms of absolute values of algebraic numbers. Hence, in the modular curve case it is not necessary to take the double trace to obtain an algebraicity result.
\end{remark}

Next, we consider \emph{twisted} double traces of the Green's function, which are defined as in \eqref{def double trace}, but with additional signs $\chi_D(X),\chi_D(Y) \in \{-1,0,1\}$. Recall that we assume that $D$ is a prime discriminant. Let $\ell$ denote the unique prime dividing $D$. For $X = \left(\begin{smallmatrix}a & b \\ \overline{b} & c \end{smallmatrix} \right) \in L^+_{m|D|}$ with $m \in \N$, we define
\[
\chi_D(X) =  \begin{cases}
\left( \frac{D}{a}\right), & \text{if } \ell \nmid a, \\
\left( \frac{D}{c}\right), & \text{if } \ell \nmid c, \\
0, & \text{otherwise}.
\end{cases}
\]
This function was previously considered in \cite{bruinieryang, ehlen} for $D > 0$ in order to study twisted Borcherds produts on Hilbert modular surfaces. One can check that $\chi_D(X)$ is well-defined and invariant under the action of $\Gamma$. In particular, for $m,m' \in \N$ the \emph{doubly-twisted double trace}
\[
\tr_{m|D|,\chi_D}\tr_{m'|D|,\chi_D}(G_{s}) = \sum_{\substack{X \in \Gamma \backslash L_{m|D|}^+ \\ Y \in \Gamma \backslash L_{m'|D|}^+}}\frac{\chi_D(X)}{|\Gamma_X|}\frac{\chi_D(Y)}{|\Gamma_Y|}G_s(P_X,P_Y)
\]
is well-defined. Similarly, we can consider the \emph{partially-twisted double trace}
\[
\tr_{m|D|,\chi_D}\tr_{m'}(G_{s}) = \sum_{\substack{X \in \Gamma \backslash L_{m|D|}^+ \\ Y \in \Gamma \backslash L_{m'}^+}}\frac{\chi_D(X)}{|\Gamma_X|}\frac{1}{|\Gamma_Y|}G_s(P_X,P_Y),
\]
where we only twist one of the traces. Note that here $m'$ can be a rational number in $\frac{1}{|D|}\N$.

There are two main reasons why we are interested in the twisted double traces of the Green's function. Firstly, in Theorem~\ref{main theorem} we considered the \emph{non-twisted} double trace of $G_s$ for \emph{even} $s = 2n$, but we did not get information for odd $s$. However, we will compute the \emph{twisted} double traces of $G_s$ for \emph{odd} $s = 2n+1$. Secondly, we can sometimes compute both the non-twisted and the twisted double traces of $G_{s}$ and use this to get formulas for some \emph{individual} special values of the Green's function, even if the class numbers of the involved binary hermitian forms are not equal to $1$. For an instance of such a case, we refer to Example~\ref{example 2 intro} below.

We have the following algebraicity results for the twisted double traces of the Green's function. Once again, for simplicity, we assume that certain spaces of cusp forms are trivial. In the general case we take suitable linear combinations of twisted double traces; see Corollaries~\ref{corollary that implies theorem 1.4(2) n even}, \ref{corollary that implies theorem 1.4(2) n odd} and \ref{corollary that implies theorem 1.4(1)}.

As usual,  for an integer $k$ we denote by $S_{k}(\SL_2(\Z))$ the space of cusp forms of weight $k$ for the full modular group.

\begin{theorem}\label{main theorem twisted}
	Let $D < 0$ be a prime discriminant.
	\begin{enumerate}
		\item Let $n \geq 1$ be a natural number such that $S_{2+2n}(\SL_2(\Z)) = 0$. Let $m,m' \in \N$ such that $mm'$ is not a square. Then the doubly-twisted double trace 
		\[
		\tr_{m|D|,\chi_D} \tr_{m'|D|,\chi_D}(G_{2n+1})
		\]
	 is a rational linear combination of $\log(p)$ for some primes $p$ and of $\log(\varepsilon_\Delta)/\sqrt{\Delta}$ for some fundamental discriminants $\Delta > 0$, where $\varepsilon_\Delta$ denotes the smallest totally positive unit $> 1$ in $\Q(\sqrt{\Delta})$.
	 	\item Let $n \geq 2$ be a natural number. If $n$ is even, assume that $S_{1+n}^+(\Gamma_0(|D|),\chi_D) = \{0\}$, and if $n$ is odd, assume that $S_{1+n}(\SL_2(\Z)) = 0$. Let $m \in \N , m'\in \frac{1}{|D|}\N$ such that $mm'|D|$ is not a square.  Then the partially-twisted double trace
		\[
		\tr_{m|D|,\chi_D} \tr_{m'}(G_{n})
		\]
		is a rational multiple of $\pi  \big(\sqrt{mm'}\big)^{1+n}$.
	\end{enumerate}
\end{theorem}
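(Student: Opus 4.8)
The plan is to prove both parts by adapting the four-step method of Bruinier, Ehlen, and Yang \cite{bruinierehlenyang} that underlies Theorem~\ref{main theorem}, now feeding the genus character $\chi_D$ into the theta machinery. The only genuinely new input is that the twist alters the automorphic data attached to each trace, so I would carry out the same four steps while tracking carefully how the twist propagates. The guiding principle is that the \emph{parity of the number of $\chi_D$-twists} (two in part (1), one in part (2)) controls the parity of the Dirichlet character attached to the Eisenstein series produced in step (3), and hence decides whether the final answer is a combination of logarithms or an algebraic multiple of $\pi$.

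First I would realize the twisted inner traces as twisted regularized theta lifts, extending Theorem~\ref{theorem higher Green's function as theta lift} by replacing the Siegel theta kernel $\Theta(\tau,P)$ with a $\chi_D$-twisted kernel $\Theta^{\chi_D}(\tau,P)$. The twist shifts the weight parity relative to the untwisted case, which dictates the spaces in the hypotheses: in part (1), for odd $s=2n+1$ the double twist moves the input harmonic Maass form $F_m$ onto the full modular group $\SL_2(\Z)$, so the relevant obstruction space is $S_{2+2n}(\SL_2(\Z))$, and its vanishing forces $F_m$ to be weakly holomorphic with rational Fourier coefficients; in part (2) only the $X$-variable is twisted, and the parity of $n$ governs whether the input sits in the plus space $S_{1+n}^+(\Gamma_0(|D|),\chi_D)$ (for $n$ even) or on $\SL_2(\Z)$ (for $n$ odd), exactly matching the two hypotheses stated.

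Next I would evaluate at a special point $P_0$ and split the (twisted) theta function as in Theorem~\ref{theorem splitting theta}, then apply the Siegel--Weil formula in the remaining trace variable to replace the ternary theta factor by an Eisenstein series. In part (1) both surviving factors carry the twist, so one obtains a regularized Petersson pairing of $\left(R^{n}F_m\right)\Theta^{\chi_D}_{1/2}$ against a twisted weight $3/2$ Eisenstein series; in part (2) the untwisted trace yields the plain $E_{3/2}$ paired against the twisted $\Theta^{\chi_D}_{1/2}$. In either case Stokes' Theorem together with the Bruinier--Funke results \cite{bruinierfunke} reduces the regularized integral to a finite sum over the Fourier coefficients of $F_m$, of the relevant weight $1/2$ theta function, and of the holomorphic part of a harmonic Maass form $\widetilde{E}_{1/2}$ whose shadow is the Eisenstein series just produced.

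The decisive step, and the main obstacle, is the explicit computation of the holomorphic coefficients of $\widetilde{E}_{1/2}$ by the Bruinier--Kuss method, as in Theorem~\ref{eisenstein series fourier expansion}, and the identification of their arithmetic nature, where the two parts diverge according to the parity of the attached character. In part (1) the two twists combine to an even character governing the relevant $L$-values, so the coefficients are rational combinations of $\log(p)$ and of $L(\chi_\Delta,1)$ for positive fundamental discriminants $\Delta$, which by Dirichlet's class number formula are rational multiples of $\log(\varepsilon_\Delta)/\sqrt{\Delta}$, precisely as in Theorem~\ref{main theorem}. In part (2) the single twist by the odd character $\chi_D$ (recall $D<0$) instead produces the value $L(\chi,1)$ of an \emph{odd} Dirichlet character, which by the class number formula is a rational multiple of $\pi$; hence no logarithms survive and the double trace collapses to a rational multiple of $\pi$, the factor $\big(\sqrt{mm'}\big)^{1+n}$ arising from the $\sqrt{m},\sqrt{m'}$ normalizations of the special points $P_X,P_Y$ together with the powers introduced by the raising operator. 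The technical heart is thus to check that the twist interacts correctly with Siegel--Weil and the Bruinier--Kuss coefficient formula, and that the parity bookkeeping indeed forces the odd-character $L$-value, and therefore the factor of $\pi$, in the partially-twisted case.
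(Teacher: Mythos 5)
Your step-by-step skeleton (twisted theta lift on $\SL_2(\Z)$, splitting at a special point, Stokes/Bruinier--Funke, coefficient analysis) matches the paper's architecture, and your identification of the obstruction spaces $S_{2+2n}(\SL_2(\Z))$, $S_{1+n}^+(\Gamma_0(|D|),\chi_D)$, $S_{1+n}(\SL_2(\Z))$ is correct. But the ``decisive step'' of your plan contains a genuine gap: you assert that in both parts one can ``apply the Siegel--Weil formula in the remaining trace variable to replace the ternary theta factor by an Eisenstein series,'' and that the dichotomy logs-versus-$\pi$ then comes from the parity of the Dirichlet character attached to that Eisenstein series via Bruinier--Kuss and the class number formula. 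This fails exactly in the partially-twisted case (part (2)). There, after the splitting, the remaining adelic integral is $\int_{H(\Q)\backslash H(\A_f)}\Theta(\tau,h)\,\chi_D(\nu(h))\,dh$ with the \emph{non-trivial} character $\chi_D\circ\nu$ surviving (one twist is left over, whether it sits in the lift variable or the outer trace variable), and the classical Siegel--Weil formula simply does not apply to this twisted integral. Its value is not an Eisenstein series of any character: by Snitz's twisted Siegel--Weil formula (the paper's key new input, Theorem~\ref{twisted siegel weil formula}) it is a \emph{distinguished cusp form}, a rational linear combination of unary theta functions of weight $3/2$. Consequently there is no Eisenstein series whose Fourier coefficients you could compute by the Bruinier--Kuss method, and no odd-character $L$-value $L(\chi,1)$ ever appears. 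The rational multiple of $\pi\big(\sqrt{mm'}\big)^{1+n}$ in part (2) instead comes from a completely different source: the $\xi$-preimages of these unary theta functions have holomorphic parts with \emph{rational} Fourier coefficients up to a fixed square-root factor (Theorem~\ref{theorem twisted siegel weil preimage}, resting on \cite{lischwagenscheidt}), so the Stokes pairing produces rational numbers times $\pi$ and square roots, with no class-number-formula input at all.

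Relatedly, your parity bookkeeping is inverted relative to the actual mechanism. In the doubly-twisted case (part (1)) the two characters $\chi_D(\nu(h))$ coming from the twisted kernel and the twisted outer trace multiply to the trivial character, so the adelic integral \emph{untwists} and the ordinary Siegel--Weil formula yields the plain Eisenstein series $E_{3/2,N(D)^-}$ (Theorem~\ref{theorem splitting theta twisted modified Siegel}); its preimage $\widetilde{E}_{1/2,N(D)}$ has coefficients given by $\log(p)$ and $L(\chi_{\Delta_0},1)$ with $\Delta_0>0$, whence the logarithms --- not a ``twisted Eisenstein series with even character.'' The correct rule is: an even number of twists gives the untwisted Siegel--Weil integral (Eisenstein series, logarithms), an odd number gives the twisted integral (Snitz cusp form, rational multiples of $\pi$). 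Without Snitz's theorem and the rationality of mock modular preimages of unary theta functions, your proposed argument cannot produce part (2), and the proof as outlined does not go through.
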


	The proofs of these results, which can be found in Section~\ref{section twisted traces}, use the same strategy as the proof of Theorem~\ref{main theorem} sketched above. However, in step (1) of the proof sketch we use a twisted theta lift, and in step (2) we apply a twisted version of the Siegel--Weil formula, which is due to Snitz \cite{snitz}. Roughly speaking, this twisted Siegel--Weil formula says that a certain twisted integral of a theta function is a \emph{distinguished cusp form}, namely a weight 3/2 unary theta function, instead of an Eisenstein series. These unary theta functions admit harmonic Maass forms with \emph{rational} Fourier coefficients in the holomorphic part, up to a fixed square root factor (see \cite{bruinierschwagenscheidt, lischwagenscheidt}). Hence, in part (4) of the proof sketch we obtain rational numbers instead of logarithms (up to a factor of $\pi$ and possibly some square roots). This explains the different algebraic properties of the doubly-twisted and the partially-twisted traces in Theorem~\ref{main theorem twisted}.

\begin{example}\label{example 2 intro}
	By combining our explicit evaluations of the twisted and non-twisted double traces of the Green's function we can sometimes compute the individual values $G_{2n}(P_1,P_2)$, even if the class numbers of the two special points $P_1,P_2$ are not equal to $1$. For example, modulo $\Gamma=\mathrm{PSL}_2(\Z[i])$ there is one form $\left(\begin{smallmatrix}1 & 0 \\ 0 & 1\end{smallmatrix} \right)$ of determinant $m = 1$, with corresponding special point $j$, and there are two primitive forms $\left(\begin{smallmatrix}4 & 0 \\ 0 & 1\end{smallmatrix}\right)$ and $\left(\begin{smallmatrix}3 & 1+i \\ 1-i & 2 \end{smallmatrix}\right)$ of determinant $m' = 4$, with corresponding special points $2j$ and $\frac{1+i}{2} + j$. Using the formula for the non-twisted double trace of $G_2$ from Theorem~\ref{theorem evaluation Green's function}, we find
	\[
	G_2\left(j,2j \right) + G_2\left(j,\frac{1+i}{2}+j \right) =  32 L(\chi_{12},1) -8 L(\chi_{28},1) + 56L(\chi_{60},1).
	\]
	On the other hand, the formula for the partially-twisted double trace of $G_2$ from Theorem~\ref{theorem evaluation Green's function single twist even} yields
	\[
	G_2\left(j,2j \right) - G_2\left(j,\frac{1+i}{2}+j \right) = -4\pi.
	\]
	Combining these two evaluations, we obtain
	\[
	G_{2}\left(j,2j \right) = 16 L(\chi_{12},1) - 4 L(\chi_{28},1) + 28 L(\chi_{60},1)-2\pi.
	\]
	In particular, the algebraicity results for the double traces of $G_2$ are in general not true for the individual values. We refer to Section~\ref{section example} for the details.
\end{example}

This work is organized as follows. In Section~\ref{section preliminaries} we discuss the necessary preliminaries about vector-valued harmonic Maass forms for the Weil representation associated with an even lattice, and some basic properties of Siegel theta functions. In Section~\ref{section eisenstein series} we recall the construction of vector-valued holomorphic and harmonic Maass Eisenstein series of half-integral weight, and we give the Siegel--Weil formula in our setup. The results of this section are mostly well known. However, in Theorem~\ref{eisenstein series fourier expansion} we write out an explicit formula for the Fourier coefficients of a harmonic Eisenstein series of weight $1/2$, which is not readily available in the literature and might be of independent interest. Section~\ref{section traces green's function} is the heart of the paper. Here we first show that the trace of the Green's function can be written as a theta lift (Theorem~\ref{theorem higher Green's function as theta lift}). The next key step is to rewrite the double traces into an adelic language in order to apply the Siegel--Weil formula and determine a precise splitting of the Siegel theta function at special points (Theorem~\ref{theorem splitting theta}). Finally, we give our explicit evaluation of the double traces in terms of Fourier coefficients of Maass Eisenstein series (Theorem~\ref{theorem evaluation Green's function}). In Section~\ref{section twisted traces green's function} we consider the twisted double traces of the Green's function. The proofs of the results in this last section are very similar to the proofs of their non-twisted counterparts in Section~\ref{section traces green's function}, so we will skip some details there. \\

\noindent \textbf{Acknowledgments.} We are grateful to  Yingkun Li for pointing out the paper of Snitz \cite{snitz} to us.

\section{Preliminaries}\label{section preliminaries}

Throughout this section, we let $L$ be an even lattice of signature $(p,q)$ with   bilinear form $(\cdot,\cdot)$ and associated quadratic form $Q(x)=\frac{1}{2}(x,x)$. The dual lattice of $L$ will be denoted by $L'$. The abelian group $L'/L$ is finite and of cardinality $|\mathrm{det}(L)|$, where $\mathrm{det}(L)$ denotes the determinant of the Gram matrix of $L$.

\subsection{Vector-valued modular forms for the Weil representation}\label{section modular forms} Let $\C[L'/L]$ be the group ring of $L$, which is generated by the standard basis vectors $\e_{\gamma}$ for $\gamma \in L'/L$. We let
\[
\bigg(\sum_{\gamma \in L'/L}a_\gamma \e_\gamma\bigg)\cdot \bigg( \sum_{\gamma \in L'/L}b_\gamma \e_\gamma\bigg) = \sum_{\gamma \in L'/L}a_\gamma b_\gamma
\]
be the natural bilinear pairing on $\C[L'/L]$.

 Let $\Mp_{2}(\Z)$ be the metaplectic double cover of $\SL_{2}(\Z)$, realized as the set of pairs $(M,\phi)$ with $M = \left(\begin{smallmatrix}a & b \\ c & d \end{smallmatrix} \right) \in \SL_{2}(\Z)$ and $\phi:\H\to\C$ a holomorphic function with $\phi^{2}(\tau) = c\tau + d$. The Weil representation $\rho_{L}$ of $\Mp_{2}(\Z)$ associated to $L$ is defined on the generators $T = \left(\left(\begin{smallmatrix}1 & 1 \\ 0 & 1 \end{smallmatrix} \right), 1 \right)$ and $S = \left(\left(\begin{smallmatrix}0 & -1 \\ 1 & 0 \end{smallmatrix} \right), \sqrt{\tau} \right)$ by 
\begin{align*}
\rho_{L}(T)\e_{\gamma} = e(Q(\gamma))\e_{\gamma},  \qquad  \rho_{L}(S)\e_{\gamma}= \frac{e((q-p)/8)}{\sqrt{|L'/L|}}\sum_{\beta \in L'/L}e(-(\beta,\gamma))\e_{\beta},
\end{align*}
where we put $e(x) = e^{2\pi i x}$ for $x \in \C$.  We let $\overline{\rho}_L$ denote the dual Weil representation. Note that $\overline{\rho}_L$ is the Weil representation $\rho_{L^{-}}$ associated to the lattice $L^{-} = (L,-Q)$. 

For $k \in \frac{1}{2}\Z$ we let $A_{k,L}$ be the set of all functions $f: \H \to \C$ that transform like modular forms of weight $k$ for $\rho_{L}$, which means that $f$ is invariant under the slash operator 
\begin{align}\label{slash operator}
f|_{k,L}(M,\phi)= \phi(\tau)^{-2k}\rho_{L}(M,\phi)^{-1}f(M\tau)
\end{align}
for $(M,\phi) \in \Mp_{2}(\Z)$. If $K \subseteq L$ is a sublattice of finite index, we can naturally view modular forms for $\rho_L$ as modular forms for $\rho_K$ as follows. We have the inclusions $K \subseteq L \subseteq L' \subseteq K'$ and thus $L/K \subseteq L'/K \subseteq K'/K$. We have the natural projection $L'/K \to L'/L, \gamma \mapsto \overline{\gamma}$. There are maps
\begin{align*}
\res_{L/K}&: A_{k,L} \to A_{k,K}, \quad f\mapsto f_{K}, \\
\tr_{L/K}&: A_{k,K} \to A_{k,L}, \quad g\mapsto g^{L},
\end{align*}
which are defined for $f \in A_{k,L}$ and $\gamma \in K'/K$ by
\begin{align*}
(f_{K})_{\gamma} = \begin{cases} f_{\overline{\gamma}}, & \text{if } \gamma \in L'/K, \\ 0, & \text{if } \gamma \notin L'/K,\end{cases}
\end{align*}
and for $g \in A_{k,K}$ and $\overline{\gamma} \in L'/L$ by
\begin{align*}
(g^{L})_{\overline{\gamma}} = \sum_{\beta \in L/K}g_{\beta+\gamma}.
\end{align*}
They are adjoint with respect to the bilinear pairings on $\C[L'/L]$ and $\C[K'/K]$. We refer the reader to \cite[Lemma~3.1]{bruinieryang} for more details.

\subsection{Harmonic Maass forms}\label{section harmonic maass forms}

Recall from \cite{bruinierfunke} that a harmonic Maass form of weight $k \in \frac{1}{2}\Z$ for $\rho_{L}$ is a smooth function $f: \H \to \C$ which is annihilated by the weight $k$ Laplace operator 
\[
\Delta_{k} = -v^{2}\left(\frac{\partial^{2}}{\partial u^{2}}+\frac{\partial^{2}}{\partial v^{2}} \right)+ikv\left(\frac{\partial}{\partial u}+i\frac{\partial}{\partial v} \right), \qquad (\tau = u + iv \in \H),
\]
which transforms like a modular form of weight $k$ for $\rho_{L}$, and which is at most of linear exponential growth at the cusp $\infty$. The space of harmonic Maass forms of weight $k$ for $\rho_{L}$ is denoted by $H_{k,L}$. We let $M_{k,L}^{!}$ be subspace of weakly holomorphic modular forms, which consists of the forms that are holomorphic on $\H$. The antilinear differential operator
\[
\xi_{k} = 2iv^{k}\overline{\frac{\partial}{\partial \overline{\tau}}}
\]
maps $H_{k,L}$ onto $M_{2-k,L^{-}}^{!}$. We let $H_{k,L}^{\hol}$ and $H_{k,L}^{\cusp}$ 
 be the subspace of $H_{k,L}$ which is mapped to the space $M_{2-k,L^{-}}$ of holomorphic modular forms or the space $S_{2-k,L^{-}}$ of cusp forms under $\xi_{k}$, respectively. For $k \neq 1$ every $f \in H_{k,L}^{\hol}$ decomposes as a sum $f = f^{+} + f^{-}$ of a holomorphic and a non-holomorphic part, having Fourier expansions of the form
\begin{align}\label{eq Fourier expansion}
\begin{split}
f^{+}(\tau) &= \sum_{\gamma \in L'/L}\sum_{\substack{n \in \Q \\ n \gg -\infty}}a_{f}^{+}(n,\gamma)q^{n}\e_{\gamma}, \\
f^{-}(\tau) &=  \sum_{\gamma \in L'/L}\bigg(a_{f}^{-}(0,\gamma)v^{1-k}+\sum_{\substack{n \in \Q \\ n < 0}}a_{f}^{-}(n,\gamma)\Gamma(1-k,4\pi|n|v)q^{n}\bigg)\e_{\gamma},
\end{split}
\end{align}
where $a_{f}^{\pm}(n,\gamma) \in \C$, $q = e^{2\pi i \tau}$, and $\Gamma(s,x) = \int_{x}^{\infty}e^{-t}t^{s-1}dt$ is the incomplete Gamma function. Note that $f \in H_{k,L}^{\cusp}$ is equivalent to $a_{f}^{-}(0,\gamma) = 0$ for all $\gamma \in L'/L$.

Examples of harmonic Maass forms can be constructed using Maass Poincar\'e series, compare \cite[Section~1.3]{bruinier}. For $k \in \frac{1}{2}\Z$, $s \in \C$ and $v > 0$ we let
\begin{equation}\label{eq M Whittaker}
\mathcal{M}_{k,s}(v) = v^{-k/2}M_{-k/2,s-1/2}(v),
\end{equation}
with the usual $M$-Whittaker function. For $\mu \in L'/L$ and $m \in \Z - Q(\mu)$ with $m > 0$, and $s \in \C$ with $\Re(s) > 1$ we define the Maass Poincar\'e series
\[
F_{k,m,\mu}(\tau,s) = \frac{1}{2\Gamma(2s)}\sum_{(M,\phi) \in \widetilde{\Gamma}_{\infty}\backslash \Mp_{2}(\Z)}\mathcal{M}_{k,s}(4\pi m v)e(-mu)\e_{\mu}|_{k,L}(M,\phi),
\]
where $\widetilde{\Gamma}_{\infty}$ is the subgroup of $\Mp_{2}(\Z)$ generated by $T=\left(\left(\begin{smallmatrix}1 & 1 \\ 0 & 1 \end{smallmatrix} \right), 1 \right)$. It converges absolutely for $\Re(s) > 1$, it transforms like a modular form of weight $k$ for $\rho_{L}$, and it is an eigenform of the Laplace operator $\Delta_{k}$ with eigenvalue $s(1-s)+(k^{2}-2k)/4$. Hence, for $k < 0$ the special value
\[
F_{k,m,\mu}(\tau) = F_{k,m,\mu}\left(\tau,1-\frac{k}{2}\right)
\]
defines a harmonic Maass form in $H_{k,L}^{\cusp}$ whose Fourier expansion starts with
\[
F_{k,m,\mu}(\tau) = q^{-m}(\e_{\mu}+\e_{-\mu}) + O(1).
\]
In particular, for $k < 0$ every harmonic Maass form $f \in H_{k,L}^{\cusp}$ with Fourier expansion as in \eqref{eq Fourier expansion} can be written as a linear combination
\begin{align}\label{eq linear combination Maass Poincare}
f(\tau) = \frac{1}{2}\sum_{\mu \in L'/L}\sum_{m > 0}a_{f}^{+}(-m,\mu)F_{k,m,\mu}(\tau)
\end{align}
of Maass Poincar\'e series.

The Maass raising and lowering operators on smooth functions on $\H$ are defined by
\[
R_{k}f = 2i \frac{\partial}{\partial \tau} + kv^{-1}, \qquad L_k = -2iv^{2}\frac{\partial}{\partial \overline{\tau}}.
\]
They raise or lower the weight of an automorphic form of weight $k$ by $2$, respectively. Also note that we have $L_k = v^{2-k}\overline{\xi_{k}}$. We also define the iterated raising operator by
\[
R_{k}^n = R_{k+2n-2}\circ \dots \circ R_{k}, \qquad R_{k}^0 = \mathrm{id}.
\]
The action of the raising operator on Maass Poincar\'e series is given as follows; see, e.g., \cite[Proposition~3.4]{bruinierehlenyang}.

\begin{lemma}\label{lemma raising poincare series}
	We have
	\[
	R_{k}F_{k,m,\mu}(\tau,s) = 4\pi m (s+k/2) F_{k+2,m,\mu}(\tau,s).
	\]
\end{lemma}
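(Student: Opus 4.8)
The plan is to exploit the fact that the Maass raising operator intertwines the weight-$k$ and weight-$(k+2)$ slash operators, so that $R_k$ may be applied term-by-term inside the Poincar\'e series and the claim reduces to a single computation on the seed function $\mathcal{M}_{k,s}(4\pi m v)e(-mu)\e_\mu$. Concretely, I would first record the standard intertwining identity
\[
R_k\big(f|_{k,L}(M,\phi)\big) = \big(R_k f\big)|_{k+2,L}(M,\phi),
\]
valid for all $(M,\phi) \in \Mp_2(\Z)$, which follows from a direct calculation with the automorphy factor $\phi(\tau)^{-2k}$ and the chain rule; note that the vector-valued factor $\rho_L(M,\phi)^{-1}$ is constant in $\tau$ and hence commutes with $R_k$. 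Since the Poincar\'e series converges locally uniformly for $\Re(s)>1$ together with all its derivatives, one may interchange $R_k$ with the summation over $\widetilde{\Gamma}_\infty \backslash \Mp_2(\Z)$. Because the normalizing constant $\tfrac{1}{2\Gamma(2s)}$ is identical for $F_{k,m,\mu}$ and $F_{k+2,m,\mu}$, the lemma reduces to the pointwise identity
\[
R_k\big(\mathcal{M}_{k,s}(4\pi m v)e(-mu)\big) = 4\pi m\,(s+k/2)\,\mathcal{M}_{k+2,s}(4\pi m v)e(-mu).
\]

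Next I would compute the left-hand side directly. Writing $R_k = i\partial_u + \partial_v + kv^{-1}$ and using $\partial_u e(-mu) = -2\pi i m\, e(-mu)$, the exponential $e(-mu)$ factors out, leaving an expression in $\mathcal{M}_{k,s}(w)$ and its derivative $\mathcal{M}_{k,s}'(w)$, where $w = 4\pi m v$. Substituting the definition $\mathcal{M}_{k,s}(w) = w^{-k/2}M_{-k/2,\,s-1/2}(w)$ and setting $\kappa = -k/2$, $\mu = s - 1/2$, the desired identity becomes, after clearing the common power $w^{\kappa}$, precisely the classical contiguous relation for the $M$-Whittaker function
\[
\Big(w\tfrac{d}{dw} + \tfrac{w}{2} - \kappa\Big)M_{\kappa,\mu}(w) = \big(\tfrac{1}{2}+\mu-\kappa\big)M_{\kappa-1,\mu}(w),
\]
upon identifying $s + k/2 = \mu - \kappa + \tfrac12$ and recognizing $\mathcal{M}_{k+2,s}(w) = w^{\kappa-1}M_{\kappa-1,\mu}(w)$.

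The only genuine work lies in this last step: one must keep track of the three terms produced by $R_k$ (the multiple of $\mathcal{M}_{k,s}$ coming from $i\partial_u$, the derivative term from $\partial_v$, and the term $kv^{-1}\mathcal{M}_{k,s}$), clear the factor $w^{\kappa}$ arising from the normalization $v^{-k/2}$, and match the outcome against the correct recurrence in the parameter $\kappa$. I expect this Whittaker-function bookkeeping---rather than any analytic subtlety about differentiating the series---to be the main (if mild) obstacle, since it requires aligning the signs and the half-integer shifts of $\kappa$ and $\mu$ exactly. Everything else is formal once the intertwining relation and the absolute convergence of the raised series are in place.
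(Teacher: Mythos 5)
Your proof is correct: the intertwining relation $R_k\big(f|_{k,L}(M,\phi)\big)=(R_kf)|_{k+2,L}(M,\phi)$, the term-by-term application of $R_k$ justified by locally uniform convergence for $\Re(s)>1$, and the Whittaker contiguous relation $\big(w\tfrac{d}{dw}+\tfrac{w}{2}-\kappa\big)M_{\kappa,\mu}(w)=\big(\tfrac12+\mu-\kappa\big)M_{\kappa-1,\mu}(w)$ with $\kappa=-k/2$, $\mu=s-\tfrac12$ (so that the constant $\tfrac12+\mu-\kappa$ becomes $s+k/2$, and the factor $4\pi m$ arises from the substitution $w=4\pi mv$) fit together exactly as you describe. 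The paper gives no proof of its own, citing Proposition~3.4 of Bruinier--Ehlen--Yang instead, and the argument behind that citation is this same standard computation, so your route coincides with the intended one.
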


\subsection{Rankin--Cohen brackets}\label{section rankin cohen brackets}

Let $K$ and $L$ be even lattices. For $n\in\N_0$ and functions $f \in A_{k,K}$ and $g \in A_{\ell,L}$ with $k,\ell \in \frac{1}{2} \Z$ we define the $n$-th \emph{Rankin--Cohen bracket}
\begin{align*}
[f,g]_n = \sum_{s = 0}^n (-1)^s \binom{k+n-1}{s}\binom{\ell+n-1}{n-s}  f^{(n-s)} \otimes g^{(s)},
\end{align*}
where $f^{(s)} = \frac{1}{(2\pi i)^s}\frac{\partial^s}{\partial\tau^s}f$, and the tensor product of two vector-valued functions $f = \sum_{\mu}f_\mu \e_\mu \in A_{k,K}$ and $g = \sum_{\nu}g_\nu \e_\nu \in A_{\ell,L}$ is defined by $f \otimes g =\sum_{\mu , \nu} f_\mu g_\nu \e_{\mu + \nu} \in A_{k+\ell,K \oplus L}$. We can write the Rankin--Cohen bracket in terms of the raising operator as
\begin{align*}
[f,g]_n = \frac{1}{(-4\pi)^n}\sum_{s = 0}^n (-1)^s \binom{k+n-1}{s}\binom{\ell+n-1}{n-s}  R_{k}^{n-s}f \otimes R_\ell^{s}g,
\end{align*}
(see \cite[Equation (3.7)]{bruinierehlenyang}) which implies that we have
\[
[f,g]_n \in A_{k + \ell+2n,K \oplus L}.
\]
We will need the following formula from \cite[Proposition~3.6]{bruinierehlenyang} for the action of the lowering operator on Rankin--Cohen brackets.

\begin{proposition}\label{proposition rankin cohen bracket}
	Let $f \in H_{k,K}$ and $g \in H_{\ell,L}$ be harmonic Maass forms. For $n \in \N_0$ we have
	\begin{align*}
	(-4\pi)^n L_{k+\ell+2n}[f,g]_n= \binom{\ell+n-1}{n} R_k^n f \otimes L_\ell g + (-1)^n \binom{k+n-1}{n}L_k f \otimes R_\ell^n g.
	\end{align*}
\end{proposition}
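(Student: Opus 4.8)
The plan is to apply the lowering operator directly to the explicit expression
\[
[f,g]_n = \frac{1}{(-4\pi)^n}\sum_{s = 0}^n (-1)^s \binom{k+n-1}{s}\binom{\ell+n-1}{n-s}  R_{k}^{n-s}f \otimes R_\ell^{s}g,
\]
reduce everything to terms of the shape $R_k^a f \otimes R_\ell^b g$, and check that all but two of them cancel. Two auxiliary facts are needed. First, since $L_{\kappa} = -2iv^2\frac{\partial}{\partial\overline{\tau}}$ is literally the same differential operator for every weight $\kappa$, it obeys the Leibniz rule $L(\varphi\otimes\psi) = (L\varphi)\otimes\psi + \varphi\otimes(L\psi)$ on tensor products, with $L$ taken in the appropriate weight on each factor. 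Second, a short computation using $\frac{\partial}{\partial\overline{\tau}}v = i/2$ yields the commutation relation $L_{k+2}R_k = R_{k-2}L_k - k$ on weight $k$ forms; this is equivalent to the standard factorizations $\Delta_k = -R_{k-2}L_k = -L_{k+2}R_k - k$, which one can also verify by writing $\partial_\tau\partial_{\overline\tau} = \tfrac14(\partial_u^2+\partial_v^2)$.

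The key lemma, and the place where harmonicity is used, is the evaluation of $L$ on iterated raisings of a harmonic form: for $f\in H_{k,K}$ and $j\geq 1$,
\[
L_{k+2j}\big(R_k^j f\big) = -j(k+j-1)\,R_k^{j-1}f.
\]
I would prove this by induction on $j$. For $j=1$, harmonicity gives $R_{k-2}L_k f = -\Delta_k f = 0$, so the commutation relation collapses to $L_{k+2}R_k f = -kf$. For the inductive step I would apply the commutation relation with $k$ replaced by $k+2j$ to the form $R_k^j f$ and substitute the case $j$; the two constants then combine via $-j(k+j-1)-(k+2j) = -(j+1)(k+j)$, which is exactly the coefficient for $j+1$.

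Finally I would apply $L_{k+\ell+2n}$ to the displayed formula, expand by the Leibniz rule, and feed in the lemma on each factor, treating the boundary cases $n-s=0$ and $s=0$ (where the lemma degenerates to a bare $L_k f$ or $L_\ell g$) separately. The $s=0$ summand produces $\binom{\ell+n-1}{n}R_k^n f\otimes L_\ell g$, and the $s=n$ summand produces $(-1)^n\binom{k+n-1}{n}L_k f\otimes R_\ell^n g$, which together are precisely the right-hand side. Every remaining interior contribution has the form $R_k^a f\otimes R_\ell^b g$ with $a+b=n-1$, and the real content is to see that these cancel. Collecting the coefficient of one such fixed term reduces the claim to the identity
\[
\binom{k+n-1}{b}(k+n-1-b)\binom{\ell+n-1}{n-b}(n-b) = \binom{k+n-1}{b+1}(b+1)\binom{\ell+n-1}{n-b-1}(\ell+b),
\]
which is the elementary relation $\binom{m}{j+1}(j+1)=\binom{m}{j}(m-j)$ applied to both binomial factors; since the two contributions carry opposite signs $(-1)^b$ and $(-1)^{b+1}$, they cancel. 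The main obstacle is thus purely organizational: keeping the weight subscripts on $L$ and $R$ correct throughout the Leibniz expansion, and aligning the two families of interior terms (one coming from lowering the $f$-factor, one from lowering the $g$-factor) so that the Pascal-type identity can be applied term by term.
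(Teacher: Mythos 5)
Your proposal is correct, and every step checks out: the Leibniz rule for $L$ on tensor products is immediate because $L_{\kappa}=-2iv^{2}\partial_{\overline{\tau}}$ is a first-order operator independent of the weight; the commutation relation $L_{k+2}R_{k}=R_{k-2}L_{k}-k$ is the standard factorization $\Delta_{k}=-R_{k-2}L_{k}=-L_{k+2}R_{k}-k$; your induction for $L_{k+2j}\bigl(R_{k}^{j}f\bigr)=-j(k+j-1)R_{k}^{j-1}f$ on harmonic $f$ is sound (the constants combine as $-j(k+j-1)-(k+2j)=-(j+1)(k+j)$, as you say, and the inductive step never requires $R_{k}^{j}f$ itself to be harmonic); and the interior cancellation reduces exactly to $\binom{m}{j+1}(j+1)=\binom{m}{j}(m-j)$ applied once with $m=k+n-1$ and once with $m=\ell+n-1$, an identity that remains valid for the generalized binomial coefficients occurring here since $k,\ell\in\frac{1}{2}\Z$. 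One point of comparison worth noting: the paper does not prove this proposition at all — it quotes it verbatim from Bruinier--Ehlen--Yang \cite[Proposition~3.6]{bruinierehlenyang} — so your argument supplies a proof where the paper relies on a citation. The route you take (expand the bracket via the raising-operator formula, apply $L$ by Leibniz, evaluate $L$ on iterated raisings of harmonic forms, and cancel interior terms by a Pascal-type identity) is the natural one and is essentially the argument given in the cited source, so nothing is lost relative to the literature; the only cosmetic caveat is that for $n=0$ the two "boundary" summands you isolate coincide in the single term $s=0=n$, which your bookkeeping handles correctly since that term contributes both $L_{k}f\otimes g$ and $f\otimes L_{\ell}g$.
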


\subsection{Rational relations for spaces of cusp forms}\label{sec rational relations}

A sequence
\begin{equation}\label{eq rational relation}
   \{\lambda(m,\mu)\}_{m\in \Q^+,\,\mu \in L'/L}\subseteq \mathbb{Q} 
\end{equation}
is called a \emph{rational relation} for~$S_{k,L}$ if the following conditions are satisfied:
\begin{enumerate}
    \item For each~$\mu\in L'/L$ we have~$\lambda(m,-\mu)=\lambda(m,\mu)$,
    \item For each~$\mu\in L'/L$ we have~$\lambda(m,\mu)=0$ for all but finitely many~$m\in \Q^+$,
    \item $\sum_{\mu \in L'/L}\sum_{m > 0}\lambda(m,\mu)c(m,\mu)=0$ for all cusp forms~$\sum_{\mu \in L'/L}\sum_{m > 0}c(m,\mu)q^m\e_{\mu}$ in~$ S_{k,L}$.
\end{enumerate}
Similarly, given a discriminant $D<0$, one defines rational relations~$\{\lambda(t)\}_{t\in \N}\subseteq \Q$ for the space of scalar-valued cusp forms~$S^+_{k}(\Gamma_0(|D|),\chi_D)$ as in \cite[p.~316]{grosszagier}.

We now recall the well known fact (see, e.g., \cite[Theorem~1.17]{bruinier}) that a sequence as in~\eqref{eq rational relation} satisfying conditions~$(1)$ and~$(2)$ above, is a rational relation for~$S_{k,L}$ if and only if there is a form $f\in M_{2-k,L^-}^{!}$ with Fourier coefficients $a_f(m,\mu)$ such that $\lambda(m,\mu)=a_f(-m,\mu)$ for all~$m>0$ and all~$\mu \in L'/L$.

For a  prime discriminant $D<0$ and the lattice $L$ of determinant $|D|$, the assignment
$$\sum_{\mu\in L'/L}f_{\mu}(\tau)\e_{\mu} \mapsto \sum_{\mu\in L'/L}f_{\mu}(|D|\tau)$$
defines a linear map~$S_{k,L}\to S^+_{k}(\Gamma_0(|D|),\chi_D)$ (see \cite{bruinierbundschuh}). This implies the following lemma.

\begin{lemma}\label{lem vector-valued to scalar mfs}
Assume  $D<0$ is a prime discriminant and the even lattice $L$ has determinant $|D|$. Then, for every rational relation~$\{\lambda(t)\}_{t\in \N}$ for~$S^+_{k}(\Gamma_0(|D|),\chi_D)$ the sequence~$\lambda'(m,\mu)=\lambda(m|D|)$ defines a relation for~$S_{k,L}$ (that is independent of $\mu\in L'/L$).
\end{lemma}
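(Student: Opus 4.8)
The plan is to deduce the three defining conditions of a rational relation for $S_{k,L}$ directly from the hypothesis on $\{\lambda(t)\}$, transporting everything through the linear map
\[
\Phi \colon S_{k,L} \to S^+_{k}(\Gamma_0(|D|),\chi_D), \qquad \sum_{\mu \in L'/L}f_{\mu}(\tau)\e_{\mu} \mapsto \sum_{\mu \in L'/L}f_{\mu}(|D|\tau),
\]
recalled just above the lemma. Rationality and conditions $(1)$ and $(2)$ will be immediate: since $\lambda'(m,\mu)=\lambda(m|D|)$ takes values in $\Q$ and does not depend on $\mu$, we get $\lambda'(m,-\mu)=\lambda'(m,\mu)$, which is $(1)$; and since $\lambda(t)=0$ for all but finitely many $t\in\N$, for each fixed $\mu$ there are only finitely many $m$ with $\lambda(m|D|)\neq 0$, which is $(2)$.

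The substantive point is condition $(3)$, and the first thing I would do is make the Fourier bookkeeping for $\Phi$ precise. Writing a cusp form $g=\sum_{\mu}g_{\mu}\e_{\mu}\in S_{k,L}$ with expansion $g_{\mu}(\tau)=\sum_{m>0}c(m,\mu)q^{m}$, the rescaling $\tau\mapsto |D|\tau$ sends $q^m$ to $q^{|D|m}$; since $\Phi(g)$ is a classical form with integral $q$-expansion, every exponent $m$ occurring satisfies $|D|m\in\N$. Collecting terms, the $t$-th Fourier coefficient of $\Phi(g)=\sum_{t>0}C(t)q^{t}$ is
\[
C(t)=\sum_{\mu \in L'/L}c\big(t/|D|,\mu\big),
\]
with the convention that $c(t/|D|,\mu)=0$ unless $t/|D|$ lies in the support of $g_{\mu}$.

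Finally I would compute the pairing appearing in $(3)$ by reindexing the double sum through $t=m|D|$:
\[
\sum_{\mu \in L'/L}\sum_{m>0}\lambda'(m,\mu)\,c(m,\mu)=\sum_{t>0}\lambda(t)\sum_{\mu \in L'/L}c\big(t/|D|,\mu\big)=\sum_{t>0}\lambda(t)\,C(t).
\]
Because $\Phi(g)\in S^{+}_{k}(\Gamma_0(|D|),\chi_D)$ and $\{\lambda(t)\}$ is by hypothesis a rational relation for that space, the right-hand side vanishes, which is exactly $(3)$. The main obstacle is not analytic but combinatorial: it lies in pinning down the coefficient formula for $C(t)$ and verifying that the reindexing is legitimate. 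This is where the assumption $\det(L)=|D|$ enters, guaranteeing that the rescaled exponents are integral and that summing the components over $\mu$ reproduces the coefficients of $\Phi(g)$. Note that only the fact that $\Phi$ maps \emph{into} the plus space is needed, not that it is an isomorphism.
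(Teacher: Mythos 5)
Your proposal is correct and takes essentially the same route as the paper: the paper simply cites the map $\sum_{\mu}f_{\mu}(\tau)\e_{\mu}\mapsto\sum_{\mu}f_{\mu}(|D|\tau)$ into $S^+_{k}(\Gamma_0(|D|),\chi_D)$ and declares that the lemma follows, and your write-up supplies exactly the implicit verification (the coefficient formula $C(t)=\sum_{\mu}c(t/|D|,\mu)$ and the reindexing $t=m|D|$ reducing condition $(3)$ to the scalar-valued relation applied to $\Phi(g)$).
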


\subsection{Siegel theta functions and special points}\label{section theta functions}

As before, we let $L$ be an even lattice of signature $(p,q)$. Let $\Gr(L)$ be the Grassmannian of positive definite $p$-dimensional subspaces of $V(\R) = L \otimes \R$. The Siegel theta function associated to $L$ is defined by
\begin{align*}
\Theta_{L}(\tau,v) = \Im(\tau)^{q/2}\sum_{\gamma \in L'/L}\sum_{X \in L+\gamma}e\left(Q(X_{v})\tau+Q(X_{v^{\perp}})\overline{\tau}\right)\e_{\gamma},
\end{align*}
where $\tau \in \H$ and $v \in \Gr(L)$, and $X_{v}$ denotes the orthogonal projection of $X$ to $v$. The Siegel theta function transforms like a modular form of weight $\frac{p-q}{2}$ for $\rho_{L}$ in $\tau$ (see \cite[Theorem~4.1]{borcherds}) and is invariant in $v$ under the subgroup of $\mathrm{O}(L)$ fixing $L'/L$. 

We call $v\in \Gr(L)$ a \emph{special point} if it is defined over $\Q$, that is, if there exists $v_0\subseteq L \otimes \Q$ such that $v=v_0\otimes \R$. 
For a special point $v \in \Gr(L)$ its orthogonal complement $v^{\perp}$ in $V(\R)$ is also defined over $\Q$ and we obtain the rational splitting $L \otimes \Q = v \oplus v^{\perp}$ which yields the positive and negative definite lattices
\[
P = L \cap v, \qquad N = L \cap v^{\perp}.
\]
Note that $P \oplus N$ is a sublattice of $L$ of finite index. The Siegel theta functions associated to $L$ and $P \oplus N$ are related by
\begin{align}\label{eq theta relation}
\Theta_{L} = \left(\Theta_{P\oplus N}\right)^{L},
\end{align}
with the trace operator defined in Section~\ref{section modular forms}. Moreover, by  identifying $\C[(P\oplus N)'/(P\oplus N)]$ with $\C[P'/P] \otimes \C[N'/N]$ the Siegel theta function associated to $P \oplus N$ splits as a tensor product
\begin{align*}\label{splitting siegel theta}
\Theta_{P \oplus N}(\tau,v) = \Theta_{P}(\tau) \otimes \Theta_{N}(\tau),
\end{align*}
where
\begin{equation}\label{eq usual holomorphic theta}
    \Theta_{P}(\tau) = \sum_{\gamma \in P'/P}\sum_{X \in P+\gamma}e(Q(X)\tau)\e_\gamma \in M_{p/2,P}
\end{equation}
is the usual holomorphic (vector-valued) theta series associated with $P$, and
\[
\Theta_N(\tau) =  \Im(\tau)^{q/2}\overline{\Theta_{N^-}(\tau)}
\]
with the holomorphic theta series $\Theta_{N^-} \in M_{q/2,N^-}$, where $N^-=(N,-Q)$.

\section{Vector-valued Eisenstein series and the Siegel--Weil formula}\label{section eisenstein series}

In this section, we construct vector-valued holomorphic and harmonic Eisenstein series on positive definite lattices. The holomorphic Eisenstein series appear in the Siegel--Weil formula, which we will state at the end of this section. The Fourier coefficients of the harmonic Eisenstein series will be needed for the explicit evaluation of the double traces of our Green's function. 

Throughout this section, we let $(L,Q)$ denote a positive definite even lattice of rank $r \geq 1$, and $L^-=(L,-Q)$.

\subsection{Holomorphic Eisenstein series}\label{section holomorphic eisenstein series} Following \cite{bruinierkuehn}, for $k \in \frac{1}{2}\Z$ with $2k+r \equiv 2 \pmod 4$ and $s \in \C$ we consider the $\C[L'/L]$-valued non-holomorphic Eisenstein series
\[
E_{k,L}(\tau,s) = \frac{1}{4}\sum_{(M,\phi) \in \widetilde{\Gamma}_\infty \backslash \Mp_2(\Z)}(v^s \e_0) |_{k,L}(M,\phi),
\]
where $\widetilde{\Gamma}_\infty$ is the subgroup generated by $T = \left( \left(\begin{smallmatrix}1 & 1 \\ 0 & 1 \end{smallmatrix}\right),1\right) \in \Mp_2(\Z)$, and $|_{k,L}$ denotes the vector-valued slash operator defined in \eqref{slash operator}. Note that we multiplied the Eisenstein series from \cite{bruinierkuehn} by $1/2$, and we work with the Weil representation instead of its dual. 

The Eisenstein series converges for $\Re(s) > 1-k/2$, transforms like a modular form of weight $k$ for $\rho_L$, and satisfies the Laplace equation
\[
\Delta_k E_{k,L}(\tau,s) = s(1-k-s)E_{k,L}(\tau,s).
\]
In particular, for $k>2$ the special value
\[
E_{k,L}(\tau) = E_{k,L}(\tau,0)
\]
defines a holomorphic modular form of weight $k$ for $\rho_L$. It is normalized such that the constant term at the $\e_0$-component is $1$.

We will be particularly interested in the case that the weight $k$ of the Eisenstein series equals $r/2$, with small rank $r$. In this case, the analytic continuation of the Eisenstein series $E_{k,L}(\tau,s)$ to $s = 0$ was proved by Rallis (see \cite[Proposition~4.3]{rallis}) in the course of extending the Siegel--Weil formula to lattices of small ranks. It also follows from the Siegel--Weil formula of Rallis that the special value $E_{k,L}(\tau) = E_{k,L}(\tau,0)$ defines a holomorphic modular form in $\tau$. Note that Rallis worked in an adelic setup, and considered Eisenstein series $E(\varphi,s)$ associated with certain Schwartz functions $\varphi$. However, as explained in \cite[Section~4.2]{kudlaextensionsiegelweil} or \cite[Section~2.2]{bruinieryang}, the components of our classical Eisenstein series $E_{k,L}(\tau,s)$ are given by the Eisenstein series $E(\varphi,s)$ for suitable choices of Schwartz functions $\varphi$. Hence, we obtain the following result.

\begin{theorem}\label{thm Rallis holomorphic Eisenstein series}
	If $k = r/2$ with $r \geq 1$ then $E_{k,L}(\tau,s)$ has an analytic continuation to $s = 0$, and $E_{k,L}(\tau) = E_{k,L}(\tau,0)$ is a holomorphic modular form of weight $k$ for $\rho_L$.
\end{theorem}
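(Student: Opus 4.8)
The plan is to reduce the statement to Rallis's analytic continuation of Siegel--Weil Eisenstein series in the adelic setting; the only genuinely analytic input lives there, and our task is to match normalizations. First I would dispose of the range $r \geq 5$. Here $k = r/2 > 2$, so $s = 0$ already lies in the half-plane $\Re(s) > 1 - k/2$ of absolute convergence noted above, and the discussion preceding the theorem shows that $E_{k,L}(\tau,0)$ is the holomorphic Eisenstein series of weight $k$ with no continuation required. The content therefore concerns the small-rank range $r \in \{1,2,3,4\}$, where $k = r/2 \leq 2$ and $s = 0$ lies on or beyond the boundary of the region of convergence.

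For this range I would pass to the adelic picture. Following \cite[Section~4.2]{kudlaextensionsiegelweil} and \cite[Section~2.2]{bruinieryang}, each $\e_\gamma$-component of the classical series $E_{k,L}(\tau,s)$ is identified with an adelic Eisenstein series $E(\varphi,s)$ on the metaplectic group, where $\varphi = \varphi_\infty \otimes \varphi_f$ is the Schwartz function whose archimedean factor $\varphi_\infty$ is the standard weight-$k$ Gaussian and whose finite factor $\varphi_f$ is the characteristic function, at the finite places, of the coset determined by $\gamma \in L'/L$. I would make this dictionary precise enough that holomorphy in $\tau$ corresponds to $\varphi_\infty$ being of the weight-$k$ Gaussian type, and that the classical parameter $s = 0$ is carried to exactly the point at which Rallis performs his continuation.

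With the dictionary in hand, the continuation of $E(\varphi,s)$ to that point, and the identification of its value there with a convergent theta integral, are precisely \cite[Proposition~4.3]{rallis} together with Rallis's extension of the Siegel--Weil formula to lattices of small rank. Since $L$ is positive definite, the orthogonal group $\O(L)$ is compact, so this theta integral converges unconditionally and equals a holomorphic weighted average of theta series in the genus of $L$. Transporting this back yields the analytic continuation of $E_{k,L}(\tau,s)$ to $s = 0$ and identifies $E_{k,L}(\tau) = E_{k,L}(\tau,0)$ with a holomorphic modular form of weight $k$. As a consistency check, the Laplace equation $\Delta_k E_{k,L}(\tau,s) = s(1-k-s)E_{k,L}(\tau,s)$ continues analytically and degenerates at $s = 0$ to $\Delta_k E_{k,L}(\tau) = 0$; this shows a priori that the special value is harmonic, and the Siegel--Weil identification upgrades harmonicity to genuine holomorphy by forcing the non-holomorphic part to vanish.

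The hard part is not ours to carry out: the delicate boundary analysis — existence of the continuation to a point below the range of absolute convergence, and the absence of a pole or of a secondary non-holomorphic term in the small-rank Siegel--Weil formula — is exactly Rallis's theorem. The real labour in our proof is the bookkeeping of the second paragraph, namely checking that the lattice normalization of $E_{k,L}(\tau,s)$ (the factor $\tfrac14$, the use of $\rho_L$ rather than $\overline{\rho}_L$, and the placement of $v^s$) aligns with Rallis's so that his distinguished point is our $s = 0$, and that each $L'/L$-component is covered by an admissible Schwartz function.
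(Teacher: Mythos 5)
Your proposal takes essentially the same route as the paper: the paper also deduces the theorem directly from Rallis's analytic continuation \cite[Proposition~4.3]{rallis} and his small-rank extension of the Siegel--Weil formula, using the dictionary of \cite[Section~4.2]{kudlaextensionsiegelweil} and \cite[Section~2.2]{bruinieryang} to identify the components of $E_{k,L}(\tau,s)$ with adelic Eisenstein series $E(\varphi,s)$ for suitable Schwartz functions. Your additional observations (the separate treatment of $r \geq 5$ by absolute convergence, the compactness of the orthogonal group in the positive definite case, and the harmonicity consistency check via the Laplace equation) are correct but amount to extra bookkeeping on top of the same argument.
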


\begin{example}\label{example rank 1}
	For $r = 1$ and $k = 1/2$, the Eisenstein series $E_{k,L}(\tau)$ is a holomorphic modular form of weight $1/2$ for $\rho_L$, and hence a linear combination of unary theta functions (see \cite[Lemma~2.1]{bruinierschwagenscheidt}). For instance, if we take the rank $1$ lattice $L = \Z$ with $Q(x) = x^2$, we have $L'/L \cong \Z/2\Z$, and
	\[
	E_{1/2,L}(\tau) = \Theta_{L}(\tau), \qquad \Theta_{L}(\tau) = \sum_{r \!\!\!\!\!\pmod 2}\sum_{\substack{n \in \Z \\ n \equiv r \!\!\!\!\! \pmod 2}}q^{n^2/4}\e_r.
	\]
	Note that vector-valued modular forms for $\rho_L$ can be identified with scalar-valued modular forms for $\Gamma_0(4)$ in the Kohnen plus space via $\sum_{r(2)}f_r(\tau)\e_r \mapsto f_0(4\tau) + f_1(4\tau)$, and under this map the function $\Theta_{L}(\tau)$ corresponds to the usual Jacobi theta function $\theta(\tau) = \sum_{n \in \Z}q^{n^2}$.
\end{example}	

\begin{example}\label{example rank 3}
	For $r = 3$ and $k = 3/2$, the coefficients of $E_{3/2,L}(\tau)$ are essentially class numbers of imaginary quadratic fields; see Remark~\ref{remark eisenstein series}(4). 
 Moreover, by the Siegel--Weil formula (see Theorem~\ref{siegel weil formula} below), $E_{3/2,L}(\tau)$ is a linear combination of theta functions associated with ternary lattices in the genus of $L$. For instance, if we take the ternary lattice $L = \Z^3$ with $Q(x_1,x_2,x_3) = x_1^2 + x_2^2 + x_3^2$, we have $L'/L \cong (\Z/2\Z)^3$ and
	\[
	E_{3/2,L}(\tau) = \Theta_{L}(\tau), \qquad \Theta_{L}(\tau) = \sum_{r_1,r_2,r_3 \!\!\!\!\! \pmod 2}\sum_{\substack{x_1,x_2,x_3 \in \Z \\ x_i \equiv r_i \!\!\!\!\! \pmod 2}}q^{(x_1^2 + x_2^2 + x^3)/4}\e_{(r_1,r_2,r_3)}.
	\]
	This identity also follows from the fact that there are no non-trivial cusp forms of weight $3/2$ for $\rho_{L}$. Note that the $\e_0$-component of $\Theta_{L}(\tau)$ is just $\theta^3(\tau)$, the third power of the Jacobi theta function. As an amusing application of these facts, one can derive Gauss' formula for the number of representations as sums of three squares in terms of class numbers of imaginary quadratic fields; compare \cite[Example~5]{williamseisenstein}.

\end{example}
	
	We remark that Williams \cite{williamseisenstein} also investigated the vector-valued Eisenstein series $E_{k,L}(\tau,s)$ for small weights $k \in \{\frac{1}{2},1,\frac{3}{2},2\}$. Moreover, the Fourier coefficients of the holomorphic Eisenstein series $E_{k,L}(\tau)$ for $k \geq 3/2$ can be computed numerically using Williams' powerful WeilRep program \cite{williamsweilrep}. For instance, one can check Example~\ref{example rank 3} using the sageMath code
	\begin{verbatim}
    from weilrep import *
    w = WeilRep(diagonal_matrix([-2,-2,-2]))
    print(w.cusp_forms_basis(3/2))
    print(w.eisenstein_series(3/2,prec=10))
    print(w.theta_series(prec=10))
\end{verbatim}

\subsection{Maass Eisenstein series}\label{section maass eisenstein series}

Now we turn to the construction of harmonic Maass Eisenstein series $\widetilde{E}_{\kappa,L^-}(\tau)$, which yield $\xi$-preimages of the holomorphic Eisenstein series $E_{k,L}(\tau)$ constructed above. As before, $(L,Q)$ denotes a positive definite even lattice of rank $r \geq 1$.

For $\kappa \in \frac{1}{2}\Z$ with $2\kappa+r \equiv 0 \pmod 4$ we put
\[
\widetilde{E}_{\kappa,L^-}(\tau,s) = \frac{1}{4}\sum_{(M,\phi) \in \widetilde{\Gamma}_\infty \backslash \Mp_2(\Z)}(v^s \e_0) |_{\kappa,L^-}(M,\phi),
\]
where the slash operator $ |_{\kappa,L^-}$ involves the dual Weil representation $\overline{\rho}_L$. The Eisenstein series converges for $\Re(s) > 1-\kappa/2$, transforms like a modular form of weight $\kappa$ for $\overline{\rho}_L$, and is an eigenform of the Laplace operator with
\[
\Delta_{\kappa}\widetilde{E}_{\kappa,L^-}(\tau,s) = s(1-\kappa-s) \widetilde{E}_{\kappa,L^-}(\tau,s).
\]
Moreover, if we put $\kappa =  2-k$, a direct computation show that
\[
\xi_{2-k} \widetilde{E}_{2-k,L^-}(\tau,s) = \overline{s}E_{k,L}(1-k+\overline{s}).
\]
Hence, for $k>2$, the special value
\[
\widetilde{E}_{2-k,L^-}(\tau) = \widetilde{E}_{2-k,L^-}(\tau,k-1)
\]
defines a harmonic Maass form of weight $2-k$ for the dual Weil representation $\overline{\rho}_L$ with
\[
\xi_{2-k}\widetilde{E}_{2-k,L^-}(\tau) = (k-1)E_{k,L}(\tau).
\]
Again, if $k\leq 2$ the analytic continuation of $\widetilde{E}_{2-k,L^-}(\tau,s)$ to $s = k-1$ in the case $k = r/2$ follows from the work of Rallis \cite{rallis}.

\begin{theorem}
	If $k = r/2$ with $r \geq 1$ then $\widetilde{E}_{2-k,L^-}(\tau,s)$ has an analytic continuation to $s = k-1$, and $\widetilde{E}_{2-k,L^-}(\tau) = \widetilde{E}_{2-k,L^-}(\tau,k-1)$ is a harmonic Maass form of weight $2-k$ for $\overline{\rho}_L$ with 
	\begin{align*}
	\xi_{2-k}\widetilde{E}_{2-k,L^-}(\tau) = (k-1)E_{k,L}(\tau).
	\end{align*}
\end{theorem}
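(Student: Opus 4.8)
The proof runs exactly parallel to that of Theorem~\ref{thm Rallis holomorphic Eisenstein series}, with the holomorphic point $s = 0$ for $E_{k,L}$ replaced by the \emph{harmonic point} $s = k-1$ for $\widetilde{E}_{2-k,L^-}$. The key preliminary observation is the eigenvalue computation: taking $\kappa = 2-k$ and $s = k-1$ in the Laplace equation $\Delta_\kappa \widetilde{E}_{\kappa,L^-}(\tau,s) = s(1-\kappa-s)\widetilde{E}_{\kappa,L^-}(\tau,s)$, the eigenvalue becomes $(k-1)\bigl(1-(2-k)-(k-1)\bigr) = 0$, so that at $s = k-1$ the Eisenstein series is formally annihilated by $\Delta_{2-k}$. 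Since $\widetilde{E}_{2-k,L^-}(\tau,s)$ converges absolutely only for $\Re(s) > k/2$, the point $s = k-1$ lies inside the region of convergence precisely when $r > 4$; for $r \leq 4$ (equivalently $k \leq 2$) a genuine analytic continuation is required, and this is the only substantial point.

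For the analytic continuation I would follow the same route as in Theorem~\ref{thm Rallis holomorphic Eisenstein series}. As explained in \cite[Section~4.2]{kudlaextensionsiegelweil} and \cite[Section~2.2]{bruinieryang}, each component of the classical Eisenstein series $\widetilde{E}_{2-k,L^-}(\tau,s)$ is the archimedean--classical avatar of an adelic Eisenstein series $E(\varphi,s)$ attached to a suitable Schwartz function $\varphi$; only the choice of the archimedean section changes when one passes from weight $k$ and $\rho_L$ to weight $2-k$ and $\overline{\rho}_L$. Rallis's extension of the Siegel--Weil formula to lattices of small rank \cite[Proposition~4.3]{rallis} provides the analytic continuation of $E(\varphi,s)$ and, crucially, its holomorphy at the relevant point; transporting this back to the classical picture yields the continuation of $\widetilde{E}_{2-k,L^-}(\tau,s)$ to $s = k-1$. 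The main obstacle is exactly this step: one must verify that no pole occurs at $s = k-1$ in the small-rank range, which is where Rallis's work (rather than naive convergence) is indispensable.

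Granting the continuation, the remaining assertions follow formally. The Laplace equation holds for $\Re(s) > k/2$, where $\widetilde{E}_{2-k,L^-}(\tau,s)$ is holomorphic in $s$; by the identity theorem it persists throughout the continuation, and at $s = k-1$ the vanishing eigenvalue gives $\Delta_{2-k}\widetilde{E}_{2-k,L^-}(\tau) = 0$. Together with the weight $2-k$ transformation behaviour for $\overline{\rho}_L$ and the moderate (at most linear exponential) growth at the cusp inherited through the continuation, this shows $\widetilde{E}_{2-k,L^-}(\tau)$ is a harmonic Maass form in the sense of Section~\ref{section harmonic maass forms}. Finally, the intertwining identity $\xi_{2-k}\widetilde{E}_{2-k,L^-}(\tau,s) = \overline{s}\,E_{k,L}(\tau,1-k+\overline{s})$, valid for $\Re(s) > k/2$, continues to $s = k-1$ because $\xi_{2-k}$ is a fixed differential operator in $\tau$ that commutes with analytic continuation in the spectral variable; specializing at $s = k-1$ (so that $\overline{s} = k-1$ and $1-k+\overline{s} = 0$) and invoking Theorem~\ref{thm Rallis holomorphic Eisenstein series} to evaluate $E_{k,L}(\tau,0) = E_{k,L}(\tau)$ yields $\xi_{2-k}\widetilde{E}_{2-k,L^-}(\tau) = (k-1)E_{k,L}(\tau)$, as claimed.
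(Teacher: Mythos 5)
Your proposal is correct and follows essentially the same route as the paper, which proves this theorem through the discussion preceding it: the direct computation $\xi_{2-k}\widetilde{E}_{2-k,L^-}(\tau,s) = \overline{s}\,E_{k,L}(\tau,1-k+\overline{s})$, absolute convergence handling the case $k>2$, and Rallis' work \cite{rallis} (transported to the classical setting as in Theorem~\ref{thm Rallis holomorphic Eisenstein series}) supplying the analytic continuation to $s=k-1$ when $k \leq 2$. Your additional observations — the vanishing of the eigenvalue $s(1-\kappa-s)$ at $(\kappa,s)=(2-k,k-1)$ and the matching of the two special points under the adelic normalization — are accurate and consistent with the paper's argument.
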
 

\begin{example}
	As in Example~\ref{example rank 1} we take the rank $1$ lattice $L = \Z$ with $Q(x)= x^2$ and view $\widetilde{E}_{3/2,L^-}(\tau)$ as a scalar-valued modular form for $\Gamma_0(4)$ in the Kohnen plus space. Then we obtain Zagier's weight $3/2$ Eisenstein series
	\[
	\frac{1}{4\pi}\widetilde{E}_{3/2,L^-}(\tau) = \sum_{n =0}^{\infty}H(n)q^n + \frac{1}{4\sqrt{\pi}}\sum_{n=1}^\infty n \Gamma\left(-\tfrac{1}{2},4\pi n^2 v \right)q^{-n^2} + \frac{1}{8\pi \sqrt{v}},
	\]
	where $H(0) = -\frac{1}{12}$ and $H(n)$ for $n > 0$ is the usual Hurwitz class number of discriminant $-n$, and $\Gamma(s,x) = \int_{x}^\infty e^{-t}t^{s-1}dt$ is the incomplete gamma function. This can be proved by plugging in $s = -1/2$ into the Fourier expansion of the non-holomorphic Eisenstein series computed in \cite[Proposition~3.2]{bruinierkuehn} (see also the proof of Theorem~\ref{eisenstein series fourier expansion} below for the Fourier expansion) and simplifying the coefficients as in \cite[Example~2]{bruinierkuss}.
\end{example}

\subsection{The Fourier expansion of the Maass Eisenstein series}

In this section we recall the Fourier expansion of the non-holomorphic Eisenstein series $\widetilde{E}_{\kappa,L^-}(\tau,s)$ computed in \cite{bruinierkuehn}, and specialize it to $\kappa = 1/2$ and lattices $L$ of rank $r = 3$. Let us introduce the relevant quantities.

For $\gamma \in L'/L$, $n \in \Z-Q(\gamma)$  and~$a\in \N$ we consider the representation number
\[
N_{\gamma,n}(a) = \#\{x \in L/aL \, : \, Q(x-\gamma) + n \equiv 0 \pmod a \}
\]
modulo $a$, and the corresponding $L$-series
\[
L_{\gamma,n}(s) = \sum_{a = 1}^\infty N_{\gamma,n}(a)a^{-s},
\]
which converges for $s\in \C$ with $\Re(s)\gg 0$ and has meromorphic continuation to $s\in \C$.
Note that $N_{\gamma,n}(a)$ is multiplicative in $a$, so we have an Euler product
\[
L_{\gamma,n}(s) = \zeta(s-r+1)\prod_{p}L_{\gamma,n}^{(p)}(p^{-s}),
\]
with the local Euler factors
\begin{equation}\label{eq:local_L_factor_Igusa}
 L_{\gamma,n}^{(p)}(X) = (1-p^{r-1} X)\sum_{m = 0}^{\infty}N_{\gamma,n}(p^m)X^m.   
\end{equation}
For $n \neq 0$ these local Euler factors can be simplified as follows. For $\gamma \in L'/L$ we let $d_\gamma \in \N$ be the order of $\gamma$ in $L'/L$. Then $2d_\gamma n$ is a non-zero integer, and for a prime $p$ we put
\begin{equation}\label{eq w_p}
 w_p = 1 + 2\nu_p(2d_\gamma n) \in \N,   
\end{equation}
where $\nu_p$ denotes the usual $p$-adic valuation on $\Q$. Then, for $\nu \geq w_p$ we have $N_{\gamma,n}(p^{\nu+1}) = p^{r-1}N_{\gamma,n}(p^\nu)$, which implies that for $n \neq 0$ the Euler factor $L_{\gamma,n}^{(p)}(X)$ becomes the polynomial
\begin{align}\label{local polynomial}
L_{\gamma,n}^{(p)}(X) = N_{\gamma,n}(p^{w_p})X^{w_p} + (1-p^{r-1}X)\sum_{\nu=0}^{w_p-1}N_{\gamma,n}(p^{\nu})X^\nu. 
\end{align}

The Fourier expansion of $\widetilde{E}_{\kappa,L^-}(\tau,s)$ has been computed in \cite[Section~3]{bruinierkuehn}. Specializing their results to $\kappa = 1/2$ and $s = 1/2$, we obtain the following Fourier expansion.

\begin{theorem}[\cite{bruinierkuehn}]\label{eisenstein series fourier expansion}
	Let $L$ be a positive definite even lattice of rank $r = 3$. We have the Fourier expansion
	\begin{align*}
	\widetilde{E}_{1/2,L^-}(\tau) &= \sum_{\gamma \in L'/L}\sum_{\substack{n \in \Z-Q(\gamma) \\ n \geq 0}} c^+(n,\gamma)q^n \e_\gamma  \\
	&\quad + \sqrt{y}\e_0 + \sum_{\gamma \in L'/L}\sum_{\substack{n \in \Z-Q(\gamma) \\ n < 0}} c^-(n,\gamma)\Gamma(1/2,4\pi|n|v)q^n\e_\gamma,
	\end{align*}
	with coefficients
	\begin{align*}
	c^+(0,\gamma) &= -\frac{2^{3/2}\cdot 3}{\sqrt{|L'/L|}\pi}\lim_{s \to 1/2}\left(\zeta(4s-1)\prod_{p \mid 2\det(L)}\frac{1}{1+p^{-1}}L_{\gamma,0}^{(p)}(p^{-1-2s})\right),
	\end{align*}
	and, for $n > 0$,
	\begin{align*}
	c^+(n,\gamma) &= -\frac{2^{3/2}\cdot 3}{\sqrt{|L'/L|}\pi} \\
	&\quad\times\begin{dcases}
	 L(\chi_{\Delta_0},1)\sigma_{\gamma,n} \prod_{p \mid 2\det(L)}\frac{1-\chi_{\Delta_0}(p)p^{-1}}{1-p^{-2}}p^{-2w_p}N_{\gamma,n}(p^{w_p}), & \text{if $\Delta$ is not a square},\\ 
	 \lim_{s\to 1/2}\bigg(\zeta(2s)\prod_{p \mid 2\det(L)}\frac{1}{1+p^{-1}}L_{\gamma,n}^{(p)}(p^{-1-2s}) \bigg), & \text{if $\Delta$ is a square}, 
	 \end{dcases}
	 \end{align*}
	 and, for $n < 0$,
	 \begin{align*}
	c^-(n,\gamma) &=  -\frac{2^{3/2}\cdot 3}{\sqrt{|L'/L|}\pi^{3/2}}L(\chi_{\Delta_0},1)\sigma_{\gamma,n}\prod_{p \mid 2\det(L)}\frac{1-\chi_{\Delta_0}(p)p^{-1}}{1-p^{-2}}p^{-2w_p}N_{\gamma,n}(p^{w_p}).
	\end{align*}
	Here we wrote $\Delta = 2d_\gamma^2 n \det(L) = \Delta_0 f^2$ with $\Delta_0$ a fundamental discriminant and $f \in \N$. Moreover, $w_p$ is defined by \eqref{eq w_p} and we put
	\[
	\sigma_{\gamma,n}  = \sum_{d \mid w}\mu(d)\chi_{\Delta_0}(d)d^{-1}\sigma_{-1}(\tfrac{w}{d})
	\]
	where we wrote $f = ww'$ with $\text{g.c.d.}(w,2\det(L)) = 1$ and all prime divisors of $w'$ appear in $2\det(L)$.
\end{theorem}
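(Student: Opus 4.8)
The plan is to specialize the general Fourier expansion of the non-holomorphic Eisenstein series $\widetilde{E}_{\kappa,L^-}(\tau,s)$ computed in \cite{bruinierkuehn} to the parameters $\kappa = 1/2$ and $r = 3$, and to evaluate it at $s = 1/2$. Since $k := r/2 = 3/2$ in this situation, the point $s = 1/2$ is exactly the special value $s = k-1$ at which $\widetilde{E}_{2-k,L^-}(\tau)$ is defined; as this lies on the boundary of the region of absolute convergence, I would work throughout with the analytic continuation furnished by Rallis \cite{rallis}. The Bruinier--Kühn formula writes the $(n,\gamma)$-th Fourier coefficient as a product of an archimedean factor (a Whittaker-type integral in $v$) and the arithmetic Dirichlet series $L_{\gamma,n}(s)$ built from the representation numbers $N_{\gamma,n}(a)$. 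The first step is simply to record this formula and substitute the numerical parameters.

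Next I would analyze the archimedean factor. At $\kappa = 1/2$ and $s = 1/2$ the relevant $M$- and $W$-Whittaker functions degenerate to elementary functions: for $n > 0$ one gets an ordinary coefficient times $q^n$; for $n = 0$ the holomorphic constant term $c^+(0,\gamma)$ arises from the non-trivial cosets, while the seed term $v^s\e_0$ contributes at $s = 1/2$ the single non-holomorphic piece $\sqrt{y}\,\e_0$; and for $n < 0$ the decaying Whittaker function reproduces exactly the incomplete Gamma function $\Gamma(1/2,4\pi|n|v)$. Collecting the archimedean Gamma-factors together with the normalizing power of $|L'/L| = \det(L)$ yields the global constants $-\frac{2^{3/2}\cdot 3}{\sqrt{|L'/L|}\pi}$ and its analogue with $\pi^{3/2}$, in the same manner as the rank-$1$ computation underlying Zagier's weight $3/2$ Eisenstein series.

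It remains to simplify the arithmetic part. Using the Euler product $L_{\gamma,n}(s) = \zeta(s-2)\prod_p L_{\gamma,n}^{(p)}(p^{-s})$ and, for $n \neq 0$, the explicit local polynomial \eqref{local polynomial}, I would separate the primes $p \nmid 2\det(L)$ from the finitely many ramified primes $p \mid 2\det(L)$. The key point is that at an unramified prime the local factor is precisely the Euler factor of the quadratic character $\chi_{\Delta_0}$ attached to $\Delta = 2d_\gamma^2 n\det(L) = \Delta_0 f^2$; running the product over all such primes reconstitutes the $s$-dependent factor $L(\chi_{\Delta_0},2s)$, which upon setting $s = 1/2$ produces the value $L(\chi_{\Delta_0},1)$. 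The divisor sum $\sigma_{\gamma,n}$ then accounts for the primes dividing $f$ but not $2\det(L)$, while the displayed finite product over $p \mid 2\det(L)$ collects the ramified local densities, computed from $N_{\gamma,n}(p^{w_p})$ with $w_p$ as in \eqref{eq w_p}, following the method of \cite{bruinierkuss}.

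The main obstacle is the degenerate case in which $\Delta$ is a perfect square, together with the companion case $n = 0$. When $\Delta$ is not a square the character $\chi_{\Delta_0}$ is nontrivial, $L(\chi_{\Delta_0},1)$ is a finite value, and the stated closed form follows directly. When $\Delta$ is a square, however, $\chi_{\Delta_0}$ becomes trivial, so the factor $L(\chi_{\Delta_0},2s)$ degenerates to $\zeta(2s)$, which has a simple pole at the evaluation point $s = 1/2$; the analogous phenomenon for $n = 0$ produces the factor $\zeta(4s-1)$. Since $\widetilde{E}_{1/2,L^-}(\tau)$ is holomorphic at $s = 1/2$ by Rallis's continuation, this pole must be cancelled by a ramified local factor vanishing there, so that the limit $\lim_{s\to 1/2}$ is finite; evaluating it forces one to differentiate the prime-power factors $p^{-2s w_p}$ appearing in \eqref{local polynomial}, which is exactly the mechanism producing the logarithms of primes that feature in Theorem~\ref{main theorem}. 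Carefully tracking which Euler factors vanish at the boundary, and computing the local densities $N_{\gamma,n}(p^{w_p})$ at the ramified primes, where the Hilbert-symbol conditions of Remark~\ref{remark on fund disc and primes appearing} govern their (non)vanishing, is the delicate step, and it is precisely why the statement leaves these two cases in limit form rather than as closed values.
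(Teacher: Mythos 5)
Your proposal is correct and follows essentially the same route as the paper's proof: specialize the Bruinier--K\"uhn Fourier expansion to $\kappa=1/2$, $r=3$, evaluate at $s=1/2$ (the degenerate Whittaker functions giving $q^n$, $\sqrt{y}\,\e_0$, and $\Gamma(1/2,4\pi|n|v)q^n$), split the local factors into primes dividing $2\det(L)$ and the rest, evaluate the unramified contribution via Siegel's representation numbers as in Bruinier--Kuss to produce $L(\chi_{\Delta_0},1)\sigma_{\gamma,n}$, and leave the square-discriminant and $n=0$ cases in limit form, where the pole of $\zeta$ is cancelled by a vanishing ramified factor. The only cosmetic difference is that you reconstitute the quotient $L(\chi_{\Delta_0},2s)/\zeta(\cdot)$ from the raw Euler product yourself, whereas the paper cites the form of the Bruinier--K\"uhn coefficients in which this factorization is already extracted, so that only the finite correction product over $p\mid\Delta$ remains to be simplified.
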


\begin{proof}
	Let $\kappa \in \Z+\frac{1}{2}$ be a half-integer with $2\kappa + r \equiv 0 \pmod 4$. By \cite[Proposition~3.2 and equation (3.13)]{bruinierkuehn}, the non-holomorphic Eisenstein series $\widetilde{E}_{\kappa,L^-}(\tau,s)$ has the Fourier expansion
	\[
	\widetilde{E}_{\kappa,L^-}(\tau,s) = \sum_{\gamma \in L'/L}\sum_{n \in \Z-Q(\gamma)}c_0(n,\gamma,s,v)e(nx)\e_\gamma,
	\]
	with Fourier coefficients $c_0(n,\gamma,s,v)$ given by
\[
\begin{dcases}
\delta_{0,\gamma}v^s + 2^{2-\kappa-2s}\pi v^{1-\kappa-s}\frac{\Gamma(\kappa+2s-1)}{\Gamma(\kappa+s)\Gamma(s)}\frac{(-1)^{(2\kappa+r)/4}}{\sqrt{|L'/L|}}\prod_{p}L_{\gamma,0}^{(p)}(p^{1-r/2-\kappa-2s}), & \text{if $n = 0$}, \\
\frac{(-1)^{(2\kappa+r)/4}2^\kappa \pi^{s+\kappa}|n|^{s+\kappa-1}}{\sqrt{|L'/L|}\Gamma(s+\kappa)}\cdot \frac{L(\chi_{\Delta_0},2s+\kappa-1/2)}{\zeta(4s+2\kappa-1)}\sigma_{\gamma,n}(2s+\kappa)\mathcal{W}_s(4\pi n v), & \text{if $n > 0$}, \\
\frac{(-1)^{(2\kappa+r)/4}2^\kappa \pi^{s+\kappa}|n|^{s+\kappa-1}}{\sqrt{|L'/L|}\Gamma(s)}\cdot \frac{L(\chi_{\Delta_0},2s+\kappa-1/2)}{\zeta(4s+2\kappa-1)}\sigma_{\gamma,n}(2s+\kappa)\mathcal{W}_s(4\pi n v), & \text{if $n < 0$},
\end{dcases}
\]
with
\[
\sigma_{\gamma,n}(s) = \prod_{p \mid \Delta}\frac{1-\chi_{\Delta_0}(p)p^{1/2-s}}{1-p^{1-2s}}L_{\gamma,n}^{(p)}(p^{1-r/2-s}),
\]
and with the usual Whittaker function
\[
\mathcal{W}_{s}(y) = |y|^{-\kappa/2}W_{\sgn(y)\kappa/2,(1-\kappa)/2-s}(|y|).
\]
Note that we multiplied the coefficients in \cite{bruinierkuehn} by $1/2$. The relevant special values of the Whittaker function are given by
\[
\mathcal{W}_{1-\kappa}(y) = \mathcal{W}_{0}(y)=\begin{cases}
e^{-y/2}, & \text{if $y > 0$}, \\
e^{-y/2}\Gamma(1-\kappa,|y|), & \text{if $y < 0$}.
\end{cases}
\]

We first rewrite the coefficients of index $n = 0$. For the ``generic'' primes $p \nmid 2\det(L)$ the local Euler factors $L_{\gamma,0}^{(p)}(p^{-s})$ can be computed as explained in \cite[Remark~22]{williamspoincare} (see also \cite[Section 3]{williamseisenstein}), and are given by
\[
L_{\gamma,0}^{(p)}(p^{-s}) = \frac{1-p^{r-1-2s}}{1-p^{r-2s}},
\]
which implies that we can write
\[
\prod_{p}L_{\gamma,0}^{(p)}(p^{1-r/2-\kappa-2s}) = \frac{\zeta(4s+2\kappa-2)}{\zeta(4s+2\kappa-1)}\prod_{p\mid 2\det(L)}\frac{1-p^{2-2\kappa-4s}}{1-p^{1-2\kappa-4s}}L_{\gamma,0}^{(p)}(p^{1-r/2-\kappa-2s}).
\]
If we plug in $r = 3$, $\kappa = 1/2$, and $s = 1/2$, this gives the coefficients of index $n = 0$ as stated above.

For $n \neq 0$ we can split the product over the primes $p \mid \Delta$ into the primes with $p\nmid 2\det(L)$ and the ones with $p \mid 2\det(L)$. Note that by \eqref{local polynomial} we have
\[
L_{\gamma,n}^{(p)}(p^{1-r}) = p^{(1-r)w_p}N_{\gamma,n}(p^{w_p}).
\] 
For $p \nmid 2\det(L)$ the representation numbers $N_{\gamma,n}(p^{w_p})$ were computed by Siegel (see \cite[Theorem~6(ii) with $\alpha=w_p$]{bruinierkuss}). Now we plug in $r = 3$, $\kappa = 1/2$, and $s = 1/2$, and obtain\footnote{In the notation of \cite[Theorem~6(ii)]{bruinierkuss} we have $\mathcal{D}=\Delta_0w'^2$, hence for primes $p \nmid 2\det(L)$ we get $\chi_{\mathcal{D}}(p)=\chi_{\Delta_0}(p)$.} as in the proof of \cite[Theorem~11]{bruinierkuss} that
\begin{align*}
\prod_{\substack{p \mid \Delta \\ p \nmid 2\det(L)}}\frac{1-\chi_{\Delta_0}(p)p^{-1}}{1-p^{-2}}L_{\gamma,n}^{(p)}(p^{-2}) &= \prod_{p \mid w}\left(\sigma_{-1}(p^{\nu_p(w)})-\chi_{\Delta_0}(p)p^{-1}\sigma_{-1}(p^{\nu_p(w)-1}) \right) \\
&= \sum_{d \mid w}\mu(d)\chi_{\Delta_0}(d)d^{-1}\sigma_{-1}(\tfrac{w}{d}) = \sigma_{\gamma,n}.
\end{align*}
Note that for $\Delta_0 = 1$ we have $\sigma_{\gamma,n} = 1$ and $\frac{1-\chi_{\Delta_0}(p)p^{-1}}{1-p^{-2}} = \frac{1}{1+p^{-1}}$. This gives the formula in the theorem.
\end{proof}

\begin{remark}\label{remark eisenstein series}
	\begin{enumerate}
		\item The coefficients with non-square $\Delta$ can be further simplified using Dirichlet's class number formula
		\[
		L(\chi_{\Delta_0},1) = \begin{cases}
		\frac{\log \varepsilon_{\Delta_0}}{\sqrt{\Delta_0}}h(\Delta_0), & \text{if $\Delta_0 > 0$}, \\
		\frac{2\pi}{\omega(\Delta_0)\sqrt{|\Delta_0|}}h(\Delta_0), & \text{if $\Delta_0 < 0$},
		\end{cases} 
		\]
		where $h(\Delta_0)$ denotes the class number of $\Q(\sqrt{\Delta_0})$, $\omega(\Delta_0) \in \{2,4,6\}$ is the number of units, and $\varepsilon_{\Delta_0}$ denotes the smallest totally positive unit larger than $1$.
		
  \item Since~$L$ is positive definite, there exists at least one prime~$p\mid 2\det(L)$ such that~$L\otimes \Q_p$ is anisotropic. If $\Delta=2d_\gamma^2 n \det(L)$ is a square (including $n = 0$), this implies $N_{\gamma,n}(p^m)=0$ for~$m$ large enough, hence~$L_{\gamma,n}^{(p)}(p^{-1-2s})$ vanishes at $s = 1/2$; see \eqref{eq:local_L_factor_Igusa}. The zero of this factor at $s = 1/2$ cancels out with the pole of the Riemann zeta function and contributes with a rational multiple of~$\log(p)$. Moreover, from~\eqref{local polynomial} it follows that~$L_{\gamma,n}^{(p)}(p^{-2})$ is rational when~$n\neq 0$ for all primes~$p$. 
  In general, it is well known that $L_{\gamma,n}^{(p)}(p^{-s})$ (including $n=0$) can be written in terms of the local Igusa zeta function associated with a quadratic polynomial, and hence, by a fundamental result of Igusa, it is a rational function in $p^{-s}$ (see, e.g., \cite{cowankatzwhite}). This implies that the limits $\lim_{s \to 1/2}(\ldots)$ appearing in the above coefficients are rational multiples of $\log(p)$ if $L_{\gamma,n}^{(p)}(p^{-1-2s})$ vanishes at $s = 1/2$, and equal to $0$ if there are two such primes.
	 	\item Summarizing, the coefficients $c^+(n,\gamma)$ are of the form
		\[
		c^+(n,\gamma) = \frac{\sqrt{2}}{\sqrt{|L'/L|}\pi} \times \text{rational number}  \times 
		\begin{cases} 
		\log(\varepsilon_{\Delta_0})/\sqrt{\Delta_0}, & \text{if $\Delta$ is not a square}, \\
		\log(p), & \text{if $\Delta$ is a square},
		\end{cases}
		\]
		for some prime $p \mid 2\det(L)$ in the square case.
		\item Using $\xi_{1/2}\widetilde{E}_{1/2,L^-} = \frac{1}{2}E_{3/2,L}$ and applying the $\xi$-operator term-wise to the above Fourier expansion, we see that the coefficients of the holomorphic Eisenstein series $E_{3/2,L}$ are essentially given by class numbers of imaginary quadratic fields. 
		\item Rhoades and Waldherr \cite{rhoadeswaldherr} constructed a scalar-valued weight $1/2$ harmonic Maass form which maps to $\theta^3$ under the $\xi$-operator. By choosing $L = \Z^3$ with $Q(x_1,x_2,x_3) = x_1^2 + x_2^2 + x_3^2$ and taking the $\e_0$-component of $\widetilde{E}_{1/2,L^-}(\tau)$, we recover the results of \cite{rhoadeswaldherr}.
		\end{enumerate}
\end{remark}

\subsection{The Siegel--Weil formula}

As before, we let $(L,Q)$ be a positive definite even lattice of rank $r\geq 1$. Moreover, we let $W = L \otimes \Q$ be the surrounding rational quadratic space, and we let $H = \SO(W)$ be its special orthogonal group, viewed as an algebraic group over~$\Q$. Denoting by $\A_f$ the ring of finite adeles of~$\Q$, we have the $\C[L'/L]$-valued theta function on $\H \times H(\A_f)$ defined by
\begin{equation}\label{eqn-theta-L}
\Theta_L(\tau,h) = \sum_{\gamma \in L'/L}\sum_{X \in h(L+\gamma)}e(Q(X)\tau)\e_\gamma.
\end{equation}
It is a holomorphic modular form of weight $k = r/2$ for the Weil representation $\rho_L$. 

\begin{remark}\label{remark theta functions genus}
For $h = 1$ this is the usual holomorphic theta function $\Theta_L(\tau)$ associated with $L$ defined by \eqref{eq usual holomorphic theta}. More generally, we can view $\Theta_L(\tau,h)$ as a theta function corresponding to a lattice in the same genus as $L$: If we write $h = (h_p)$ with $h_p \in H(\Q_p)$, and $L_p =  L\otimes \Z_p$, then $H(\A_f)$ acts on lattices in the genus of $L$ by $hL = \bigcap_p h_pL_p \cap W$. Since $(hL)'/(hL) \cong L'/L$ (non-canonically), we can view $\Theta_{hL}(\tau)$ as a modular form for $\rho_L$.  Moreover, if we denote by $\O(L'/L)$ the orthogonal group of the finite quadratic module $(L'/L,Q \text{ mod }\Z)$, then there is an isometry $\sigma \in \O(L'/L)$, depending on the choice of the isomorphism $(hL)'/(hL) \cong L'/L$, such that 
\[
\Theta_L(\tau,h) = \Theta_{hL}^\sigma(\tau),
\]
where $\O(L'/L)$ acts on $\C[L'/L]$-valued functions by $f^{\sigma}(\tau) = \sum_{\gamma}f_\gamma(\tau)\e_{\sigma(\gamma)}$. This means that, up to a possible permutation of the components, $\Theta_L(\tau,h)$ can be viewed as the theta function corresponding to the lattice $hL$. 
\end{remark}

We state the Siegel--Weil formula in our setup. We refer the reader to \cite[Theorem~4.1]{kudla} for the general formula. It can be translated into our notation as explained in \cite[Section~2.2]{bruinieryang}.

\begin{theorem}[Siegel--Weil]\label{siegel weil formula}
	Let $L$ be a positive definite even lattice of rank $r \geq 1$, and let $k = r/2$. Then we have
	\[
	\int_{H(\Q)\backslash H(\A_f)}\Theta_L(\tau,h)dh = 2E_{k,L}(\tau),
	\]
	where $dh$ is the Tamagawa measure on $H(\A_f)$ normalized such that $\vol(H(\Q)\backslash H(\A_f)) = 2$, and $E_{k,L}(\tau)$ is the holomorphic Eisenstein series of weight $k$ for $\rho_L$ defined in Section~\ref{section holomorphic eisenstein series}.
\end{theorem}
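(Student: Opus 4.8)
The statement to prove is the Siegel--Weil formula (Theorem~\ref{siegel weil formula}): for a positive definite even lattice $L$ of rank $r\geq 1$ and $k=r/2$,
\[
\int_{H(\Q)\backslash H(\A_f)}\Theta_L(\tau,h)\,dh = 2E_{k,L}(\tau).
\]

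\textbf{Overall strategy.} The plan is \emph{not} to reprove the Siegel--Weil formula from scratch, but to deduce the stated classical identity from the adelic Siegel--Weil theorem of Kudla cited in the excerpt (\cite[Theorem~4.1]{kudla}), translating the adelic statement into the language of vector-valued modular forms for $\rho_L$ via the dictionary of \cite[Section~2.2]{bruinieryang}. The left-hand side is already a convergent adelic integral of a genus-theta function, and the right-hand side is Rallis' holomorphic Eisenstein series from Theorem~\ref{thm Rallis holomorphic Eisenstein series}, so the work is matching the two sides of an already-established theorem rather than analyzing convergence or Weil's original argument.

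\textbf{Key steps in order.} First I would fix, for each finite place $p$, a standard Schwartz function $\varphi_p$ on $W(\Q_p)$ so that the corresponding global Schwartz function $\varphi = \bigotimes_p \varphi_p$ (together with the archimedean Gaussian) has the property that its associated theta integral reproduces the components of $\Theta_L(\tau,h)$; this is exactly the matching carried out in \cite[Section~2.2]{bruinieryang} and \cite[Section~4.2]{kudlaextensionsiegelweil}. Concretely, each standard basis vector $\e_\gamma$ corresponds to the characteristic function of the coset $L+\gamma$ completed $p$-adically, and summation over $\gamma\in L'/L$ reassembles the full vector-valued theta function $\Theta_L(\tau,h)$ of \eqref{eqn-theta-L}. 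Second, I would invoke Kudla's adelic Siegel--Weil formula in the form $\int_{H(\Q)\backslash H(\A_f)}\theta(\varphi)(\tau,h)\,dh = c\, E(\varphi,s_0)$, valid here because $L$ is positive definite (so the theta integral converges) and because $k=r/2$ places us in the convergent or Rallis-regularized range, with the analytic continuation to the relevant point guaranteed by Rallis' work already recalled above. Third, I would identify the value $E(\varphi,s_0)$ of the adelic Eisenstein series with the classical vector-valued Eisenstein series $E_{k,L}(\tau,0)=E_{k,L}(\tau)$ from Section~\ref{section holomorphic eisenstein series}, again using the same Schwartz-function dictionary; the normalization of $E_{k,L}$ making its $\e_0$-constant term equal to $1$ fixes this comparison. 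Fourth, I would pin down the proportionality constant on the right: the factor $2$ comes precisely from the chosen normalization $\vol(H(\Q)\backslash H(\A_f))=2$ of the Tamagawa measure together with the fact that both $\Theta_L(\tau,h)$ and $E_{k,L}(\tau)$ are normalized so that their $h=1$ (resp. constant) terms compare by a factor of $1$.

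\textbf{Main obstacle and bookkeeping.} The genuinely delicate point is the \textbf{normalization of constants}: reconciling Kudla's normalization of the adelic Eisenstein series $E(\varphi,s)$ and of the Tamagawa measure with the explicit $1/2$-normalized classical Eisenstein series $E_{k,L}$ (recall the excerpt notes ``we multiplied the Eisenstein series from \cite{bruinierkuehn} by $1/2$''). One must track how the volume normalization $\vol(H(\Q)\backslash H(\A_f))=2$, the half-integral weight $k=r/2$, and the metaplectic sign factors interact to produce exactly the constant $2$ on the right-hand side; a cleaner way to verify this is to compare a single well-understood Fourier coefficient (for instance the constant term in the $\e_0$-component) on both sides, since the $\e_0$-constant term of $E_{k,L}$ is $1$ by normalization and the corresponding term of the theta integral can be computed directly. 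A secondary point requiring care is the legitimacy of specializing to the point $s=0$ (equivalently $s_0 = (r-2)/2$ in the adelic parameter) in the small-rank cases $r\in\{1,2,3\}$, but this is precisely what Theorem~\ref{thm Rallis holomorphic Eisenstein series} and Rallis' extension of the Siegel--Weil formula \cite{rallis} provide, so I would cite those rather than redoing the continuation.
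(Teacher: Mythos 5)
Your proposal follows essentially the same route as the paper: the paper does not prove this theorem from scratch either, but simply cites Kudla's adelic Siegel--Weil formula \cite[Theorem~4.1]{kudla} together with the translation into vector-valued language explained in \cite[Section~2.2]{bruinieryang}, with the continuation to the relevant point in small rank supplied by Rallis \cite{rallis} as recalled in Theorem~\ref{thm Rallis holomorphic Eisenstein series}. Your additional bookkeeping (the Schwartz-function dictionary, the identification of the special value with $E_{k,L}(\tau)$, and the factor $2$ arising from the normalization $\vol(H(\Q)\backslash H(\A_f))=2$) is exactly the content of that translation, so the proposal is correct and matches the paper's approach.
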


The following well known result (see, e.g., \cite[Lemma~2.13]{schofer}) allows us to write the integral in the Siegel--Weil formula as a finite sum. We give a proof for the convenience of the reader.

\begin{lemma}\label{sum lemma}
	Let $K \subset H(\A_f)$ be a compact open subgroup and let $f(h)$ be a  function on $H(\Q) \backslash H(\A_f)$ which is $K$-invariant from the right. Then we have
	\[
	\int_{H(\Q)\backslash H(\A_f)}f(h)dh = \vol(K)\sum_{\alpha \in H(\Q)\backslash H(\A_f)/K}\frac{1}{|\Gamma_{\alpha}|}f(\alpha)
	\]
	where $\Gamma_{\alpha} = H(\Q) \cap (\alpha K \alpha^{-1})$ and 
	\[
	\vol(K) = 2\left(\sum_{\alpha \in H(\Q)\backslash H(\A_f)/K}\frac{1}{|\Gamma_{\alpha}|}\right)^{-1}.
	\]
\end{lemma}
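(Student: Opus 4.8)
The plan is to unfold the adelic integral using the right $K$-invariance of $f$ to decompose $H(\A_f)$ into a disjoint union of double cosets $H(\Q)\alpha K$, and then to relate the Tamagawa measure of each piece to the order of an arithmetic stabilizer group. First I would write $H(\A_f) = \bigsqcup_{\alpha} H(\Q)\alpha K$, a \emph{finite} disjoint union since $H(\Q)\backslash H(\A_f)/K$ is finite (this is the finiteness of the class number for the algebraic group $H$). Because $f$ is left $H(\Q)$-invariant and right $K$-invariant, $f$ is constant on each double coset, taking the value $f(\alpha)$. The integral therefore splits as
\[
\int_{H(\Q)\backslash H(\A_f)}f(h)\,dh = \sum_{\alpha \in H(\Q)\backslash H(\A_f)/K} f(\alpha)\cdot \vol\big(H(\Q)\backslash H(\Q)\alpha K\big).
\]

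The heart of the argument is to compute the volume of the single double coset $H(\Q)\backslash H(\Q)\alpha K$. I would use the orbit--stabilizer principle in measure-theoretic form: the map $K \to H(\Q)\backslash H(\Q)\alpha K$ given by $k \mapsto H(\Q)\alpha k$ is surjective, and two elements $k_1,k_2 \in K$ have the same image precisely when $\alpha k_1 k_2^{-1}\alpha^{-1} \in H(\Q)$, i.e.\ when $k_1 k_2^{-1} \in \alpha^{-1}H(\Q)\alpha \cap K = \alpha^{-1}\Gamma_\alpha \alpha$. Hence the fibers of this map are cosets of the finite group $\alpha^{-1}\Gamma_\alpha\alpha$, which has the same order as $\Gamma_\alpha = H(\Q)\cap(\alpha K \alpha^{-1})$. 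Since the Tamagawa measure is left-invariant under $H(\Q)$ and the quotient map is a local isometry away from these finite fibers, this yields
\[
\vol\big(H(\Q)\backslash H(\Q)\alpha K\big) = \frac{\vol(K)}{|\Gamma_\alpha|},
\]
where I am using that $\vol(K)$ is independent of $\alpha$ since right translation preserves the Haar measure. Substituting back gives the sum $\vol(K)\sum_\alpha \frac{1}{|\Gamma_\alpha|}f(\alpha)$, which is the claimed identity.

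Finally, the formula for $\vol(K)$ itself follows by applying the identity just proved to the constant function $f \equiv 1$: the left-hand side becomes $\vol(H(\Q)\backslash H(\A_f)) = 2$ by the Tamagawa normalization, while the right-hand side becomes $\vol(K)\sum_\alpha \frac{1}{|\Gamma_\alpha|}$. Solving for $\vol(K)$ yields the stated expression $\vol(K) = 2\big(\sum_\alpha |\Gamma_\alpha|^{-1}\big)^{-1}$.

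The step I expect to require the most care is verifying that $\Gamma_\alpha$ is genuinely finite and that the fiber computation correctly accounts for its order rather than the order of $\alpha^{-1}\Gamma_\alpha\alpha \cap K$ versus $\alpha^{-1}\Gamma_\alpha\alpha$ — one must check these coincide, which holds because $\Gamma_\alpha \subseteq \alpha K \alpha^{-1}$ by its very definition, so $\alpha^{-1}\Gamma_\alpha\alpha \subseteq K$ automatically. Finiteness of $\Gamma_\alpha$ comes from the fact that it is a discrete subgroup (being contained in $H(\Q)$, which is discrete in $H(\A_f)$ after intersecting with a compact open subgroup) of the compact group $\alpha K \alpha^{-1}$; this is precisely the arithmetic input making the class number finite and the stabilizers finite. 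Everything else is a careful but routine bookkeeping of left-invariant measures on quotients.
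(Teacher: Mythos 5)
Your proof is correct and follows essentially the same route as the paper's: decompose $H(\A_f)$ into finitely many double cosets $H(\Q)\alpha K$, unfold each piece to get the factor $\vol(K)/|\Gamma_\alpha|$ (the paper phrases this as $\int_{\Gamma_\alpha\backslash\alpha K}f\,dh = \frac{1}{|\Gamma_\alpha|}\int_{\alpha K}f\,dh$, which is the same orbit--stabilizer computation you carry out), and then obtain the formula for $\vol(K)$ by applying the identity to $f\equiv 1$ with the Tamagawa normalization $\vol(H(\Q)\backslash H(\A_f))=2$. Your closing remarks on the finiteness of $\Gamma_\alpha$ and the conjugation bookkeeping are sound and only make explicit what the paper leaves implicit.
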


\begin{proof}
	We have a finite disjoint double coset decomposition
	\[
	H(\A_f) = \bigcup_{\alpha \in H(\Q)\backslash H(\A_f)/K}H(\Q)\alpha K,
	\]
	so we can write the integral as
	\[
	\int_{H(\Q)\backslash H(\A_f)}f(h)dh = \sum_{\alpha \in H(\Q)\backslash H(\A_f)/K}\int_{\Gamma_\alpha \backslash\alpha K}f(h)dh.
	\]
	 Since $\Gamma_\alpha$ is finite and $f$ is invariant under $K$, we have
	\[
	\int_{\Gamma_\alpha \backslash\alpha K}f(h)dh = \frac{1}{|\Gamma_\alpha|}\int_{\alpha K}f(h)dh = \vol(K)\frac{1}{|\Gamma_{\alpha}|}f(\alpha).
	\]
	This gives the stated formula. In order to compute $\vol(K)$, apply the lemma to $f(h) = 1$. 
 \end{proof}

	\begin{example}
		Let us rewrite the integral in the Siegel--Weil formula as a finite sum over lattices in the genus of $L$ of symmetrized theta functions. We choose as $K$ the compact open subgroup
		\[
		\widetilde{U} = \{h \in H(\A_f): hL = L\},
		\]
		the stabilizer of $L$ in $H(\A_f)$. The map sending $\alpha\in H(\Q)\backslash H(\A_f)/\widetilde{U}$ to the isometry class $M$ of $\alpha L$ is a bijection between $H(\Q)\backslash H(\A_f)/\widetilde{U}$ and the set~$\gen(L)$  of isometry classes of lattices in the genus of $L$. Moreover, we have $\Gamma_\alpha = \SO(\alpha L)= \SO(M)$, where $\SO(M)=H(\Q)\cap \widetilde{U}$. However, we cannot directly apply Lemma~\ref{sum lemma} since $\widetilde{U}$ might act non-trivially on $L'/L$, so $\Theta_{L}(\tau,h)$ is not generally $\widetilde{U}$-invariant. To circumvent this problem, we note that the symmetrized theta function 
  $$\Theta_{L}^{\mathrm{sym}}(\tau,h)=\frac{1}{|\O(L'/L)|}\sum_{\sigma \in \O(L'/L)}\Theta_{L}^{\sigma}(\tau,h)$$
  is $\widetilde{U}$-invariant, so by Lemma~\ref{sum lemma} and Remark~\ref{remark theta functions genus} we can write
  \begin{align*}
      \int_{H(\Q)\backslash H(\A_f)}\Theta^{\mathrm{sym}}_L(\tau,h)dh & = \vol(\widetilde{U})\sum_{\alpha \in H(\Q)\backslash H(\A_f)/\widetilde{U}}\frac{1}{|\Gamma_{\alpha}|}\Theta_L^{\mathrm{sym}}(\tau,\alpha) \\
      &  =
  \frac{2}{\mathrm{mass}(L)}\sum_{M \in \gen(L)}\frac{1}{|\SO(M)|}
  \Theta_{M}^{\mathrm{sym}}(\tau),
  \end{align*}
  where $\mathrm{mass}(L) = \sum_{M \in \gen(L)}\frac{1}{|\SO(M)|}$ is the mass of the genus of $L$. Now by the Siegel--Weil formula and the symmetry of the Eisenstein series $E_{k,L}$ we obtain 
		\begin{align}\label{siegel weil finite sum}
   \frac{1}{\mathrm{mass}(L)}\sum_{M \in \gen(L)}
  \Theta_{M}^{\mathrm{sym}}(\tau)
  = E_{k,L}(\tau),
		\end{align}
  where $k = r/2$. Here we had identified $M'/M \cong L'/L$ for each~$M\in  \gen(L)$, but  the sum on the left-hand side is independent of the choice of such isomorphisms due to the symmetrization over $\O(L'/L)$.
  	
		Finally, we remark that the classical Siegel--Weil formula is usually stated for the scalar-valued theta function $\theta_L(\tau) = \sum_{x \in L}e(Q(x)\tau)$, which is just the $\e_0$-component of $\Theta_L(\tau)$. If we take the $\e_0$-component on both sides of \eqref{siegel weil finite sum}, and use that any $\sigma \in \O(L'/L)$ fixes $\e_0$, we obtain the scalar-valued Siegel--Weil formula
		\[
		\frac{1}{\mathrm{mass}(L)}\sum_{M \in \gen(L)}\frac{1}{|\SO(M)|}\theta_{M}(\tau) = E_{k,L,0}(\tau),
		\]
		where $E_{k,L,0}(\tau)$ denotes the $\e_0$-component of $E_{k,L}(\tau)$, which is a scalar-valued Eisenstein series for some congruence subgroup.
	\end{example}

\section{Traces of special values of the Green's function}\label{section traces green's function}

\subsection{The trace of the Green's function as a theta lift}\label{section theta lift}

In this section we show that the traces of the Green's function $G_{s}(P_1,P_2)$ defined in \eqref{eq Green's function} can be obtained as a regularized theta lift on a lattice of signature $(1,3)$. 

\subsubsection{The orthogonal model of hyperbolic $3$-space}\label{sec orthogonal model}

	We fix an imaginary quadratic field $\Q(\sqrt{D})$ of fundamental discriminant $D < 0$. We let $\mathcal{O}_D$ be its ring of integers and $\mathfrak{d}_D = \sqrt{D}\mathcal{O}_D$ be its different. We consider the lattice
\begin{align}\label{lattice}
	L = \left\{ X= \begin{pmatrix}a & b \\ \overline{b} & c \end{pmatrix} \, : \, a,c \in \Z, \, b \in \mathcal{O}_D\right\}
\end{align}
	with the quadratic form
	\[
	Q(X) = \det(X) = ac - |b|^2
	\]
	and corresponding bilinear form
	\[
	(X_1,X_2) = a_1 c_2 + a_2 c_1 - \tr(b_1 \overline{b}_2).
	\]
	It is an even lattice of signature $(1,3)$, with dual lattice
	\[
L' = \left\{ X= \begin{pmatrix}a & b \\ \overline{b} & c \end{pmatrix} \, :  \, a,c \in \Z, \, b \in \mathfrak{d}_D^{-1}\right\}.
	\]
	In particular, we have 
	\[
	L'/L \cong \mathfrak{d}_D^{-1}/\mathcal{O}_D. 
	\]
	The group $\Gamma = \PSL_2(\mathcal{O}_D)$ acts on $L$ via $\gamma.X = \gamma X \overline{\gamma}^t$, fixing the classes of $L'/L$. Note that we may identify the elements $X = \left(\begin{smallmatrix}a & b \\ \overline{b} & c \end{smallmatrix} \right) \in L'$ with integral binary hermitian forms 
	\[
	[a,b,c](x,y) = a|x|^2 + \tr(bx\overline{y}) + c|y|^2
	\]
	of determinant $ac - |b|^2 = Q(X)$, and this identification is compatible with the corresponding actions\footnote{$\Gamma$ acts on hermitian forms via $(\gamma [a,b,c]) (x,y)=[a,b,c]( (x,y)\cdot \overline{\gamma})$.} of $\Gamma$.
	
	The Grassmannian $\Gr(L)$ corresponding to $L$ is the set of all positive definite lines in $V(\R) = L \otimes \R$. We can identify the hyperbolic $3$-space $\H^3$ with $\Gr(L)$ by mapping $P = z+rj \in \H^3$ to the positive line $\R v(P) \in \Gr(L)$ spanned by the vector
\begin{align}\label{eq identification}
v(P) = \frac{1}{\sqrt{2}r}\begin{pmatrix}r^2+|z|^2 & z \\ \overline{z} & 1\end{pmatrix} \in V(\R).
\end{align}
Conversely, each positive line in $\Gr(L)$ is spanned by some vector $X = \left(\begin{smallmatrix}a & b \\ \overline{b} & c \end{smallmatrix} \right) \in V(\R)$ with $Q(X) = \det(X) > 0$. If we put
\begin{align}\label{eq point for X}
P_X = \frac{b}{c} + \frac{\sqrt{\det(X)}}{|c|}j \in \H^3,
\end{align}
then one can check that $v(P_X) = \sgn(c)\sqrt{2\det(X)}^{-1}X$, so $v(P_X)$ spans the given positive line. We call $P_X$ the \emph{point corresponding to $X$}, and if $X \in V(\Q)$ is a rational vector, we call $P_X$ a \emph{special point}.

Recall that the hyperbolic distance $d(P_1,P_2)$ on $\H^3$ is given by
\[
\cosh(d(P_1,P_2)) = \frac{|z_1-z_2|^2 + r_1^2 + r_2^2}{2r_1 r_2}, \qquad \text{ where }P_i=z_i+r_ij \text{ for }i\in \{1,2\}.
\]

The bilinear form on $V(\R)$ can be expressed in terms of the hyperbolic distance as follows.

\begin{lemma}\label{lemma hyperbolic distance}
	Given $X_1,X_2 \in V(\R)$ with $D_1 = Q(X_1) > 0$ and $D_2 = Q(X_2) > 0$, and corresponding points $P_{1},P_{2} \in \H^3$, we have
	\[
	(X_1,X_2) = 2\, \sgn(c_1 c_2)\sqrt{D_1 D_2}\cdot\cosh(d(P_{1},P_{2})).
	\]
\end{lemma}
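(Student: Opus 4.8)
The plan is to reduce the identity to a single computation of the bilinear form applied to the normalizing vectors $v(P_1)$ and $v(P_2)$, which we then match against the formula for $\cosh(d(P_1,P_2))$ recalled just above the statement. The starting point is the relation $v(P_X) = \sgn(c)\sqrt{2\det(X)}^{-1}X$ recorded earlier. Applied to $X = X_i$ (note that $Q(X_i) = a_i c_i - |b_i|^2 > 0$ forces $c_i \neq 0$, so $\sgn(c_i)$ is well defined), it rearranges to
\[
X_i = \sgn(c_i)\sqrt{2 D_i}\, v(P_i), \qquad i \in \{1,2\}.
\]
By bilinearity of $(\cdot,\cdot)$ and $\sgn(c_1)\sgn(c_2) = \sgn(c_1 c_2)$ this gives at once
\[
(X_1,X_2) = \sgn(c_1 c_2)\sqrt{2 D_1}\sqrt{2 D_2}\,(v(P_1),v(P_2)) = 2\,\sgn(c_1 c_2)\sqrt{D_1 D_2}\,(v(P_1),v(P_2)),
\]
so the whole lemma reduces to proving the single identity $(v(P_1),v(P_2)) = \cosh(d(P_1,P_2))$.

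For this remaining identity I would substitute $v(P_i) = \frac{1}{\sqrt{2}r_i}\left(\begin{smallmatrix} r_i^2+|z_i|^2 & z_i \\ \overline{z}_i & 1\end{smallmatrix}\right)$ into the explicit formula $(Y_1,Y_2) = a_1 c_2 + a_2 c_1 - \tr(b_1\overline{b}_2)$ for the bilinear form, now with $Y_i = v(P_i)$. Reading off the entries of $v(P_i)$ and collecting the three terms over the common denominator $2r_1 r_2$ yields
\[
(v(P_1),v(P_2)) = \frac{(r_1^2+|z_1|^2) + (r_2^2+|z_2|^2) - 2\Re(z_1\overline{z}_2)}{2r_1 r_2},
\]
where the middle term uses $\tr(b_1\overline{b}_2) = b_1\overline{b}_2 + \overline{b}_1 b_2 = 2\Re(z_1\overline{z}_2)/(2r_1 r_2)$ for the trace from $\C$ to $\R$. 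Recognizing that $|z_1|^2 - 2\Re(z_1\overline{z}_2) + |z_2|^2 = |z_1-z_2|^2$ collapses the numerator to $|z_1-z_2|^2 + r_1^2 + r_2^2$, and the result is exactly the numerator in $\cosh(d(P_1,P_2)) = (|z_1-z_2|^2+r_1^2+r_2^2)/(2r_1 r_2)$. This completes the reduction.

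There is no conceptual obstacle; the argument is entirely elementary once the normalization $X_i = \sgn(c_i)\sqrt{2D_i}\,v(P_i)$ is available. The only points requiring care are the bookkeeping of the factors of $\sqrt{2}$ and the convention $\tr(w) = w + \overline{w} = 2\Re(w)$ for the $\C/\R$-trace, which is what produces the $-2\Re(z_1\overline{z}_2)$ needed to form the perfect square $|z_1-z_2|^2$. The sign $\sgn(c_1 c_2)$ emerges cleanly because each $\sgn(c_i)^2 = 1$ is absorbed into $\sqrt{2D_i}$ while the two individual signs survive only in their product.
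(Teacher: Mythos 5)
Your proof is correct. It takes essentially the same route as the paper --- an elementary verification combining the explicit bilinear form with the explicit distance formula --- but organized in the opposite direction: the paper substitutes the coordinates $z_i = b_i/c_i$, $r_i = \sqrt{D_i}/|c_i|$ of $P_{X_i}$ from \eqref{eq point for X} into the formula for $\cosh(d(P_1,P_2))$ and simplifies until $a_1c_2+a_2c_1-\tr(b_1\overline{b}_2)$ appears, whereas you invoke the recorded normalization $X_i = \sgn(c_i)\sqrt{2D_i}\,v(P_i)$ and reduce everything to the clean identity $(v(P_1),v(P_2)) = \cosh(d(P_1,P_2))$, checked in generic coordinates $(z_i,r_i)$. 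Your factorization has the small advantage of isolating the sign and scaling once and for all and of exhibiting the geometrically meaningful fact that the pairing of unit positive vectors computes $\cosh$ of the hyperbolic distance; the paper's version is self-contained in that it never uses the ``one can check'' relation $v(P_X)=\sgn(c)\sqrt{2\det(X)}^{-1}X$, which you take as given (legitimately, since the paper states it before the lemma). One cosmetic slip: in your middle-term computation the displayed equation $\tr(b_1\overline{b}_2)=2\Re(z_1\overline{z}_2)/(2r_1r_2)$ conflates the entries of $X_i$ with those of $v(P_i)$; what you mean is $\tr(b_1'\overline{b_2'})$ for the entries $b_i'=z_i/(\sqrt{2}r_i)$ of $v(P_i)$, and with that reading the computation is exactly right.
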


\begin{proof}
	By definition we have
	\[
	(X_1,X_2) = a_1 c_2 + a_2 c_1 - \tr(b_1 \overline{b}_2).
	\]
	One the other hand, using the explicit formula for the points $P_1,P_2$ from \eqref{eq point for X}, we have
	\begin{align*}
	\cosh(d(P_{1},P_{2})) &= \frac{\left|\frac{b_1}{c_1}-\frac{b_2}{c_2}\right|^2+\frac{D_1}{c_1^2}+\frac{D_2}{c_2^2}}{2\frac{\sqrt{D_1}}{|c_1|}\frac{\sqrt{D_2}}{|c_2|}}
	= \sgn(c_1 c_2)\frac{a_1 c_2 + a_2 c_1 - \tr(b_1 \overline{b}_2)}{2\sqrt{D_1 D_2}}.
	\end{align*}
	This gives the stated formula.
\end{proof}

\subsubsection{Traces of $\Gamma$-invariant functions on $\H^3$}\label{section traces} For $\mu \in L'/L$ and $m \in \Z+Q(\mu)$ we let
\[
L_{m,\mu} = \{X \in L + \mu, Q(X) = m\}.
\]
If $m > 0$, we let $L_{m,\mu}^+$ the subset of those $X  =\left(\begin{smallmatrix}a & b \\ \overline{b} & c \end{smallmatrix} \right) \in L_{m,\mu}$ with $a > 0$ (or, equivalently, $c > 0$). Note that the elements $X \in L_{m,\mu}^+$ correspond to positive definite binary hermitian forms $[a,b,c]$ of determinant $m$ with $b \in \mathcal{O}_D + \mu$.
For $m > 0$ and a $\Gamma$-invariant function $f$ on $\H^3$, we define its \emph{trace of index $(m,\mu)$} by
\[
\tr_{m,\mu}(f) = \sum_{X \in \Gamma \backslash L_{m,\mu}^+}\frac{1}{|\Gamma_X|}f(P_X).
\]
Here $\Gamma_X$ denotes the stabilizer of $X$ in $\Gamma$. Clearly, we have
\begin{equation*}\label{eq-fulltrace}
\tr_{m}(f) = \sum_{X \in \Gamma \backslash L_{m}^+}\frac{1}{|\Gamma_X|}f(P_X) =\sum_{\mu\in L'/L}\tr_{m,\mu}(f),    
\end{equation*}
where $L_{m}^+$ is defined in \eqref{eq:L_m^+}.

We call $X \in L'$ \emph{primitive} if $\frac{1}{r}X \notin L'$ for every integer $r > 1$, and we let $L_{m,\mu}^{+,0}$ be the subset of $L_{m,\mu}^{+}$ consisting of primitive vectors. Correspondingly, we define the \emph{primitive trace of index $(m,\mu)$} by
\[
\tr_{m,\mu}^0(f) = \sum_{X \in \Gamma \backslash L_{m,\mu}^{+,0}}\frac{1}{|\Gamma_X|}f(P_X).
\]
Each $X \in L_{m,\mu}^{+}$ can be written in a unique way as $X = rX_0$ for some $r \in \N$ and a primitive $X_0 \in L'$ with $a > 0$. Then we have $P_{X} = P_{X_0}$ and $\Gamma_X = \Gamma_{X_0}$. Since $DQ(X) \in \Z$ for any $X \in L'$, we must have $r^2 \mid Dm$. Moreover, if $X_0 \in L+\nu$, then we necessarily have $r\nu = \mu$ in $L'/L$. Hence, we can write
\[
\tr_{m,\mu}(f) = \sum_{\substack{r\in \N \\ r^2 \mid  Dm}}\sum_{\substack{\nu \in L'/L \\  r\nu = \mu}}\tr_{m/r^2,\nu}^0(f).
\]
Conversely, using an inclusion-exclusion argument, we see that the primitive trace $\tr_{m,\mu}^0(f)$ can be written as an integral linear combination of the traces $\tr_{m/r^2,\nu}(f)$, for $r\in \N$ with $r^2 \mid Dm$ and $\nu \in L'/L$ with $r\nu = \mu$.

\subsubsection{Green's functions and theta lifts}

We let $\Theta_L(\tau,v)$ be the Siegel theta function on $\H \times \Gr(L)$ as in Section~\ref{section theta functions}. Using the identification $\H^3 \cong \Gr(L), P \mapsto v = \R v(P),$ given by \eqref{eq identification}, we can view 
\[
\Theta_L(\tau,P) = \Theta_L(\tau,\R v(P))
\]
as a function on $\H \times \H^3$. It is $\Gamma$-invariant in $P$, and since $L$ has signature $(1,3)$, it transforms like a modular form of weight $-1$ for the Weil representation $\rho_L$ in $\tau$.

Let $n\in \N_0$. For a harmonic Maass form $f \in H_{1-2n,L^-}^{\cusp}$ of weight $1-2n$ for the dual Weil representation $\overline{\rho}_L$, we define the regularized theta lift
\[
\Phi_L^{(1-2n)}(f,P) = \int_{\mathcal{F}}^{\mathrm{reg}} \left(R_{1-2n}^{n}f\right)(\tau) \cdot \Theta_L(\tau,P) d\mu(\tau),
\]
where the integral over the fundamental domain $\mathcal{F}$ for $\SL_2(\Z)$ is regularized as explained by Borcherds \cite{borcherds} or Bruinier \cite{bruinier}, the product in the integral is the bilinear pairing on $\C[L'/L]$, and $d\mu(\tau)$ is the invariant measure on $\H$.

For $n\in \N_0$, $\mu \in L'/L$ and $m \in \Z + Q(\mu)$ with $m > 0$ we let $F_{1-n,m,\mu}(\tau,s)$ be the weight $1-n$ Maass Poincar\'e series for $\overline{\rho}_L$ defined in Section~\ref{section harmonic maass forms} (for the lattice $L^-$).

\begin{theorem}\label{proposition higher Green's function as theta lift}
	For $n\in \N_0$, $\mu \in L'/L$ and $\mu \in \Z + Q(\mu)$ with $m > 0$ and $\Re(s) > 1$ we have
	\[
	\Phi_L^{(1-2n)}\left(F_{1-2n,m,\mu}(\,\cdot\,,s),P\right) = C_n(s)\frac{2}{\pi\Gamma(s+1/2)\sqrt{m}}\tr_{m,\mu}\big( G_{2s-1}(\, \cdot \,,P)\big),
	\]
	with the constant $C_n(s) = (4\pi m)^{n}(s+1/2-n)(s+1/2-n+1)\cdots (s-1/2)$ for $n>0$ and $C_0(s)=1$. 
\end{theorem}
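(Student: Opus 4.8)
The plan is to reduce the general statement to the base case $n=0$ by means of the raising operator, and then to prove that case by unfolding the Poincar\'e series against the Siegel theta function. For the reduction, I would apply Lemma~\ref{lemma raising poincare series} repeatedly. Since $R_{1-2n}^{n}=R_{-1}\circ\dots\circ R_{1-2n}$ involves the weights $1-2n,3-2n,\dots,-1$, iterating the identity $R_kF_{k,m,\mu}(\tau,s)=4\pi m(s+k/2)F_{k+2,m,\mu}(\tau,s)$ gives
\[
R_{1-2n}^{n}F_{1-2n,m,\mu}(\tau,s)=(4\pi m)^{n}\prod_{j=0}^{n-1}\left(s+\tfrac12-n+j\right)F_{1,m,\mu}(\tau,s)=C_n(s)\,F_{1,m,\mu}(\tau,s),
\]
where the product is precisely the constant $C_n(s)$ of the statement. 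Substituting this into the definition of $\Phi_L^{(1-2n)}$ yields $\Phi_L^{(1-2n)}(F_{1-2n,m,\mu}(\cdot,s),P)=C_n(s)\,\Phi_L^{(1)}(F_{1,m,\mu}(\cdot,s),P)$, so it suffices to prove the case $n=0$, namely that $\Phi_L^{(1)}(F_{1,m,\mu}(\cdot,s),P)=\frac{2}{\pi\Gamma(s+1/2)\sqrt m}\,\tr_{m,\mu}(G_{2s-1}(\cdot,P))$.

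For the base case I would unfold. For $\Re(s)>1$ the Poincar\'e series converges absolutely, and I would unfold it against the $\Mp_2(\Z)$-invariant pairing $F_{1,m,\mu}\cdot\Theta_L$ in the standard way (Borcherds, Bruinier), checking that the regularization is compatible with unfolding in this range. Since $\e_\mu\cdot\Theta_L=\Theta_{L,\mu}$, the regularized integral over $\mathcal F$ collapses to an integral of $\mathcal{M}_{1,s}(4\pi mv)e(-mu)\,\Theta_{L,\mu}(\tau,P)$ over the strip $\widetilde{\Gamma}_\infty\backslash\H$. The inner $u$-integral $\int_0^1 e((Q(X)-m)u)\,du$ annihilates every term except those $X\in L+\mu$ with $Q(X)=m$, i.e.\ $X\in L_{m,\mu}$, leaving a sum over $L_{m,\mu}$ of one-dimensional $v$-integrals.

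Next I would evaluate the $v$-integral. By Lemma~\ref{lemma hyperbolic distance} together with $Q(v(P))=\tfrac12$ one has $(X,v(P))^2=2m\cosh^2 d$ with $d=d(P_X,P)$, hence $Q(X_v)-Q(X_{v^\perp})=m\cosh(2d)$. Substituting $w=4\pi mv$ and inserting $q=3$, the surviving integral for each $X$ is the Laplace transform
\[
(4\pi m)^{-1/2}\int_0^\infty w^{-1}\,M_{-1/2,\,s-1/2}(w)\,e^{-\frac{\cosh(2d)}{2}w}\,dw,
\]
where $M_{-1/2,s-1/2}$ is the $M$-Whittaker function from \eqref{eq M Whittaker}. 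Writing $M_{-1/2,s-1/2}(w)=w^{s}e^{-w/2}\,{}_1F_1(s+\tfrac12;2s;w)$ and using $\tfrac{\cosh(2d)}{2}+\tfrac12=\cosh^2 d$, the standard Laplace-transform formula for ${}_1F_1$ evaluates this to $(4\pi m)^{-1/2}\Gamma(s)\cosh^{-2s}d\cdot{}_2F_1(s+\tfrac12,s;2s;\cosh^{-2}d)$; an Euler transformation followed by the quadratic transformation for ${}_2F_1(a,b;2b;\cdot)$ then collapses the hypergeometric factor to an elementary expression and produces a constant multiple of $\varphi_{2s-1}(\cosh d)=e^{-(2s-1)d}/\sinh d$. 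Finally I would reassemble $\sum_{X\in L_{m,\mu}}\varphi_{2s-1}(\cosh d(P_X,P))$ into $\tr_{m,\mu}(G_{2s-1}(\cdot,P))$ by using $P_{\gamma.X}=\gamma P_X$, the stabilizer weights $1/|\Gamma_X|$, and the $\Gamma$-fold structure $G_{2s-1}=\pi\sum_{\gamma}\varphi_{2s-1}$.

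The delicate point is the exact evaluation of the Whittaker/Laplace integral and, above all, the bookkeeping of the overall constant. The duplication formula $\Gamma(2s)=\pi^{-1/2}2^{2s-1}\Gamma(s)\Gamma(s+1/2)$, the precise constant in the quadratic hypergeometric transformation, the metaplectic normalization of the strip $\widetilde{\Gamma}_\infty\backslash\H$ together with the factor $\tfrac{1}{2\Gamma(2s)}$ in the Poincar\'e series, and the treatment of the sign of $c$ (i.e.\ of $L_{m,\mu}^{+}$ versus $L_{m,\mu}^{-}$) in passing from the full sum over $L_{m,\mu}$ to the trace must all be tracked carefully to produce exactly $\frac{2}{\pi\Gamma(s+1/2)\sqrt m}$. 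Justifying that the regularized theta integral may be unfolded term by term throughout $\Re(s)>1$ is the remaining technical ingredient; the meromorphic continuation in $s$ then follows from that of the Green's function and of the Poincar\'e series.
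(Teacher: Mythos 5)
Your proposal is correct and takes essentially the same route as the paper: the identical reduction to $n=0$ via Lemma~\ref{lemma raising poincare series} (with the same constant $C_n(s)$), followed by unfolding, the quadratic transformation collapsing ${}_2F_1\left(s,s+\frac{1}{2},2s;\,\cdot\,\right)$ to an elementary function, Lemma~\ref{lemma hyperbolic distance}, the Legendre duplication formula, and the final orbit/sign bookkeeping. The only cosmetic difference is that the paper outsources the unfolding and the Laplace-transform evaluation of the Whittaker integral to \cite[Theorem~2.14]{bruinier} (applied to $L^-$ in weight $1$), whereas you carry out that computation by hand; the content is the same.
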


\begin{proof}
	Using Lemma~\ref{lemma raising poincare series} we see that
	\[
	\Phi_L^{(1-2n)}\left(F_{1-2n,m,\mu}(\,\cdot\,,s),P\right) = C_n(s)\Phi_L^{(1)}\left(F_{1,m,\mu}(\,\cdot\,,s),P\right),
	\]
	so it suffices to show the result for $n = 0$. Then the theta lift can be computed by unfolding against the Poincar\'e series as in \cite[Theorem~2.14]{bruinier}. We need to apply this theorem to the lattice $L^- = (L,-Q)$, which has signature $(3,1)$, and with weight $k = \frac{b^+-b^-}{2} = 1$. Then we obtain
	\[
	\Phi_L^{(1)}(F_{1,m,\mu}(\,\cdot\,,s),P) = \frac{2\Gamma(s)}{\Gamma(2s)\cdot (4\pi m)^{1/2-s}}\sum_{\substack{X \in L+\mu \\ Q(X) = m}}(4\pi Q(X_v))^{-s} \ _2 F_1\left(s,s+\frac{1}{2},2s; \frac{m}{Q(X_v)}\right),
	\]
	where we put $v = \R v(P)$ for brevity. Since $(v(P),v(P)) = 1$, we have
	\[
	Q(X_v) = Q\left(\frac{(X,v(P))}{(v(P),v(P))}v(P)\right) = \frac{1}{2}(X,v(P))^2.
	\]
	Hence, using Lemma~\ref{lemma hyperbolic distance}, we can write $Q(X_v) = m \, \cosh(d(P,P_X))^2$.	Moreover, the hypergeometric function simplifies to
	\begin{equation}\label{eq simplification of hypergeometric function}
	    \ _2 F_1\left(s,s+\frac{1}{2},2s; x\right) = 2^{2s-1}(\sqrt{1-x}+1)^{1-2s} (1-x)^{-1/2}. 
	\end{equation}
	Combining these two facts, we obtain after a short computation
	\begin{align}\label{eq hypergeometric equals varphi}
	\ _2 F_1\left(s,s+\frac{1}{2},2s; \frac{m}{Q(X_v)}\right)
	&= 2^{2s-1}\cosh(d(P,P_X))^{2s}\varphi_{2s-1}(\cosh(d(P,P_X))),
	\end{align}
 where $\varphi_s$ is defined by \eqref{eq:varphi_s}. Taking everything together, we arrive at
	\[
	\Phi_L^{(1)}(F_{1,m,\mu}(\,\cdot\,,s),P) = \frac{2^{2s}\Gamma(s)}{\Gamma(2s)\cdot (4\pi m)^{1/2}}\sum_{\substack{X \in L+\mu \\ Q(X) = m}}\varphi_{2s-1}(\cosh(d(P_X,P))).
	\]
	By the duplication formula for the Gamma function we have $\frac{2^{2s}\Gamma(s)}{\Gamma(2s)} = \frac{2\sqrt{\pi}}{\Gamma(s+1/2)}$. Splitting the sum modulo $\Gamma$, and replacing $X$ with $-X$ if $a < 0$, gives the stated formula.
\end{proof}

Next we recall that for~$n\geq 1$ the function $F_{1-2n,m,\mu}(\tau) = F_{1-2n,m,\mu}\left(\tau,n+\frac{1}{2}\right)$ defines a harmonic Maass form of weight $1-2n$ for $\overline{\rho}_L$ with principal part $q^{-m}(\e_\mu + \e_{-\mu})$. As an application  of Theorem~\ref{proposition higher Green's function as theta lift} we obtain the next theorem which is the main result of this section. 

\begin{theorem}\label{theorem higher Green's function as theta lift}
	Let $\mu \in L'/L$ and $m \in \Z + Q(\mu)$ with $m > 0$. For $n \geq 1$ we have
	\[
	\Phi_L^{(1-2n)}\left(F_{1-2n,m,\mu},P\right) = (4\pi m)^n\frac{2}{\pi \sqrt{m}}\tr_{m,\mu}\big(G_{2n}(\,\cdot\,,P)\big).
	\]
\end{theorem}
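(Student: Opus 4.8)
The plan is to derive this identity by specializing Theorem~\ref{proposition higher Green's function as theta lift} at the value $s = n + \tfrac{1}{2}$. By the definition recalled immediately before the statement, the harmonic Maass form $F_{1-2n,m,\mu}(\tau)$ is exactly the special value $F_{1-2n,m,\mu}(\tau, n + \tfrac{1}{2})$ of the Poincar\'e series. Since $n \geq 1$ forces $s = n + \tfrac{1}{2} \geq \tfrac{3}{2}$, this point lies strictly inside the region $\Re(s) > 1$ in which Theorem~\ref{proposition higher Green's function as theta lift} is valid, so I may substitute $s = n + \tfrac{1}{2}$ directly into that identity, with no analytic continuation required.

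First I would simplify the three $s$-dependent pieces on the right-hand side. The spectral parameter of the Green's function becomes $2s - 1 = 2n$, so that $G_{2s-1}(\,\cdot\,,P) = G_{2n}(\,\cdot\,,P)$, which is precisely the even weight appearing in the claim. The Gamma factor becomes $\Gamma(s + \tfrac{1}{2}) = \Gamma(n+1) = n!$. For the constant $C_n(s) = (4\pi m)^n (s + \tfrac{1}{2} - n)(s + \tfrac{1}{2} - n + 1)\cdots(s - \tfrac{1}{2})$, the $n$ linear factors evaluate at $s = n + \tfrac{1}{2}$ to $1, 2, \ldots, n$ respectively, so that $C_n(n + \tfrac{1}{2}) = (4\pi m)^n \, n!$.

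Substituting these into Theorem~\ref{proposition higher Green's function as theta lift} gives
\[
\Phi_L^{(1-2n)}\left(F_{1-2n,m,\mu}, P\right) = (4\pi m)^n n! \cdot \frac{2}{\pi\, n!\, \sqrt{m}}\,\tr_{m,\mu}\big(G_{2n}(\,\cdot\,,P)\big),
\]
and cancelling the two factors of $n!$ yields the asserted formula. There is no serious analytic obstacle here: all the substantive work sits in Theorem~\ref{proposition higher Green's function as theta lift}, and the present statement is simply the clean special value obtained by tuning $s$ to $n + \tfrac{1}{2}$, so that $2s - 1$ becomes the even integer $2n$. The only point that warrants attention is the bookkeeping of $C_n(s)$---in particular verifying that its factors run from $s + \tfrac{1}{2} - n$ up to $s - \tfrac{1}{2}$, so that at $s = n + \tfrac{1}{2}$ they combine to $n!$ and exactly cancel the $\Gamma(n+1)$ coming from the Gamma factor.
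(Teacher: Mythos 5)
Your skeleton is the right one --- it is the same specialization the paper performs --- and your bookkeeping is correct: at $s=n+\tfrac{1}{2}$ one has $G_{2s-1}=G_{2n}$, $\Gamma(s+\tfrac{1}{2})=n!$, and $C_n(n+\tfrac{1}{2})=(4\pi m)^n\,n!$, so the factorials cancel and the stated constant $(4\pi m)^n\frac{2}{\pi\sqrt{m}}$ drops out.

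The gap is your assertion that the substitution is automatic (``no serious analytic obstacle \dots no analytic continuation required''). That is precisely the point the paper's proof does \emph{not} take for granted: its entire content is the identity
\[
\Phi_L^{(1-2n)}\left(F_{1-2n,m,\mu}(\,\cdot\,,s),P\right)\Big|_{s = n+\frac{1}{2}} = \Phi_L^{(1-2n)}\left(F_{1-2n,m,\mu}\left(\,\cdot\,,n+\tfrac{1}{2}\right),P\right),
\]
i.e.\ equation~\eqref{eq plug in s0}, which the authors establish ``as in the proof of \cite[Proposition~2.11]{bruinier}''. The reason this needs an argument is that $\Phi_L^{(1-2n)}(\,\cdot\,,P)$ is a \emph{regularized} integral --- a truncation limit combined with a constant-term extraction, not an absolutely convergent integral --- and the identity of Theorem~\ref{proposition higher Green's function as theta lift} is obtained by an unfolding whose justification is intertwined with exactly those limiting procedures. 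Identifying the value at $s=n+\tfrac{1}{2}$ of the family of lifts with the lift of the single harmonic Maass form $F_{1-2n,m,\mu}$ therefore requires a compatibility statement (in effect, continuity of the regularized lift in $s$ at the special point, proved by estimating $F_{1-2n,m,\mu}(\,\cdot\,,s)-F_{1-2n,m,\mu}(\,\cdot\,,n+\tfrac{1}{2})$ inside the regularization), and this is where the hypothesis $n\geq 1$ genuinely enters: it makes the Poincar\'e series convergent at $s=n+\tfrac{1}{2}$ so that the Bruinier-type argument applies. That this step is substantive rather than pedantic is underlined by the Remark immediately following the theorem: for $n=0$ it is exactly the validity of \eqref{eq plug in s0} (together with the continuation of $F_{1,m,\mu}(\,\cdot\,,s)$ to $s=\tfrac{1}{2}$) that the authors single out as a difficult open problem. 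Your proof should either carry out this interchange argument or invoke it explicitly, rather than declare it unnecessary.
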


\begin{proof}
One can show, as in  the proof of \cite[Proposition~2.11]{bruinier}, that
	\begin{align}\label{eq plug in s0}
	\Phi_L^{(1-2n)}\left(F_{1-2n,m,\mu}(\,\cdot\,,s),P\right)\big|_{s = n+\frac{1}{2}} = \Phi_L^{(1-2n)}\left(F_{1-2n,m,\mu}\left(\,\cdot\,,n+\frac{1}{2}\right),P\right).
	\end{align}
 Here we use that $n \geq 1$, so $F_{1-2n,m,\mu}(\,\cdot\,,s)$ converges at $s=n+\frac{1}{2}$. Now the theorem follows from~\eqref{eq plug in s0} and Theorem~\ref{proposition higher Green's function as theta lift}.
\end{proof}

\begin{remark}
	It would be interesting to extend Theorem~\ref{theorem higher Green's function as theta lift} to $n = 0$. However, one would need to show that $F_{1,m,\mu}(\,\cdot\,,s)$ has an analytic continuation to $s = 1/2$, and that~\eqref{eq plug in s0} still holds, which seem to be difficult problems.
\end{remark}

\subsection{Splitting of the Siegel theta function at special points}

\subsubsection{The special orthogonal group}\label{sec orthogonal and spinor groups} We let $L$ be the lattice of integral binary hermitian forms over $\Q(\sqrt{D})$ defined in \eqref{lattice}, with the quadratic form $Q(X) = \det(X)$. We let 
\[
V = L \otimes \Q = \left\{X = \begin{pmatrix}a & b \\ \overline{b}& c \end{pmatrix} : a,c \in \Q,  b\in \Q(\sqrt{D})\right\}
\]
be the surrounding rational quadratic space, and $G = \SO(V)$ the corresponding special orthogonal group. As usual, there is a short exact sequence
$$1\to \mathbb{G}_m \to \mathrm{GSpin}_V\to G\to 1$$
of algebraic groups over~$\Q$, where $\mathrm{GSpin}_V$ denotes the general spinor group associated to $V$, and $\mathbb{G}_m$ is the multiplicative group.

For an integral domain $R$ of characteristic zero we  consider the commutative ring $\mathcal{O}_D\otimes R=R\oplus R\omega_D$, where $\omega_D = \frac{D+\sqrt{D}}{2}$. On $\mathcal{O}_D\otimes R$ there is a unique $R$-linear involution denoted by $z\mapsto \overline{z}$ satisfying $\overline{\sqrt{D}} = -\sqrt{D}$, which induces an involution on the group $\GL_2(\mathcal{O}_D\otimes R)$. Note that $\mathcal{O}_D\otimes R$ is an integral domain if and only if $D$ is not a square in $R$. 

A  study of the Clifford algebra associated to $(V,Q)$ (see, e.g., \cite[Chapter~1]{shimurabook}) shows that, for any field extension $F$ of $\Q$, we have
\begin{equation}\label{eq GSpin(F) description}
\mathrm{GSpin}_V(F) \cong \{g \in \GL_2(\mathcal{O}_D\otimes F): \det(g) \in F^\times\}, 
\end{equation}
with spinor norm $\nu$ corresponding to $\mathrm{det}(g)$. Moreover, the action of $\mathrm{GSpin}_V(F)$ on $V\otimes F$ corresponds to
\[
g.X = \frac{1}{\det(g)}gX\overline{g}^t.
\]
Hence, we get
\begin{equation}\label{eq: description of G(F)}
G(F) \cong  \mathrm{GSpin}_V(F)/F^\times \cong \{g \in \GL_2(\mathcal{O}_D\otimes F): \det(g) \in F^\times\}/F^{\times}.
\end{equation}

We now consider the compact open subgroup
\[
\widetilde{U}=\{g \in G(\A_f) \, : \, gL = L\}\subset G(\A_f)
\]
and its subgroup
\[
U = \{g \in \widetilde{U} \, : \, \text{$g$ fixes the classes in $L'/L$} \}.
\]
Note that $\widetilde{U}=\prod_p\widetilde{U}_p$ and $U=\prod_pU_p$ where
\[
\widetilde{U}_p=\{g_p \in G(\mathbb{Q}_p) \, : \, g_pL_p = L_p\}  \text{ and }
U_p = \{g_p \in \widetilde{U}_p \, : \, \text{$g_p$ fixes the classes in $L'_p/L_p$}\},
\]
with $L_p=L\otimes \Z_p$ and $L'_p=L'\otimes \Z_p$.

For the rest of this section we assume that $D<0$ is a prime discriminant, and denote by $\ell$ the unique prime dividing $D$. We then define
 $$z_D=\begin{cases}
    1+\sqrt{-1}, & \text{if $D=-4$},  \\
   \sqrt{-2}, & \text{if $D=-8$},  \\
   \sqrt{-\ell}, & \text{if $D=-\ell, \ell\equiv 3$ (mod 4)},
\end{cases}$$
and put $g_D=\left(\begin{smallmatrix}
    z_D& 0 \\
       0  & \overline{z_D}
\end{smallmatrix}\right)$ as an element of $\mathrm{GSpin}_V(\mathbb{Q})$ according to \eqref{eq GSpin(F) description}. We let $T_D$ denote the image of $g_D$ under the natural map $\mathrm{GSpin}_V(\mathbb{Q})\to G(\mathbb{Q})$. A straightforward computation shows that $T_D\in \widetilde{U}_{\ell}$ and that $T_D$ induces an automorphism of order two of $L'_{\ell}/L_{\ell}$.

\begin{lemma}\label{l:Dp_and_Up}
For every prime $p$ we have
$$U_p\cong  \{g \in \GL_2(\mathcal{O}_D\otimes \Z_p): \det(g) \in \Z_p^\times\}/\Z_p^{\times}.$$
Moreover, we have $\widetilde{U}_p= U_p$ if $p\neq \ell$, and $\widetilde{U}_{\ell}=
 U_{\ell}\cup T_DU_{\ell}$.
\end{lemma}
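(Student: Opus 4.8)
The plan is to verify both assertions prime by prime, using the product decompositions $\widetilde{U}=\prod_p\widetilde{U}_p$ and $U=\prod_p U_p$ recorded before the lemma. Throughout I write $\mathcal{O}_{D,p}=\mathcal{O}_D\otimes\Z_p$ and $E_p=\mathcal{O}_D\otimes\Q_p$, so that by \eqref{eq: description of G(F)} the group $G(\Q_p)$ is the quotient by $\Q_p^\times$ of $\mathrm{GSpin}_V(\Q_p)=\{g\in\GL_2(E_p):\det g\in\Q_p^\times\}$, acting through $g.X=\det(g)^{-1}gX\overline{g}^t$. Set $K_p=\{g\in\GL_2(\mathcal{O}_{D,p}):\det g\in\Z_p^\times\}$, a subgroup of $\mathrm{GSpin}_V(\Q_p)$. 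Since $\Q_p\cap\mathcal{O}_{D,p}=\Z_p$, we have $K_p\cap\Q_p^\times=\Z_p^\times$, so the image of $K_p$ in $G(\Q_p)$ is $K_p/\Z_p^\times$; thus part $(1)$ is exactly the assertion that this image equals $U_p$.

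\textbf{The easy inclusion.} First I would establish $K_p/\Z_p^\times\subseteq U_p$. For $g\in K_p$ both $g$ and $\overline{g}^t$ have entries in $\mathcal{O}_{D,p}$ and $\det(g)^{-1}\in\Z_p^\times$, so $X\mapsto\det(g)^{-1}gX\overline{g}^t$ preserves hermiticity and integrality; applying this to $g$ and to $g^{-1}\in K_p$ gives $g.L_p=L_p$, and by duality $g.L'_p=L'_p$. That $g$ fixes $L'_p/L_p$ is the one genuine computation here: the classes are represented by the off-diagonal forms $\left(\begin{smallmatrix}0 & b\\\overline{b} & 0\end{smallmatrix}\right)$ with $b\in\mathfrak{d}_D^{-1}$, and writing out the $(1,2)$-entry of $g.X$ one finds its difference from $b$ equals $\det(g)^{-1}\bigl(\alpha b(\overline{\delta}-\delta)+\beta\,\Tr(\gamma b)\bigr)$ for $g=\left(\begin{smallmatrix}\alpha&\beta\\\gamma&\delta\end{smallmatrix}\right)$. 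This lies in $\mathcal{O}_{D,p}$ because $\overline{\delta}-\delta\in\mathfrak{d}_D$ (so the first term lands in $\mathfrak{d}_D^{-1}\mathfrak{d}_D=\mathcal{O}_D$) and $\Tr(\mathfrak{d}_D^{-1})\subseteq\Z$ (so the second term is integral). For $p\neq\ell$ one has moreover $L'_p=L_p$, since $|L'/L|=|D|$ is a power of $\ell$; the condition defining $U_p$ inside $\widetilde{U}_p$ is then vacuous, which already gives $\widetilde{U}_p=U_p$ and settles the second assertion of $(2)$ away from $\ell$.

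\textbf{The main obstacle: the full stabilizer.} The heart of the argument is the reverse inclusion, i.e. the determination of $\widetilde{U}_p$. I would realize $L_p$ as the lattice $\mathrm{Int}(\mathcal{L})$ of hermitian forms integral on $\mathcal{L}:=\mathcal{O}_{D,p}^2$, and use the identity $h_{g.X}(u,v)=\det(g)^{-1}h_X(\overline{g}^t u,\overline{g}^t v)$ to rewrite $g.L_p=\det(g)^{-1}\mathrm{Int}\bigl((\overline{g}^t)^{-1}\mathcal{L}\bigr)$, together with the scaling rule $\mathrm{Int}(c\mathcal{L})=\Nm(c)^{-1}\mathrm{Int}(\mathcal{L})$. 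Imposing $g.L_p=L_p$ and comparing the \emph{shapes} of these form lattices (an elementary-divisor computation over $\mathcal{O}_{D,p}$ in which the diagonal and off-diagonal integrality scale differently, ruling out lattices not homothetic to $\mathcal{L}$) forces $(\overline{g}^t)^{-1}\mathcal{L}=c\mathcal{L}$ for some $c\in E_p^\times$, i.e. $g\in\overline{c}^{-1}\GL_2(\mathcal{O}_{D,p})$, subject to a norm condition on $c\overline{c}^{-1}$ coming from $\det g\in\Q_p^\times$. Consequently $\widetilde{U}_p$, modulo the image of $K_p$, is governed by the class of $c$ in $E_p^\times/\Q_p^\times\mathcal{O}_{D,p}^\times$ cut out by this condition, and I expect making this homothety-and-norm bookkeeping precise to be the main difficulty. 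A valuation computation then finishes it: for $p$ inert the quotient is already trivial (the uniformizer of $\Q_p$ stays a uniformizer of $E_p$); for $p$ split the norm condition forces the two valuations of $c$ to agree, again leaving only the trivial class; and for the ramified prime $p=\ell$ the group is cyclic of order two, generated by the uniformizer $z_D$ of $E_\ell$, whose associated element of $G(\Q_\ell)$ is precisely $T_D$ (note $\det g_D=\Nm(z_D)=\ell$, so $g_D\notin K_\ell$).

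\textbf{Assembly.} For every $p\neq\ell$ the chain $K_p/\Z_p^\times\subseteq U_p\subseteq\widetilde{U}_p\subseteq K_p/\Z_p^\times$ collapses to equalities, giving $(1)$ and the equality $\widetilde{U}_p=U_p$ in $(2)$. For $p=\ell$ the stabilizer computation gives $\widetilde{U}_\ell=(K_\ell/\Z_\ell^\times)\cup T_D(K_\ell/\Z_\ell^\times)$; since, as recorded just before the lemma, $T_D$ induces the nontrivial order-two automorphism of $L'_\ell/L_\ell$, we have $T_D\notin U_\ell$, so the first coset is exactly $U_\ell=K_\ell/\Z_\ell^\times$, proving $(1)$ at $\ell$, and the two cosets are disjoint, yielding $\widetilde{U}_\ell=U_\ell\cup T_DU_\ell$ as claimed.
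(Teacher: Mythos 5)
Your proposal is correct, and it reaches the lemma by a genuinely different route than the paper. The paper's proof is entry-wise: it lifts an element of $U_p$ to a matrix $g\in M_2(\mathcal{O}_D\otimes\Z_p)\setminus pM_2(\mathcal{O}_D\otimes\Z_p)$ with $\det(g)\in p^m\Z_p^\times$, writes out four explicit generators of $gL_p\overline{g}^t$ in a fixed $\Z_p$-basis of $L_p$, and forces $m=0$ by divisibility arguments in the three cases $\left(\frac{D}{p}\right)=-1,0,1$ — the split case using the ring isomorphism $\mathcal{O}_D\otimes\Z_p\cong\Z_p\times\Z_p$ and the fact that $\det(g)$ lies in the diagonal subring, the ramified case by writing $g=g_D^mh$ and using that $g_D$ acts nontrivially on $L_\ell'/L_\ell$ to force $m$ even; the second assertion is then read off from the same computation. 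You instead determine the full stabilizer $\widetilde{U}_p$ first, via the dictionary between module lattices and lattices of hermitian forms together with Smith normal form, reducing everything to the homothety class group $(\mathcal{O}_D\otimes\Q_p)^\times/\Q_p^\times(\mathcal{O}_D\otimes\Z_p)^\times$ cut down by the rationality condition on $\det(g)$: this is trivial for $p$ inert, trivial for $p$ split (the norm condition forces the two component valuations of $c$ to agree), and of order two at $\ell$, generated by the class of the uniformizer $z_D$, i.e.\ by $T_D$; intersecting with the class-fixing condition then yields both statements at once. Both are local case analyses of comparable length; yours buys a structural explanation of why exactly $T_D$ appears (it is the homothety class of the uniformizer) and a uniform derivation of (1) and (2) from a single computation of $\widetilde{U}_p$, at the price of invoking the form-lattice correspondence and elementary divisors, which in the split case must be applied componentwise since $\mathcal{O}_D\otimes\Z_p$ is not a domain there — your valuation bookkeeping shows you have that case right. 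Two small points for a final write-up: with the value-integrality convention your $\mathrm{Int}(\mathcal{L})$ is $L_p'$ rather than $L_p$ (harmless, since a transformation stabilizes $L_p$ if and only if it stabilizes $L_p'$, but the convention should be fixed), and in the inert and ramified cases the homothety conclusion already follows from the two diagonal constraints alone, so the asserted different scaling of diagonal versus off-diagonal entries is only genuinely the mechanism in the split case.
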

\begin{proof}
It is easy to check that every element in $\{g \in \GL_2(\mathcal{O}_D\otimes \Z_p): \det(g) \in \Z_p^\times\}$ acts on $V(\mathbb{Q}_p)$ as a transformation in $U_p$. In order to prove the converse, let $T\in U_p$. By \eqref{eq: description of G(F)} we have that $T$ is induced by the action of a matrix $g\in \GL_2(\mathcal{O}_D\otimes \Q_p)$ with $\det(g) \in \Q_p^\times$. By multiplying $g$ by an appropriate power of $p$ we can assume that $g\in M_2(\mathcal{O}_D\otimes \Z_p)\setminus pM_2(\mathcal{O}_D\otimes \Z_p)$. Then $\mathrm{det}(g)\in p^{m}\Z_p^{\times}$ for some integer $m \geq 0$. We claim that $m=0$. Indeed, using the $\Z_p$-basis 
$$\left\{ \begin{pmatrix}
    1 & 0 \\
    0 &  0
\end{pmatrix},\begin{pmatrix}
    0 & 0 \\
    0 &  1
\end{pmatrix},\begin{pmatrix}
    0 & 1 \\
    1 &  0
\end{pmatrix},\begin{pmatrix}
    0 & \omega_D \\
    \overline{\omega_D} &  0
\end{pmatrix} \right\}$$
of $L_p$, we see that $gL_pg^{-1}$ is generated over $\Z_p$ by
    $$\left\{ \begin{pmatrix}
        \alpha\overline{\alpha} & \alpha \overline{\gamma} \\
       \gamma \overline{\alpha}  & \gamma \overline{\gamma}
    \end{pmatrix},\begin{pmatrix}
        \beta\overline{\beta} & \beta \overline{\delta} \\
       \delta \overline{\beta}  & \delta \overline{\delta}
    \end{pmatrix},
    \begin{pmatrix}
        \alpha\overline{\beta}+\beta\overline{\alpha} & \beta\overline{\gamma}+\alpha \overline{\delta} \\
       \gamma\overline{\beta}+\delta \overline{\alpha}  & \delta\overline{\gamma}+\gamma\overline{\delta}
    \end{pmatrix},
    \begin{pmatrix}
        \omega_D\alpha\overline{\beta}+\overline{\omega_D}\beta\overline{\alpha} & \overline{\omega_D}\beta\overline{\gamma}+\omega_D\alpha \overline{\delta} \\
       \omega_D\gamma\overline{\beta}+\overline{\omega_D}\delta \overline{\alpha}  & \overline{\omega_D}\delta\overline{\gamma}+\omega_D\gamma\overline{\delta}
    \end{pmatrix} \right\}.
    $$
Since $gL_pg^{-1}=\mathrm{det}(g)L_p$, if follows that these four matrices have all their coefficients in $p^m\mathcal{O}_D\otimes \Z_p$. We distinguish three cases according to the value of $\left(\frac{D}{p}\right)$. First, assume $\left(\frac{D}{p}\right)=-1$. Then $\mathcal{O}_D\otimes \Z_p=\Z_p[\omega_D]$ is a quadratic unramified extension of $\Z_p$ with maximal ideal $p\Z_p[\omega_D]$. Since $z\overline{z}\in p^m\Z_p[\omega_D]$ for all $z\in\{\alpha,\beta,\gamma,\delta\}$, we have $\alpha,\beta,\gamma,\delta \in  p^m\Z_p[\omega_D]$. Since $g\not \in pM_2(\Z_p[\omega_D])$ we get $m=0$. Now, assume $\left(\frac{D}{p}\right)=0$, i.e.~$p=\ell$. Then $\Z_p[\omega_D]$ is a quadratic ramified extension of $\Z_p$ with maximal ideal $z_D\Z_p[\omega_D]$. Since $z\overline{z}\in z_D^{2m}\Z_p[\omega_D]$ for all $z\in\{\alpha,\beta,\gamma,\delta\}$, we have $\alpha,\beta,\gamma,\delta \in  z_D^m\Z_p[\omega_D]=\overline{z_D}^m\Z_p[\omega_D]$. But then $g=g_D^mh$ for some matrix $h\in \mathrm{GL}_2(\Z_p[\omega_D])$ with $\mathrm{det}(h)\in \Z_p^{\times}$. Since $g$ and $h$ act as transformations in $U_{\ell}$ and $g_D$ does not, we must have $m$ even. This implies  $\alpha,\beta,\gamma,\delta\in p^{m/2}\Z_p[\omega_D]$. Since $g\not \in pM_2(\Z_p[\omega_D])$ we get $m=0$, as before. Finally, assume $\left(\frac{D}{p}\right)=1$. Using that $p\nmid D$ we deduce from the generators of $gL_pg^{-1}$ computed above, that $z\overline{w}\in p^m\mathcal{O}_D\otimes \Z_p$ for all $z,w\in \{\alpha,\beta,\gamma,\delta\}$. Since $\left(\frac{D}{p}\right)=1$, there exists $\zeta \in \Z_p$ such that $\zeta^2=D$, and the map $p^m\mathcal{O}_D\otimes \Z_p\to \Z_p\times \Z_p$ given by 
\begin{equation}\label{ring_isomorphism}
    z=x+y\omega_D\mapsto (z_1,z_2)=\left(x+y\frac{D+\zeta}{2},x+y\frac{D-\zeta}{2}\right)
\end{equation}
is a ring isomorphism. We then have $(z_1w_2,z_2w_1)\in p^m( \Z_p\times \Z_p)$ for all $z,w\in \{\alpha,\beta,\gamma,\delta\}$. If $z_i\in \Z_p^{\times}$ for some index $i\in \{1,2\}$ and some $z\in  \{\alpha,\beta,\gamma,\delta\}$, then we get $w_j\in p^m\Z_p$ for the other index $j\in \{1,2\},j\neq i$, for all $w\in \{\alpha,\beta,\gamma,\delta\}$. In particular $\alpha_j\delta_j-\beta_j\gamma_j\in p^{2m}\Z_p$. But $\mathrm{det}(g)=\alpha\delta-\beta\gamma\in \Z_p$, hence its image under \eqref{ring_isomorphism} lies in the diagonal subring of $\Z_p\times \Z_p$. This implies $\mathrm{det}(g)\in p^{2m}\Z_p^{\times}$, which is only possible for $m=0$. This proves that $m=0$ in all possible cases, and completes the proof of the first statement. 

Regarding the second statement, for $p\neq \ell$ we have $L'_p=L_p$ and the result follows. The case $p=\ell$ follows from the arguments used in the previous paragraph since  any $T\in \widetilde{U}_{\ell}$ is in $U_{\ell}$, hence it acts as a matrix of the form $g_D^mh$ where $m\geq 0$ is an integer and $h\in \GL_2(\mathcal{O}_D\otimes \Z_p)$ with $ \det(h) \in \Z_p^\times$. Since $h$ defines a transformation in $U_{\ell}$ and $g_D$ defines the transformation $T_D$ with $T_D^2\in U_{\ell}$, we get $\widetilde{U}_{\ell}=U_{\ell}\cup T_DU_{\ell}$ as wanted. This completes the proof.
\end{proof}

We now prove the following approximation result.

\begin{lemma}\label{lem:strong_approximation}
	We have $G(\A_f) = G(\Q)U$.
\end{lemma}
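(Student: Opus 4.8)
The plan is to deduce the statement from strong approximation for the simply connected cover $\mathrm{Spin}_V$ of $G$, using the explicit description \eqref{eq GSpin(F) description} together with Lemma~\ref{l:Dp_and_Up}. The key observation is that $\mathrm{Spin}_V = \ker(\nu) \subseteq \mathrm{GSpin}_V$ is the kernel of the spinor norm $\nu = \det$, so by \eqref{eq GSpin(F) description} we have $\mathrm{Spin}_V \cong \mathrm{Res}_{K/\Q}\SL_2$, the restriction of scalars of $\SL_2$ from $K := \Q(\sqrt{D})$ to $\Q$. This is a $\Q$-simple, simply connected semisimple group with $\mathrm{Spin}_V(\R) = \SL_2(\mathcal{O}_D \otimes \R) = \SL_2(\C)$ non-compact (as $D<0$), so by the theorem of Kneser and Platonov it satisfies strong approximation with respect to the archimedean place. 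That is, $\mathrm{Spin}_V(\Q)$ is dense in $\mathrm{Spin}_V(\A_f)$, equivalently $\mathrm{Spin}_V(\A_f) = \mathrm{Spin}_V(\Q)\cdot K_0$ for every compact open subgroup $K_0 \subseteq \mathrm{Spin}_V(\A_f)$.

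First I would lift a given $h = (h_p) \in G(\A_f)$ to $\mathrm{GSpin}_V(\A_f)$. Since $H^1(\Q_p,\mathbb{G}_m)=0$, the projection $\pi\colon \mathrm{GSpin}_V(\Q_p) \to G(\Q_p)$ is surjective at each $p$, and for the almost all $p$ with $h_p \in U_p$ Lemma~\ref{l:Dp_and_Up} lets me choose the lift inside $\widehat{U}_p := \{g \in \GL_2(\mathcal{O}_D\otimes\Z_p) : \det(g)\in\Z_p^\times\}$; thus $h$ lifts to some $g \in \mathrm{GSpin}_V(\A_f)$ with $g_p \in \widehat{U}_p$ for almost all $p$. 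Next I would clear the spinor norm: since $\Q$ has class number one we have $\A_f^\times = \Q^\times\hat\Z^\times$, so after multiplying $g$ on the left by a suitable $\gamma = \mathrm{diag}(q,1) \in \mathrm{GSpin}_V(\Q)$ I may assume $\det(g_p)\in\Z_p^\times$ for all $p$. Setting $d_p = \mathrm{diag}(\det(g_p),1) \in \widehat{U}_p$ then produces a factorization $g = s\cdot d$ with $d=(d_p) \in \widehat{U} := \prod_p\widehat{U}_p$ and $s = (g_pd_p^{-1}) \in \mathrm{Spin}_V(\A_f)$, the latter because $s_p \in \SL_2(\mathcal{O}_D\otimes\Q_p)$ for all $p$ and $s_p \in \SL_2(\mathcal{O}_D\otimes\Z_p)$ for almost all $p$. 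Applying strong approximation to $s$ with $K_0 = \prod_p\SL_2(\mathcal{O}_D\otimes\Z_p) \subseteq \widehat{U}$ yields $s = \sigma k_0$ with $\sigma \in \mathrm{Spin}_V(\Q)$ and $k_0 \in \widehat{U}$, whence $g \in \mathrm{GSpin}_V(\Q)\cdot\widehat{U}$. Since $\pi$ maps $\mathrm{GSpin}_V(\Q)$ into $G(\Q)$ and maps $\widehat{U}$ into $U = \prod_p U_p$ (again by Lemma~\ref{l:Dp_and_Up}), projecting gives $h \in G(\Q)U$, as desired.

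The hard part is precisely the interplay between $\mathrm{GSpin}_V$ and $G$: the group $G$ is not simply connected, so strong approximation cannot be applied to it directly, and one must pass through the spin cover while keeping track of the spinor norm. The two places where genuine care is needed are (i) checking that the local lifts and the auxiliary elements $d_p$ really assemble into \emph{adelic} elements, i.e.\ lie in the relevant compact open subgroups for almost all $p$ — this is exactly where the integrality in Lemma~\ref{l:Dp_and_Up} enters; and (ii) reducing the determinant into $\hat\Z^\times$, which relies crucially on $\Q$ having class number one. Everything else is bookkeeping with the exact sequence $1 \to \mathbb{G}_m \to \mathrm{GSpin}_V \to G \to 1$.
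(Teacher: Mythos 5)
Your proof is correct, but it takes a genuinely different route from the paper. The paper's argument is class-field-free and short: it identifies the double coset space $G(\Q)\backslash G(\A_f)/\widetilde{U}$ with the set of classes in the genus of $L$, invokes Conway--Sloane (indefinite lattice of prime discriminant, hence one class in its genus) to get $G(\A_f)=G(\Q)\widetilde{U}$, and then descends from $\widetilde{U}$ to $U$ using the coset decomposition $\widetilde{U}=U\cup T_D U$ with $T_D\in G(\Q)$ --- so it needs \emph{both} parts of Lemma~\ref{l:Dp_and_Up}, and the prime discriminant hypothesis enters twice (through the genus-one-class citation and through the construction of $T_D$). You instead run strong approximation directly on the simply connected cover $\mathrm{Spin}_V\cong\mathrm{Res}_{\Q(\sqrt{D})/\Q}\SL_2$, lifting adelic points of $G$ to $\mathrm{GSpin}_V$ via Hilbert~90, normalizing the spinor norm using $\A_f^\times=\Q^\times\hat{\Z}^\times$, and projecting back; this uses only the \emph{first} part of Lemma~\ref{l:Dp_and_Up} (the local description of $U_p$) and lands on $G(\Q)U$ directly, with no detour through $\widetilde{U}$. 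The trade-off: the paper's proof is shorter on the page but outsources the real content to Eichler's theory of indefinite genera --- which is itself proved by exactly the strong approximation argument you carry out --- whereas your proof is self-contained modulo standard theorems (Kneser--Platonov, Hilbert~90, class number one of $\Q$) and is arguably more robust, since it would extend to non-prime fundamental discriminants once the description of $U_p$ is extended, while the Conway--Sloane citation and the single element $T_D$ are tied to the prime case. Note also that both arguments ultimately exploit the same geometric input: your use of noncompactness of $\mathrm{Spin}_V(\R)=\SL_2(\C)$ is precisely the indefiniteness of $L$ that the paper feeds into the genus theory.
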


\begin{proof}
	The number of double cosets in
	\[
	G(\Q) \backslash G(\A_f)/\widetilde{U}
	\]
	is equal to the number of classes in the genus of $L$. Since $L$ is indefinite of determinant $D$, and $D$ is a prime discriminant, we have by \cite[Chapter 15, Theorem 21]{conwaysloane} that the genus of $L$ consist of a single class, thus
	\[
	G(\A_f) = G(\Q)\widetilde{U}.
	\]
From Lemma \ref{l:Dp_and_Up} we get $\widetilde{U}=U\cup T_DU$. Since $T_D\in G(\mathbb{Q})$ we conclude that
\[
	G(\A_f) = G(\Q)(U\cup T_DU)=G(\Q)U,
	\]
as claimed.
\end{proof}

We let 
\[
\Gamma_U = G(\Q) \cap U,
\]
which is a discrete subgroup of $G(\Q)$. From Lemma \ref{l:Dp_and_Up} it follows that
\[
\Gamma_U \cong \{g \in \GL_2(\mathcal{O}_D): \det(g) = \pm 1\}/\{\pm 1\}.
\]

\begin{remark}\label{rmk:det=+-1_to_det=1}
	Since an element in $\{g \in \GL_2(\mathcal{O}_D): \det(g) = \pm 1\}$ of determinant $-1$ can be written as $g = \varepsilon g_0$ with $\varepsilon = \left(\begin{smallmatrix}-1 & 0 \\ 0 & 1 \end{smallmatrix}\right)$ and $g_0 \in \SL_2(\mathcal{O}_D)$, we have
	\[
	\{g \in \GL_2(\mathcal{O}_D): \det(g) = \pm 1\} = \SL_2(\mathcal{O}_D) \cup  \SL_2(\mathcal{O}_D)\varepsilon,
	\]
	as a disjoint union. Also note that $\varepsilon$ acts on $V(\Q)$ by
	\[
	\begin{pmatrix}a & b \\ \overline{b} & c \end{pmatrix} \mapsto \begin{pmatrix}-a &  b \\ \overline{b} & -c \end{pmatrix}.
	\]
	Hence, we have
	\[
	\Gamma_U \backslash L_{m,\mu}^0 = \Gamma \backslash L_{m,\mu}^{0,+},
	\]
	where the $+$ indicates that we only take the positive definite (i.e. $a > 0$) hermitian forms in $L_{m,\mu}^0$ (recall that $\Gamma=\PSL_2(\mathcal{O}_D)$ and $L_{m,\mu}^0$ denotes the set of all $X \in L+\mu$ with $Q(X) = m$ which are primitive in $L'$). Moreover, for every $X\in V(\Q)$ the stabilizers subgroups $\Gamma_X$ and $\Gamma_{U,X} = \{\alpha \in \Gamma_U: \alpha X = X\}$ satisfy $|\Gamma_X|=|\Gamma_{U,X}|$.
\end{remark}

\begin{lemma}\label{lem:one_U_orbit}
For any fixed $X_0 \in L_{m,\mu}^0$ we have
\[
L_{m,\mu}^0 = V(\Q) \cap U X_0.
\]
\end{lemma}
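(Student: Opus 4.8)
The plan is to establish the two inclusions separately and to reduce the nontrivial one to a local transitivity statement for the groups $U_p$.

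The inclusion $V(\Q)\cap UX_0\subseteq L_{m,\mu}^0$ is formal, so I would dispatch it first. Writing $u=(u_p)\in U$, each component $u_p$ acts on $V(\Q_p)$ as an isometry with $u_pL_p=L_p$ (hence also $u_pL_p'=L_p'$) and fixes the classes of $L_p'/L_p$. Thus if $X=uX_0$ happens to lie in $V(\Q)$, then $Q(X)=Q(X_0)=m$, and for each $p$ one has $X_p=u_pX_{0,p}\in L_p'$ with $X_p\equiv u_p\mu\equiv\mu\pmod{L_p}$; since $X$ is rational, this forces $X\in L'$ with $X\equiv\mu\pmod L$. Finally, as each $u_p$ is a $\Z_p$-linear automorphism of $L_p'$, it preserves primitivity, so $X$ is primitive in $L'$. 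Hence $X\in L_{m,\mu}^0$.

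For the reverse inclusion, fix $X\in L_{m,\mu}^0$. Since $U=\prod_pU_p$ acts componentwise, it is enough to produce, for each prime $p$, an element $u_p\in U_p$ with $u_pX_0=X$ in $V(\Q_p)$: the resulting tuple $u=(u_p)$ then lies in $U$ and satisfies $uX_0=X\in V(\Q)$. Both $X$ and $X_0$ belong, for every $p$, to the local set
\[
L_{m,\mu}^0(\Z_p)=\{Y\in L_p': Q(Y)=m,\ Y\equiv\mu\pmod{L_p},\ Y\text{ primitive in }L_p'\},
\]
so the entire statement reduces to showing that $U_p$ acts transitively on $L_{m,\mu}^0(\Z_p)$ for every prime $p$.

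To prove this local transitivity I would use the explicit model $U_p\cong\{g\in\GL_2(\mathcal{O}_D\otimes\Z_p):\det(g)\in\Z_p^\times\}/\Z_p^\times$, acting by $g.Y=\det(g)^{-1}gY\overline{g}^t$, furnished by Lemma~\ref{l:Dp_and_Up}. For the primes $p\neq\ell$ we have $L_p'=L_p$ unimodular and $U_p=\widetilde{U}_p=\SO(L_p)$; the class $\mu$ is then trivial locally, and transitivity of $\SO(L_p)$ on primitive vectors of norm $m$ is the standard transitivity result for special orthogonal groups of $p$-adic lattices, where one passes from $\O(L_p)$ to $\SO(L_p)$ by composing with a reflection in an anisotropic vector of $X_0^{\perp}$. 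The main obstacle is the single ramified prime $p=\ell$: here $L_\ell'\neq L_\ell$, the residue class $\mu$ is genuinely present, and one must work with the class-fixing subgroup $U_\ell$ rather than with all of $\widetilde{U}_\ell=U_\ell\cup T_DU_\ell$. At this prime I would argue directly in the $\GL_2(\mathcal{O}_D\otimes\Z_\ell)$-model, invoking the local reduction theory of binary hermitian forms over the ramified (and, for $D=-4,-8$, dyadic) ring $\mathcal{O}_D\otimes\Z_\ell$ to bring every primitive form of determinant $m$ and class $\mu$ into a normal form depending only on $m$ and $\mu$; this yields transitivity of $U_\ell$ and completes the argument.
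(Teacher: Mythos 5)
Your global-to-local reduction is exactly the skeleton of the paper's own proof: the easy inclusion is dispatched the same way (the paper is terser about primitivity, which you rightly spell out), and the paper likewise reduces the hard inclusion to producing, for every prime $p$, an element of $U_p$ carrying $X_0$ to $X$. The divergence --- and the gap --- lies in how that local statement is proved. The paper does it by a uniform, elementary computation valid at \emph{every} prime, including $p=\ell$: primitivity of $X$ in $L'$ guarantees that, after acting by an element of $\PSL_2(\mathcal{O}_D)\subset G(\Q)\cap U_p$ (either the inversion $\left(\begin{smallmatrix}0&-1\\1&0\end{smallmatrix}\right)$ or a unipotent $\left(\begin{smallmatrix}1&z\\0&1\end{smallmatrix}\right)$; the point is that if $a+\tr(z\overline{b})+c|z|^2$ were divisible by $p$ for all $z\in\mathcal{O}_D$ then $\tfrac{1}{p}X\in L'$), the entry $a$ becomes a $p$-adic unit; then $\left(\begin{smallmatrix}1/a&0\\0&1\end{smallmatrix}\right)\in G(\Q)\cap U_p$ and a lower-triangular unipotent bring $X$ to the principal form $\left(\begin{smallmatrix}1&b_0\\ \overline{b_0}&*\end{smallmatrix}\right)$, which depends only on $(m,\mu)$. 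No case distinction on $p$, and no input from the structure theory of local orthogonal or hermitian groups, is needed.

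Your treatment of the local step is incomplete precisely where the real content lies. At the unramified primes, the appeal to transitivity of $\O(L_p)$ on primitive vectors of fixed norm in a unimodular lattice is essentially fine for odd $p$ (Witt/Kneser), though at $p=2$ (unramified when $D=-\ell$, $\ell\equiv 3\pmod 4$) you need the version for \emph{even} unimodular $\Z_2$-lattices, and your reflection trick requires a vector of \emph{unit} norm in $X_0^\perp\cap L_p$ (mere anisotropy does not make the reflection preserve $L_p$) --- both available here, but not automatic. The genuine gap is at $p=\ell$: ``invoking the local reduction theory of binary hermitian forms over the ramified (dyadic, for $D=-4,-8$) ring'' is not an off-the-shelf theorem in the form you need. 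What is required is exactly the assertion to be proved --- that primitive forms of determinant $m$ and class $\mu$ form a single orbit under the \emph{class-fixing} group $U_\ell$ --- whereas classification results for hermitian lattices over ramified (and dyadic) local rings give equivalence under the full unimodular group, i.e.\ under $\widetilde{U}_\ell=U_\ell\cup T_DU_\ell$; you flag this distinction yourself but give no argument for descending from $\widetilde{U}_\ell$ to $U_\ell$, nor for the existence of a normal form depending only on $(m,\mu)$. So the ramified prime, which you identify as the main obstacle, remains unproven as written; the missing idea is the paper's observation that primitivity lets one normalize the $(1,1)$-entry to a unit by integral moves, after which the normalization to the principal form is uniform in $p$.
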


\begin{proof}
	Since $U$ preserves $L$ and fixes the classes of $L'/L$, we have $V(\Q) \cap UX_0 \subseteq L_{m,\mu}^0$. Conversely, given $b_0\in \mu$, for each prime $p$ we will show that any point 
	\[
	X = \begin{pmatrix}a & b \\ \overline{b} & c \end{pmatrix} \in L_{m,\mu}^0
	\]
	can be transformed into the ``principal form''
	\[
	 \begin{pmatrix} 1 & b_0 \\ \overline{b_0} & * \end{pmatrix} \in L_{m,\mu}^0,
	\]
	(where $*$ is uniquely determined from $\det(X_0) = m$) using a  transformation in $G(\Q)\cap U_p$. First, since $X$ is primitive in $L'$, we can act by some element in $\PSL_2(\mathcal{O}_D) \subset G(\Q)\cap U_p$ to assume that $a$ is coprime to $p$. Indeed, if $a$ is already coprime to $p$ then there is nothing to prove. If $c$ is coprime to $p$, we can act by $\left(\begin{smallmatrix}0 & -1 \\ 1 & 0 \end{smallmatrix}\right)$. If $a$ and $c$ are divisible by $p$, then acting by $\left(\begin{smallmatrix}1 & z \\ 0 & 1 \end{smallmatrix}\right)$, with $z\in \mathcal{O}_D$, transforms the $a$-component of $X$ to $a+\tr(z\overline{b})+c|z|^2$. If this number were divisible by $p$ for all $z\in \mathcal{O}_D$, then we would have $b/p\in \mathfrak{d}^{-1}$, hence $\frac{1}{p}X\in L'$. This contradicts the fact that $X$ is primitive in $L'$. Now, since $p\nmid a$, the matrix $\left(\begin{smallmatrix}1/a & 0 \\ 0 & 1 \end{smallmatrix} \right)$ acts as a transformation in $G(\Q)\cap U_p$ (by Lemma \ref{l:Dp_and_Up}) with
	\[
	\begin{pmatrix}1/a & 0 \\ 0 & 1 \end{pmatrix}.X = \begin{pmatrix}1 & b \\ \overline{b} & ac \end{pmatrix}. 
	\]
	 Finally, for $\beta\in \mathcal{O}_D$ the matrix $\left(\begin{smallmatrix}1 & 0 \\ \overline{\beta} & 1 \end{smallmatrix} \right)$  also acts as a transformation in $G(\Q)\cap U_p$, with
	\[
	\begin{pmatrix}1 & 0 \\ \overline{\beta} & 1 \end{pmatrix}.\begin{pmatrix}1 & b \\ \overline{b} & ac \end{pmatrix}
 = \begin{pmatrix}1 & b + \beta \\ \overline{b+\beta} & * \end{pmatrix}.
	\]
	Hence,  by choosing $\beta = b_0-b \in \mathcal{O}_D$, we arrive at the desired principal form. This completes the proof of the lemma.
\end{proof}

From now on, we fix some primitive $X_0 \in L_{m,\mu}^0$. We let 
\[
H = \{g \in G: g X_0 = X_0\}
\]
be the stabilizer of $X_0$ in $G$, which can be identified with $\SO(W)$ where $W = (\Q X_0)^\perp$ is a negative definite three-dimensional rational quadratic space. In $H(\A_f)$ we have the compact open subgroup 
\[
K = H(\A_f) \cap U.
\]
Since $G(\A_f) = G(\Q)U$ and $H(\A_f) \subset G(\A_f)$, every element $h \in H(\A_f) \subset G(\A_f)$ can be written as a product $h = gu$ for some $g \in G(\Q)$ and $u \in U$. The following bijection, that we extract from \cite{shimura}, allows us to translate the set $\Gamma \backslash L_{m,\mu}^0$ into an adelic setting. Recall that $\Gamma_U=G(\Q)\cap U$ and $\Gamma_{U,X} = \{\alpha \in \Gamma_U: \alpha X = X\}$ is the stabilizer of a point $X \in V(\Q)$ in $\Gamma_U$.

\begin{theorem}\label{shimura theorem}
 We have a bijection
	\begin{align*}
	H(\Q) \backslash H(\A_f) / K \quad &\to  \quad \Gamma_U \backslash L_{m,\mu}^0, \\
	h = gu \quad &\mapsto \quad g^{-1} X_0.
	\end{align*}
	Moreover, if we let $\Gamma_h = H(\Q) \cap h K h^{-1}$ then we have $|\Gamma_h| = |\Gamma_{U,g^{-1}X_0}|$.
	\end{theorem}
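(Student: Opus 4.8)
The plan is to check that the map is well-defined on double cosets, prove surjectivity and injectivity by direct manipulation of the decomposition $h = gu$, and finally compute the stabilizers by conjugation. Throughout I would repeatedly use the two structural facts already established: the single-orbit statement $L_{m,\mu}^0 = V(\Q)\cap UX_0$ from Lemma~\ref{lem:one_U_orbit}, and the strong approximation identity $G(\A_f) = G(\Q)U$ from Lemma~\ref{lem:strong_approximation}, which guarantees that a decomposition $h = gu$ with $g\in G(\Q)$ and $u\in U$ exists for every $h\in H(\A_f)$.

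First I would verify that $g^{-1}X_0$ really lies in $L_{m,\mu}^0$ and that its $\Gamma_U$-class is independent of all choices. Since $h = gu$ fixes $X_0$, we have $uX_0 = g^{-1}X_0$; the left-hand side lies in $UX_0$ and the right-hand side in $V(\Q)$, so by Lemma~\ref{lem:one_U_orbit} the common value lies in $L_{m,\mu}^0$. If $gu = g'u'$ is a second decomposition, then $g'^{-1}g = u'u^{-1}\in G(\Q)\cap U = \Gamma_U$, so $g^{-1}X_0$ and $g'^{-1}X_0$ differ by an element of $\Gamma_U$. Replacing $h$ by $\eta h k$ with $\eta\in H(\Q)$ and $k\in K\subseteq U$ gives the decomposition $(\eta g)(uk)$ with $\eta g\in G(\Q)$ and $uk\in U$, whose image is $(\eta g)^{-1}X_0 = g^{-1}\eta^{-1}X_0 = g^{-1}X_0$ because $\eta$ fixes $X_0$. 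Hence the map descends to double cosets.

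For surjectivity, given $X\in L_{m,\mu}^0$ I would first invoke Witt's extension theorem: since $Q(X) = Q(X_0) = m \neq 0$, the vectors $X$ and $X_0$ lie in a common $\mathrm{O}(V)(\Q)$-orbit, and because $W = (\Q X_0)^\perp$ is a nondegenerate $3$-dimensional space containing anisotropic vectors, a reflection fixing $X_0$ adjusts the determinant, producing $g\in G(\Q)$ with $gX = X_0$. Writing $X = u_0 X_0$ with $u_0\in U$ via Lemma~\ref{lem:one_U_orbit}, the element $h = gu_0$ satisfies $hX_0 = gX = X_0$, hence $h\in H(\A_f)$, and it maps to $g^{-1}X_0 = X$. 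For injectivity, suppose $h = gu$ and $h' = g'u'$ have images in the same $\Gamma_U$-orbit, say $g^{-1}X_0 = \gamma\, g'^{-1}X_0$ with $\gamma\in\Gamma_U$. Then $\eta := g\gamma g'^{-1}$ fixes $X_0$ and lies in $G(\Q)$, so $\eta\in H(\Q)$, and a short computation using $uX_0 = g^{-1}X_0$ and $u'X_0 = g'^{-1}X_0$ shows that $k := h'^{-1}\eta^{-1}h = u'^{-1}\gamma^{-1}u$ lies in $U$ and fixes $X_0$, so $k\in K$ and $h = \eta h' k$; thus $h$ and $h'$ represent the same double coset.

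Finally, for the stabilizer statement with image $X = g^{-1}X_0$, I would show that conjugation $\alpha\mapsto g\alpha g^{-1}$ carries $\Gamma_{U,X}$ bijectively onto $\Gamma_h = H(\Q)\cap hKh^{-1}$. If $\alpha\in\Gamma_U$ fixes $X$, then $g\alpha g^{-1}$ fixes $X_0$, and since $hKh^{-1} = g(uKu^{-1})g^{-1}$ the membership $g\alpha g^{-1}\in hKh^{-1}$ reduces to $u^{-1}\alpha u\in K$, which holds because $u^{-1}\alpha u\in U$ automatically and fixes $X_0$ (using $uX_0 = X$ and $\alpha X = X$); the reverse inclusion is identical with $g^{-1}(\,\cdot\,)g$. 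I expect the only genuinely delicate point to be surjectivity, where one must upgrade the purely local/adelic equivalence $X\in UX_0$ to a global rational isometry via Witt's theorem together with the determinant correction in $W$; the remaining steps are careful but routine bookkeeping of which factors belong to $G(\Q)$, $U$, $H(\A_f)$, and $K$.
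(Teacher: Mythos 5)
Your proof is correct, but it takes a more self-contained route than the paper. The paper's own proof is essentially a citation: it invokes \cite[Theorem~2.2(ii)]{shimura} for the bijection
\[
H(\Q)\backslash \big(H(\A_f)\cap G(\Q)U\big)\big/\big(H(\A_f)\cap U\big) \;\longrightarrow\; \big(G(\Q)\cap U\big)\backslash \big(V(\Q)\cap UX_0\big), \qquad h = gu \mapsto g^{-1}X_0,
\]
and then, exactly as you do, applies Lemma~\ref{lem:strong_approximation} to replace $H(\A_f)\cap G(\Q)U$ by $H(\A_f)$ and Lemma~\ref{lem:one_U_orbit} to identify the target with $\Gamma_U\backslash L_{m,\mu}^0$; the stabilizer statement is left to the reader, with the same conjugation map $\beta\mapsto g^{-1}\beta g$ that you spell out. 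What you do differently is to reprove Shimura's bijection in this special case: your double-coset bookkeeping for well-definedness and injectivity, and your use of Witt's extension theorem plus a determinant-correcting reflection in an anisotropic vector of $W=(\Q X_0)^\perp$ for surjectivity, are precisely the ingredients inside Shimura's general argument. Your identification of surjectivity as the only genuinely delicate step is accurate, and the reflection trick is legitimate here because $W$ is nondegenerate (indeed negative definite) and three-dimensional, so anisotropic vectors exist and the reflection fixes $X_0$ while flipping the determinant. The trade-off: the paper's proof is shorter and rests on a general theorem that needs no definiteness assumptions and no strong approximation for the abstract bijection, while yours is self-contained, makes visible exactly where the rationality of $X$, the single-$U$-orbit lemma, and $G(\A_f)=G(\Q)U$ enter, and actually verifies the stabilizer bijection rather than leaving it to the reader.
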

	
	\begin{proof}
  Since $U \subset G(\A_f)$ is open and compact, the map
		\begin{align}\begin{split}\label{shimura bijection}
		H(\Q)\backslash (H(\A_f) \cap G(\Q)U)/(H(\A_f) \cap U) &\to (G(\Q)\cap U)\backslash (V(\Q) \cap UX_0) \\
		h = gu & \mapsto g^{-1}X_0,
		\end{split}
		\end{align}
  is a bijection by \cite[Theorem 2.2(ii)]{shimura}.
  By Lemma \ref{lem:strong_approximation} we have $G(\A_f) = G(\Q)U$ and hence $H(\A_f) \cap G(\Q)U = H(\A_f)$. Thus the left-hand side in \eqref{shimura bijection} is $H(\Q) \backslash H(\A_f)/K$. Moreover, by Lemma~\ref{lem:one_U_orbit} we have
		\[
		(G(\Q)\cap U)\backslash (V(\Q) \cap UX_0) = \Gamma_U \backslash L_{m,\mu}^0,
		\]
		This gives the stated bijection. We leave it to the reader to verify that the map
		\[
		\Gamma_h \to \Gamma_{U,g^{-1}X_0}, \quad \beta \mapsto g^{-1}\beta g
		\]
		is a bijection.
	\end{proof}
	
	We obtain the following splitting of the trace of the theta function.
	
	\begin{theorem}\label{theorem splitting theta}
		Fix a point $X_0 \in L_{m,\mu}^0$, and define sublattices
		\[
		P = L \cap \Q X_0, \qquad N = L \cap (\Q X_0)^\perp,
		\]
		which are one-dimensional positive definite and three-dimensional negative definite, respectively. Let $N^- = (N,-Q)$. Then we have
		\[
		\tr_{m,\mu}^0\left(\Theta_L(\tau,\, \cdot \,)\right) = \tr_{m,\mu}^0(1) \cdot \left(\Theta_{P}(\tau) \otimes \overline{E_{3/2,N^-}(\tau)}v^{3/2}\right)^L,
		\]
		where $\Theta_{P}(\tau)$ is the weight $1/2$ holomorphic theta function for $P$ and $E_{3/2,N^-}(\tau)$ is the weight $3/2$ holomorphic Eisenstein series for $N^-$, and the superscript $L$ denotes the operator defined in Section~\ref{section modular forms}.
	\end{theorem}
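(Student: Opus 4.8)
The plan is to combine the pointwise splitting of the Siegel theta function at a special point, recorded in Section~\ref{section theta functions}, with the Siegel--Weil formula of Theorem~\ref{siegel weil formula}. Concretely, for each primitive $X \in L_{m,\mu}^{+,0}$ the associated special point $P_X \in \H^3$ corresponds to the positive line $\R v(P_X) = \R X \in \Gr(L)$ (by the discussion around \eqref{eq point for X}). I would first apply the rational splitting $V = \Q X \oplus (\Q X)^\perp$ together with \eqref{eq theta relation} and the tensor product decomposition of Section~\ref{section theta functions} to write
\[
\Theta_L(\tau,P_X) = \left(\Theta_{P_X}(\tau) \otimes \overline{\Theta_{N_X^-}(\tau)}\, v^{3/2}\right)^{L},
\]
where $P_X = L \cap \Q X$ is positive definite of rank $1$ and $N_X = L \cap (\Q X)^\perp$ is negative definite of rank $3$. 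Summing over $\Gamma$-classes with weights $1/|\Gamma_X|$ then expresses $\tr_{m,\mu}^0(\Theta_L(\tau,\cdot))$ as a sum of such terms, and the task is to show that the positive definite factor is constant while the negative definite factors average to the Eisenstein series.

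For this I would use Lemma~\ref{lem:one_U_orbit}, which exhibits $L_{m,\mu}^0$ as a single $U$-orbit of $X_0$; writing $X = g^{-1}X_0$ with $h = gu \in H(\A_f)$ as in Theorem~\ref{shimura theorem}, a local computation gives $(gL)_p = h_p L_p$ for every prime $p$ (since $u \in U$ fixes each $L_p$ and $g = hu^{-1}$). Because $gX = X_0$ and $h$ stabilizes $X_0$, we get $g P_X = hL \cap \Q X_0$ and $g N_X = hL \cap (\Q X_0)^\perp$, so $gP_X$ and $gN_X$ run through the genera of $P$ and $N$ respectively as $h$ varies. Since $g$ is an isometry, $\Theta_{P_X} = \Theta_{gP_X}$ and $\Theta_{N_X^-} = \Theta_{(gN_X)^-}$; as a rank $1$ lattice is determined by its genus, all $P_X$ are isometric to $P$, so $\Theta_{P_X} = \Theta_P$ is independent of $X$ and may be pulled out of the sum. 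This identifies $\Theta_{N_X^-}(\tau)$ with the adelic theta value $\Theta_{N^-}(\tau,h)$ of \eqref{eqn-theta-L} (up to the identification of discriminant groups in Remark~\ref{remark theta functions genus}), leaving
\[
\tr_{m,\mu}^0(\Theta_L(\tau,\cdot)) = \left(\Theta_P(\tau) \otimes \overline{\Big(\sum_{h}\tfrac{1}{|\Gamma_h|}\Theta_{N^-}(\tau,h)\Big)}\, v^{3/2}\right)^{L},
\]
where $h$ runs over $H(\Q)\backslash H(\A_f)/K$ and $|\Gamma_h| = |\Gamma_X|$ by Theorem~\ref{shimura theorem}.

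It remains to evaluate the inner sum. Applying Lemma~\ref{sum lemma} to $f(h) = \Theta_{N^-}(\tau,h)$ on $H = \SO((\Q X_0)^\perp)$ with the compact open subgroup $K$, together with the Siegel--Weil formula (Theorem~\ref{siegel weil formula}) for the rank $3$ lattice $N^-$, the normalization $\vol(K) = 2(\sum_h 1/|\Gamma_h|)^{-1}$ cancels the factor $2$ in $\int \Theta_{N^-}(\tau,h)\,dh = 2E_{3/2,N^-}(\tau)$, yielding
\[
\sum_{h}\tfrac{1}{|\Gamma_h|}\Theta_{N^-}(\tau,h) = \Big(\sum_h \tfrac{1}{|\Gamma_h|}\Big)E_{3/2,N^-}(\tau) = \tr_{m,\mu}^0(1)\,E_{3/2,N^-}(\tau).
\]
Substituting this back and pulling the scalar $\tr_{m,\mu}^0(1)$ out of the trace operator gives the claimed formula.

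The main obstacle I anticipate is the bookkeeping of the finite quadratic modules: one must check that, under the Shimura bijection of Theorem~\ref{shimura theorem}, the classical negative definite theta functions $\Theta_{N_X^-}$ really match the adelic values $\Theta_{N^-}(\tau,h)$ as vector-valued forms for $\overline{\rho}_N$, including the possible permutation $\sigma \in \O(N'/N)$ of components from Remark~\ref{remark theta functions genus}. This requires controlling the local behavior at the ramified prime $\ell$, where $U_\ell \subsetneq \widetilde{U}_\ell = U_\ell \cup T_D U_\ell$ by Lemma~\ref{l:Dp_and_Up}, and verifying that the symmetrization implicit in the Siegel--Weil formula (as in the derivation of \eqref{siegel weil finite sum}) is compatible with the trace operator $(\cdot)^L$, so that the discriminant-group identifications are consistent on both sides.
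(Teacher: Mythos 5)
Your overall architecture --- the Shimura bijection (Theorem~\ref{shimura theorem}), Lemma~\ref{sum lemma}, the splitting \eqref{eq theta relation}, and the Siegel--Weil formula (Theorem~\ref{siegel weil formula}) --- is the same as the paper's, but you permute the order of operations: you split $\Theta_L(\tau,P_X)$ pointwise, using sublattices $P_X = L\cap \Q X$ and $N_X = L\cap(\Q X)^\perp$ that vary with $X$, and only afterwards try to match the resulting terms with the adelic values $\Theta_{N^-}(\tau,h)$. The paper does it the other way around: for $X = g^{-1}X_0$ and $h=gu$ it first uses the isometry $g$ to rewrite each value as $\Theta_L(\tau,P_X) = \Theta_L(\tau,P_0,h)$ --- the Grassmannian point stays fixed at $P_0$ and the \emph{lattice} moves --- then converts the sum over double cosets into $\frac{1}{\vol(K)}\int_{H(\Q)\backslash H(\A_f)}\Theta_L(\tau,P_0,h)\,dh$ via Lemma~\ref{sum lemma}, and only then splits along the \emph{fixed} decomposition $P\oplus N$ and applies Siegel--Weil inside the integral. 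This reordering is not cosmetic; it is precisely what makes the two issues below evaporate.

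First, the ``bookkeeping'' you flag in your final paragraph is a genuine unproven step rather than a deferred routine check. In your first display the operator $(\cdot)^L$ is the trace relative to $P_X\oplus N_X\subseteq L$, which changes with $X$, and $\Theta_{N_X^-}$ takes values in $\C[N_X'/N_X]$. To reach your second display you must transport everything through $g$: verify that $gP_X = P$ exactly (true, since $h$ fixes $\Q X_0$ pointwise --- this is stronger than your ``determined by its genus'' argument), that $gN_X = hN$, that $\Theta_{(hN)^-}$ matches $\Theta_{N^-}(\tau,h)$ as in Remark~\ref{remark theta functions genus}, and that the identification $(hL)'/(hL)\cong L'/L$ induced by $g$ (equivalently by $h$, since $u$ fixes $L'/L$) intertwines the two trace operators. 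Carrying this out in full is essentially equivalent to proving the identity $\Theta_L(\tau,P_X)=\Theta_L(\tau,P_0,h)$ and splitting afterwards, i.e., to the paper's proof; as written, your proposal stops exactly where the real content begins.

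Second, and not flagged at all: applying Lemma~\ref{sum lemma} to $f(h)=\Theta_{N^-}(\tau,h)$ requires this function to be right $K$-invariant --- indeed, even writing $\sum_{h}\frac{1}{|\Gamma_h|}\Theta_{N^-}(\tau,h)$ over double cosets already presupposes it. This means every $k\in K=H(\A_f)\cap U$ must not only preserve $N$ (easy) but act trivially on $N'/N$, and that is not formal: $|N'/N|=2m|D|$ is strictly larger than $|L'/L|=|D|$, so triviality on $L'/L$ does not by itself give triviality on $N'/N$. The statement is true, but needs an argument using that $N=L\cap W$ is primitive in $L$: then the orthogonal projection $L'\to N'$ is surjective, so any $y\in N'$ can be written as $y=x_W$ with $x\in L'$, and since $k$ fixes the $\Q X_0$-component of $x$ one gets $ky-y=kx-x\in L\cap W=N$. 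Either supply this argument, or --- simpler --- do what the paper does and apply Lemma~\ref{sum lemma} to $\Theta_L(\tau,P_0,h)$, whose right $K$-invariance is immediate from $K\subseteq U$, postponing the splitting and the Siegel--Weil formula until you are inside the integral.
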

	
	\begin{proof}
		Using Remark~\ref{rmk:det=+-1_to_det=1} and the bijection from Theorem~\ref{shimura theorem}, we can write the left-hand side as
		\[
		\tr_{m,\mu}^0(\Theta_L(\tau,\,\cdot\,)) = \sum_{X \in \Gamma_U \backslash L_{m,\mu}^0}\frac{1}{|\Gamma_{U,X}|}\Theta_L(\tau,P_X) = \sum_{h \in H(\Q)\backslash H(\A_f)/K}\frac{1}{|\Gamma_h|}\Theta_L(\tau,P_0,h),
		\]
		where $P_0 \in \H^3$ is the special point corresponding to $X_0$, and
		\[
		\Theta_L(\tau,P_0,h) = \Im(\tau)^{3/2}\sum_{\mu \in L'/L}\sum_{X \in h(L + \mu)}e(Q(X_{v(P_0)})\tau + Q(X_{v(P_0)^\perp})\overline{\tau})\e_\mu.
		\]
		Since $K=H(\A_f) \cap U$ acts trivially on $L'/L$, the function $f(h) = \Theta_L(\tau,P_0,h)$ on $H(\Q) \backslash H(\A_f)$ is invariant under $K$ from the right, so Lemma \ref{sum lemma} leads to
		\[
		\sum_{h \in H(\Q)\backslash H(\A_f)/K}\frac{1}{|\Gamma_h|}\Theta_L(\tau,P_0,h) = \frac{1}{\vol(K)}\int_{H(\Q) \backslash H(\A_f)}\Theta_L(\tau,P_0,h)dh.
		\]
		By \eqref{eq theta relation} the theta functions for $L$ and $P \oplus N$ are related by
		\[
		\Theta_L(\tau,P_0,h) = \Theta_{P \oplus N}(\tau,P_0,h)^L.
		\]
		Note that the operator $f \mapsto f^L$ commutes with the integral, so we get
		\[
		\int_{H(\Q) \backslash H(\A_f)}\Theta_L(\tau,P_0,h)dh = \left(\int_{H(\Q) \backslash H(\A_f)}\Theta_{P \oplus N}(\tau,P_0,h)dh \right)^L.
		\]
		Next, we use the splitting
		\[
		\Theta_{P \oplus N}(\tau,P_0,h) = \Theta_{P}(\tau) \otimes v^{3/2}\overline{\Theta_{N^-}(\tau,h)}.
		\]
		Note that $\Theta_{P}(\tau)$ is independent of $h$ since $H(\A_f)$ acts trivially on $P$. Thus we get
		\[
		\int_{H(\Q) \backslash H(\A_f)}\Theta_{P \oplus N}(\tau,P_0,h)dh = \Theta_{P}(\tau) \otimes v^{3/2}\overline{\int_{H(\Q) \backslash H(\A_f)}\Theta_{N^-}(\tau,h)dh}.
		\]
		By the Siegel--Weil formula (Theorem~\ref{siegel weil formula}) we have
		\[
		\int_{H(\Q) \backslash H(\A_f)}\Theta_{N^-}(\tau,h)dh = 2E_{3/2,N^-}(\tau).
		\]
		It remains to compute $\vol(K)$. Using the formula for $\vol(K)$ from Lemma~\ref{sum lemma}, and the bijection from Theorem~\ref{shimura theorem} again, we obtain
		\[
		\frac{2}{\vol(K)} = \sum_{h \in H(\Q)\backslash H(\A_f)/K}\frac{1}{|\Gamma_h|} = \sum_{X \in \Gamma_U \backslash L_{m,\mu}^0}\frac{1}{|\Gamma_{U,X}|} =  \tr_{m,\mu}^0(1).
		\]
		Taking everything together, we obtain the stated formula.
	\end{proof}

\begin{remark}\label{remark determinant N}
    The unary lattice~$P$ associated to~$X_0\in L^0_{m,\mu}$ equals $\Z d_{\mu} X_0$ and has determinant~$2d_{\mu}m$, where~$d_{\mu}$ is the order of~$\mu$ in~$L'/L$. The ternary lattice~$N^-$ has determinant~$2m|D|$. Indeed, by Lemma~\ref{lem:one_U_orbit} one can assume that~$X_0$ is the ``principal form'' $\left(\begin{smallmatrix}
        1 & b_0 \\ \overline{b_0} & c_0
    \end{smallmatrix}\right)$, where~$b_0\in \mu$ and~$c_0=m+|b_0|^2$, and then a direct computation using the equality
    $$N=\left\{ \begin{pmatrix}
        a & b \\
        \overline{b} & \tr(b\overline{b_0})-ac_0
    \end{pmatrix}: a\in \Z, b\in \mathcal{O}_D\right\}$$
    gives the result. 
\end{remark}

\subsection{Traces of special values of Green's functions}

We fix a primitive rational point $X_0 \in L'$ of determinant $m' = Q(X_0) > 0$, and we let $P_0 \in \H^3$ be the corresponding special point. We define the sublattices
\[
P =  L \cap \Q X_0 ,\quad N = L \cap (\Q X_0)^\perp,
\]
which are positive definite one-dimensional and negative definite three-dimensional, respectively. Then $N^-= (N,-Q)$ is positive definite three-dimensional. We let $E_{3/2,N^-} \in M_{3/2,N^-}$ be the holomorphic weight $3/2$ Eisenstein series for $\overline{\rho}_N$ defined in Section~\ref{section holomorphic eisenstein series}. Moreover, we let $\widetilde{E}_{1/2,N} \in H_{1/2,N}^{\hol}$ be the harmonic Maass Eisenstein series of weight $1/2$ for $\rho_{N}$ defined in Section~\ref{section maass eisenstein series}, which satisfies $\xi_{1/2}\widetilde{E}_{1/2,N} = \frac{1}{2}E_{3/2,N^-}$.

We obtain the following explicit formula involving double traces of the Green's function.

\begin{theorem}\label{theorem evaluation Green's function}
	Let $n \geq 1$ be a positive integer, and let 
	\[
	f = \sum_{\mu \in L'/L}\sum_{m \in \Z-Q(\mu)}a_f(m,\mu)q^m \e_\mu \in M_{1-2n,L^-}^!
	\]
	be a weakly holomorphic modular form of weight $1-2n$ for the Weil representation $\overline{\rho}_L$. Let $\mu' \in L'/L$ and $-m' \in \Z-Q(\mu')$ with $m' > 0$ such that $a_f(-m'r^2,\mu'r) = 0$ for all integers $r\geq 1$. Then we have
	\begin{align}\label{eq theorem evaluation Green's function}
	&\frac{1}{2}\sum_{\mu \in L'/L}\sum_{m > 0}m^{n-1/2}a_f(-m,\mu)\tr_{m',\mu'}^0\tr_{m,\mu}(G_{2n})  \\
	&\qquad = \frac{4^n \pi}{\binom{2n}{n}} \tr_{m',\mu'}^0(1)\CT\left(f_{P \oplus N} \cdot \left[\Theta_{P},\widetilde{E}_{1/2,N}^+\right]_n \right), \nonumber
	\end{align}
	where $\widetilde{E}_{1/2,N}^+ $ denotes the holomorphic part of $\widetilde{E}_{1/2,N}$, $[\cdot,\cdot]_n$ denotes the $n$-th Rankin--Cohen bracket as defined in Section~\ref{section rankin cohen brackets}, with $k = \ell = 1/2$, and $\CT(\cdot)$ denotes the constant term of a holomorphic $q$-series. 
\end{theorem}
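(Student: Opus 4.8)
The plan is to carry out the four steps sketched in the introduction, now with the weakly holomorphic form $f$ in place of a single Poincaré series. First I would expand $f$ into Maass Poincaré series: since $n\geq 1$ the weight $1-2n$ is negative and $f\in M_{1-2n,L^-}^!\subseteq H_{1-2n,L^-}^{\cusp}$, so \eqref{eq linear combination Maass Poincare} gives $f = \tfrac12\sum_{\mu}\sum_{m>0}a_f(-m,\mu)F_{1-2n,m,\mu}$. By linearity of the regularized theta lift and Theorem~\ref{theorem higher Green's function as theta lift}, using $(4\pi m)^n\tfrac{2}{\pi\sqrt m} = \tfrac{(4\pi)^n}{\pi}m^{n-1/2}$, I obtain
\[
\Phi_L^{(1-2n)}(f,P) = \frac{(4\pi)^n}{\pi}\sum_{\mu\in L'/L}\sum_{m>0}a_f(-m,\mu)\,m^{n-1/2}\,\tr_{m,\mu}\big(G_{2n}(\,\cdot\,,P)\big).
\]
Taking the primitive trace $\tr_{m',\mu'}^0$ in the second variable, the left-hand side of \eqref{eq theorem evaluation Green's function} becomes $\frac{\pi}{2(4\pi)^n}\tr_{m',\mu'}^0\big(\Phi_L^{(1-2n)}(f,\,\cdot\,)\big)$; the hypothesis $a_f(-m'r^2,\mu'r)=0$ ensures that $P_0$ is never a pole of the Green's function, so every trace is well defined.

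Next I would interchange the finite sum $\tr_{m',\mu'}^0$ with the regularized integral and insert the splitting of Theorem~\ref{theorem splitting theta}, which yields
\[
\tr_{m',\mu'}^0\big(\Phi_L^{(1-2n)}(f,\,\cdot\,)\big) = \tr_{m',\mu'}^0(1)\int_{\mathcal F}^{\reg}\Big\langle R_{1-2n}^n f,\ \big(\Theta_P\otimes\overline{E_{3/2,N^-}}v^{3/2}\big)^L\Big\rangle d\mu(\tau).
\]
Using that $\res_{L/K}$ and $\tr_{L/K}$ are adjoint for $K=P\oplus N$ (Section~\ref{section modular forms}), and that restriction to $K$ commutes with the raising operator, I may replace the integrand by $\big\langle R_{1-2n}^n f_{P\oplus N},\ \Theta_P\otimes\overline{E_{3/2,N^-}}v^{3/2}\big\rangle$.

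The crux is the remaining regularized integral. From $\xi_{1/2}\widetilde E_{1/2,N}=\tfrac12 E_{3/2,N^-}$ and $L_{1/2}=v^{3/2}\overline{\xi_{1/2}}$ I get $\overline{E_{3/2,N^-}}v^{3/2}=2L_{1/2}\widetilde E_{1/2,N}$. I would then integrate by parts $n$ times, using the bilinear-pairing identity $\int^{\reg}\langle R_\kappa A,B\rangle d\mu = -\int^{\reg}\langle A,R_{-\kappa-2}B\rangle d\mu + (\text{boundary})$, to move all raising operators off $f_{P\oplus N}$ and onto the kernel. Expanding $R_{-1}^n(\Theta_P\otimes L_{1/2}\widetilde E_{1/2,N})$ by the Leibniz rule and applying Proposition~\ref{proposition rankin cohen bracket} with $k=\ell=\tfrac12$ together with the vanishing $L_{1/2}\Theta_P=0$ (as $\Theta_P$ is holomorphic), the leading term of the kernel becomes a multiple of $L_{1+2n}[\Theta_P,\widetilde E_{1/2,N}]_n$, with constant $\tfrac{(-4\pi)^n}{\binom{n-1/2}{n}}=(-1)^n\tfrac{16^n\pi^n}{\binom{2n}{n}}$ since $\binom{n-1/2}{n}=4^{-n}\binom{2n}{n}$. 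Finally, $\int_{\mathcal F}^{\reg}\langle f_{P\oplus N},L_{1+2n}[\Theta_P,\widetilde E_{1/2,N}]_n\rangle d\mu$ is the Bruinier--Funke pairing of the weakly holomorphic form $f_{P\oplus N}$ with the harmonic Maass form $[\Theta_P,\widetilde E_{1/2,N}]_n$, which by \cite{bruinierfunke} equals the constant term $\CT\big(f_{P\oplus N}\cdot[\Theta_P,\widetilde E_{1/2,N}^+]_n\big)$, using $[\Theta_P,\widetilde E_{1/2,N}]_n^+=[\Theta_P,\widetilde E_{1/2,N}^+]_n$. Collecting constants, $\frac{\pi}{2(4\pi)^n}\cdot 2\cdot\frac{16^n\pi^n}{\binom{2n}{n}}=\frac{4^n\pi}{\binom{2n}{n}}$, which is exactly the stated coefficient.

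I expect the main obstacle to be making this last step rigorous: controlling the boundary contributions of the regularized integration by parts, and showing that the remaining Leibniz terms, those in which a raising operator lands on $\widetilde E_{1/2,N}$, do not contribute to the final constant term. This is precisely the technical heart of the Bruinier--Ehlen--Yang method \cite{bruinierehlenyang}, and I would adapt their argument, exploiting that $\widetilde E_{1/2,N}$ is harmonic to rewrite the unwanted terms and that the regularization isolates the holomorphic constant term.
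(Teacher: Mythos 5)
Your proposal is correct and follows essentially the same route as the paper's own proof: decomposition of $f$ into Maass Poincar\'e series, the theta-lift identity of Theorem~\ref{theorem higher Green's function as theta lift}, the splitting of the traced theta kernel via the Siegel--Weil formula (Theorem~\ref{theorem splitting theta}) together with the adjointness of $\res_{L/K}$ and $\tr_{L/K}$, self-adjointness of the raising operator combined with Proposition~\ref{proposition rankin cohen bracket}, and the Bruinier--Funke/Stokes pairing, with constant bookkeeping that matches the paper's exactly. The Leibniz terms you flag as the expected obstacle in fact vanish identically, since $R_{-3/2}L_{1/2}\widetilde{E}_{1/2,N} = -\Delta_{1/2}\widetilde{E}_{1/2,N} = 0$ (equivalently, the shadow $E_{3/2,N^-}$ is holomorphic), which is precisely how the paper disposes of them, and the remaining boundary-term checks are likewise dismissed there as ``straightforward but tedious.''
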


\begin{remark}\label{remark after theorem evaluation Green's function}
	\begin{enumerate}
		\item The condition $a_f(-m'r^2,\mu'r) = 0$ for all integers $r\geq 1$
  ensures that we never evaluate the Green's function $G_{2n}(P_1,P_2)$ along the diagonal. 
		\item By writing $\tr_{m',\mu'}$ in terms of primitive traces as explained in Section~\ref{section traces}, we can also get a closed formula for the non-primitive double trace $\tr_{m',\mu'} \tr_{m,\mu}(G_{2n})$.
		\item The Rankin--Cohen bracket appearing in the theorem is given by
	\[
	[g,h]_n = \sum_{j = 0}^n (-1)^j \binom{n-1/2}{j}\binom{n-1/2}{n-j} g^{(n-j)}h^{(j)},
	\]
	with $g^{(j)} = \left(\frac{1}{2\pi i }\frac{\partial}{\partial \tau}\right)^j g$. 
	The product of binomial coefficients can be written as
	\[
	\binom{n-1/2}{j}\binom{n-1/2}{n-j} = \frac{1}{4^n}\binom{2n}{n}\binom{2n}{2j}.
	\]
	\item The right-hand side of~\eqref{eq theorem evaluation Green's function} equals
\begin{align*}
	\pi \tr_{m',\mu'}^0(1)\sum_{\substack{\mu'\in L'/L \\ m>0}}a_f(-m,\mu)\sum_{\substack{\alpha \in P'/P \\ \beta \in N'/N \\ \alpha + \beta = \mu \!\!\!\!\!\pmod{L}}}\sum_{\ell \in \Z+Q(\alpha)}\kappa_{m,n}(\ell) c_{\Theta_P}(\ell,\alpha)c_{\widetilde{E}_{1/2,N}}^+(m-\ell,\beta),
	\end{align*}
 with the rational constants $\kappa_{m,n}(\ell) = \sum_{j=0}^n (-1)^j \binom{2n}{2j}\ell^{n-j}(m-\ell)^j$.
 Suppose that $S_{1+2n,L} = \{0\}$. Then we can choose $f = F_{1-2n,m,\mu} = q^{-m}(\e_{\mu} + \e_{-\mu}) + O(1)$ for some fixed $\mu \in L'/L$ and $m \in \Z+Q(\mu)$ with $m > 0$. In this case, formula~\eqref{eq theorem evaluation Green's function} simplifies to
	\begin{align*}
	&m^{n-1/2}\tr_{m',\mu'}^0 \tr_{m,\mu}(G_{2n}) \\
	&= 2\pi \tr_{m',\mu'}^0(1)\sum_{\substack{\alpha \in P'/P \\ \beta \in N'/N \\ \alpha + \beta = \mu \!\!\!\!\!\pmod{L}}}\sum_{\ell \in \Z+Q(\alpha)}\kappa_{m,n}(\ell) c_{\Theta_P}(\ell,\alpha)c_{\widetilde{E}_{1/2,N}}^+(m-\ell,\beta).
	\end{align*}
	\end{enumerate}
\end{remark}

\begin{proof}[Proof of Theorem~\ref{theorem evaluation Green's function}]
	Since the weight $1-2n$ is negative, we can write $f$ as a linear combination of Maass Poincar\'e series,
	\[
	f(\tau) = \frac{1}{2}\sum_{\mu \in L'/L}\sum_{m > 0}a_f(-m,\mu)F_{1-2n,m,\mu}(\tau),
	\]
	see \eqref{eq linear combination Maass Poincare} in Section~\ref{section harmonic maass forms}. Writing the Green's function as a theta lift using Theorem~\ref{theorem higher Green's function as theta lift}, we have
	\[
	\frac{1}{2}\sum_{\mu \in L'/L}\sum_{m > 0}m^{n-1/2}a_f(-m,\mu)\tr_{m,\mu}\big(G_{2n}(\,\cdot\,,Q)\big)  = \frac{1}{(4\pi)^{n}}\frac{\pi}{2}\Phi_L^{(1-2n)}\left(f,Q\right).
	\]
	Taking the trace $\tr_{m',\mu'}^0$ in $Q$, and using the splitting of the trace of the Siegel theta function from Theorem~\ref{theorem splitting theta} (which involves an application of the Siegel--Weil formula), we obtain
	\[
	\tr_{m',\mu'}^0\left(\Phi_L^{(1-2n)}\left(f,\, \cdot \,\right)\right) = \tr_{m',\mu'}^0(1)\int_{\mathcal{F}}^{\reg}\big(R_{1-2n}^n f_{P\oplus N}\big)(\tau)\cdot \Theta_{P}(\tau) \otimes \overline{E_{3/2, N^-}(\tau)}v^{3/2}d\mu(\tau). 
	\]
	Next, using the ``self-adjointness'' of the raising operator, we obtain
	\begin{align*}
	&\int_{\mathcal{F}}^{\reg}\big(R_{1-2n}^n f_{P\oplus N}\big)(\tau)\cdot \Theta_{P}(\tau) \otimes \overline{E_{3/2,N^-}(\tau)}v^{3/2}d\mu(\tau) \\
	&= (-1)^n\int_{\mathcal{F}}^{\reg} f_{P\oplus N}(\tau)\cdot R_{-1}^n\bigg(\Theta_{P}(\tau) \otimes \overline{E_{3/2,N^-}(\tau)}v^{3/2}\bigg)d\mu(\tau) \\
	&= (-1)^n\int_{\mathcal{F}}^{\reg} f_{P\oplus N}(\tau)\cdot \left(R_{1/2}^n \Theta_{P}(\tau)\right) \otimes \overline{E_{3/2,N^-}(\tau)}v^{3/2}d\mu(\tau).
	\end{align*}
	In the last step we used that $E_{3/2,N^-}$ is holomorphic (Theorem \ref{thm Rallis holomorphic Eisenstein series}). Moreover, one has to check that certain boundary integrals vanish, which is straightforward but tedious. Next, Proposition~\ref{proposition rankin cohen bracket} implies that
	\[
	\left(R_{1/2}^n\Theta_{P}(\tau)\right)\otimes \overline{E_{3/2,N^-}(\tau)}v^{3/2} = 2\frac{(-4 \pi)^n}{\binom{n-1/2}{n}} L_{1+2n} \left[\Theta_{P},\widetilde{E}_{1/2,N}\right]_n,
	\]
	where we used that $\Theta_P$ is holomorphic. Note that $\binom{n-1/2}{n} = \frac{1}{4^n}\binom{2n}{n}$. Now a standard application of Stokes' Theorem as in \cite[Proposition~3.5]{bruinierfunke} gives
	\begin{align*}
	&\int_{\mathcal{F}}^{\reg} f_{P\oplus N}(\tau)\cdot \left(R_{1/2}^n \Theta_{P}(\tau)\right) \otimes \overline{E_{3/2,N^-}(\tau)}v^{3/2}d\mu(\tau) \\
	&\qquad = 2\frac{(-4\pi)^n}{\frac{1}{4^n}\binom{2n}{n}}\CT\left(f_{P\oplus N}\cdot \left[\Theta_{P},\widetilde{E}_{1/2,N}^+\right]_n \right).
	\end{align*}
	Here we used that $f$ (and hence $f_{P\oplus N}$) is holomorphic on $\H$. Taking everything together, we obtain the stated formula.
\end{proof}

\begin{example}\label{example 1}
	Here we give the details for Example~\ref{example 1 intro} from the introduction. We work over the field $ \Q(i)$ with discriminant $D = -4$. Then 
	\[
	L'/L \cong \tfrac{1}{2}\Z[i]/\Z[i] \cong \Z/2\Z \times \Z/2\Z.
	\]
	We will write the elements of $L'/L$ as $(b_1,b_2)$ with $b_1,b_2 \in \Z/2\Z$, and understand that this tuple corresponds to the class of the matrix $\left(\begin{smallmatrix}0 & (b_1+ib_2)/2 \\ (b_1-ib_2)/2 & 0 \end{smallmatrix}\right) \in L'$.

Let us take $n = 1$. Since $S_{3,L} = \{0\}$, the Maass Poincar\'e series $f_{-1,m,\mu}$ is weakly holomorphic for every $m > 0$ and $\mu \in L'/L$, so we can just take $f = f_{-1,m,\mu}$. Then the formula from Theorem~\ref{theorem evaluation Green's function} simplifies to
\begin{align}\label{example 1 step 1}
	&\sqrt{m}\tr_{m',\mu'}^0\tr_{m,\mu}(G_{2n}) = 2\pi \tr_{m',\mu'}^0(1)\CT\left(f_{P \oplus N} \cdot \left[\Theta_{P},\widetilde{E}_{1/2,N}^+\right]_1 \right).
	\end{align}

 We take $m = 1/2$ and $\mu = (1,1)$, such that the trace $\tr_{m,\mu}$ has only one summand for the form $\left(\begin{smallmatrix}1 & (1+i)/2 \\ (1-i)/2 & 1 \end{smallmatrix} \right)$ with corresponding point $\frac{1+i}{2}+\frac{\sqrt{2}}{2}j$, with stabilizer of order $12$. Moreover, we choose $m' = 1$ and $\mu' = (0,0)$, such that $\tr_{m',\mu'}^0$ has only one summand for the form $X_0 = \left(\begin{smallmatrix}1 & 0 \\ 0 & 1 \end{smallmatrix} \right)$ with corresponding point $j$, with stabilizer of order $4$. In particular, $\tr_{m',\mu'}^0(1) = \frac{1}{4}$. Hence, \eqref{example 1 step 1} becomes
 \begin{align}\label{example 1 step 2}
	\frac{\sqrt{1/2}}{48}G_2\left(j,\frac{1+i}{2}+\frac{\sqrt{2}}{2}j\right) = \frac{\pi}{2}\CT\left(f_{P \oplus N} \cdot \left[\Theta_{P},\widetilde{E}_{1/2,N}^+\right]_1 \right).
	\end{align}
	
Next, we compute the sublattices $P$ and $N$. We have
\begin{align*}
P &= L \cap \Q X_0 = \left\{n\begin{pmatrix}1 & 0 \\ 0 & 1 \end{pmatrix}: n \in \Z\right\} \cong (\Z,n^2), \\
N &= L \cap (\Q X_0)^\perp = \left\{\begin{pmatrix}a & b \\ \overline{b} & -a \end{pmatrix}: a \in \Z, b \in \Z[i]\right\} \cong (\Z^3,-a^2-b_1^2-b_2^2).
\end{align*}
Their dual lattices are given by
\begin{align*}
P' &= \frac{1}{2}P = \left\{\frac{n}{2}\begin{pmatrix}1 & 0 \\ 0 & 1 \end{pmatrix}: n \in \Z\right\}, \\
N' &= \frac{1}{2}N = \left\{\frac{1}{2}\begin{pmatrix}a & b \\ \overline{b} & -a \end{pmatrix}: a \in \Z, b \in \Z[i]\right\}.
\end{align*}
Note that for $\alpha \in P'$ and $\beta \in N'$ we have $\alpha + \beta \in L'$ if and only if $n \equiv a \pmod 2$. We have
\[
P'/P \cong \Z/2\Z, \quad N'/N \cong (\Z/2\Z)^3.
\]
In particular, we will write the elements of $(P \oplus N)'/(P \oplus N) \cong P'/P \times N'/N$ as $(n,(a,b_1,b_2))$ with $n,a,b_1,b_2 \in \Z/2\Z$. Such an element is in $L'/L$ if and only if $n \equiv a \pmod 2$, and in this case it is in the class of the tuple $(b_1,b_2) \in L'/L$.

The first Rankin--Cohen bracket of two forms of weight $1/2$ is given by
	\[
	[f,g]_1 = \frac{1}{2}(f' g - f g'),
	\]
	where $f' = \frac{1}{2\pi i }\frac{d}{d\tau}f$. Moreover, by construction, $\Theta_{P}$ and $\widetilde{E}_{1/2,N}^+$ do not have any terms of negative index, which implies that $[\Theta_{P},\widetilde{E}_{1/2,N}^+]_1$ has vanishing principal part. Since $f_{-1,m,\mu}$ has principal part $q^{-m}(\e_\mu + \e_{-\mu})$, we obtain
	\begin{align*}
&\CT\left((f_{-1,m,\mu})_{P \oplus N} \cdot \left[\Theta_{P},\widetilde{E}_{1/2,N}^+\right]_1\right) \\
&\quad= 2c_{\left[\Theta_{P},\widetilde{E}_{1/2,N}^+\right]_1}(m,(0,(0,\mu))) + 2c_{\left[\Theta_{P},\widetilde{E}_{1/2,N}^+\right]_1}(m,(1,(1,\mu))).
\end{align*}
Hence, it remains to compute the coefficients of the Rankin--Cohen bracket. Recall that we take $m = 1/2$ and $\mu = (1,1)$. The coefficient at $q^{1/2}$ of the component of index $(0,(0,1,1))$ is given by $-1/2$ times the constant coefficient of $\Theta_P$ at $\e_0$ (which equals $1$) times $1/2$ times the coefficient at $q^{1/2}\e_{(0,1,1)}$ of $\widetilde{E}_{1/2,N}^+$ (here we use that the constant coefficient of the derivative of $\Theta_P$ vanishes, and that the coefficient at $q^{1/2}$ of $\Theta_P$ vanishes). Using Theorem~\ref{eisenstein series fourier expansion} and the Dirichlet class number formula, this latter coefficient can be computed as
\[
c_{\widetilde{E}_{1/2,N}^+}(1/2,(0,1,1)) = -\frac{4}{\pi}L(\chi_8,1) = -\frac{4}{\pi}\frac{\log(3+\sqrt{8})}{\sqrt{8}} = -\frac{2}{\pi}\frac{\log(3+\sqrt{8})}{\sqrt{2}}.
\]
Hence, we get
\[
c_{\left[\Theta_{P},\widetilde{E}_{1/2,N}^+\right]_1}\big(1/2,(0,(0,1,1))\big) = \frac{1}{\pi}L(\chi_8,1).
\]
Similarly, we see that
\[
c_{\left[\Theta_{P},\widetilde{E}_{1/2,N}^+\right]_1}\big(1/2,(1,(1,1,1))\big) = 0
\]
due to a cancellation in the Rankin--Cohen bracket. Putting this into \eqref{example 1 step 2}, we finally obtain
\[
\frac{1}{\sqrt{1/2}}G_2\left(j,\frac{1+i}{2}+\frac{\sqrt{2}}{2}j\right) = 96L(\chi_8,1).
\]

 For $n = 2$ we have $S_{5,L} = \{0\}$, so we can compute in a similar way that
\[
\frac{1}{\sqrt{1/2}}G_4\left(j,\frac{1+i}{2}+\frac{\sqrt{2}}{2}j\right) = 96\log(2) - 96L(\chi_8,1).
\]
\end{example}

We summarize the algebraic properties of the double traces of the Green's function.

\begin{theorem}\label{theorem algebraicity greens function 2}
	Let  $f\in M_{1-2n,L^-}^! $, $\mu' \in L'/L$ and $-m' \in \Z-Q(\mu')$ be  as in Theorem~\ref{theorem evaluation Green's function}. Suppose that the coefficients $a_f(-m,\mu)$ for $m > 0$ are rational. Then the linear combination of double traces
	\begin{align*}
	\frac{1}{\sqrt{m'|D|}}\sum_{\mu \in L'/L}\sum_{m > 0}m^{n-1/2}a_f(-m,\mu)\tr_{m',\mu'}^0\tr_{m,\mu}(G_{2n})
	\end{align*}
	is a rational linear combination of $\log(p)$ for some primes $p$ and $\log(\varepsilon_\Delta)/\sqrt{\Delta}$ for some fundamental discriminants $\Delta > 0$, where $\varepsilon_\Delta$ is the smallest totally positive unit $>1$ in $\Q(\sqrt{\Delta})$.
\end{theorem}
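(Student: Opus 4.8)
Write $\mathcal{T}$ for the left-hand side of the theorem. The plan is to feed the explicit evaluation of Theorem~\ref{theorem evaluation Green's function}, in the expanded form of Remark~\ref{remark after theorem evaluation Green's function}(4), into the explicit Fourier coefficients of the Maass Eisenstein series from Theorem~\ref{eisenstein series fourier expansion}, and then to keep careful track of the normalizing factors. Concretely, Theorem~\ref{theorem evaluation Green's function} together with Remark~\ref{remark after theorem evaluation Green's function}(4) gives the identity
\[
\mathcal{T} = \frac{2\pi}{\sqrt{m'|D|}}\,\tr_{m',\mu'}^0(1)\sum_{\substack{\mu \in L'/L \\ m > 0}}a_f(-m,\mu)\sum_{\substack{\alpha \in P'/P,\ \beta \in N'/N \\ \alpha + \beta = \mu \!\!\!\!\!\pmod{L}}}\sum_{\ell \in \Z+Q(\alpha)}\kappa_{m,n}(\ell)\,c_{\Theta_{P}}(\ell,\alpha)\,c_{\widetilde{E}_{1/2,N}}^+(m-\ell,\beta).
\]
Since $f$ is weakly holomorphic only finitely many $a_f(-m,\mu)$ are nonzero, and for each $m$ only finitely many triples $(\alpha,\beta,\ell)$ contribute, so this is a finite sum. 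The factor $\tr_{m',\mu'}^0(1)$ is rational, the constants $\kappa_{m,n}(\ell)$ are rational by Remark~\ref{remark after theorem evaluation Green's function}(4), the $a_f(-m,\mu)$ are rational by hypothesis, and the coefficients $c_{\Theta_{P}}(\ell,\alpha)$ of the holomorphic unary theta series are representation numbers, hence rational integers. Thus the arithmetic of $\mathcal{T}$ is governed entirely by the Eisenstein coefficients $c_{\widetilde{E}_{1/2,N}}^+(m-\ell,\beta)$ together with the overall factor $\tfrac{2\pi}{\sqrt{m'|D|}}$.

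Next I would insert the shape of these coefficients recorded in Remark~\ref{remark eisenstein series}(3), applied with the positive definite rank-$3$ lattice of Theorem~\ref{eisenstein series fourier expansion} taken to be $N^-$: writing $t = m-\ell$ for the index, each coefficient $c_{\widetilde{E}_{1/2,N}}^+(t,\beta)$ equals $\frac{\sqrt 2}{\sqrt{|N'/N|}\,\pi}$ times a rational number times either $\log(\varepsilon_{\Delta_0})/\sqrt{\Delta_0}$, when the associated discriminant $\Delta = 2d_\beta^2 t\det(N^-)$ is not a square, or $\log(p)$ for a prime $p \mid 2\det(N^-)$, when $\Delta$ is a square (including the index-zero case). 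The decisive point is the prefactor $\frac{\sqrt 2}{\sqrt{|N'/N|}\,\pi}$: by Remark~\ref{remark determinant N} the ternary lattice $N^-$ has determinant $2m'|D|$, so $|N'/N| = 2m'|D|$ and this prefactor is exactly $\frac{1}{\sqrt{m'|D|}\,\pi}$. Inserting this, the two square roots multiply to $\frac{1}{m'|D|}$ and the two factors of $\pi$ cancel, so that
\[
\mathcal{T} = \frac{2}{m'|D|}\,\tr_{m',\mu'}^0(1)\sum_{\substack{\mu \in L'/L \\ m > 0}}a_f(-m,\mu)\sum_{\alpha,\beta,\ell}\kappa_{m,n}(\ell)\,c_{\Theta_{P}}(\ell,\alpha)\,\big(\text{rational}\big)\cdot\big(\text{log-term}\big),
\]
in which every factor to the left of the log-term is rational. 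This is precisely the cancellation that motivates the normalization $1/\sqrt{m'|D|}$ in the statement.

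It then remains to identify the log-terms. Since $N^-$ is positive definite we have $\det(N^-) > 0$, and for every contributing index $t = m-\ell > 0$ the discriminant $\Delta = 2d_\beta^2 t\det(N^-)$ is positive; hence in the non-square case $\Delta_0$ is a \emph{positive} fundamental discriminant, and Dirichlet's class number formula (Remark~\ref{remark eisenstein series}(1)) rewrites $L(\chi_{\Delta_0},1)$ as a rational multiple of $\log(\varepsilon_{\Delta_0})/\sqrt{\Delta_0}$. In the remaining square case (and the index-zero case) Remark~\ref{remark eisenstein series}(2) shows the contribution is a rational multiple of $\log(p)$ for a prime $p \mid 2\det(N^-)$. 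Summing the finitely many terms exhibits $\mathcal{T}$ as a rational linear combination of $\log(p)$ and $\log(\varepsilon_\Delta)/\sqrt{\Delta}$ for positive fundamental discriminants $\Delta$, as claimed.

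The only genuine work is the factor bookkeeping in the second step: one must confirm the determinant identity $|N'/N| = 2m'|D|$ from Remark~\ref{remark determinant N} and check that it combines with the global prefactor $2\pi$ and the normalization $1/\sqrt{m'|D|}$ to leave a purely rational scalar, with all powers of $\pi$ and of $\sqrt{m'|D|}$ accounted for. Everything else follows immediately from the rationality of the coefficients of $f$, of the holomorphic theta series $\Theta_P$, and of the Rankin--Cohen constants $\kappa_{m,n}(\ell)$.
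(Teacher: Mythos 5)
Your argument is correct and follows essentially the same route as the paper's proof: both analyze the right-hand side of the evaluation formula in Theorem~\ref{theorem evaluation Green's function}, invoke the rationality of the coefficients of $f$, $\Theta_P$, and the Rankin--Cohen constants, insert the shape of the coefficients $c_{\widetilde{E}_{1/2,N}}^+$ from Remark~\ref{remark eisenstein series}(3), and use $|N'/N| = 2m'|D|$ from Remark~\ref{remark determinant N} to cancel the factor of $\pi$ and absorb the normalization $1/\sqrt{m'|D|}$ into a rational scalar. Your additional observation that the contributing discriminants $\Delta = 2d_\beta^2(m-\ell)\det(N^-)$ are positive (so the non-square case indeed produces \emph{positive} fundamental discriminants $\Delta_0$) is a correct and slightly more explicit justification of a point the paper leaves implicit.
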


\begin{proof}
	We need to analyze the right-hand side of~\eqref{eq theorem evaluation Green's function}. Since~$n\geq 1$, and $\Theta_P$ and $\widetilde{E}_{1/2,N}^+$ have Fourier expansions supported on non-negative indices, the constant term of~$f_{P \oplus N} \cdot \left[\Theta_{P},\widetilde{E}_{1/2,N}^+\right]_n$ is a linear combination with coefficients in $\Q$ of products of negative index coefficients of $f_{P \oplus N}$ (hence of $f$), and positive index coefficients of $\Theta_{P}$ and $\widetilde{E}_{1/2,N}^+$. Moreover, the theta function $\Theta_P$ has integral Fourier coefficients by construction. Now, by Theorem~\ref{eisenstein series fourier expansion} and the subsequent Remark~\ref{remark eisenstein series}(3), the coefficients of $\widetilde{E}_{1/2,N}^+$ are of the form
	\[
	c_{\widetilde{E}_{1/2,N}}^+(n,\gamma) = \frac{\sqrt{2}}{\sqrt{|N'/N|}\pi} \times \text{rational number}  \times 
		\begin{cases} 
		\log(\varepsilon_{\Delta_0})/\sqrt{\Delta_0}, & \text{if $\Delta$ is not a square}, \\
		\log(p), & \text{if $\Delta$ is a square},
		\end{cases}
		\]
		for some prime $p \mid 2\det(N)$ in the square case. Here we use the same notation as in Theorem~\ref{eisenstein series fourier expansion}. The factor $\pi$ in the denominator on the right cancels out with the factor $\pi$ in the numerator on the right-hand side of~\eqref{eq theorem evaluation Green's function}. Moreover, by Remark~\ref{remark determinant N} we have $|N'/N|=2m'|D|$. Taking everything together, we obtain the stated result.
	\end{proof}

\begin{remark}\label{rem:primes_in_traces}
It follows from Remarks~\ref{remark determinant N} and~\ref{remark after theorem evaluation Green's function}(4) that  the right-hand side of~\eqref{eq theorem evaluation Green's function} is a linear combination of coefficients~$c_{\widetilde{E}_{1/2,N}}^+(n,\beta)$ with~$n=m-\ell$ and~$\ell$ of the form~$\frac{t^2}{4d_{\mu}^2m'}$ ($t\in \Z$), hence the relevant discriminants are of the form~$\Delta=4d_{\beta}^2nm'|D|$ and $\Delta_0$ is the discriminant of~$\Q\left(\sqrt{(4mm'd_{\mu}^2-t^2)|D|}\right)$. Moreover, when~$\Delta$ is a square, the coefficient~$c_{\widetilde{E}_{1/2,N}}^+(n,\beta)$ is a rational multiple of~$\log(p)$ provided~$p$ is the unique prime divisor of~$2m'|D|$ for which~$N\otimes \Q_p$ is anisotropic, and it vanishes if there are two or more such primes by Remark~\ref{remark eisenstein series}$(2)$. By using properties of Hilbert symbols it is easy to check that $N\otimes \Q_p$ is anisotropic if and only if $(-m,D)_p=-1$. This explains Remark~\ref{remark on fund disc and primes appearing}.
\end{remark}

We note that Theorem \ref{theorem algebraicity greens function 2} can be rephrased in terms of linear combinations of traces with coefficients coming from rational relations for spaces of cusp forms (see Section \ref{sec rational relations}). More precisely, in the language of rational relations, Theorem \ref{theorem algebraicity greens function 2} says that if~$\{\lambda(m,\mu)\}_{m\in \frac{1}{|D|}\N,\,\mu \in L'/L}$ is a rational relation for~$S_{1+2n,L}$, then for any $\mu' \in L'/L$ and $-m' \in \Z-Q(\mu')$ such that  $m' > 0$ and $\lambda (m'r^2,\mu'r) = 0$ for all integers $r\geq 1$, we have that the linear combination of double traces
	\begin{align*}
	\frac{1}{\sqrt{m'|D|}}\sum_{\mu \in L'/L}\sum_{m > 0}m^{n-1/2}\lambda(m,\mu)\tr_{m',\mu'}^0\tr_{m,\mu}(G_{2n})
	\end{align*}
	is a rational linear combination of $\log(p)$ for some primes $p$ and $\log(\varepsilon_\Delta)/\sqrt{\Delta}$ for some fundamental discriminants $\Delta > 0$. 
Then, Theorem~\ref{theorem algebraicity greens function 2} and Lemma \ref{lem vector-valued to scalar mfs} 
imply the following corollary, of which Theorem~\ref{main theorem} in the introduction is a special case.

\begin{corollary}\label{corollary that implies main theorem}
Let~$\{\lambda(t)\}_{t\in \N}$ be a rational relation for~$S^+_{1+2n}(\Gamma_0(|D|),\chi_D)$, and let $m' \in \frac{1}{|D|}\N$ be such that $\lambda (m'|D|r^2) = 0$ for all integers~$r\geq 1$. Then the linear combination of double traces
	\begin{align*}
	\frac{1}{\sqrt{m'|D|}}\sum_{m > 0}m^{n-1/2}\lambda(m|D|)\tr_{m'}^0\tr_{m}(G_{2n})
	\end{align*}
	is a rational linear combination of $\log(p)$ for some primes $p$ and $\log(\varepsilon_\Delta)/\sqrt{\Delta}$ for some fundamental discriminants $\Delta > 0$, where $\varepsilon_\Delta$ is the smallest totally positive unit $>1$ in $\Q(\sqrt{\Delta})$.
\end{corollary}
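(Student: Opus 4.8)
The plan is to obtain the corollary by combining the reformulation of Theorem~\ref{theorem algebraicity greens function 2} in terms of rational relations (stated in the paragraph immediately preceding the corollary) with the passage from scalar-valued to vector-valued relations furnished by Lemma~\ref{lem vector-valued to scalar mfs}. The argument is purely formal, summing the already established $(m',\mu')$- and $(m,\mu)$-refined statements over the components of $L'/L$.

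First I would lift the given scalar relation to the vector-valued setting. Since $D$ is a prime discriminant and the lattice $L$ of~\eqref{lattice} has determinant $|D|$, Lemma~\ref{lem vector-valued to scalar mfs} applies with $k = 1+2n$ and shows that $\lambda'(m,\mu) := \lambda(m|D|)$ is a rational relation for $S_{1+2n,L}$ which is independent of $\mu \in L'/L$. I would record that $m|D| \in \N$ for every index $m > 0$ occurring (because $Q(\mu) \in \tfrac{1}{|D|}\Z$ modulo $\Z$, so $m \in \Z + Q(\mu)$ forces $m|D| \in \Z$), so that $\lambda'$ is well defined, and that the $\mu$-independence is precisely condition~(1) in the definition of a rational relation. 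I would then verify that the vanishing hypothesis transfers: for any admissible $\mu'$ (i.e.\ $m' \in \Z + Q(\mu')$) and any $r \geq 1$ one has $\lambda'(m'r^2,\mu'r) = \lambda(m'|D|r^2) = 0$ by assumption.

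Next I would apply the reformulated Theorem~\ref{theorem algebraicity greens function 2} to $\lambda'$ separately for each $\mu' \in L'/L$ with $m' \in \Z + Q(\mu')$. Each application produces a quantity that is a rational linear combination of the numbers $\log(p)$ and $\log(\varepsilon_\Delta)/\sqrt{\Delta}$. As $L'/L$ is finite, summing these finitely many statements over $\mu'$ preserves this shape. Finally, exploiting that $\lambda'(m,\mu)=\lambda(m|D|)$ does not depend on $\mu$, the double sum over $\mu$ and $\mu'$ collapses via the defining decompositions $\tr_m = \sum_{\mu}\tr_{m,\mu}$ and $\tr^0_{m'} = \sum_{\mu'}\tr^0_{m',\mu'}$:
\[
\sum_{\mu' \in L'/L}\,\sum_{\mu \in L'/L}\lambda(m|D|)\,\tr^0_{m',\mu'}\tr_{m,\mu}(G_{2n}) = \lambda(m|D|)\,\tr^0_{m'}\tr_m(G_{2n}).
\]
Pulling the $\mu$-independent factor $m^{n-1/2}\lambda(m|D|)$ out of the inner sums then yields precisely the linear combination of double traces in the statement, together with the normalization $1/\sqrt{m'|D|}$.

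Since every step is an application of a result already proved in the paper, I expect no genuine obstacle. The only point that needs attention is the bookkeeping of the indices: one must confirm that the $\mu$-independence of $\lambda'$ is exactly what turns the refined traces $\tr_{m,\mu}$ and $\tr^0_{m',\mu'}$ into the full traces $\tr_m$ and $\tr^0_{m'}$, and that the normalizing factor $1/\sqrt{m'|D|}$ is unaffected by the summation over $\mu'$.
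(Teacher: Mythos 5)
Your proposal is correct and follows exactly the paper's intended derivation: the paper deduces this corollary directly from Theorem~\ref{theorem algebraicity greens function 2} (rephrased via rational relations) together with Lemma~\ref{lem vector-valued to scalar mfs}, which is precisely your argument. Your bookkeeping — lifting $\lambda$ to the $\mu$-independent relation $\lambda'(m,\mu)=\lambda(m|D|)$, applying the reformulated theorem for each admissible $\mu'$, and collapsing the sums via $\tr_m=\sum_{\mu}\tr_{m,\mu}$ and $\tr^0_{m'}=\sum_{\mu'}\tr^0_{m',\mu'}$ — supplies the routine details the paper leaves implicit.
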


\section{Twisted traces of special values of Green's functions}\label{section twisted traces green's function}

In this section we let $(L,Q)$ denote the lattice of integral binary hermitian forms over $\Q(\sqrt{D})$ defined in Section \ref{sec orthogonal model}, associated to a fundamental discriminant $D<0$. Recall that $V=L\otimes \Q$.

\subsection{The twisting function}

Let us write $D = \prod_{p \mid D} p^*$ with prime discriminants $p^*$, that is, for odd $p$ we have $p^* = (\frac{-1}{p})p$ and for $p = 2$ we have $p^* \in \{-4, \pm 8\}$.

We set
\[
L_D = \{X \in L' : Q(X) \in D\Z\}.
\]
As in \cite[Lemma~3.3]{ehlen} one can check that $L_D \subseteq L$. For $X = \left(\begin{smallmatrix}a & b \\ \overline{b} & c \end{smallmatrix} \right) \in L_D$ we define the function
\begin{equation}\label{eq chi_D(X)}
\chi_D(X) = \prod_{p \mid D}\chi_p(X), \qquad \chi_p(X) = \begin{cases}
\left( \frac{p^*}{a}\right), & \text{if } p \nmid a, \\
\left( \frac{p^*}{c}\right), & \text{if } p \nmid c, \\
0, & \text{otherwise}.
\end{cases}
\end{equation}
For $X \in V(\Q)$ with $X \notin L_D$ we put $\chi_D(X) = 0$. It is easy to check that $\chi_D$ is well-defined, $\Gamma$-invariant, and only depends on $X$ modulo $DL$. For prime discriminants $D<0$, it coincides with the function defined in the introduction.

So far, we have defined $\chi_D$ on all of $V(\Q)$. Next, we extend it to $V(\A_f)$.

\begin{definition}
For a prime $p$ dividing~$D$ and $X_p = \left(\begin{smallmatrix} a & b \\ \overline{b} & c \end{smallmatrix} \right) \in L_p'$ with $Q(X_p) \in D \Z_p$ we let
\[
\chi_{D,p}(X_p) = \begin{cases}
(a,D)_p, & \text{if $p \nmid a$}, \\
(c,D)_p, & \text{if $p \nmid c$}, \\
0, & \text{otherwise},
\end{cases}
\]
where $(c,D)_p$ denotes the $p$-adic Hilbert symbol. For all other $X_p \in V(\Q_p)$ we put $\chi_{D,p}(X_p) = 0$, so that $\chi_{D,p}$ is defined on all of $V(\Q_p)$. For primes~$p$ not dividing~$D$, we define~$\chi_{D,p}$ as the characteristic function of~$L_p'=L_p$. Once again, it is easy to check that~$\chi_{D,p}$ is well-defined. For $X = (X_p)_{p < \infty} \in V(\A_f)$ we put
\[
\chi_D(X) = \prod_{p < \infty}\chi_{D,p}(X_p).
\]
\end{definition}
It follows from the basic properties of the Hilbert symbol that $\chi_D$ agrees with the previous definition on $V(\Q)$. 

By a slight abuse of notation, we also define $\chi_D$ as a function on $\A_f^\times$ by
\begin{equation}\label{eq:chi-adelic}
 \chi_D(x) = \prod_{p < \infty}\chi_{D,p}(x_p) = \prod_{p < \infty}(x_p,D)_p.   
\end{equation}
Note that $\chi_D$ is a quadratic character on $\A_f^\times / \Q^+$.

Let $\nu$ denote the spinor norm on $\mathrm{GSpin}_V$. By \eqref{eq: description of G(F)} the map
$$\chi_{D,p}\circ \nu:\mathrm{GSpin}_V(\Q_p)\to \{\pm 1\}$$
induces a quadratic character~$\chi_{D,p}\circ \nu:G(\Q_p)\to \{\pm 1\}$ . Hence, we obtain the quadratic character
\begin{equation}\label{eq:chi-adelic-on_G}
   \chi_D\circ \nu:G(\A_f)\to \{\pm 1\}, \qquad \chi_D\circ \nu=\prod_p \chi_{D,p}\circ \nu. 
\end{equation}

We have the following transformation property of $\chi_D(X)$ under the action of the subgroup $U=\prod_p U_p$ defined  in Section \ref{sec orthogonal and spinor groups}. 

\begin{proposition}\label{proposition character invariance}
For every prime~$p$, every $g_p\in U_p$ and~$X_p\in V(\Q_p)$ we have
\[
\chi_{D,p}(g_p X_p) = \chi_{D,p}(\nu(g_p))  \chi_{D,p}(X_p).
\]
In particular, for all~$g \in U$ and $X \in V(\A_f)$ we have
\[
\chi_{D}(g X) = \chi_D(\nu(g))  \chi_{D}(X).
\]
\end{proposition}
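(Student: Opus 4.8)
The plan is to factor the statement over primes and then reduce to generators of $U_p$. Since $\nu = \prod_p \nu_p$, since $\chi_D = \prod_p \chi_{D,p}$ both on $V(\A_f)$ and on $\A_f^\times$, and since each local Hilbert symbol $(\,\cdot\,,D)_p$ is multiplicative, the local identities $\chi_{D,p}(g_p X_p) = \chi_{D,p}(\nu(g_p))\,\chi_{D,p}(X_p)$ multiply to give the adelic identity $\chi_D(gX) = \chi_D(\nu(g))\,\chi_D(X)$. So it suffices to fix one prime $p$ and prove the local statement.

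For $p \nmid D$ this is immediate: $\chi_{D,p}$ is the characteristic function of $L_p$, which $g_p \in U_p \subseteq \widetilde U_p$ preserves, and by Lemma~\ref{l:Dp_and_Up} we may normalise $\nu(g_p) \in \Z_p^\times$, so that $\chi_{D,p}(\nu(g_p)) = (\nu(g_p),D)_p = 1$ because $\Q_p(\sqrt D)/\Q_p$ is unramified and $p$-adic units are norms. Now let $p \mid D$, so that $R = \mathcal O_D \otimes \Z_p$ is a ramified local $\Z_p$-algebra with maximal ideal $\mathfrak p = (z_D)$. Both sides vanish unless $X_p \in L_p'$ and $Q(X_p) \in D\Z_p$, a condition preserved by $g_p$; so we assume it. By Lemma~\ref{l:Dp_and_Up} we have $U_p \cong \{g \in \GL_2(R) : \det g \in \Z_p^\times\}/\Z_p^\times$, acting by $g.X = \det(g)^{-1} gX\overline g^{\,t}$ with $\nu(g) = \det g$. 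As $R$ is local, $\SL_2(R) = E_2(R)$, and writing $g = \mathrm{diag}(\det g,1)\cdot(\mathrm{diag}(\det g,1)^{-1}g)$ shows that $U_p$ is generated by the elementary matrices $E_{12}(z), E_{21}(z)$ ($z \in R$) together with the diagonal matrices $\mathrm{diag}(t,1)$ ($t \in \Z_p^\times$). Since $\nu$ and $(\,\cdot\,,D)_p$ are multiplicative, the $g_p$ satisfying the identity for all $X_p$ form a subgroup of $U_p$, so it is enough to check the generators.

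For $\mathrm{diag}(t,1)$ one computes $g.X = \bigl(\begin{smallmatrix} ta & b \\ \overline b & c/t \end{smallmatrix}\bigr)$; the unit diagonal entry gets multiplied by $t^{\pm 1}$, and in each case ($p \nmid a$; $p \mid a$, $p \nmid c$; $p \mid a$ and $p \mid c$) one reads off $\chi_{D,p}(g.X) = (t,D)_p\,\chi_{D,p}(X) = \chi_{D,p}(\nu(g))\,\chi_{D,p}(X)$. The elementary matrices have $\nu = 1$, and $E_{21}(z)$ preserves $a$ while sending $c$ to $c' = a\,\Nm(z) + \tr(zb) + c$ (with $E_{12}(z)$ symmetric). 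If $p \nmid a$ then $\chi_{D,p}(g.X) = (a,D)_p = \chi_{D,p}(X)$ at once; the remaining case $p \mid a$, $p \nmid c$ amounts to $(c',D)_p = (c,D)_p$. Here the constraint $Q(X) \in D\Z_p$ forces $\Nm(b) \in p\Z_p$, hence $b \in \mathfrak p$, and then $a\,\Nm(z) + \tr(zb) \in p\Z_p$, so $c' \equiv c \pmod p$. Conceptually, $c$ and $c'$ are the values $[a,b,c](0,1)$ and $[a,b,c](z,1)$ of the hermitian form $X$ at two primitive vectors, and the equality $(c,D)_p = (c',D)_p$ is the genus-character property that all values coprime to $p$ primitively represented by $X$ carry the same Hilbert symbol with $D$, i.e.\ the (strong form of the) well-definedness of $\chi_{D,p}$.

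The main obstacle is this last equality at $p = 2$, i.e.\ for $D = -4$ and $D = -8$: there $(\,\cdot\,,D)_2$ is sensitive modulo $8$, so $c' \equiv c \pmod 2$ does not suffice and one must show directly that $c'/c$ lies in the norm group $\Nm(\Q_2(\sqrt D)^\times)$. This is a finite check modulo $8$: using $b \in \mathfrak p$ and the evenness of $L$ one finds that the perturbation $a\,\Nm(z) + \tr(zb)$ is confined to the square class keeping $c'$ in the same norm class as $c$ (for instance, for $D = -4$ it equals $2[m(m+1) + n(n+1)] \in 4\Z_2$ when $z = m + ni$, and an analogous mod-$8$ identity holds for $D = -8$). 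Granting this, the three generator computations give the local identity for $p \mid D$, which together with the cases $p \nmid D$ completes the proof.
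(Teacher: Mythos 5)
Your proof is correct, but it takes a genuinely different route from the paper's. The paper never reduces to generators: by Lemma~\ref{l:Dp_and_Up} an arbitrary $g_p\in U_p$ acts through a matrix $h_p=\left(\begin{smallmatrix}\alpha & \beta\\ \gamma & \delta\end{smallmatrix}\right)\in\GL_2(\mathcal{O}_D\otimes\Z_p)$ with $\det(h_p)\in\Z_p^\times$, and for $p\mid D$ the whole proof rests on the single identity
\[
\det(h_p)\,A\,a=|a\alpha+\beta\overline{b}|^2+|\beta|^2\,Q(X_p),
\]
where $a$ and $A$ denote the upper-left entries of $X_p$ and $g_pX_p$. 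Since $Q(X_p)\in D\Z_p$, this exhibits $\det(h_p)Aa$ as a norm modulo $D\Z_p$, so $(\det(h_p)Aa,D)_p=1$ and the transformation law follows at once (the other unit-entry cases are symmetric). The crucial feature is that the congruence holds modulo $D\Z_p$, which is exactly the precision the Hilbert symbol detects, so $p=2$ needs no separate argument. Your route --- generator decomposition via $\SL_2(R)=E_2(R)$ for the local ring $R=\mathcal{O}_D\otimes\Z_p$, then case analysis on $\mathrm{diag}(t,1)$ and $E_{12}(z),E_{21}(z)$ --- is sound and each step is elementary, but it concentrates all the difficulty at the dyadic prime, where your proposal only sketches the decisive check: the expression $2[m(m+1)+n(n+1)]$ is a representative instance (a specific choice of $a$ and $b$), not the general perturbation, and for $D=-8$ the correct statement is not that the perturbation lies in $8\Z_2$ but only that $c'/c\equiv 1$ or $3\pmod 8$, both of which are norms from $\Q_2(\sqrt{-2})$; for example $a=2$, $b=\sqrt{-2}$, $c=1$, $z=1$ gives $c'=3c$. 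I verified that these finite checks do go through (the needed input is $ac\equiv\Nm(b)\pmod{D\Z_2}$ together with the parity constraints it imposes on $b$), so your argument is complete in substance; but a full write-up must carry out that dyadic verification case by case, and at that point the paper's norm identity is both shorter and more transparent --- it is, in effect, the conceptual reason the genus-character invariance holds uniformly at all primes.
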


\begin{proof} It is enough to prove the first statement since the second one follows from the fact that~$\chi_D$ is the product of~$\chi_{D,p}$ over all primes~$p$. Let~$g_p\in U_p$ and~$X_p\in V(\Q_p)$.  Recall that $g_p$ is given by the action of a matrix $h_p=\left(\begin{smallmatrix}
        \alpha & \beta \\
  \gamma       &  \delta
    \end{smallmatrix}\right)\in \GL_2(\mathcal{O}_D\otimes \Z_p)$ with $\nu(g_p)=\det(h_p)\in \Z_p^{\times}$ (Lemma~\ref{l:Dp_and_Up}). If~$p$ does not divide~$D$ then~$(\nu(g_p),D)_p=1$ and~$\chi_{D,p}$ on $V(\Q_p)$ is the characteristic function of~$L_p'=L_p$. Since~$g_p$ preserves~$L_p$, we have~$g_pX_p\in L_p$ if and only if~$X_p\in L_p$, hence the desired identity follows. Now, assume~$p$ divides $D$. Since the action of $g_p$ preserves $L_p'$ and the quadratic form~$Q$, both $\chi_{D,p}(g_pX_p)$ and $\chi_{D,p}(X_p)$ are zero if $X_p$ is not in $L_p'$ or has $Q(X_p) \not \in D \Z_p$. Hence, we can assume $X_p\in L_p'$ and $Q(X_p)\in D \Z_p$. This implies~$X_p\in L_p$. If we write $X_p=\left(\begin{smallmatrix}
   a  & b \\
    \overline{b} & c
\end{smallmatrix}\right)$, then
    $$\begin{pmatrix}
        A & B \\
        \overline{B} & C
    \end{pmatrix}=g_p X=\frac{1}{\mathrm{det}(h_p)}\begin{pmatrix}
        a|\alpha|^2+\tr(\alpha \overline{\beta}b)+c|\beta|^2 &\ast  \\
        \ast & a|\gamma|^2+\tr(\gamma\overline{\delta}b)+c|\delta|^2
    \end{pmatrix}$$
    where the norms and traces are taken in the ring~$\mathcal{O}_D\otimes \Z_p\subseteq \mathbb{Q}_p(\sqrt{D})$. Note that~$p\mid D$ implies  $\tr(z)\in p\Z_p$ for every~$z\in \mathcal{O}_D\otimes \Z_p$. In particular, $\tr(\alpha \overline{\beta}b),\tr(\gamma\overline{\delta}b)\in p\Z_p$. Hence, if $a,c\in p\Z_p$, then $\chi_{D,p}(g_p X_p)=\chi_{D,p}(X_p)=0$. We get the same conclusion if $A,C\in p\Z_p$. Assume  $a,A\in \Z_p^{\times}$. Note that
    \begin{eqnarray*}
        \mathrm{det}(h_p)Aa &=&|a\alpha+\beta \overline{b}|^2+|\beta|^2(ac-|b|^2)\\
        &=& |a\alpha+\beta \overline{b}|^2+|\beta|^2 Q(X_p)\\
        &\equiv & |a\alpha+\beta \overline{b}|^2 \, \text{ mod }D\Z_p.
    \end{eqnarray*}
    This implies~$(\mathrm{det}(h_p)Aa,D)_p=1$, hence
    $$\chi_{D,p}(g_pX_p)=(A,D)_p=(\mathrm{det}(h_p),D)_p(a,D)_p=\chi_{D,p}(\nu(g_p))\chi_{D,p}(X_p).$$
    The remaining cases are treated similarly. This proves the result.
\end{proof}

\subsection{The twisted modified Siegel theta function}\label{section twisted theta function}

In this section we construct a scalar-valued twisted Siegel theta function, which is modified with a polynomial. A similar twisted theta function was first constructed in \cite{ehlen} for lattices of signature $(2,2)$ and $D > 0$, whereas we are in the case of signature $(1,3)$ and $D < 0$.

Recall that $G=\SO(V)$ where $V=L\otimes \Q$.

\begin{definition}
For $h \in G(\A_f), \tau \in \H$ and $P \in \H^3$ we define the (scalar-valued) $D$-twisted modified theta function
\begin{equation}\label{eq (scalar-valued) D-twisted modified theta function}
   \Theta_{L,\chi_D}^*(\tau,P,h) = \Im(\tau)^{3/2}\sum_{X \in hL}\chi_D(h^{-1}X)\tfrac{1}{|D|}(X,v(P))e\left(\frac{Q(X_v)}{|D|}\tau + \frac{Q(X_{v^\perp})}{|D|}\overline{\tau}\right), 
\end{equation}
where $v = \R v(P) \in \Gr(L)$ is the positive line corresponding to $P$ as in Section \ref{sec orthogonal model}. 
\end{definition}
Note that this is not the usual Siegel theta function, but it is modified with a polynomial in $X$, which is harmonic and homogeneous of degree $(1,0)$. It is straightforward to check that the twisted theta function is $\Gamma$-invariant in $P$. The goal of this section is to prove its modularity in $\tau$.

\begin{proposition}\label{proposition twisted theta function}
	The twisted theta function $\Theta_{L,\chi_D}^*(\tau,P,h)$ transforms like a (scalar-valued) modular form of weight $0$ for the full modular group $\SL_2(\Z)$ in $\tau$.
\end{proposition}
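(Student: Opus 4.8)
The plan is to reduce the assertion to the transformation under the two generators $T = \left(\begin{smallmatrix} 1 & 1 \\ 0 & 1\end{smallmatrix}\right)$ and $S = \left(\begin{smallmatrix} 0 & -1 \\ 1 & 0\end{smallmatrix}\right)$ of $\SL_2(\Z)$, handling the twist by $\chi_D$ by decomposing the sum into finitely many \emph{untwisted}, polynomial-modified Siegel theta functions to which Borcherds' transformation formula \cite[Theorem~4.1]{borcherds} applies. Fix $h \in G(\A_f)$ and $P \in \H^3$ and regard $M = hL$ as a fixed $\Q$-lattice of signature $(1,3)$ in $V(\R)$ (it lies in the genus of $L$), and $v = \R v(P)$ as a fixed positive line. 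As a weight count: the modifying factor $\tfrac{1}{|D|}(X,v(P))$ is, up to the constant, a harmonic polynomial of bidegree $(1,0)$ with respect to $V(\R) = v \oplus v^\perp$, and for signature $(p,q)=(1,3)$ a bidegree-$(m^+,m^-)$ polynomial raises the weight $\frac{p-q}{2}=-1$ of $\Theta_L$ by $m^+-m^-$; here $-1 + 1 = 0$, consistent with the claimed weight.

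The $T$-invariance follows from the support of $\chi_D$. A summand is nonzero only if $\chi_D(h^{-1}X)\neq 0$, which forces $h^{-1}X \in L_D$ and hence $Q(X) = Q(h^{-1}X) \in D\Z$. Replacing $\tau$ by $\tau+1$ multiplies the summand by $e\!\left(\tfrac{Q(X_v)}{|D|}+\tfrac{Q(X_{v^\perp})}{|D|}\right) = e\!\left(\tfrac{Q(X)}{|D|}\right)$, which equals $1$ since $Q(X)/|D| \in \Z$. Thus $\Theta_{L,\chi_D}^*$ is invariant under $T$ with the trivial multiplier.

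The substance is the $S$-transformation, where $\chi_D$ obstructs a direct Poisson summation. To remove it I would use that $\chi_D(Y)$ is supported on $L_D$ and depends only on $Y \bmod DL$, together with the chain $DL \subseteq L_D \subseteq L$. Writing the sum over $r \in L/DL$ (with $\chi_D(r)=0$ unless $r \in L_D/DL$) and pulling back through $h$ gives the finite decomposition
\begin{equation*}
\Theta_{L,\chi_D}^*(\tau,P,h) = \sum_{r \in L/DL}\chi_D(r)\,\vartheta_{h(DL+r)}(\tau,P),
\end{equation*}
where each $\vartheta_{h(DL+r)}$ is the ordinary bidegree-$(1,0)$ polynomial-modified Siegel theta function attached to the coset $h(DL+r)$ inside the rescaled quadratic space $\left(V,\tfrac{1}{|D|}Q\right)$. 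By \cite[Theorem~4.1]{borcherds} (equivalently, by Poisson summation on the full-rank lattice $h(DL)$) the vector $(\vartheta_{h(DL+r)})_r$ transforms under $S$ through the associated Weil representation with the weight-$0$ automorphy factor computed above, sending $\vartheta_{h(DL+r)}$ to a Gauss-sum-weighted average of the $\vartheta_{h(DL+r')}$ with phases $e\!\left(-\tfrac{(r,r')}{|D|}\right)$.

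It then remains to verify that weighting the components by $\chi_D(r)$ trivializes this multiplier, so that the scalar sum is fixed by $S$. The needed identity has the shape
\begin{equation*}
\sum_{r'}\chi_D(r')\,e\!\left(-\tfrac{(r,r')}{|D|}\right) = (\text{normalizing Gauss sum})\cdot \chi_D(r),
\end{equation*}
i.e.\ $\chi_D$ is, up to a fixed factor, an eigenvector for the Weil-representation Gauss sums. I expect this to be the main obstacle: its proof is the arithmetic heart of the argument, where the local Gauss sums are evaluated in terms of the Hilbert symbols defining $\chi_{D,p}$, and Hilbert reciprocity $\prod_p (x,D)_p = 1$ forces the product of local contributions — together with the archimedean sign produced by the degree-$(1,0)$ polynomial — to be trivial. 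Since $S$ and $T$ generate $\SL_2(\Z)$ and both act with trivial multiplier and weight $0$, the modularity of $\Theta_{L,\chi_D}^*$ follows; the reduction to Borcherds' formula and the $T$-computation are comparatively routine.
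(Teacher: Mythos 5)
Your proposal is, in structure, the same proof as the paper's, written out componentwise. The paper formalizes your decomposition by introducing the rescaled lattice $L(D)=(DL,\tfrac{1}{|D|}Q)$ with discriminant group $\mathcal{L}(D)=L'/L(D)$, observing that $\Theta_{L,\chi_D}^{*}(\tau,P,h)=\Theta_{L(D)}^{*}(\tau,P,h)\cdot\psi_D$ with $\psi_D=\sum_{\delta}\chi_D(\delta)\e_\delta$, invoking Borcherds' Theorem~4.1 for the weight-$0$ vector-valued modularity of $\Theta_{L(D)}^{*}$ (your weight count is the correct one), and reducing the whole statement to Lemma~\ref{lemma intertwiner}, namely that $\psi_D$ is invariant under the dual Weil representation $\overline{\rho}_{\mathcal{L}(D)}$. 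Your $T$-computation is precisely the paper's verification of $\overline{\rho}(T)\psi_D=\psi_D$, and your ``eigenvector'' identity for the Gauss sums is precisely the paper's identity $\sum_{\delta\in L_D/L(D)}\chi_D(\delta)\,e((\delta,\mu)/|D|)=i|D|^{5/2}\chi_D(\mu)$.

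There are, however, two problems, one a correction and one a genuine gap. First, your cosets must run over $L'/DL$, not $L/DL$: the dual lattice of $(DL,\tfrac{1}{|D|}Q)$ is $L'$ (so the discriminant group has order $|D|^5$), and Poisson summation on $h(DL)$ mixes in \emph{all} cosets of $L'/DL$; since $[L':L]=|D|>1$ your index set is too small. The decomposition itself is unharmed because $\chi_D$ vanishes off $L_D\subseteq L$, but the identity that must be proven has to hold for every $\mu\in L'/DL$ — in particular, for $\mu\notin L/DL$ one must show that the Gauss sum \emph{vanishes}, a case your formulation does not see. Second, and more seriously, that Gauss sum identity is the entire arithmetic content of the proposition, and you leave it unproven: the appeal to Hilbert reciprocity is a plausible heuristic, not an argument — reciprocity alone yields neither the vanishing statements nor the exact constant $i|D|^{5/2}$ (which depends on the signature through the factor $e((q-p)/8)$ in the Weil representation). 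The paper disposes of this step by adapting the explicit prime-by-prime computation of Ehlen (Section~4.2 of that paper) for the analogous character. Until that identity is established, what you have is a correct reduction with the same skeleton as the paper's proof, not a complete proof.
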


We will deduce this result from the well known modularity of a (vector-valued) non-twisted modified Siegel theta function, using an ``intertwining operator'' $\psi_D$ of certain Weil representations. This general method for constructing twisted theta functions was 
developed in \cite{alfesehlen}.

We let $L(D)$ be the rescaled lattice $DL = \{DX: X \in L\}$ with the quadratic form $Q_D(X) = \frac{Q(X)}{|D|}$. Then the dual lattice of $L(D)$ is again $L'$ (as a set, but $L'$ is now equipped with $Q_D(X)$). We write $\mathcal{L}(D) = L'/L(D)$ for the discriminant group of $L(D)$. Note that $|\mathcal{L}(D)| = |D|\cdot |D|^4 = |D|^5$, and that $L(D) \subset L_D $ (as sets, but they have different quadratic forms). We let $\rho_{\mathcal{L}(D)}$ be the Weil representation corresponding to $\mathcal{L}(D)$. Recall that $\chi_D(X)$ defined in \eqref{eq chi_D(X)} depends only on $X$ modulo $DL$, hence it defines a function on $\mathcal{L}(D)$. Then we have the following general result.

\begin{lemma}\label{lemma intertwiner}
	Put
	\[
	\psi_D = \sum_{\delta \in \mathcal{L}(D)}\chi_D(\delta)\e_\delta = \sum_{\delta \in L_D/L(D)}\chi_D(\delta)\e_\delta \in \C[\mathcal{L}(D)].
	\]
	Then $\psi_D$ is invariant under the dual Weil representation $\overline{\rho}_{\mathcal{L}(D)}$.
\end{lemma}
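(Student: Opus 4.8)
The plan is to verify that $\psi_D$ is fixed by the two generators $T$ and $S$ of $\Mp_2(\Z)$ acting through $\overline{\rho}_{\mathcal{L}(D)}$; since $T$ and $S$ generate the metaplectic group, this yields invariance under all of $\overline{\rho}_{\mathcal{L}(D)}$. The second equality in the statement requires no work: by definition $\chi_D(X)=0$ for $X \in L' \setminus L_D$, so only the classes $\delta \in L_D/L(D)$ contribute.

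For the generator $T$ I would use that $\overline{\rho}_{\mathcal{L}(D)}(T)\e_\delta = e(-Q_D(\delta))\e_\delta$. If $\chi_D(\delta) \neq 0$ then $\delta \in L_D$, so $Q(\delta) \in D\Z = -|D|\Z$ and hence $Q_D(\delta) = Q(\delta)/|D| \in \Z$, which forces $e(-Q_D(\delta)) = 1$. Every term of $\psi_D$ is therefore fixed, and $\overline{\rho}_{\mathcal{L}(D)}(T)\psi_D = \psi_D$.

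The real content is invariance under $S$. Writing out $\overline{\rho}_{\mathcal{L}(D)}(S)\e_\gamma = \frac{e(-1/4)}{|D|^{5/2}}\sum_{\beta}e\!\left(\tfrac{(\beta,\gamma)}{|D|}\right)\e_\beta$, where the phase $e(-1/4)$ comes from the signature $(1,3)$ of $L(D)$, and comparing the coefficient of each $\e_\beta$, the identity $\overline{\rho}_{\mathcal{L}(D)}(S)\psi_D = \psi_D$ becomes the Gauss-sum identity
\[
\sum_{\gamma \in \mathcal{L}(D)}\chi_D(\gamma)\,e\!\left(\frac{(\beta,\gamma)}{|D|}\right) = e(1/4)\,|D|^{5/2}\,\chi_D(\beta)
\]
for all $\beta \in \mathcal{L}(D)$. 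The structural point that makes this tractable is that $D$ is a unit at every prime $p \nmid D$, so $L'_p = L_p = DL_p$ there; hence the finite quadratic module $(\mathcal{L}(D),Q_D)$ is supported on the primes dividing $D$ and splits orthogonally as $\mathcal{L}(D) = \bigoplus_{p \mid D}\mathcal{L}(D)_p$ with $\mathcal{L}(D)_p = L'_p/DL_p$ of order $|D|_p^{-5}$. Both the pairing $(\cdot,\cdot)/|D| \bmod \Z$ and the character $\chi_D = \prod_{p \mid D}\chi_p$ respect this decomposition, so the Gauss sum factors as a product of local Gauss sums and it suffices to prove the analogous identity on each $\mathcal{L}(D)_p$.

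The hard part will be the explicit evaluation of these local Gauss sums. At a prime $p \mid D$ I would use the splitting $(V,Q) \cong H \perp \langle -1, D\rangle$, where the hyperbolic plane $H$ comes from the diagonal entries $a,c$ and $\langle -1, D\rangle$ from the negated norm form $-b\overline{b}$ of $\Q(\sqrt{D})$, in order to diagonalize the local module and reduce the sum to classical quadratic Gauss sums evaluated via $\sum_{x \bmod p}\left(\tfrac{x}{p}\right)e(x/p) = \varepsilon_p\sqrt{p}$ together with the ramified Gauss sum at $p$. The identification of $\chi_p$ with the local Hilbert-symbol expression $(a,D)_p$ recorded in the definition of $\chi_{D,p}$ is what lets me recognize the resulting root-of-unity and square-root factors as precisely $e(1/4)|D|^{5/2}\chi_D(\beta)$ after multiplying the local contributions; the ramification of $p$ in $\Q(\sqrt{D})$ is responsible for the factor $p^{5/2}$ and the correct quadratic twist, and the prime $p=2$ (when $D \in \{-4,-8\}$) must be treated separately but by the same scheme. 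As a final sanity check I would confirm compatibility with the central element $Z = S^2$: since $\overline{\rho}_{\mathcal{L}(D)}(Z)\e_\gamma = -\e_{-\gamma}$ in signature $(1,3)$ and $\chi_D(-X) = \mathrm{sign}(D)\,\chi_D(X) = -\chi_D(X)$, the vector $\psi_D$ is automatically $Z$-invariant; this both corroborates the $S$-computation and explains why the odd character $\chi_D$ must be paired with the degree-one harmonic polynomial $(X,v(P))$ in the definition of $\Theta_{L,\chi_D}^*$.
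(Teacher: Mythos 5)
Your proposal is correct and follows essentially the same route as the paper: both arguments verify invariance on the generators $T$ and $S$, dispose of $T$ using $Q(\delta)\in D\Z$ for $\delta\in L_D$, and reduce $S$-invariance to precisely the Gauss-sum identity $\sum_{\delta \in L_D/L(D)}\chi_D(\delta)\,e\!\left((\delta,\mu)/|D|\right) = i\,|D|^{5/2}\chi_D(\mu)$. The only divergence is at this last step, where you sketch the local splitting over $p\mid D$ and the evaluation via classical quadratic Gauss sums, while the paper simply defers to the arguments of \cite[Section~4.2]{ehlen} -- which carry out essentially that same computation.
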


\begin{proof}
	Put $\rho = \rho_{\mathcal{L}(D)}$ for brevity. Using the notation in Section \ref{section modular forms}, we need to check that $\overline{\rho}(T)\psi_D = \psi_D$ and $\overline{\rho}(S)\psi_D = \psi_D$. We have
	\[
	\overline{\rho}(T)\psi_D = \sum_{\delta \in L_D/L(D)}\chi_D(\delta)e\left( -\frac{Q(\delta)}{|D|}\right)\e_\delta = \sum_{\delta \in L_D/L(D)}\chi_D(\delta)\e_\delta = \psi_D,
	\]
	where we used that $\frac{Q(\delta)}{|D|} \in \Z$ for $\delta \in L_D$. Moreover, we have
	\begin{align*}
	\overline{\rho}(S)\psi_D &= \sum_{\delta \in L_D/L(D)}\chi_D(\delta)\overline{\rho}(S)\e_\delta \\
	&= \sum_{\delta \in L_D/L(D)}\chi_D(\delta)\frac{e((1-3)/8)}{\sqrt{|\mathcal{L}(D)|}}\sum_{\mu \in L'/L(D)}e\left(\frac{1}{|D|}(\delta,\mu)\right)\e_\mu \\
	&= -\frac{i}{|D|^{5/2}}\sum_{\mu \in L'/DL}\left(\sum_{\delta \in L_D/L(D)}\chi_D(\delta)e\left(\frac{1}{|D|}(\delta,\mu)\right)\right)\e_\mu.
	\end{align*}
	Hence it remains to show the evaluation of the Gauss sum
	\[
	\sum_{\delta \in L_D/L(D)}\chi_D(\delta)e\left(\frac{1}{|D|}(\delta,\mu)\right) = i |D|^{5/2}\chi_D(\mu).
	\]
	This can be proved by the same arguments as in \cite[Section~4.2]{ehlen}.
\end{proof}

	\begin{corollary}\label{corollary intertwiner}	
	Let $k \in \Z$, let $A_{k,\rho_{\mathcal{L}(D)}}$ be the space of functions transforming like vector-valued modular forms of weight $k$ for $\rho_{\mathcal{L}(D)}$, and let $A_{k,\SL_2(\Z)}$ be the space of functions transforming like scalar-valued modular forms of weight $k$ for $\SL_2(\Z)$. Then we have a map
	\[
	A_{k,\rho_{\mathcal{L}(D)}} \to A_{k,\SL_2(\Z)}, \qquad f = \sum_{\delta \in L'/L(D)}f_\delta \e_\delta \mapsto f \cdot \psi_D = \sum_{\delta \in L_D/L(D)}\chi_D(\delta)f_{\delta}.
	\]
	Here $f \cdot \psi_D$ denotes the bilinear pairing on $\C[\mathcal{L}(D)]$.
	\end{corollary}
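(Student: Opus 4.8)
The plan is to deduce the corollary immediately from Lemma~\ref{lemma intertwiner}, using that the Weil representation $\rho_{\mathcal{L}(D)}$ and its dual $\overline{\rho}_{\mathcal{L}(D)}$ are adjoint to one another with respect to the bilinear pairing $\cdot$ on $\C[\mathcal{L}(D)]$. First I would record this adjointness as a one-line computation: since each $\rho_{\mathcal{L}(D)}(M,\phi)$ is unitary for the standard Hermitian inner product, one has $\rho_{\mathcal{L}(D)}(M,\phi)^t = \overline{\rho}_{\mathcal{L}(D)}(M,\phi)^{-1}$, and hence $(\rho_{\mathcal{L}(D)}(M,\phi)x)\cdot y = x\cdot(\overline{\rho}_{\mathcal{L}(D)}(M,\phi)^{-1}y)$ for all $x,y \in \C[\mathcal{L}(D)]$ and all $(M,\phi)\in\Mp_2(\Z)$. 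Here I use that $\cdot$ is the symmetric pairing $x\cdot y = \sum_\delta x_\delta y_\delta$, so that transposition, rather than Hermitian adjunction, is the relevant operation, which is what forces the inverse to appear on the dual side.

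Next, given $f = \sum_\delta f_\delta\e_\delta \in A_{k,\rho_{\mathcal{L}(D)}}$, I would set $g = f\cdot\psi_D$ and verify its modularity directly. The transformation law of $f$ reads $f(M\tau) = \phi(\tau)^{2k}\rho_{\mathcal{L}(D)}(M,\phi)f(\tau)$. Pairing against $\psi_D$ and moving $\rho_{\mathcal{L}(D)}(M,\phi)$ across via the adjointness above turns the right-hand side into $\phi(\tau)^{2k}\, f(\tau)\cdot\big(\overline{\rho}_{\mathcal{L}(D)}(M,\phi)^{-1}\psi_D\big)$. By Lemma~\ref{lemma intertwiner} the vector $\psi_D$ is fixed by $\overline{\rho}_{\mathcal{L}(D)}$, hence also by its inverse, so the factor $\overline{\rho}_{\mathcal{L}(D)}(M,\phi)^{-1}\psi_D$ collapses to $\psi_D$ and I obtain $g(M\tau) = \phi(\tau)^{2k}g(\tau)$.

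Finally I would note that, because $k\in\Z$, we have $\phi(\tau)^{2k} = (c\tau+d)^k$, which is exactly the classical weight-$k$ automorphy factor; in particular the multiplier system on the metaplectic cover cancels and $g$ descends to a scalar-valued modular form of weight $k$ for $\SL_2(\Z)$, that is $g \in A_{k,\SL_2(\Z)}$. The explicit formula $f\cdot\psi_D = \sum_{\delta\in L_D/L(D)}\chi_D(\delta)f_\delta$ is then immediate from the definition of $\psi_D$ and of the pairing, using that $\chi_D(\delta)=0$ for $\delta\notin L_D/L(D)$.

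I expect no real obstacle here, since all the arithmetic content---the Gauss sum evaluation making $\psi_D$ an $\overline{\rho}_{\mathcal{L}(D)}$-invariant vector---has already been carried out in Lemma~\ref{lemma intertwiner}. The only points demanding minor care are placing the inverse correctly in the adjointness identity (a consequence of working with the symmetric bilinear pairing rather than the Hermitian one), and observing that the hypothesis $k\in\Z$ is precisely what lets $\phi(\tau)^{2k}$ descend to a genuine automorphy factor for $\SL_2(\Z)$ instead of merely for $\Mp_2(\Z)$.
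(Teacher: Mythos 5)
Your proof is correct and follows essentially the same route as the paper's: pair the transformation law of $f$ against $\psi_D$, move $\rho_{\mathcal{L}(D)}(M,\phi)$ across the bilinear pairing via the adjointness identity $(\rho(M,\phi)\mathfrak{a})\cdot\mathfrak{b} = \mathfrak{a}\cdot(\overline{\rho}(M,\phi)^{-1}\mathfrak{b})$, and invoke the invariance of $\psi_D$ from Lemma~\ref{lemma intertwiner}. The only differences are cosmetic: you derive the adjointness from unitarity of the Weil representation (the paper cites it as well known), and you make explicit that $k\in\Z$ is what lets $\phi(\tau)^{2k}=(c\tau+d)^k$ descend to a genuine $\SL_2(\Z)$ automorphy factor, a point the paper leaves implicit.
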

	
	\begin{proof}
		We again put $\rho = \rho_{\mathcal{L}(D)}$. It is well known that the Weil representation satisfies 
		\[
		(\rho(M,\phi)\mathfrak{a}) \cdot \mathfrak{b} = \mathfrak{a} \cdot (\overline{\rho}(M,\phi)^{-1}\mathfrak{b})
		\] for any $(M,\phi) \in \Mp_2(\Z)$ and $\mathfrak{a},\mathfrak{b} \in \C[\mathcal{L}(D)]$. Since $\psi_D$ is invariant under $\overline{\rho}$, we obtain
		\[
		f(M\tau) \cdot \psi_D = \phi(\tau)^{2k}\rho(M,\phi)f(\tau) \cdot \psi_D = \phi(\tau)^{2k}f(\tau) \cdot \overline{\rho}(M,\phi)^{-1}\psi_D = \phi(\tau)^{2k} (f \cdot \psi_D),
		\]
		so $f\cdot\psi_D$ transforms like a modular form of weight $k$ for $\SL_2(\Z)$.
	\end{proof}

We now complete the proof of Proposition~\ref{proposition twisted theta function}.

\begin{proof}[Proof of Proposition~\ref{proposition twisted theta function}]
Consider the (vector-valued and non-twisted) modified theta function for $L(D)$,
\[
\Theta_{L(D)}^*(\tau,P,h) =  \Im(\tau)^{3/2}\sum_{\delta \in L'/L(D)}\sum_{X \in h(\delta+L(D))}\tfrac{1}{|D|}(X,v(P))e\left(\frac{Q(X_v)}{|D|}\tau + \frac{Q(X_{v^\perp})}{|D|}\overline{\tau}\right)\e_\delta.
\]
As a function of $\tau$ it transforms like a modular form of weight $0$ for $\rho_{\mathcal{L}(D)}$ by \cite[Theorem~4.1]{borcherds}. Moreover, it is related to the scalar-valued twisted theta function by
\[
\Theta_{L,\chi_D}^*(\tau,P,h) = \Theta_{L(D)}^*(\tau,P,h)\cdot \psi_D,
\]
which can be checked by a short computation. In particular, by Corollary~\ref{corollary intertwiner} the twisted theta function $\Theta_{L,\chi_D}^*(\tau,P,h)$ transforms like a modular form of weight $0$ for $\SL_2(\Z)$ in $\tau$. This proves the result.
\end{proof}

\subsection{The twisted traces of the Green's function as a theta lift}

Let $n \in \N_0$. For a smooth modular form $f$ of weight $-2n$ for $\SL_2(\Z)$ we define the twisted regularized theta lift
\[
\Phi_{L,\chi_D}^{*,(-2n)}(f,P) = \int_{\mathcal{F}}^{\reg}R_{-2n}^n f(\tau) \Theta_{L,\chi_D}^*(\tau,P)d\mu(\tau).
\]
Here we put $\Theta_{L,\chi_D}^*(\tau,P)=\Theta_{L,\chi_D}^*(\tau,P,1)$ in the twisted theta function \eqref{eq (scalar-valued) D-twisted modified theta function}.

For $m \in \N$ we let
\[
F_{-2n,m}(\tau,s) = \frac{1}{\Gamma(2s)}\sum_{M \in  \Gamma_{\infty}\backslash \SL_2(\Z)}\mathcal{M}_{-2n,s}(4\pi m v)e(-mu)|_{k}M
\]
be the $m$-th Maass Poincar\'e series of weight $-2n$ for $\SL_2(\Z)$, where $\Gamma_{\infty}$ is the subgroup of $\SL_2(\Z)$ generated by $\left(\begin{smallmatrix}
    1 & 1 \\ 0 & 1
\end{smallmatrix}\right)$ and $\mathcal{M}_{k,s}$ is defined in \eqref{eq M Whittaker}. By unfolding against the Poincar\'e series as in the proof of Theorem~\ref{proposition higher Green's function as theta lift}, we obtain the twisted trace of the Green's function as a theta lift.

\begin{theorem}
	Let $n\in \N_0$ and $m \in \N$. For $\Re(s) > 1$ we have
	\[
	\Phi_{L,\chi_D}^{*,(-2n)}(F_{-2n,m}(\, \cdot \,,s),P) = C_n(s)\frac{2\sqrt{2}}{\pi \Gamma(s) \sqrt{|D|}}\tr_{m|D|,\chi_D}\big( G_{2s-1}(\, \cdot \, , P)\big),
	\]
	with the constant $C_n(s) = (4\pi m)^n(s-n)(s-n+1)\cdots (s-1)$.
\end{theorem}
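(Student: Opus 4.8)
The plan is to follow the unfolding argument in the proof of Theorem~\ref{proposition higher Green's function as theta lift}, adapted to the twisted modified theta kernel $\Theta_{L,\chi_D}^*$, which transforms with weight $0$ for $\SL_2(\Z)$ by Proposition~\ref{proposition twisted theta function}. Throughout I would work in the range $\Re(s)>1$, where all sums and integrals converge absolutely, so the regularization plays no role and every interchange is justified. The first step is to reduce to $n=0$: the scalar-valued Poincaré series satisfies the same raising relation as in Lemma~\ref{lemma raising poincare series}, namely $R_k F_{k,m}(\tau,s)=4\pi m(s+k/2)F_{k+2,m}(\tau,s)$, and iterating this from weight $-2n$ up to weight $0$ gives
\[
R_{-2n}^{n}F_{-2n,m}(\tau,s)=(4\pi m)^{n}(s-n)(s-n+1)\cdots(s-1)\,F_{0,m}(\tau,s)=C_n(s)\,F_{0,m}(\tau,s),
\]
so that $\Phi_{L,\chi_D}^{*,(-2n)}(F_{-2n,m}(\cdot,s),P)=C_n(s)\,\Phi_{L,\chi_D}^{*,(0)}(F_{0,m}(\cdot,s),P)$ and it suffices to treat $n=0$.

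For $n=0$ I would unfold the lift against the Poincaré series as in Theorem~\ref{proposition higher Green's function as theta lift}: since $\Theta_{L,\chi_D}^*(\tau,P)$ is $\SL_2(\Z)$-invariant in $\tau$, the integral over $\mathcal{F}$ collapses to an integral over the strip $\Gamma_\infty\backslash\H$, producing $\frac{1}{\Gamma(2s)}\int_{\Gamma_\infty\backslash\H}\mathcal{M}_{0,s}(4\pi m v)e(-mu)\Theta_{L,\chi_D}^*(\tau,P)\,d\mu(\tau)$. The $u$-integral over $[0,1]$ extracts the terms of the theta function whose $u$-frequency equals $m$; because the kernel carries the rescaled exponent $Q(X_v)\tau/|D|+Q(X_{v^\perp})\overline\tau/|D|$ and $Q(X_v)+Q(X_{v^\perp})=Q(X)$, this selects precisely the vectors $X\in L$ with $Q(X)=m|D|$ — this is the origin of the index $m|D|$ in the trace. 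Using $2Q(X_v)-Q(X)=m|D|\cosh(2d(P,P_X))$ from Lemma~\ref{lemma hyperbolic distance}, what remains is
\[
\frac{1}{\Gamma(2s)}\sum_{\substack{X\in L\\ Q(X)=m|D|}}\chi_D(X)\,\tfrac{1}{|D|}(X,v(P))\int_0^\infty \mathcal{M}_{0,s}(4\pi m v)\,v^{-1/2}\,e^{-2\pi m\cosh(2d(P,P_X))v}\,dv.
\]

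The main obstacle is the $v$-integral, which is the Laplace transform of an $M$-Whittaker function. Writing $t=\cosh(d(P,P_X))$ and using the standard evaluation, this integral produces ${}_2F_1(s,s+\tfrac12,2s;1/t^2)$ together with a factor $\Gamma(s)/\Gamma(2s)$ and an explicit power of $4\pi m$; the degree-one polynomial modification is exactly what shifts the Gamma factor from $\Gamma(s+1/2)$ (as in the non-modified case of Theorem~\ref{proposition higher Green's function as theta lift}) to $\Gamma(s)$ and supplies the extra factor $\sqrt2$. Applying the hypergeometric simplification \eqref{eq simplification of hypergeometric function} and Lemma~\ref{lemma hyperbolic distance} collapses the contribution of each $X$ to a multiple of $\varphi_{2s-1}(\cosh d(P,P_X))$: the polynomial factor $\tfrac{1}{|D|}(X,v(P))=\sgn(c)\sqrt{2m/|D|}\,t$ combines with the $1/\sqrt{m}$ arising from the Whittaker evaluation, leaving the factor $\sqrt{2/|D|}$. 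Verifying this constant is the only genuinely computational point.

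Finally I would assemble the trace. Since $P_{-X}=P_X$, the character satisfies $\chi_D(-X)=\sgn(D)\chi_D(X)=-\chi_D(X)$, while the linear factor $(X,v(P))=\sgn(c)\sqrt{2m|D|}\,\cosh d(P,P_X)$ is odd under $X\mapsto -X$; hence each summand is invariant under this involution, and the full sum equals twice the sum over $X$ with $c>0$, that is, over $L_{m|D|}^+$. Folding the residual $\Gamma$-action exactly as in Theorem~\ref{proposition higher Green's function as theta lift} — using the $\Gamma$-invariance of $\chi_D$ from Proposition~\ref{proposition character invariance} together with $P_{\gamma.X}=\gamma P_X$ — turns $\sum_{X\in L_{m|D|}^+}\frac{\chi_D(X)}{|\Gamma_X|}\sum_{\gamma\in\Gamma}\varphi_{2s-1}(\cosh d(P_X,\gamma P))$ into $\frac{1}{\pi}\tr_{m|D|,\chi_D}(G_{2s-1}(\cdot,P))$. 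Collecting the factor $2$ from the symmetrization, the $\sqrt{2/|D|}$ from the polynomial and Whittaker evaluation, and the $\frac{1}{\pi}$ from the definition of $G_{2s-1}$ yields the constant $\frac{2\sqrt2}{\pi\Gamma(s)\sqrt{|D|}}$, completing the proof for $n=0$ and hence, via the first step, in general.
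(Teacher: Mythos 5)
Your proof follows the paper's own argument essentially step for step: reduce to $n=0$ with the raising identity of Lemma~\ref{lemma raising poincare series}, unfold against the weight-$0$ Poincar\'e series, evaluate the resulting Laplace transform via the hypergeometric simplification \eqref{eq simplification of hypergeometric function} and Lemma~\ref{lemma hyperbolic distance}, use that $\chi_D(-X)=-\chi_D(X)$ together with the odd linear factor $(X,v(P))$ to restrict the sum to $L_{m|D|}^+$ at the cost of a factor $2$, and finally fold modulo $\Gamma$. One bookkeeping remark: your displayed unfolded integral is missing a factor $2$ (at weight $0$ the cosets of $M$ and $-M$ in $\Gamma_\infty\backslash\SL_2(\Z)$ contribute equally), which the paper's proof carries as the prefactor $\frac{2}{\Gamma(2s)|D|}$; your final tally of constants implicitly restores it, so the stated constant $\frac{2\sqrt{2}}{\pi\Gamma(s)\sqrt{|D|}}$ comes out correctly.
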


\begin{proof}
	By Lemma~\ref{lemma raising poincare series} we have
	\[
	\Phi_{L,\chi_D}^{*,(-2n)}(F_{-2n,m}(\, \cdot \,,s),P) = C_n(s)\Phi_{L,\chi_D}^{*,(0)}(F_{0,m}(\, \cdot \,,s),P).
	\]
	Unfolding against $F_{0,m}(\tau,s)$ and computing the integral over $u=\Re(\tau)$ as in the proof of \cite[Theorem~2.14]{bruinier}, we obtain
	\begin{align*}
	\Phi_{L,\chi_D}^{*,(0)}(F_{0,m}(\, \cdot \,,s),P) &= \frac{2}{\Gamma(2s)|D|}\sum_{\substack{X \in L \\ Q(X)  =m|D|}}\chi_D(X)(X,v(P)) \\
	& \qquad \times \int_{v = 0}^\infty M_{0,s-1/2}(4\pi m v) \exp\left(-4\pi \frac{Q(X_v)}{|D|}v+2\pi mv \right)v^{1/2}\frac{dv}{v}.
	\end{align*}
	The integral is a Laplace transform and equals
	\[
	(4\pi m)^{-1/2}\Gamma(s+1/2)\left(\frac{Q(X_v)}{m|D|}\right)^{-s-1/2} \ _2 F_1\left(s,s+\frac{1}{2},2s,\frac{m|D|}{Q(X_v)}\right).
	\]
	By Lemma~\ref{lemma hyperbolic distance}, we have
	\[
	(X,v(P)) = \sgn(c_X)\sqrt{2m|D|}\cosh(d(P,P_X)), \qquad \frac{m|D|}{Q(X_v)} = \frac{1}{\cosh(d(P,P_X))^2},
	\]
 where $c_X$ is the bottom left entry of $X$.	Moreover, the hypergeometric function simplifies as in \eqref{eq simplification of hypergeometric function}. As in \eqref{eq hypergeometric equals varphi} we obtain
	\begin{align*}
	\ _2 F_1\left(s,s+\frac{1}{2},2s; \frac{m|D|}{Q(X_v)}\right)
	&= 2^{2s-1}\cosh(d(P,P_X))^{2s}\varphi_{2s-1}(\cosh(d(P,P_X))).
	\end{align*}
 Taking everything together, we arrive at
	\begin{align*}
	\Phi_{L,\chi_D}^{*,(0)}(F_{0,m}(\, \cdot \,,s),P)  &=  \frac{2^{2s-1}\Gamma(s+1/2)\sqrt{2}}{\Gamma(2s)\sqrt{\pi|D|}}\sum_{\substack{X \in L \\ Q(X) = m|D|}}\chi_D(X)\sgn(c_X)\varphi_{2s-1}\big(\cosh(d(P,P_X))\big).
	\end{align*}
	 Note that $D<0$ implies $\chi_D(-X)=-\chi_D(X)$. Hence, by replacing $X$ with $-X$ if $\sgn(c_X) < 0$ we can restrict the sum to $X \in L_{m|D|}^+$ and get a factor of $2$. Moreover, using the Legendre duplication formula $\frac{2^{2s-1}\Gamma(s+1/2)}{\Gamma(2s)} = \frac{\sqrt{\pi}}{\Gamma(s)}$, we obtain
	\begin{align*}
	\Phi_{L,\chi_D}^{*,(0)}(F_{0,m}(\, \cdot \,,s),P)  &=  \frac{2\sqrt{2}}{\Gamma(s)\sqrt{|D|}}\sum_{\substack{X \in L_{m|D|}^+}}\chi_D(X)\varphi_{2s-1}\big(\cosh(d(P,P_X))\big).
	\end{align*}
	Splitting the sum modulo $\Gamma$ gives the stated formula.
\end{proof}

 At $s = n+1$ the function $F_{-2n,m}(\tau) = F_{-2n,m}(\tau,n+1)$ defines a harmonic Maass form of weight $-2n$ for $\SL_2(\Z)$. We obtain the following result.
 
 \begin{theorem}\label{theorem twisted theta lift}
 	Let $m \in \N$. For $n \geq 1$ we have
	\[
	\Phi_{L,\chi_D}^{*,(-2n)}(F_{-2n,m},P) = (4\pi m)^n \frac{2\sqrt{2}}{\pi \sqrt{|D|}}\tr_{m,\chi_D}\big( G_{2n+1}(\, \cdot \, , P)\big).
	\]
 \end{theorem}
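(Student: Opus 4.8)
The plan is to specialize the preceding theorem, which for $\Re(s) > 1$ expresses $\Phi_{L,\chi_D}^{*,(-2n)}(F_{-2n,m}(\,\cdot\,,s),P)$ as an explicit multiple of $\tr_{m|D|,\chi_D}(G_{2s-1}(\,\cdot\,,P))$, to the point $s = n+1$. Since $n \geq 1$, we have $s = n+1 \geq 2 > 1$, so this value lies in the region of absolute convergence of the Poincaré series, and $F_{-2n,m}(\tau,n+1) = F_{-2n,m}(\tau)$ is the harmonic Maass form of weight $-2n$ for $\SL_2(\Z)$ discussed just before the theorem. Substituting $s = n+1$ gives $2s-1 = 2n+1$ in the argument of the Green's function, which produces the desired $G_{2n+1}$.

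First I would establish the twisted analogue of \eqref{eq plug in s0}, namely that evaluation at $s = n+1$ commutes with the regularized theta integral:
\[
\Phi_{L,\chi_D}^{*,(-2n)}(F_{-2n,m}(\,\cdot\,,s),P)\big|_{s=n+1} = \Phi_{L,\chi_D}^{*,(-2n)}(F_{-2n,m},P).
\]
This is proved exactly as in the non-twisted case (see the proof of Theorem~\ref{theorem higher Green's function as theta lift}), following \cite[Proposition~2.11]{bruinier}: one shows that the regularized integral, viewed as a function of $s$, is holomorphic in a neighbourhood of $s = n+1$ and that the limit $s \to n+1$ may be taken inside the regularization. The only structural difference from the non-twisted argument is that the Poincaré series $F_{-2n,m}(\,\cdot\,,s)$ is now the scalar-valued series for $\SL_2(\Z)$ and the kernel is the twisted modified theta function $\Theta_{L,\chi_D}^*$ of weight $0$ (Proposition~\ref{proposition twisted theta function}). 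Neither the twisting by $\chi_D$ nor the degree $(1,0)$ polynomial modification affects the growth estimates needed for the interchange, so the argument transfers directly.

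Finally I would substitute $s = n+1$ into the constant from the preceding theorem. With $C_n(s) = (4\pi m)^n(s-n)(s-n+1)\cdots(s-1)$, the factors at $s = n+1$ run over $1,2,\dots,n$, so $C_n(n+1) = (4\pi m)^n\, n!$, while $\Gamma(s) = \Gamma(n+1) = n!$. The two factors of $n!$ cancel, leaving the constant $(4\pi m)^n \frac{2\sqrt{2}}{\pi\sqrt{|D|}}$, which yields the asserted identity.

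The main obstacle is the interchange step: justifying that the specialization at $s = n+1$ can be performed inside the regularized theta integral. Everything else is a direct manipulation of the Gamma factors. Because $n \geq 1$ keeps us strictly inside the convergence half-plane $\Re(s) > 1$ (in contrast to the delicate $n = 0$, $s = 1/2$ case flagged in the non-twisted setting), this interchange is controlled by the same estimates as in \cite[Proposition~2.11]{bruinier}, and I expect no genuinely new analytic difficulty beyond transcribing that argument to the twisted kernel.
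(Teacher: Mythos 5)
Your proposal is correct and matches the paper's (implicit) argument: the paper obtains this theorem exactly by specializing the preceding theorem at $s=n+1$, justified by the twisted analogue of \eqref{eq plug in s0} proved as in Theorem~\ref{theorem higher Green's function as theta lift} (following \cite[Proposition~2.11]{bruinier}), with the constants cancelling just as you compute, since $C_n(n+1) = (4\pi m)^n\, n! = (4\pi m)^n\,\Gamma(n+1)$. One small point: your derivation yields the trace $\tr_{m|D|,\chi_D}$, which is what the statement should read (the printed index $\tr_{m,\chi_D}$ is a typo, as confirmed by the remark following the theorem in the paper).
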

 
 \begin{remark}
 	The theorem also holds for $n = 0$. In this case, $F_{0,m}(\tau,s)$ is a Niebur Poincar\'e series which can be analytically continued to $s = 1$ via its Fourier expansion. Arguing as in the proof of \cite[Proposition~2.11]{bruinier} one can check that the twisted theta lift $\Phi_{L,\chi_D}^{*,(0)}(F_{0,m}(\,\cdot\,,s),P)$ is holomorphic at $s = 1$, and agrees with $\Phi_{L,\chi_D}^{*,(0)}(F_{0,m},P)$. 
This is remarkable since it implies that the twisted trace $\tr_{m|D|,\chi_D}\big( G_{s}(\, \cdot \, , P)\big)$ is holomorphic at $s = 1$, although the Green's function $G_{s}(P_1,P_2)$ has a pole at $s  = 1$. Since the residue of the Green's function at $s = 1$ does not depend on $P_1,P_2$, the holomorphicity of $\tr_{m|D|,\chi_D}\big( G_{s}(\, \cdot \, , P)\big)$ at $s = 1$  is equivalent to
	\[
	\tr_{m|D|,\chi_D}(1) = 0.
	\]
	Indeed, this vanishing can also be proved directly. Using Theorem~\ref{shimura theorem} and Lemma~\ref{sum lemma} we see that $\tr_{m|D|,\chi_D}(1)$ is a multiple of
	\[
	\int_{H(\Q)\backslash H(\A_f)}\chi_D(\nu(h))dh,
	\]
	where $\chi_D\circ \nu$ is defined in \eqref{eq:chi-adelic-on_G}, and this integral vanishes since $\chi_D\circ \nu$ is a non-trivial character on the compact group $H(\Q) \backslash H(\A_f)$.
  \end{remark}
 \subsection{A twisted Siegel--Weil formula}\label{section twisted siegel weil}
 
 	In this section we let $M$ be a positive definite even lattice of rank $3$. Moreover, we let $W = M \otimes \Q$ the surrounding rational quadratic space, and $H = \SO(W)$. The classical Siegel--Weil formula describes the integral of the theta function $\Theta_M(\tau,h)$ over $h \in H(\Q) \backslash H(\A_f)$ as an explicit Eisenstein series; see Theorem~\ref{siegel weil formula}. Let $\nu$ denote the spinor norm on $\mathrm{GSpin}_W(\A_f)$, and associated to the field discriminant $D<0$ consider  the quadratic character $\chi_D$ of $\A_f^{\times}$ defined in \eqref{eq:chi-adelic}.   Then the function $\chi_D\circ \nu$ defines a non-trivial quadratic character of $H(\A_f)$ that is trivial on $H(\Q)$, so it makes sense to consider the \emph{twisted Siegel--Weil integral}
	\begin{equation}\label{eq twisted Siegel Weil integral}
	\vartheta_{M,\chi_D}(\tau) = \frac{1}{2}\int_{H(\Q)\backslash H(\A_f)}\Theta_M(\tau,h)\chi_D(\nu(h))dh,
	\end{equation}
 where $dh$ denotes the Tamagawa measure on $H(\A_f)$ normalized such that $\vol(H(\Q)\backslash H(\A_f)) = 2$ (as in Theorem~\ref{siegel weil formula}). This function was studied in detail by Snitz \cite{snitz} in his thesis. It follows from Snitz's work that $\vartheta_{M,\chi_D}(\tau)$ is a holomorphic cusp form of weight $3/2$ with rational Fourier coefficients whose Fourier expansion is supported on a single rational square class. More precisely, his main result implies that $\vartheta_{M,\chi_D}(\tau)$ 
 lies in the distinguished subspace generated by unary theta functions of weight $3/2$. To state his result in a convenient form, recall that $A_{k,\rho_M}$ denotes the space of functions transforming like vector-valued modular forms of weight $k$ under the Weil representation $\rho_M$. For a  lattice $P$ in a one-dimensional positive definite quadratic space $(\Q,\widetilde{Q})$ we let
 \[
 \theta_{P}^*(\tau) = \sum_{r \in P'/P}\sum_{x\in (r+P)}xe^{2\pi i \widetilde{Q}(x) \tau}\e_{r}
 \]
be the holomorphic unary theta function of weight $3/2$ associated with $P$. 

In the following theorem, we assume that the character $\chi_D$ is compatible with the quadratic space $W$ in the sense that at every prime $p$ such that the local character $\chi_{D,p}$ is trivial, we have that $W\otimes \Q_p$ is isotropic. If this compatibility condition is not satisfied, then $\vartheta_{M,\chi_D}(\tau)=0$ (see \cite[Corollary 15]{snitz}) and the following theorem holds trivially.
 
 \begin{theorem}[Twisted Siegel--Weil formula \cite{snitz}]\label{twisted siegel weil formula}
 	Let $M$ be a positive definite even lattice of rank $3$ and let $D<0$ be a field discriminant such that $\chi_D$ is compatible with the quadratic space $M\otimes \Q$ as explained above. Then there exists a positive definite unary lattice $P$ and a linear map $\Phi: A_{k,\rho_P} \to A_{k,\rho_M}$ of the form
  \[
  \Phi(f) =  \sum_{\mu \in M'/M}\bigg(\sum_{r \in P'/P}f_r\left(\tau\right) \cdot c_{r,\mu} \bigg)\e_\mu
  \]
  with rational coefficients $c_{r,\mu}$, such that
  \[
  \vartheta_{M,\chi_D}= \Phi(\theta_P^*).
  \]
\end{theorem}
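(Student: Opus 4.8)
The plan is to derive the statement by translating the main results of Snitz's thesis \cite{snitz} into the vector-valued Weil-representation language used here. I would argue in three steps: first that $\vartheta_{M,\chi_D}$ is a holomorphic weight $3/2$ form for $\rho_M$; then invoke Snitz's structural description of it; and finally repackage that description as a map $\Phi$ with rational coefficients.

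First I would record that $\vartheta_{M,\chi_D}(\tau)$ is a holomorphic modular form of weight $3/2$ for $\rho_M$. For each fixed $h \in H(\A_f)$ the theta function $\Theta_M(\tau,h)$ of \eqref{eqn-theta-L} is a holomorphic modular form of weight $r/2 = 3/2$ for $\rho_M$ in $\tau$, and the integrand $h \mapsto \Theta_M(\tau,h)\chi_D(\nu(h))$ of \eqref{eq twisted Siegel Weil integral} is right-invariant under a compact open subgroup $K \subset H(\A_f)$, namely the intersection of the stabilizer of $M$ fixing the classes of $M'/M$ with the kernel of the character $\chi_D\circ\nu$ from \eqref{eq:chi-adelic-on_G}. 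By Lemma~\ref{sum lemma} the integral is then a finite $\C$-linear combination of forms $\Theta_M(\tau,\alpha)$, hence holomorphic and of weight $3/2$ for $\rho_M$. If the compatibility hypothesis on $\chi_D$ fails, then $\vartheta_{M,\chi_D} = 0$ by \cite[Corollary~15]{snitz} and the assertion is trivial, so I assume it holds.

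Next I would invoke the output of \cite{snitz}: the Fourier expansion of $\vartheta_{M,\chi_D}$ shows that it is a cusp form of weight $3/2$ whose rational Fourier coefficients are supported on a single rational square class $t\cdot(\Q^\times)^2$, and which lies in the distinguished subspace spanned by unary theta functions. Concretely, the coefficient of $q^n\e_\mu$ vanishes unless $n \in t\cdot(\Q^\times)^2$, and for $n = ts^2$ it is a rational multiple of $s$, exactly matching the shape of the coefficients of a weight $3/2$ unary theta series. I would then fix the positive definite unary lattice $P = (\Z,\widetilde{Q})$, normalizing $\widetilde{Q}$ so that the exponents occurring in $\theta_P^*$ fill precisely this square class; its coefficient $a(n,r)$ of $q^n\e_r$ equals, for $n = ts^2$, one of $0,\pm s$, and is in particular rational.

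Finally I would construct $\Phi$ by matching Fourier expansions. Membership of $\vartheta_{M,\chi_D}$ in the distinguished subspace means that it is a finite sum of images of unary theta functions under intertwining maps of the stated shape; discarding the summands whose square class differs from $t\cdot(\Q^\times)^2$ and consolidating the rest, a single lattice $P$ and a single map $\Phi(f) = \sum_{\mu}\bigl(\sum_r f_r c_{r,\mu}\bigr)\e_\mu$ suffice, with $\Phi$ automatically sending $A_{k,\rho_P}$ to $A_{k,\rho_M}$. Writing $\vartheta_{M,\chi_D} = \sum_\mu\sum_n b(n,\mu)q^n\e_\mu$, the relations $b(n,\mu) = \sum_r a(n,r)c_{r,\mu}$ determine the $c_{r,\mu}$; since on the square-class support both $a(n,r)$ and $b(n,\mu)$ are rational, these can be solved for over $\Q$. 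This yields $\vartheta_{M,\chi_D} = \Phi(\theta_P^*)$ with rational $c_{r,\mu}$, as claimed.

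The main obstacle is the second step. The cuspidality of $\vartheta_{M,\chi_D}$ and, above all, the confinement of its Fourier support to a single square class with coefficients of unary-theta shape are genuinely the content of Snitz's computation and are not formal consequences of the definitions. The remaining work is bookkeeping: transferring his adelic formulation into the vector-valued setting used here, and verifying that the passage to a single lattice $P$ and the resulting coefficient matching can be carried out over $\Q$.
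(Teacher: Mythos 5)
Your proposal founders at the final step, and the failure is precisely at the point you dismiss as \emph{automatic}. For a component-wise linear map $\Phi(f)=\sum_{\mu}\bigl(\sum_{r}f_{r}\,c_{r,\mu}\bigr)\e_{\mu}$ to send $A_{k,\rho_P}$ into $A_{k,\rho_M}$, the matrix $(c_{r,\mu})$ must define an $\Mp_2(\Z)$-equivariant map $\C[P'/P]\to\C[M'/M]$ intertwining the two Weil representations; this is a strong algebraic constraint, not a consequence of the shape of the formula. Matching the Fourier coefficients of the \emph{single} form $\theta_P^*$ cannot deliver it: the linear system $b(n,\mu)=\sum_{r}a(n,r)\,c_{r,\mu}$ is in general underdetermined (the coefficients $a(n,r)$ of a unary theta series vanish for most pairs $(n,r)$ and coincide for $r$ and $-r$), so it has many rational solutions, almost none of which intertwine $\rho_P$ and $\rho_M$; conversely, nothing in your argument shows that \emph{some} solution does. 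Your fallback — that membership in the distinguished subspace means $\vartheta_{M,\chi_D}$ is a finite sum of images of unary theta functions under intertwining maps ``of the stated shape'' with rational coefficients — is circular: that is exactly the assertion of the theorem, and the cuspidality, single-square-class support, and rationality statements you take as Snitz's ``output'' are themselves derived in the paper from the very identity that produces the intertwiner, so they cannot serve as independent input from which to reconstruct it.

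What actually closes this gap in the paper is Snitz's \emph{explicit} correspondence of Schwartz functions: an equivariant (with respect to the adelic Weil representations $\omega_W$ and $\omega_U$) linear map $\mathcal{S}(W(\A))\to\mathcal{S}(U(\A))$, $\varphi\mapsto\varphi^{0}$, satisfying $\int_{H(\Q)\backslash H(\A_f)}\Theta_W(g,h,\varphi)\chi_D(\nu(h))\,dh=\Theta_U(g,1,\varphi_0)$ under the compatibility hypothesis. Restricting to the span of the characteristic functions $\1_{\mu}$ gives an equivariant map $\mathcal{S}_M\to\mathcal{S}_P$ between the finite-dimensional spaces identified with $\C[M'/M]$ and $\C[P'/P]$, and it is this adelic equivariance — not coefficient matching — that makes $\Phi$ a map $A_{k,\rho_P}\to A_{k,\rho_M}$. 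Rationality likewise comes from the construction, not from solving a rational linear system: Snitz's Theorem 2 expresses $\1_{\mu}^{0}(\epsilon r)=c_f\,c_{\mu,r}$ with $c_{\mu,r}\in\Q$ as products of $p$-adic orbital integrals of locally constant $\Q$-valued functions, and the archimedean and finite volume constants cancel ($c_\infty c_f=1$) under Snitz's normalization of measures. In addition, the lattice $P$ is not chosen by fiat to ``fill the square class'': it is determined as the lattice in $U(\Q)=(\Q,2\det(M)|D|x^2)$ on which the functions $\1_{\mu}^{0}$ are supported and locally constant, and getting this quadratic form right is essential, since $\rho_P$ (hence the existence of any intertwiner at all) depends on it. Your step (1) and the reduction to the compatible case are fine, but the heart of the proof — equivariance plus rationality via the explicit adelic transfer — is missing from your argument.
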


 \begin{remark}\label{rmk:on-TSW-formula}
    \begin{enumerate}
    \item Snitz's results are stated for the ternary quadratic spaces $(B^0,\mathrm{nr})$ coming from the reduced norm on trace zero elements of a quaternion algebra $B$. For a general quadratic space $(W,Q)$ we have that $(W,2\, \det(W)Q)$ is isometric to $(B^0,\mathrm{nr})$ for some $B$ (see, e.g., \cite[Theorem~5.1.1]{voight}), hence one can apply the results of Snitz in our setting by keeping track of the effect of this re-scaling.
        \item It follows from \cite[Theorem 2]{snitz} that the coefficients $c_{r,\mu}$ can be expressed as products of $p$-adic orbital integrals of compactly supported and locally constant functions with rational values. However, we will not need their precise values. 
        \item Since the coefficients $c_{r,\mu}$ are rational, $\Phi$ also defines a linear map between spaces of modular forms for the dual Weil representations $\overline{\rho}_P$ and $\overline{\rho}_M$.
        \item The lattice $P$ is a lattice in the rational quadratic space $(\Q,2\, \det(M)|D|x^2)$.
    \end{enumerate}
 \end{remark}

  \begin{proof}[Proof of Theorem~\ref{twisted siegel weil formula}]
    Since the results of \cite{snitz} are stated in terms of Schwartz functions on adelic spaces, we give some details on the translation into our setting. Given a lattice $L$ we define $\hat{L}=L\otimes \hat{\Z}$ where $\hat{\Z}=\prod_{p<\infty}\Z_p$. We let $\mathcal{S}(W(\A))$ be the space of Schwartz functions $\varphi =\varphi_{\infty}\otimes \varphi_f$, 
    that is, $\varphi_\infty \in \mathcal{S}(W(\R))$ is smooth and rapidly decreasing and $\varphi_f\in \mathcal{S}(W(\A_f))$ is compactly supported and locally constant. Associated to the quadratic space $(W,Q)$ and the standard non-trivial additive character $\psi_0$ of $\A/\Q$ there is the adelic Weil representation $\omega_{W,\psi_0}=\omega_W$ of $\Mp_2(\A)$ on $\mathcal{S}(W(\A))$. Note that, for $\epsilon=2\, \mathrm{det}(W)$, the representation $\omega_W$ is equivalent to the adelic Weil representation associated to the quadratic space $(W,\epsilon Q)$ and the additive character $\psi(x)=\psi_0(x/\epsilon)$, hence we can apply the results in \cite{snitz} as explained in Remark \ref{rmk:on-TSW-formula}(1). Now, to each $\varphi\in \mathcal{S}(W(\A))$ one can associate a theta function $\Theta_W(g,h,\varphi)$ as in \cite[Equation~(1.1)]{snitz}, which is a function of $g \in \Mp_2(\A)$ and $h \in H(\A_f)$. Choosing $\varphi_\infty = e^{-2\pi Q(x)}$ as the Gaussian,  the finite part $\varphi_f$ as the characteristic function $\1_{\mu}$  of a coset $\mu \in \hat{M'}/\hat{M}\cong M'/M$, and 
    \begin{equation}\label{eq:choice-of-g-in-adelic-theta}
    g = g_\tau=\left(\begin{pmatrix}
        1 & u \\
        0 & 1
    \end{pmatrix}\begin{pmatrix}
        v^{1/2} & 1 \\
        0 & v^{-1/2}
    \end{pmatrix},1\right) \in \Mp_2(\R)
     \end{equation}
    the usual matrix that sends $i$ to $\tau=u+iv\in \H$, we obtain $v^{3/4}$ times  the $\mu$-th component of our holomorphic ternary theta function $\Theta_M(\tau,h)$ defined in \eqref{eqn-theta-L}. 
    
    Next, consider the one-dimensional quadratic space $U = (\Q,\epsilon |D|x^2)$, the corresponding space  of Schwartz functions $\mathcal{S}(U(\A))$, and the adelic Weil representation $\omega_{U,\psi_0}=\omega_U$ of $\Mp_2(\A)$ on $\mathcal{S}(W(\A))$. In a similar way as before, to each $\varphi^0 \in \mathcal{S}(U(\A))$ one can associate a theta function $\Theta_U(g,h^0,\varphi^0)$, which is a function of $g \in \Mp_2(\A)$ and $h^0 \in \O(U)(\A_f)=\prod_{p< \infty}\{\pm 1\}$. If we let $S$ denote the set of primes $p$ such that $W\otimes \Q_p$ is anisotropic, and define the character $\eta_S: \O(U)(\A_f)\to \{\pm 1\}$ as $\eta_S=\bigotimes_{p< \infty}\eta_{p}$ with $\eta_{p}(x_p)=\mathrm{sgn}(x_p)$ if $p\in S$, and $\eta_{p}(x_p)=1$ otherwise, then the results of \cite{snitz} can be summarized by saying that there exists an explicit equivariant (with respect to $\omega_W$ and $\omega_U$) linear map
    \[
    \mathcal{S}(W(\A)) \to \mathcal{S}(U(\A)), \quad \varphi \to \varphi^0=\varphi^{0}_{\infty}\otimes \varphi^{0}_{f}
    \]
    such that the  equality of twisted integrals
    \begin{equation*}
    \int_{H(\Q)\backslash H(\A_f)}\Theta_W(g,h,\varphi)\chi_D(\nu(h))dh = \int_{\O(U)(\Q)\backslash \O(U)(\A_f)} \Theta_U(g,h^0,\varphi^0)\eta_S(h^0)dh^0
    \end{equation*}
    holds. For factorizable functions $\varphi_f=\bigotimes_{p<\infty}\varphi_p$ the image $\varphi_f^0$ is also factorizable $\varphi_f^0=\bigotimes_{p<\infty}\varphi_p^0$. Moreover, our compatibility hypothesis implies  the parity property
    $\varphi^{0}_{p}(-r)=\eta_p(-1)\varphi^{0}_{p}(r)$ valid for every prime $p$ (see \cite[Corollary 38]{snitz}). Then,  as done in \cite[Section 3.2]{snitz} one can rewrite the  above equality as
   \begin{equation}\label{eq:Snitz-identity}
    \int_{H(\Q)\backslash H(\A_f)}\Theta_W(g,h,\varphi)\chi_D(\nu(h))dh = \Theta_U(g,1,\varphi_0).
    \end{equation}
    
    In our setting, we choose $g$ as in \eqref{eq:choice-of-g-in-adelic-theta}, $\varphi_\infty = e^{-2\pi Q(x)}$ as the Gaussian and $\varphi_f=\1_{\mu}$ as the characteristic function of $\mu \in \hat{M}'/\hat{M}$. Then, a direct computation using \cite[Theorem 2 with $x_0\in W(\Q)$ satisfying $\epsilon Q(x_0)=|D|$]{snitz} shows that $\varphi_{\infty}^{0}(r) = c_{\infty}\,re^{-2\pi |D|r^2/\epsilon}$ with $c_{\infty}$ the volume of $T_{x_0}(\R)\backslash H(\R)$ where $T_{x_0}$ is the stabilizer of $x_0$ in $H$. Moreover, we have that $\varphi_f$ is factorizable. As a consequence of \eqref{eq:Snitz-identity}, multiplying both sides by $\e_{\mu}$ and summing over $\mu \in \hat{M}'/\hat{M}$  we get
    \begin{equation}\label{eq:twisted-S-W-1}
    \frac{1}{2}\int_{H(\Q)\backslash H(\A_f)}  \Theta_M(\tau,h)\chi_D(\nu(h))dh = c_{\infty} \sum_{\mu \in M'/M} \sum_{r\in U(\Q)} re^{-2\pi \epsilon|D|r^2 \tau} \1_{\mu}^{0}(\epsilon r)\e_{\mu}.
    \end{equation}
    Since for every $\mu \in M'/M$ we have $\1_{\mu}^{0}\in \mathcal{S}(U(\A_f))$, there exists a lattice $P$ in $U(\Q)$ such that $\1_{\mu}^{0}(\epsilon r)$ is supported on $\hat{P'}$ and constant on each class in $\hat{P'}/\hat{P}\cong P'/P$.  Hence, the map $\varphi \to \varphi_0(\epsilon r)$ restricts to a map $\mathcal{S}_M \to \mathcal{S}_{P}$, where $\mathcal{S}_M$ and $\mathcal{S}_P$ are the subspaces of $\mathcal{S}(W(\A_f))$ and $\mathcal{S}(U(\A_f))$ spanned by characteristic functions $\{\1_{\mu}:\mu \in M'/M\}$ and $\{\1_{r}:r \in P'/P\}$, respectively. Notice that $\mathcal{S}_M$ and $\mathcal{S}_P$ can be identified with the group rings $\C[M'/M]$ and $\C[P'/P]$ via $\1_{\mu} \mapsto \e_{\mu}$ and $\1_{r} \mapsto \e_{r}$, respectively. Since the dual Weil representations $\overline{\rho}_M$ and $\overline{\rho}_{P}$ acting on $\mathcal{S}_M$ and $\mathcal{S}_{P}$ are induced by the adelic Weil representations $\omega_W$ and $\omega_{P}$, respectively (see, e.g., \cite[Section 2]{bruinieryang}), the map $\mathcal{S}_M \to \mathcal{S}_{P}$ is equivariant with respect to $\overline{\rho}_M$ and $\overline{\rho}_{P}$. Using this, it is easy to check that the map
    \[
    \sum_{r \in P'/P}f_r(\tau)\e_r \mapsto \sum_{\mu \in M'/M}\bigg(\sum_{r \in P'/P}f_r(\tau)\cdot \1_{\mu}^0(\epsilon r)\bigg) \e_{\mu}
    \]
    sends $A_{k,\rho_{P}}$ to $A_{k,\rho_M}$. Now, the explicit integral formula given in \cite[Theorem 2]{snitz} shows that $\1_{\mu}^0(\epsilon r)=c_fc_{\mu,r}$ with $c_{\mu,r}$ a positive rational number and $c_f$ the volume of $T_{x_0}(\A_f)\backslash H(\A_f)$. Finally, noting that $c_{\infty}c_f=1$ due to the normalization of measures used by Snitz (see \cite[p.~440]{snitz}), we get that the right-hand side of \eqref{eq:twisted-S-W-1} equals $\Phi(\theta_P^*)$. This finishes the proof.
  \end{proof}

 \begin{example}\label{twisted siegel weil example}
 	Let us choose $D = -4$, and consider the lattice $M = \Z^3$ with $Q(x,y,z) = 4x^2 + y^2 + z^2$. We have $M'/M \cong (\Z/8\Z) \times (\Z/2\Z)^2$, so we write its elements in the form $(\frac{x}{8}, \frac{y}{2},\frac{z}{2})$. Note that the quadratic space $W\otimes \Q_p$ is anisotropic only when $p=2$, hence $\chi_{D}$ is compatible with $M\otimes \Q$. The twisted Siegel--Weil formula asserts that $\vartheta_{M,\chi_D}$ is a cusp form whose Fourier expansion is supported on rational squares. 
  Indeed, using Williams' weilrep package for sage math \cite{williamsweilrep} one can check that the space $S_{3/2,M}$ is one-dimensional and spanned by the form
	\begin{eqnarray*}
	f &=& \theta_{4,1}^*\left(\e_{(\frac{1}{8},0,0)}+\e_{(\frac{7}{8},0,0)}-\e_{(\frac{5}{8},\frac{1}{2},\frac{1}{2})}-\e_{(\frac{7}{8},\frac{1}{2},\frac{1}{2})}\right) \\
	& &+ \theta_{4,2}^*\left(\e_{(0,\frac{1}{2},0)}+\e_{(0,0,\frac{1}{2})}-\e_{(\frac{1}{2},\frac{1}{2},0)}-\e_{(\frac{1}{2},0,\frac{1}{2})}\right) \\
	& & + \theta_{4,3}^* \left(\e_{(\frac{1}{8},\frac{1}{2},\frac{1}{2})}+\e_{(\frac{7}{8},\frac{1}{2},\frac{1}{2})}-\e_{(\frac{3}{8},0,0)}-\e_{(\frac{5}{8},0,0)}\right),
	\end{eqnarray*}
 where $\theta_{N,r}^* = \sum_{n \equiv r (2N)}nq^{n^2/4N}$ is a unary theta function of weight $3/2$, and a multiple of a component of $\theta_P^*$ for a suitable unary lattice $P$. In our case, $P = (\Z,4x^2)$. The sageMath code
	\begin{verbatim}
    from weilrep import *
    w = WeilRep(diagonal_matrix([-8,-2,-2]))
    w.cusp_forms_basis(3/2,prec=10) \end{verbatim}
	will print the components of $f$ up to $q^{10}$. The cusp form $\vartheta_{M,\chi_D}$ is a rational multiple of $f$. 
 \end{example}
 
 	\begin{theorem}\label{theorem twisted siegel weil preimage} Let the notation be as in Theorem~\ref{twisted siegel weil formula}. There exists a harmonic Maass form $\widetilde{\vartheta}_{M,\chi_D}$ of weight $1/2$ for $\rho_{M^{-}}$ with
		\[
		\xi_{1/2}\widetilde{\vartheta}_{M,\chi_D} = \frac{1}{2}\vartheta_{M,\chi_D},
		\]
		such that the holomorphic part of $\sqrt{2\det(M)|D|}\,\widetilde{\vartheta}_{M,\chi_D}$ has \emph{rational} Fourier coefficients.
 \end{theorem}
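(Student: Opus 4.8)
The plan is to combine the twisted Siegel--Weil formula (Theorem~\ref{twisted siegel weil formula}) with the known construction of harmonic Maass forms whose shadows are unary theta functions, and then to transport the $\xi$-preimage from the auxiliary unary lattice back to $M$ using the rational equivariant map $\Phi$. By Theorem~\ref{twisted siegel weil formula} we may write $\vartheta_{M,\chi_D} = \Phi(\theta_P^*)$, where $P$ is a lattice in the one-dimensional positive definite space $(\Q, 2\det(M)|D| x^2)$ (Remark~\ref{rmk:on-TSW-formula}(4)), and $\Phi \colon A_{k,\rho_P}\to A_{k,\rho_M}$ has rational coefficients $c_{r,\mu}$. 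By Remark~\ref{rmk:on-TSW-formula}(3) the same formula defines a rational map between the dual Weil representations. Hence it suffices to produce a suitable $\xi$-preimage of $\theta_P^*$ on $P$ and then apply $\Phi$.

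First I would construct a harmonic Maass form $\widetilde{\theta}_P \in H_{1/2,P^-}$ of weight $1/2$ for $\rho_{P^-} = \overline{\rho}_P$ satisfying $\xi_{1/2}\widetilde{\theta}_P = \tfrac{1}{2}\theta_P^*$. Such a preimage exists because $\xi_{1/2}$ maps $H_{1/2,P^-}$ onto $M_{3/2,P}^{!}$ (Section~\ref{section harmonic maass forms}), and by the explicit constructions in \cite{bruinierschwagenscheidt, lischwagenscheidt} it can be chosen so that the coefficients of its holomorphic part $\widetilde{\theta}_P^+$ become rational after multiplication by $\sqrt{2\det(M)|D|}$. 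The point of those works is precisely that, when the shadow is a unary theta function supported on a single rational square class, the transcendental content of the holomorphic coefficients is captured by a single square-root factor determined by the discriminant of $P$, which here is a rational multiple of $2\det(M)|D|$.

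Then I would set $\widetilde{\vartheta}_{M,\chi_D} = \Phi(\widetilde{\theta}_P)$. Since $\Phi$ acts componentwise by rational (hence real) linear combinations, it maps $H_{1/2,P^-}$ into $H_{1/2,M^-}$ and commutes with the antilinear operator $\xi_{1/2}$, the complex conjugation of the $c_{r,\mu}$ being harmless as these are real. Therefore
\[
\xi_{1/2}\widetilde{\vartheta}_{M,\chi_D} = \Phi\big(\xi_{1/2}\widetilde{\theta}_P\big) = \Phi\big(\tfrac{1}{2}\theta_P^*\big) = \tfrac{1}{2}\vartheta_{M,\chi_D},
\]
as required. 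Moreover, the holomorphic part of $\widetilde{\vartheta}_{M,\chi_D}$ is obtained from that of $\widetilde{\theta}_P$ by the same rational map, so the coefficients of $\sqrt{2\det(M)|D|}\,\widetilde{\vartheta}_{M,\chi_D}^+ = \Phi\big(\sqrt{2\det(M)|D|}\,\widetilde{\theta}_P^+\big)$ are rational linear combinations of the rational coefficients of $\sqrt{2\det(M)|D|}\,\widetilde{\theta}_P^+$, hence rational.

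The main obstacle is the second step: pinning down the precise square-root normalization in the rationality statement for $\widetilde{\theta}_P^+$, that is, verifying that the single fixed factor $\sqrt{2\det(M)|D|}$ (rather than some other algebraic factor) rationalizes all holomorphic coefficients simultaneously. This requires careful bookkeeping of the discriminant of the lattice $P$ produced by Snitz's construction together with the normalizations in \cite{bruinierschwagenscheidt, lischwagenscheidt}; once this is matched, the rationality up to this fixed square-root is exactly what those references supply for $\xi$-preimages of unary theta functions.
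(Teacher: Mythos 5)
Your proposal is correct and takes essentially the same route as the paper: the authors also write $\vartheta_{M,\chi_D}=\Phi(\theta_P^*)$, invoke \cite[Theorem~1.1]{lischwagenscheidt} for a weight $1/2$ harmonic Maass form whose shadow is $\frac{1}{2\sqrt{2\det(M)|D|}}\theta_P^*$ and whose holomorphic part is rational (legitimate because $P$ lies in $(\Q,2\det(M)|D|x^2)$, so any mismatch between the discriminant of $P$ and $2\det(M)|D|$ is a rational square), and then push forward through the rational map $\Phi$, which commutes with $\xi_{1/2}$. The normalization bookkeeping you flag as the main obstacle is precisely what that citation supplies, so there is no gap.
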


 \begin{proof}
    Since $P$ is a lattice in $(\Q,2\,\det(M)|D|x^2)$, by \cite[Theorem~1.1]{lischwagenscheidt} there exists a harmonic Maass form $\widetilde{\theta}_{P}^*$ of weight $1/2$ for the dual Weil representation $\overline{\rho}_P$ with 
    \[
    \xi_{1/2}\widetilde{\theta}_{P}^* = \frac{1}{2\sqrt{2\det(M)|D|}}\theta_{P}^*,
    \]
    such that the holomorphic part of $\widetilde{\theta}_{P}^*$ has rational coefficients. Now we choose $\widetilde{\vartheta}_{M,\chi_D} = \sqrt{2\det(M)|D|}\,\Phi(\widetilde{\theta}_{P}^*)$.
 \end{proof}

 \subsection{Splitting of the Siegel theta function, twisted versions}
 
 In this section we compute the twisted traces of the Siegel theta functions $\Theta_L(\tau,P)$ and $\Theta_{L,\chi_D}^*(\tau,P)$, similarly as in Theorem~\ref{theorem splitting theta}. Throughout, we fix $\mu \in L'/L$ and $m \in \Z + Q(\mu)$ with $m > 0$. Recall that $L_{m,\mu}^{+,0}$ denotes the set of primitive positive definite binary hermitian forms $X \in L+ \mu$ with determinant $Q(X) = m$. Throughout we consider a fixed primitive positive definite vector $X_0 \in L_{m,\mu}^{+,0}$. As before, we define sublattices
		\begin{align}\label{eq splitting lattice}
		P = L \cap (\Q X_0), \qquad N = L \cap (\Q X_0)^\perp,
		\end{align}
		which are one-dimensional positive definite and three-dimensional negative definite. Let $N^- = (N,-Q)$. As in Section~\ref{section twisted theta function} we let $P(D) = (DP, Q_D)$ and $N(D) = (DN, Q_D)$ with $Q_D(X) = \frac{1}{|D|}Q(X)$ be the corresponding rescaled lattices. Note that we have
		\[
		P(D) = L(D) \cap (\Q X_0), \qquad N(D) = L(D) \cap (\Q X_0)^\perp.
		\]
  As in Section \ref{sec orthogonal and spinor groups} we let $H=\{g\in G:gX_0=X_0\}$ be the stabilizer of $X_0$ in $G$, which we now identify with $\SO(W)$ where $W = (\Q X_0)^\perp$ is a positive definite three-dimensional quadratic space with quadratic form $-Q$.
  
 We start with the twisted traces of the (non-twisted) Siegel theta function $\Theta_L(\tau,P)$, which was defined in Section~\ref{section theta functions}.
 
 \begin{theorem}\label{theorem splitting theta twisted Siegel}
		Fix a primitive vector $X_0 \in L_{m|D|,0}^{+,0}$. Then we have
		\[
		\tr_{m|D|,\chi_D}^0\left(\Theta_L(\tau,\, \cdot \,)\right) = \chi_D(X_0)\tr_{m|D|,0}^0(1) \cdot \left(\Theta_{P}(\tau) \otimes \overline{\vartheta_{N^-,\chi_D}(\tau)}v^{3/2}\right)^L,
		\]
		where $\Theta_{P}(\tau)$ is the weight $1/2$ holomorphic theta function for $P$ and $\vartheta_{N^-,\chi_D}(\tau)$ is the weight $3/2$ cusp form described in Section~\ref{section twisted siegel weil}, and the superscript $L$ denotes the operator defined in Section~\ref{section modular forms}.
	\end{theorem}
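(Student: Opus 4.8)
The plan is to follow the proof of Theorem~\ref{theorem splitting theta} essentially line by line, replacing the ordinary Siegel--Weil formula by its twisted counterpart Theorem~\ref{twisted siegel weil formula}. The first step is to rewrite the twisted primitive trace adelically. Using Remark~\ref{rmk:det=+-1_to_det=1} and the bijection of Theorem~\ref{shimura theorem}, every $\Gamma$-class of a positive definite primitive $X$ of determinant $m|D|$ corresponds to a double coset $h = gu \in H(\Q)\backslash H(\A_f)/K$ with $g \in G(\Q)$, $u \in U$, where $X$ is the positive definite representative in the $\Gamma_U$-orbit of $g^{-1}X_0$; moreover $|\Gamma_X| = |\Gamma_h|$ and $\Theta_L(\tau,P_X) = \Theta_L(\tau,P_0,h)$, exactly as in the untwisted case (here one uses that $\varepsilon$ fixes the classes of $L'/L$, so $\Theta_L(\tau,P_{\varepsilon.X}) = \Theta_L(\tau,P_X)$). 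What remains is to understand how the twisting factor $\chi_D(X)$ transforms along this bijection.

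The heart of the argument, and the step I expect to be the \emph{main obstacle}, is the identity $\chi_D(X) = \chi_D(\nu(h))\,\chi_D(X_0)$ for the positive definite representative $X$ attached to $h = gu$. Viewing $X_0 \in V(\Q)$ diagonally in $V(\A_f)$ and using that $h$ fixes $X_0$, one has $g^{-1}X_0 = uX_0$ in $V(\A_f)$, so Proposition~\ref{proposition character invariance} gives $\chi_D(g^{-1}X_0) = \chi_D(\nu(u))\chi_D(X_0)$. Writing $\nu(u) = \nu(g)^{-1}\nu(h)$ and invoking Hilbert reciprocity together with $D<0$ yields $\chi_D(\nu(g)) = \sgn(\nu(g))$ for $g \in G(\Q)$. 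Finally, congruence by $g$ preserves definiteness up to the sign of $\det = \nu$, so $g^{-1}X_0$ is positive definite precisely when $\sgn(\nu(g)) = +1$; in the opposite case the positive definite representative is $\varepsilon.(g^{-1}X_0)$, and the relation $\chi_D(\varepsilon.X) = \chi_D(-X) = -\chi_D(X)$ (the first equality because $\varepsilon$ only negates the diagonal entries, on which $\chi_D$ depends, the second because $D<0$) cancels the extra sign. In both cases one obtains $\chi_D(X) = \chi_D(\nu(h))\chi_D(X_0)$. The same computation shows that $\chi_D\circ\nu$ is well defined on $H(\Q)\backslash H(\A_f)$ and right $K$-invariant: it is trivial on $H(\Q)$ because $H(\R) = \SO(W)(\R)$ is compact and connected, forcing $\nu(H(\Q)) \subset \Q^{>0}$, and trivial on $K = H(\A_f)\cap U$ by applying Proposition~\ref{proposition character invariance} to $kX_0 = X_0$ (assuming $\chi_D(X_0)\neq 0$; if $\chi_D(X_0) = 0$ then Lemma~\ref{lem:one_U_orbit} forces $\chi_D \equiv 0$ on the entire $U$-orbit, and both sides of the claimed identity vanish).

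With this in hand the remaining steps are routine. Factoring out $\chi_D(X_0)$ and applying Lemma~\ref{sum lemma} to the right $K$-invariant function $f(h) = \chi_D(\nu(h))\Theta_L(\tau,P_0,h)$ turns the trace into $\tfrac{\chi_D(X_0)}{\vol(K)}\int_{H(\Q)\backslash H(\A_f)}\chi_D(\nu(h))\Theta_L(\tau,P_0,h)\,dh$. Using the relation $\Theta_L = (\Theta_{P\oplus N})^L$ of~\eqref{eq theta relation}, which commutes with the integral, together with the splitting $\Theta_{P\oplus N}(\tau,P_0,h) = \Theta_P(\tau)\otimes v^{3/2}\overline{\Theta_{N^-}(\tau,h)}$ and the fact that $H(\A_f)$ acts trivially on $P$ (so $\Theta_P$ is independent of $h$, and the real scalar $\chi_D(\nu(h))$ passes inside the conjugation), reduces the $h$-integral to $\int_{H(\Q)\backslash H(\A_f)}\chi_D(\nu(h))\Theta_{N^-}(\tau,h)\,dh = 2\vartheta_{N^-,\chi_D}(\tau)$ by the definition~\eqref{eq twisted Siegel Weil integral} and Theorem~\ref{twisted siegel weil formula}. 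Finally, feeding $f = 1$ into Lemma~\ref{sum lemma} gives $2/\vol(K) = \tr_{m|D|,0}^0(1)$, the untwisted count, and collecting all factors produces exactly $\chi_D(X_0)\,\tr_{m|D|,0}^0(1)\,\big(\Theta_P\otimes\overline{\vartheta_{N^-,\chi_D}}\,v^{3/2}\big)^L$, as claimed.
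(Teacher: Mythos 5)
Your proposal is correct and follows essentially the same route as the paper's proof: adelize the twisted primitive trace via Theorem~\ref{shimura theorem} and Remark~\ref{rmk:det=+-1_to_det=1}, use Proposition~\ref{proposition character invariance} to extract the factor $\chi_D(\nu(h))\chi_D(X_0)$, pass to the integral over $H(\Q)\backslash H(\A_f)$ by Lemma~\ref{sum lemma}, split off $\Theta_P$ using \eqref{eq theta relation}, identify the remaining twisted integral with $2\vartheta_{N^-,\chi_D}$ via \eqref{eq twisted Siegel Weil integral}, and compute $\vol(K)$. If anything, your treatment of the sign issue is more careful than the paper's: the paper simply asserts $\chi_D(\nu(g^{-1}))=1$, which implicitly normalizes the decomposition $h=gu$ so that $\nu(g)\in\Q^{+}$ (possible since $\varepsilon\in\Gamma_U$ has spinor norm $-1$), whereas you handle the case $\nu(g)<0$ explicitly through $\chi_D(\varepsilon.X)=-\chi_D(X)$, and you also justify the triviality of $\chi_D\circ\nu$ on $H(\Q)$ and on $K$ together with the degenerate case $\chi_D(X_0)=0$, points the paper leaves unproved.
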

	
	\begin{proof}
		The arguments are very similar to the proof of Theorem~\ref{theorem splitting theta}, where the non-twisted trace of $\Theta_L(\tau,P)$ was computed. Using Theorem~\ref{shimura theorem}, we have
		\[
		\tr_{m|D|,\chi_D}^0(\Theta_L(\tau,\,\cdot\,)) = \sum_{X \in \Gamma \backslash L_{m|D|,0}^{+,0}}\frac{\chi_D(Q)}{|\Gamma_X|}\Theta_L(\tau,P_X) = \sum_{h \in H(\Q)\backslash H(\A_f)/K}\frac{\chi_D(g^{-1}X_0)}{|\Gamma_h|}\Theta_L(\tau,P_0,h),
		\]
		where $P_0 \in \H^3$ is the special point corresponding to $X_0$, and we wrote $h = gu$ with $g \in G(\Q)$ and $u \in U$. By Proposition~\ref{proposition character invariance} we have
		\begin{align*}
		\chi_D(g^{-1}X_0) &= \chi_D(uX_0) = \chi_D(\nu(u))\chi_D(X_0) = \chi_D(\nu(h))\chi_D(X_0).
		\end{align*}
		Here we wrote $g^{-1} = uh^{-1}$, used that $h^{-1}X_0 = X_0$ since $H$ is the stabilizer of $X_0$, and $\chi_D(\nu(g^{-1})) = 1$ since $g \in G(\Q)$ and $\chi_D$ is trivial on $\Q^+$. Using Lemma \ref{sum lemma} we obtain as in the proof of Theorem~\ref{theorem splitting theta} that
		\begin{align*}
		\tr_{m|D|,\chi_D}^0(\Theta_L(\tau,\,\cdot\,)) &= \chi_D(X_0)\sum_{h \in H(\Q)\backslash H(\A_f)/K}\frac{\chi_D(\nu(h))}{|\Gamma_h|}\Theta_L(\tau,P_0,h) \\
		&= \frac{\chi_D(X_0)}{\vol(K)}\int_{H(\Q) \backslash H(\A_f)}\chi_D(\nu(h))\Theta_L(\tau,P_0,h)dh \\
		&= \frac{\chi_D(X_0)}{\vol(K)}\left( \Theta_P(\tau) \otimes v^{3/2}\overline{\int_{H(\Q) \backslash H(\A_f)}\chi_D(\nu(h))\Theta_{N^-}(\tau,P_0,h)dh}\right)^L.
		\end{align*}
  Notice that $\chi_D\circ \nu$ is precisely the non-trivial quadratic character on the twisted Siegel--Weil integral \eqref{eq twisted Siegel Weil integral}. Hence, the integral is the function $2\vartheta_{N^-,\chi_D}$. Finally, we plug in $\frac{2}{\vol(K)} = \tr_{m|D|}^0(1)$ to finish the proof.
	\end{proof}
	
	The crucial difference between Theorem~\ref{theorem splitting theta} and its twisted version Theorem~\ref{theorem splitting theta twisted Siegel} is the fact that the Eisenstein series $E_{3/2,N^-}$ is replaced by the cusp form $\vartheta_{N^-,\chi_D}$.
	
	Next, we consider the twisted and non-twisted traces of the twisted modified theta function $\Theta_{L,\chi_D}^*(\tau,P)$. We will use the same notation as in Section~\ref{section twisted theta function}.

	\begin{theorem}\label{theorem splitting theta twisted modified Siegel}
		Fix a primitive vector $X_0 \in L_{m,\mu}^{+,0}$. Then we have
		\[
		\tr_{m,\mu}^0\left(\Theta_{L,\chi_D}^*(\tau,\, \cdot \,)\right) = \chi_D(X_0)\tr_{m,\mu}^0(1)\left( \Theta_{P(D)}^*(\tau) \otimes \overline{\vartheta_{N(D)^-,\chi_D}(\tau)}v^{3/2}\right)^{L(D)}\cdot \psi_D.
		\]
		Similarly, if $X_0 \in L_{m|D|,0}^{+,0}$, then we have
		\[
		\tr_{m|D|,\chi_D}^0\left(\Theta_{L,\chi_D}^*(\tau,\, \cdot \,)\right) = \chi_D(X_0)\tr_{m|D|,0}^0(1)\left( \Theta_{P(D)}^*(\tau) \otimes \overline{E_{3/2,N(D)^-}(\tau)}v^{3/2}\right)^{L(D)}\cdot \psi_D,
		\]
		Here,
		\begin{align}\label{eq weight 3half theta}
		\Theta_{P(D)}^*(\tau) = \frac{1}{\sqrt{2Q(X_0)}}\sum_{X \in P(D)'}\tfrac{1}{|D|}(X,X_0)e\left(\frac{Q(X)}{|D|}\tau\right)\e_{X+P(D)}
		\end{align}
		is the weight $3/2$ holomorphic theta function for $P(D)$, and $\psi_{D}$ is the invariant vector defined in Lemma~\ref{lemma intertwiner}.
	\end{theorem}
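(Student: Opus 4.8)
The plan is to follow the adelic computation used in the proofs of Theorem~\ref{theorem splitting theta} and its twisted analogue Theorem~\ref{theorem splitting theta twisted Siegel}, the only genuinely new features being the polynomial modification and the passage to the rescaled lattice $L(D)$, both of which are absorbed by the intertwining vector $\psi_D$. First I would invoke the identity $\Theta_{L,\chi_D}^*(\tau,P,h)=\Theta_{L(D)}^*(\tau,P,h)\cdot\psi_D$ established in the proof of Proposition~\ref{proposition twisted theta function}, which expresses the scalar-valued twisted modified theta function as the bilinear pairing of the vector-valued modified theta function for $L(D)$ against the $\overline{\rho}_{\mathcal{L}(D)}$-invariant vector $\psi_D$ of Lemma~\ref{lemma intertwiner}. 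Since $\psi_D$ is independent of the special point, it commutes with the trace, so it is enough to compute the trace of $\Theta_{L(D)}^*(\tau,\,\cdot\,)$ (or, equivalently, to carry the scalar $\Theta_{L,\chi_D}^*$ through the computation and pair with $\psi_D$ only at the very end).

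The heart of the argument is the adelic translation of the trace. Using Remark~\ref{rmk:det=+-1_to_det=1} to identify $\Gamma\backslash L_{m,\mu}^{+,0}$ with $\Gamma_U\backslash L_{m,\mu}^{0}$, the bijection $h=gu\mapsto g^{-1}X_0$ of Theorem~\ref{shimura theorem}, and the $G(\Q)$-equivariance of the modified theta function, I would rewrite each summand in terms of $\Theta_{L,\chi_D}^*(\tau,P_0,h)$ at the fixed base point $P_0=P_{X_0}$. The equivariance holds with no spurious factor because $g$ is an isometry and, for the positive definite representative, $g$ preserves the positive cone, so that $v(g^{-1}P_0)=g^{-1}v(P_0)$ and the sign-sensitive degree one polynomial $(X,v(P_0))$ transforms cleanly; equivalently $\nu(g)>0$, whence $\chi_D(\nu(g))=1$. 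The character bookkeeping is then governed by Proposition~\ref{proposition character invariance}: the twist $\chi_D(h^{-1}X)$ built into $\Theta_{L,\chi_D}^*$ contributes, after the equivariance, a factor $\chi_D(\nu(h))$, while in the twisted trace one additionally rewrites $\chi_D(g^{-1}X_0)=\chi_D(\nu(h))\chi_D(X_0)$. Collecting these factors produces the prefactor $\chi_D(X_0)$ together with a residual quadratic character on $H(\A_f)$; this residual character is $\chi_D\circ\nu$ in the first identity and is trivial in the second, the difference being exactly whether the trace itself is twisted. Finally Lemma~\ref{sum lemma} converts the finite sum into an integral over $H(\Q)\backslash H(\A_f)$, the integrand being left $H(\Q)$-invariant because $\chi_D\circ\nu$ is trivial on $H(\Q)$ (here $W$ is positive definite, so spinor norms of $H(\Q)$ are positive), and the volume constant is evaluated as $\tfrac{2}{\vol(K)}=\tr^{0}(1)$ precisely as in Theorem~\ref{theorem splitting theta}.

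With the trace written as an adelic integral, I would split the rescaled lattice. Because the modifying polynomial $(X,v(P_0))$ is homogeneous of degree $(1,0)$ and $v(P_0)$ spans $\Q X_0$, it only sees the projection of $X$ onto the positive line; hence, using the finite-index splitting $L(D)\supseteq P(D)\oplus N(D)$ and the trace-up operator $(\,\cdot\,)^{L(D)}$ as in \eqref{eq theta relation}, the modified theta factors as $\Theta_{P(D)}^*(\tau)\otimes v^{3/2}\overline{\Theta_{N(D)^-}(\tau,h)}$, where $\Theta_{P(D)}^*$ is the weight $3/2$ unary theta function of \eqref{eq weight 3half theta} and $\Theta_{N(D)^-}$ is the standard weight $3/2$ ternary theta function. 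Since $H(\A_f)$ acts trivially on the positive line, $\Theta_{P(D)}^*$ leaves the integral, which then reduces to an integral of $\Theta_{N(D)^-}(\tau,h)$, against $\chi_D\circ\nu$ in the first case and against the trivial character in the second. Applying the twisted Siegel--Weil formula (Theorem~\ref{twisted siegel weil formula}) in the first case replaces this integral by $2\vartheta_{N(D)^-,\chi_D}$, whereas the classical Siegel--Weil formula (Theorem~\ref{siegel weil formula}) in the second case replaces it by $2E_{3/2,N(D)^-}$; this is exactly the mechanism that exchanges a distinguished cusp form for an Eisenstein series. Combining with the volume constant and pairing with $\psi_D$ then yields the two stated identities.

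The main obstacle will be the character bookkeeping of the second step: one must track $\chi_D$ simultaneously through the definition of $\Theta_{L,\chi_D}^*$, through the (possible) twist of the trace, and through the spinor norm, and verify that the factors of $\chi_D\circ\nu$ combine so that precisely the correct Siegel--Weil formula applies and the correct prefactor $\chi_D(X_0)$ is produced; the interaction of $\psi_D$ with the splitting $L(D)\supseteq P(D)\oplus N(D)$ is the delicate bit here, since $\chi_D$ on $\mathcal{L}(D)$ must be matched against the unary and ternary pieces. A secondary technical point is the sign sensitivity of the degree one modification, which forces one to work with the positive definite representative and to check $\nu(g)>0$ so that no extra sign intervenes. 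Finally one should record the compatibility hypothesis of Theorem~\ref{twisted siegel weil formula} for the ternary space $N(D)^-\otimes\Q$: if it fails then $\vartheta_{N(D)^-,\chi_D}=0$ and the first identity holds trivially.
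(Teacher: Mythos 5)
Your overall route is the same as the paper's: the paper's proof consists of exactly the two facts you invoke, namely the equivariance $\Theta_{L,\chi_D}^*(\tau,g^{-1}P_0)=\chi_D(\nu(h))\,\Theta_{L,\chi_D}^*(\tau,P_0,h)$ (verified via Proposition~\ref{proposition character invariance}) and the pairing identity $\Theta_{L,\chi_D}^*(\tau,P,h)=\Theta_{L(D)}^*(\tau,P,h)\cdot\psi_D$, followed by the instruction to repeat the argument of Theorem~\ref{theorem splitting theta twisted Siegel} (Shimura bijection, Lemma~\ref{sum lemma}, splitting along $P(D)\oplus N(D)$, Siegel--Weil). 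Your treatment of the degree-one polynomial (choice of positive definite representatives, $\nu(g)>0$), your observation that the polynomial only sees the projection to $\Q X_0$, your matching of the twisted Siegel--Weil formula with the first identity and the classical one with the second, and your remark on Snitz's compatibility hypothesis are all consistent with what the paper leaves to the reader.

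There is, however, a genuine gap in the character bookkeeping, and it sits exactly at the point you flag as ``delicate'' but then assume rather than prove: the origin of the prefactor $\chi_D(X_0)$ in the \emph{first} identity. In the first identity the trace $\tr_{m,\mu}^0$ is not twisted, so the rewriting $\chi_D(g^{-1}X_0)=\chi_D(\nu(h))\chi_D(X_0)$ never enters; the only character factor your argument produces is the single $\chi_D(\nu(h))$ coming from the equivariance of the twisted theta function, and that factor is entirely consumed as the twisting character of the Siegel--Weil integral, producing $\vartheta_{N(D)^-,\chi_D}$. Collecting the factors you list therefore yields the first identity \emph{without} any prefactor $\chi_D(X_0)$: there is no source for it in your derivation. (In the second identity your accounting is correct: the factor $\chi_D(\nu(h))$ from the theta equivariance and the factor $\chi_D(\nu(h))\chi_D(X_0)$ from the trace twist multiply to $\chi_D(X_0)$ with trivial residual character, and the untwisted integral gives $E_{3/2,N(D)^-}$.) So your sentence ``collecting these factors produces the prefactor $\chi_D(X_0)$'' is justified only for the second identity; for the first, establishing the stated formula would require extracting $\chi_D(X_0)$ from the interaction of $\psi_D$ (i.e.\ of the values $\chi_D(\alpha+\beta)$ on cosets of $P(D)\oplus N(D)$) with the splitting, which your proposal never does. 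This is not a cosmetic issue, since $\chi_D(X_0)$ can equal $-1$, and it vanishes whenever $\mu\neq 0$, so the two versions of the formula genuinely differ. A secondary point you should also record: in the second identity, Lemma~\ref{sum lemma} is applied to $h\mapsto\Theta_{L,\chi_D}^*(\tau,P_0,h)$, which is right $K$-invariant only because $\chi_D\circ\nu$ is trivial on $K=H(\A_f)\cap U$; this follows from $kX_0=X_0$ together with Proposition~\ref{proposition character invariance} precisely when $\chi_D(X_0)\neq 0$, the remaining case being the one where both sides of the identity vanish.
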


	\begin{proof}
		Let $P_0 \in \H^3$ be the point corresponding to $X_0$. Using Proposition~\ref{proposition character invariance} one can check that the twisted modified Siegel theta function satisfies
		\[
		\Theta_{L,\chi_D}^*(\tau,g^{-1}P_0) = \chi_D(\nu(h))\Theta_{L,\chi_D}^*(\tau,P_0,h)
		\]
		for $h = gu \in H(\A_f)$ with $g \in G(\Q)$ and $u \in U$. Moreover, we can write $\Theta_{L,\chi_D}^*(\tau,P,h) = \Theta_{L(D)}^*(\tau,P,h)\cdot \psi_D$. Using these facts, the proof of the theorem is analogous to the proof of Theorem~\ref{theorem splitting theta twisted Siegel}. We leave the details to the reader.
	\end{proof}
	
	\begin{remark}
		If $P(D)$ is spanned by $rX_0$ for some $r \in \Q$, and we put $N_0= \frac{1}{|D|}Q(rX_0)$, then we have $P(D) \cong (\Z,N_0x^2)$ and $P(D)'/P(D) \cong \Z/2N_0\Z$. Hence, we can write more explicitly
		\[
		\Theta_{P(D)}^*(\tau)  = \frac{\sqrt{2N_0}}{\sqrt{|D|}}\sum_{\rho \!\!\!\!\! \pmod {2N_0}}\sum_{\substack{n\in \Z \\ n \equiv \rho \!\!\!\!\! \pmod {2N_0}}}nq^{n^2/4N_0}\e_\rho.
		\]
	\end{remark}

\subsection{Twisted traces of Green's functions}\label{section twisted traces}

We are now ready to give an explicit evaluation of the twisted double traces of the Green's function, similar to Theorem~\ref{theorem evaluation Green's function}. We can either twist both traces, or only one of them. Moreover, we can consider the Green's function $G_s$ at even or odd integer values for $s$. Throughout this section, $X_0$ denotes a fixed primitive positive definite vector in $L_{m',\mu'}^{+,0}$ or $L_{m'|D|,0}^{+,0}$, which defines sublattices $P$ and $N$ as in \eqref{eq splitting lattice}.

We start with the partially-twisted double trace of $G_{s}$ at even integral $s = 2n$ for $n \geq 1$, where one of the traces is twisted, and the other trace is non-twisted.

\begin{theorem}\label{theorem evaluation Green's function single twist even}
	Let $n \in \N$ and let 
	\[
	f = \sum_{\mu \in L'/L}\sum_{m \in \Z-Q(\mu)}a_f(m,\mu)q^m \e_\mu \in M_{1-2n,L^-}^!
	\]
	be a weakly holomorphic modular form of weight $1-2n$ for the Weil representation $\overline{\rho}_L$. Let $m' \in \N$ such that $a_f(-m'|D|r^2,0) = 0$ for all integers~$r\geq 1$. Then we have
	\begin{align*}
	&\frac{1}{2}\sum_{\mu \in L'/L}\sum_{m > 0}m^{n-1/2}a_f(-m,\mu)\tr_{m'|D|,\chi_D}^0\tr_{m,\mu}(G_{2n})  \\
	&\qquad = \frac{4^n \pi}{\binom{2n}{n}} \chi_D(X_0) \tr_{m'|D|,0}^0(1)\CT\left(f_{P \oplus N} \cdot \left[\Theta_{P},\widetilde{\vartheta}_{N^-,\chi_D}^+\right]_n \right),
	\end{align*}
	where $\widetilde{\vartheta}_{N^-,\chi_D}^+ $ denotes the holomorphic part of $\widetilde{\vartheta}_{N^-,\chi_D}$, and $[\cdot,\cdot]_n$ denotes the $n$-th Rankin--Cohen bracket as defined in Section~\ref{section rankin cohen brackets}, with $k = \ell = 1/2$.
\end{theorem}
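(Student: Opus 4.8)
The plan is to run the proof of Theorem~\ref{theorem evaluation Green's function} essentially verbatim, feeding in the twisted Siegel--Weil input in place of the ordinary one. First I would expand $f$ as a linear combination of Maass Poincaré series via~\eqref{eq linear combination Maass Poincare} and apply Theorem~\ref{theorem higher Green's function as theta lift} to the \emph{inner} (non-twisted) trace, rewriting the weighted sum $\frac{1}{2}\sum_{\mu}\sum_{m>0}m^{n-1/2}a_f(-m,\mu)\tr_{m,\mu}(G_{2n}(\,\cdot\,,\,\cdot\,))$ as $\frac{1}{(4\pi)^n}\frac{\pi}{2}\,\Phi_L^{(1-2n)}(f,\,\cdot\,)$. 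The whole problem then reduces to evaluating $\tr_{m'|D|,\chi_D}^0\big(\Phi_L^{(1-2n)}(f,\,\cdot\,)\big)$, where the outer trace acts only on the Siegel theta function inside the lift.

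The decisive step is to apply the twisted splitting: Theorem~\ref{theorem splitting theta twisted Siegel} gives $\tr_{m'|D|,\chi_D}^0(\Theta_L(\tau,\,\cdot\,)) = \chi_D(X_0)\tr_{m'|D|,0}^0(1)\,(\Theta_P\otimes\overline{\vartheta_{N^-,\chi_D}}v^{3/2})^L$. This is exactly where the twisted case diverges from Theorem~\ref{theorem evaluation Green's function}: the weight $3/2$ Eisenstein series $E_{3/2,N^-}$ is replaced by the weight $3/2$ cusp form $\vartheta_{N^-,\chi_D}$ coming from Snitz's twisted Siegel--Weil formula (Theorem~\ref{twisted siegel weil formula}), and the prefactor now carries the sign $\chi_D(X_0)$. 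I would stress that the raising operator $R_{1-2n}^n$ commutes with the restriction map and that $(\,\cdot\,)^L$ is adjoint to restriction, so the pairing against $(R_{1-2n}^n f)$ may be rewritten as a pairing of $R_{1-2n}^n(f_{P\oplus N})$ against $\Theta_P\otimes\overline{\vartheta_{N^-,\chi_D}}v^{3/2}$.

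From here the manipulations are formally identical to the non-twisted proof. Using self-adjointness of the raising operator under the regularized pairing, and the fact that $\vartheta_{N^-,\chi_D}$ is holomorphic (so raising acts only on $\Theta_P$), I obtain $(-1)^n (R_{1/2}^n\Theta_P)\otimes\overline{\vartheta_{N^-,\chi_D}}v^{3/2}$. I then invoke the harmonic Maass form $\widetilde{\vartheta}_{N^-,\chi_D}$ of Theorem~\ref{theorem twisted siegel weil preimage}, which satisfies $\xi_{1/2}\widetilde{\vartheta}_{N^-,\chi_D}=\frac12\vartheta_{N^-,\chi_D}$, hence $L_{1/2}\widetilde{\vartheta}_{N^-,\chi_D}=\frac12 v^{3/2}\overline{\vartheta_{N^-,\chi_D}}$. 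With $L_{1/2}\Theta_P=0$, Proposition~\ref{proposition rankin cohen bracket} (taking $k=\ell=1/2$) rewrites $(R_{1/2}^n\Theta_P)\otimes\overline{\vartheta_{N^-,\chi_D}}v^{3/2}$ as a multiple of $L_{1+2n}[\Theta_P,\widetilde{\vartheta}_{N^-,\chi_D}]_n$. A standard Stokes argument in the style of \cite[Proposition~3.5]{bruinierfunke} then evaluates the regularized integral of the holomorphic $f_{P\oplus N}$ against this lowering-operator expression as $\CT\big(f_{P\oplus N}\cdot[\Theta_P,\widetilde{\vartheta}_{N^-,\chi_D}^+]_n\big)$, in which only the holomorphic part survives; collecting constants exactly as in the non-twisted proof (the $(4\pi)^{\pm n}$ and $2^{\pm1}$ factors cancel, leaving $\binom{n-1/2}{n}^{-1}=4^n\binom{2n}{n}^{-1}$ times $\pi$) reproduces the stated prefactor $4^n\pi/\binom{2n}{n}$.

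The hard part is not a new computation but the bookkeeping ensuring this transplant is legitimate. The hypothesis $a_f(-m'|D|r^2,0)=0$ for all $r\geq 1$ must be used to guarantee that no special point of determinant $m'|D|$ meets a singularity of the lifted Green's function, so that the double trace is well defined and the unfolding in the first step is valid. One must also check that the boundary terms in the Stokes argument vanish; here the situation is in fact cleaner than in Theorem~\ref{theorem evaluation Green's function}, since $\vartheta_{N^-,\chi_D}$ is a cusp form and decays rapidly at $\infty$, so the problematic constant-term contributions that required care in the Eisenstein case do not arise. Finally, I would note that the rationality of the coefficients $c_{r,\mu}$ in Theorem~\ref{twisted siegel weil formula} is irrelevant to the identity itself and is only needed later for the algebraicity statements, so the formula holds as stated.
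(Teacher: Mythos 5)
Your proposal is correct and follows essentially the same route as the paper, whose proof consists precisely of invoking the twisted splitting (Theorem~\ref{theorem splitting theta twisted Siegel}) and the twisted $\xi$-preimage (Theorem~\ref{theorem twisted siegel weil preimage}) and then repeating the argument of Theorem~\ref{theorem evaluation Green's function} verbatim, with the same constants you track. The only ingredient of the paper's proof absent from yours is the opening observation that $\chi_D$ is compatible with $N^-\otimes\Q$ (via Remark~\ref{rem:primes_in_traces}), which guarantees that Snitz's formula applies in its nontrivial form; this is harmless for the identity itself, since in the incompatible case $\vartheta_{N^-,\chi_D}=0$ and both sides vanish.
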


\begin{proof}
	Note that the character $\chi_D$ is compatible with the quadratic space $W=N^-\otimes \Q$ because of Remark \ref{rem:primes_in_traces}. Using Theorem~\ref{theorem splitting theta twisted Siegel} and Theorem~\ref{theorem twisted siegel weil preimage} the proof is analogous to the proof of Theorem~\ref{theorem evaluation Green's function}.
\end{proof}

\begin{theorem}~\label{theorem evaluation Green's function single twist even II}
	Let  $f\in M_{1-2n,L^-}^! $ and $m'\in \N$ be  as in Theorem~\ref{theorem evaluation Green's function single twist even}, and suppose that the coefficients $a_{f}(-m,\mu)$ for $m > 0$ are rational. Then the linear combination of partially twisted double traces
	\[
	\sum_{\mu \in L'/L}\sum_{m > 0}m^{n-1/2}a_f(-m,\mu)\tr_{m'|D|,\chi_D}^0\tr_{m,\mu}(G_{2n}) 
	\]
	is a rational multiple of $\pi \sqrt{m'}$.
\end{theorem}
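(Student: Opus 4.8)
The plan is to run the argument of Theorem~\ref{theorem algebraicity greens function 2} essentially verbatim, with the only change being that the harmonic Eisenstein series $\widetilde{E}_{1/2,N}$ is replaced by the harmonic Maass form $\widetilde{\vartheta}_{N^-,\chi_D}$ furnished by Theorem~\ref{theorem twisted siegel weil preimage}. Concretely, I would begin from the explicit evaluation in Theorem~\ref{theorem evaluation Green's function single twist even}, whose right-hand side is
\[
\frac{4^n\pi}{\binom{2n}{n}}\chi_D(X_0)\tr_{m'|D|,0}^0(1)\,\CT\!\left(f_{P\oplus N}\cdot\left[\Theta_P,\widetilde{\vartheta}_{N^-,\chi_D}^+\right]_n\right),
\]
and then analyze the arithmetic of this constant term coefficient by coefficient.

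First I would expand the constant term. Since $n\geq 1$ and both $\Theta_P$ and $\widetilde{\vartheta}_{N^-,\chi_D}^+$ have Fourier expansions supported on non-negative indices, $\CT(\cdots)$ is a finite $\Q$-linear combination of products of a negative-index coefficient $a_f(-m,\mu)$ of $f$, a coefficient of the unary theta function $\Theta_P$, and a coefficient of $\widetilde{\vartheta}_{N^-,\chi_D}^+$, with the rational constants $\kappa_{m,n}(\ell)$ of Remark~\ref{remark after theorem evaluation Green's function}(4) accounting for the Rankin--Cohen bracket. By hypothesis the $a_f(-m,\mu)$ are rational, and $\Theta_P$ has integral coefficients by construction, so the entire arithmetic content is carried by the coefficients of $\widetilde{\vartheta}_{N^-,\chi_D}^+$.

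The decisive input is Theorem~\ref{theorem twisted siegel weil preimage}, applied to the positive definite ternary lattice $M=N^-$: it shows that the Fourier coefficients of $\widetilde{\vartheta}_{N^-,\chi_D}^+$ are rational multiples of $1/\sqrt{2\det(N^-)|D|}$. This is precisely where the twisted case departs from the non-twisted one. Snitz's twisted Siegel--Weil formula (Theorem~\ref{twisted siegel weil formula}) produces a \emph{cusp form} rather than an Eisenstein series, and its $\xi$-preimage has \emph{rational} holomorphic coefficients up to a single fixed square-root; in particular there is no factor of $1/\pi$ and no logarithm. Consequently the $\pi$ in the prefactor is \emph{not} cancelled (unlike in Theorem~\ref{theorem algebraicity greens function 2}, where the $1/\pi$ in the Eisenstein coefficients absorbs it), and it is exactly this surviving $\pi$ that appears in the final answer. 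I would then compute $\det(N^-)$ via Remark~\ref{remark determinant N}: since $X_0\in L_{m'|D|,0}^{+,0}$ satisfies $Q(X_0)=m'|D|$, we obtain $\det(N^-)=2Q(X_0)|D|=2m'|D|^2$.

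Finally I would assemble the pieces. The factors $\chi_D(X_0)\in\{-1,0,1\}$, $\tr_{m'|D|,0}^0(1)\in\Q$ and $\tfrac{4^n}{\binom{2n}{n}}\in\Q$ are harmless, so the whole expression is $\pi$ times a rational multiple of $\sqrt{2\det(N^-)|D|}$; substituting $\det(N^-)=2m'|D|^2$ and discarding the rational square factors from the radical is meant to isolate exactly the factor $\sqrt{m'}$ claimed in the statement, leaving a rational multiple of $\pi\sqrt{m'}$. I expect the main obstacle to be precisely this last bookkeeping: one must chase every square-root factor (of $|D|$, of the determinants of $P$ and of $N$, and of $m'$) through the normalizations in Theorems~\ref{theorem twisted siegel weil preimage}, \ref{twisted siegel weil formula} and \ref{theorem splitting theta twisted Siegel}, and verify that the powers of $|D|$ cancel so that only $\sqrt{m'}$ survives modulo $\Q^{\times 2}$. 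Everything else is a routine transcription of the non-twisted argument.
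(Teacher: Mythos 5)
Your overall route is the paper's own: start from the explicit evaluation in Theorem~\ref{theorem evaluation Green's function single twist even}, feed in the rationality statement of Theorem~\ref{theorem twisted siegel weil preimage}, and track determinants via Remark~\ref{remark determinant N}. However, there are two genuine gaps. The first is your claim that $\widetilde{\vartheta}_{N^-,\chi_D}^+$ has Fourier expansion supported on non-negative indices; this is false. Unlike $\widetilde{E}_{1/2,N}^+$, whose expansion in Theorem~\ref{eisenstein series fourier expansion} indeed has no negative-index terms, the holomorphic part of a $\xi$-preimage of a \emph{cusp} form generically has a nontrivial principal part: the paper's own computation in Section~\ref{section example} exhibits this, with $\widetilde{\theta}_{4,1}^{*,+}=\tfrac14 q^{-1/16}+\cdots$ entering $\widetilde{\vartheta}_{N^-,\chi_D}$. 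Hence $[\Theta_P,\widetilde{\vartheta}_{N^-,\chi_D}^+]_n$ has negative-index coefficients, and $\CT\big(f_{P\oplus N}\cdot[\Theta_P,\widetilde{\vartheta}_{N^-,\chi_D}^+]_n\big)$ pairs these against coefficients of $f$ of \emph{non-negative} index, whose rationality you never establish: your hypothesis only concerns the principal part $a_f(-m,\mu)$, $m>0$. The paper closes exactly this gap with McGraw's theorem \cite{mcgraw}: since $M_{1-2n,L^-}^!$ has a basis of forms with rational coefficients and the weight $1-2n$ is negative, a weakly holomorphic form with rational principal part has \emph{all} Fourier coefficients rational. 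Your proposal is missing this ingredient, and without it the argument does not go through.

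The second gap is the step you explicitly defer, and with your own numbers it does not close. You apply Remark~\ref{remark determinant N} with $Q(X_0)=m'|D|$ and obtain $\det(N^-)=2m'|D|^2$; but then $\sqrt{2\det(N^-)|D|}=2|D|\sqrt{m'|D|}$, which modulo $\Q^{\times}$ is $\sqrt{m'|D|}$, not $\sqrt{m'}$. Your assembly therefore yields a rational multiple of $\pi\sqrt{m'|D|}$, which for non-square $|D|$ is a different statement from the one claimed; the hoped-for cancellation of the powers of $|D|$ does not happen. The paper's proof instead quotes $\det(N^-)=2|D|m'$ from Remark~\ref{remark determinant N}, and it is only with this value that $\sqrt{2\det(N^-)|D|}=2|D|\sqrt{m'}$ and the conclusion $\pi\sqrt{m'}$ drops out. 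So you must either justify that value of the determinant in this setting (note that your reading of the remark, via $Q(X_0)=m'|D|$, is the literal one) or locate a compensating $\sqrt{|D|}$ in the normalizations of Theorems~\ref{twisted siegel weil formula} and~\ref{theorem twisted siegel weil preimage}. As written, this discrepancy is unresolved, and the proposal establishes at best $\pi\sqrt{m'|D|}$-rationality rather than the stated result.
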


\begin{proof}
	Since the space $M_{1-2n,L^-}^!$ has a basis of forms with rational coefficients by a result of McGraw \cite{mcgraw}, a weakly holomorphic modular form with rational principal part of negative weight has only rational coefficients. Therefore, $f$ (hence $f_{P \oplus N}$) has rational Fourier coefficients. Now, by Theorem~\ref{theorem twisted siegel weil preimage}, we can assume that the coefficients of~$\widetilde{\vartheta}_{N^-,\chi_D}^+$  are rational multiples of $\sqrt{2\det(N^-)|D|}$. We have seen in Remark~\ref{remark determinant N} that $\det(N^-) = 2|D|m'$. Since~$\Theta_P$ has rational coefficients, the right-hand side of the formula in Theorem~\ref{theorem evaluation Green's function single twist even} is a rational multiple of $\pi \sqrt{m'}$.
\end{proof}

As in the case of non-twisted double traces, one can rephrase Theorem~\ref{theorem evaluation Green's function single twist even II} in terms of linear combinations of partially-twisted traces with coefficients coming from rational relations for spaces of cusp forms. As a consequence, we obtain the following corollary.

\begin{corollary}\label{corollary that implies theorem 1.4(2) n even}
Let~$\{\lambda(t)\}_{t\in \N}$ be a rational relation for~$S^+_{1+2n}(\Gamma_0(|D|),\chi_D)$, and let $m' \in \N$ be such that $\lambda (m'|D|^2r^2) = 0$ for all integers~$r\geq 1$. Then the linear combination of partially-twisted double traces
	\[
	\sum_{m > 0}m^{n-1/2}\lambda(m|D|)\tr_{m'|D|,\chi_D}^0\tr_{m}(G_{2n}) 
	\]
	is a rational multiple of $\pi \sqrt{m'}$.    
\end{corollary}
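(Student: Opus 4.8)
The plan is to deduce this corollary from Theorem~\ref{theorem evaluation Green's function single twist even II} by passing between rational relations and weakly holomorphic modular forms, in exact parallel to the way Corollary~\ref{corollary that implies main theorem} was obtained from Theorem~\ref{theorem algebraicity greens function 2} in the non-twisted case.

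First I would rephrase Theorem~\ref{theorem evaluation Green's function single twist even II} in the language of rational relations for $S_{1+2n,L}$. By the equivalence recalled in Section~\ref{sec rational relations}, a sequence $\{\lambda(m,\mu)\}$ satisfying conditions $(1)$ and $(2)$ there is a rational relation for $S_{1+2n,L}$ precisely when there exists $f \in M_{1-2n,L^-}^!$ with $a_f(-m,\mu) = \lambda(m,\mu)$ for all $m > 0$. Feeding such an $f$ into Theorem~\ref{theorem evaluation Green's function single twist even II} then shows: if $\{\lambda(m,\mu)\}$ is a rational relation for $S_{1+2n,L}$ with $\lambda(m'|D|r^2,0) = 0$ for all integers $r \geq 1$, then
\[
\sum_{\mu \in L'/L}\sum_{m > 0}m^{n-1/2}\lambda(m,\mu)\tr_{m'|D|,\chi_D}^0\tr_{m,\mu}(G_{2n})
\]
is a rational multiple of $\pi\sqrt{m'}$.

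Next, given a rational relation $\{\lambda(t)\}_{t \in \N}$ for $S^+_{1+2n}(\Gamma_0(|D|),\chi_D)$, I would invoke Lemma~\ref{lem vector-valued to scalar mfs} to produce the vector-valued rational relation $\lambda'(m,\mu) = \lambda(m|D|)$ for $S_{1+2n,L}$, which is independent of $\mu$. Under this substitution the degeneracy condition becomes $\lambda'(m'|D|r^2,0) = \lambda(m'|D|^2r^2)$, which vanishes for all $r \geq 1$ by the hypothesis of the corollary. Applying the rephrased statement to $\lambda'$, pulling the $\mu$-independent factor $\lambda(m|D|)$ out of the inner sum, and using the identity $\sum_{\mu \in L'/L}\tr_{m,\mu} = \tr_m$ from Section~\ref{section traces}, the left-hand side collapses to
\[
\sum_{m > 0}m^{n-1/2}\lambda(m|D|)\tr_{m'|D|,\chi_D}^0\tr_{m}(G_{2n}),
\]
which is exactly the quantity in the corollary, and the conclusion follows.

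I do not expect any serious obstacle: the argument is a purely formal translation that mirrors the non-twisted case, and the genuine analytic content already resides in Theorem~\ref{theorem evaluation Green's function single twist even II} (and ultimately in the twisted Siegel--Weil formula of Snitz, together with the rationality of $\xi$-preimages of unary theta functions). The only point demanding care is the index bookkeeping in the degeneracy hypothesis, namely verifying that $a_f(-m'|D|r^2,0) = \lambda'(m'|D|r^2,0) = \lambda(m'|D|^2r^2)$, so that the vanishing assumption of the corollary matches precisely the one required in Theorem~\ref{theorem evaluation Green's function single twist even}.
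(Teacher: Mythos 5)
Your proposal is correct and is essentially the paper's own argument: the paper proves this corollary exactly by rephrasing Theorem~\ref{theorem evaluation Green's function single twist even II} in terms of rational relations for $S_{1+2n,L}$ (via the equivalence of Section~\ref{sec rational relations}) and then applying Lemma~\ref{lem vector-valued to scalar mfs}, just as in the non-twisted case of Corollary~\ref{corollary that implies main theorem}. Your index bookkeeping $\lambda'(m'|D|r^2,0)=\lambda(m'|D|^2r^2)$ and the collapse of the $\mu$-sum via $\sum_{\mu}\tr_{m,\mu}=\tr_m$ are exactly the points the paper leaves implicit, and both check out.
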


Next, we compute the partially-twisted double trace of $G_s$ at odd integral $s = 2n+1$.

\begin{theorem}\label{theorem evaluation Green's function single twist odd}
	Let $n \in \N$ and let 
	\[
	f = \sum_{m \in \Z}a_f(m)q^m \in M_{-2n}^!(\SL_2(\Z))
	\]
	be a weakly holomorphic modular form of weight $-2n$ for $\SL_2(\Z)$. Let $\mu' \in L'/L$ and $-m' \in \Z-Q(\mu')$ with $m' > 0$ such that $a_f(-m'r^2/|D|) = 0$ for all integers~$r\geq 1$ if $\mu' = 0$. Then we have
	\begin{align*}
		&\frac{1}{2}\sum_{m > 0}m^{n}a_f(-m)\tr_{m',\mu'}^0 \tr_{m|D|,\chi_D}(G_{2n+1}) \\
		&\qquad = \frac{4^n \pi}{\binom{2n}{n}}\tr_{m',\mu'}^0(1)\chi_D(X_0) \sqrt{\frac{|D|}{2}}\CT\left(f \cdot \left[\Theta_{P(D)}^*, \widetilde{\vartheta}_{N(D)^-,\chi_D}\right]_n^{L(D)} \cdot \psi_D \right),
	\end{align*}
	where $\Theta_{P(D)}^*$ is the weight $3/2$ unary theta function defined in \eqref{eq weight 3half theta}, $\widetilde{\vartheta}_{N(D)^-,\chi_D}^+ $ denotes the holomorphic part of $\widetilde{\vartheta}_{N(D)^-,\chi_D}$, and $[\cdot,\cdot]_n$ denotes the $n$-th Rankin--Cohen bracket as defined in Section~\ref{section rankin cohen brackets}, with $k = 3/2$ and $\ell = 1/2$. Moreover, the superscript $L(D)$ denotes the map defined in Section~\ref{section modular forms}, and $\psi_D$ is the invariant vector defined in Lemma~\ref{lemma intertwiner}.
\end{theorem}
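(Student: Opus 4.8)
The plan is to transcribe the four-step argument of Theorem~\ref{theorem evaluation Green's function} into the twisted--modified setting, with the Eisenstein series replaced by the distinguished cusp form of the twisted Siegel--Weil formula. Since the weight $-2n$ is negative, I would first expand $f$ into the scalar Maass--Poincar\'e series $F_{-2n,m}$ for $\SL_2(\Z)$ (descending from the spectral parameter to the harmonic value $s=n+1$ via Lemma~\ref{lemma raising poincare series}) and apply the twisted theta-lift identity of Theorem~\ref{theorem twisted theta lift} term by term. This rewrites the left-hand side as a single twisted theta lift,
\[
\tfrac12\sum_{m>0}m^{n}a_f(-m)\,\tr_{m|D|,\chi_D}\big(G_{2n+1}(\,\cdot\,,P)\big) = \frac{\pi}{2(4\pi)^{n}}\sqrt{\tfrac{|D|}{2}}\;\Phi_{L,\chi_D}^{*,(-2n)}(f,P).
\]

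Next I would take the outer primitive trace $\tr_{m',\mu'}^{0}$ in the variable $P$. Because only the inner trace is twisted, the relevant splitting is the first case of Theorem~\ref{theorem splitting theta twisted modified Siegel}, which expresses $\tr_{m',\mu'}^{0}(\Theta_{L,\chi_D}^{*}(\tau,\,\cdot\,))$ as $\chi_D(X_0)\,\tr_{m',\mu'}^{0}(1)$ times $(\Theta_{P(D)}^{*}\otimes\overline{\vartheta_{N(D)^{-},\chi_D}}\,v^{3/2})^{L(D)}\cdot\psi_D$, where $\Theta_{P(D)}^{*}$ is the weight $3/2$ unary theta function \eqref{eq weight 3half theta} and $\vartheta_{N(D)^{-},\chi_D}$ is the weight $3/2$ cusp form produced by the twisted Siegel--Weil formula. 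As in the proof of Theorem~\ref{theorem evaluation Green's function single twist even} one first records that $\chi_D$ is compatible with $N(D)^{-}\otimes\Q$, so that Theorem~\ref{twisted siegel weil formula} applies. This turns $\tr_{m',\mu'}^{0}(\Phi_{L,\chi_D}^{*,(-2n)}(f,\,\cdot\,))$ into a regularized integral of $R_{-2n}^{n}f$ against this split form.

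The remaining steps are the differential-operator bookkeeping, carried out as in Theorem~\ref{theorem evaluation Green's function}. By self-adjointness of the iterated raising operator in the regularized pairing I would move $R_{-2n}^{n}$ off $f$, picking up $(-1)^{n}$, and onto the split form; since $\vartheta_{N(D)^{-},\chi_D}$ is holomorphic of weight $3/2$ one has $R_{-3/2}(\overline{\vartheta_{N(D)^{-},\chi_D}}\,v^{3/2})=0$, so the raising collapses onto the holomorphic factor as $R_{3/2}^{n}\Theta_{P(D)}^{*}$ (the $\tau$-independent operators $(\,\cdot\,)^{L(D)}$ and $\cdot\,\psi_D$ commuting through). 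Here the one genuine departure from the non-twisted case surfaces: the polynomial factor $\tfrac{1}{|D|}(X,v(P))$ in \eqref{eq (scalar-valued) D-twisted modified theta function} makes $\Theta_{P(D)}^{*}$ have weight $3/2$ rather than $1/2$, so I would invoke Proposition~\ref{proposition rankin cohen bracket} with the \emph{unequal} weights $k=3/2$, $\ell=1/2$ applied to the harmonic Maass form $\widetilde{\vartheta}_{N(D)^{-},\chi_D}$ of Theorem~\ref{theorem twisted siegel weil preimage} (using $\xi_{1/2}\widetilde{\vartheta}_{N(D)^{-},\chi_D}=\tfrac12\vartheta_{N(D)^{-},\chi_D}$ and $L_{3/2}\Theta_{P(D)}^{*}=0$), obtaining
\[
\big(R_{3/2}^{n}\Theta_{P(D)}^{*}\big)\otimes\overline{\vartheta_{N(D)^{-},\chi_D}}\,v^{3/2} = \frac{2(-4\pi)^{n}}{\binom{n-1/2}{n}}\,L_{2+2n}\big[\Theta_{P(D)}^{*},\widetilde{\vartheta}_{N(D)^{-},\chi_D}\big]_{n}.
\]
A standard application of Stokes' theorem in the form of \cite[Proposition~3.5]{bruinierfunke}, the identity $\binom{n-1/2}{n}=4^{-n}\binom{2n}{n}$, and the constant collected above then produce the stated constant term, the holomorphic part $\widetilde{\vartheta}_{N(D)^{-},\chi_D}^{+}$ entering because $f$ is weakly holomorphic.

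I expect the main obstacle to be exactly this weight shift and the associated intertwining bookkeeping rather than any deep new input: one must check that Proposition~\ref{proposition rankin cohen bracket} is used with the correct unequal weights, that the intertwining vector $\psi_D$ of Lemma~\ref{lemma intertwiner} and Corollary~\ref{corollary intertwiner} together with the operator $(\,\cdot\,)^{L(D)}$ pass unharmed through the raising and lowering operators and the Stokes computation on the rescaled lattice $L(D)$, and that the boundary contributions in the regularization vanish. Assembling the scalar constant $\frac{4^{n}\pi}{\binom{2n}{n}}\sqrt{|D|/2}$ is then a routine (if delicate) tracking of normalizations, identical in spirit to the even non-twisted case.
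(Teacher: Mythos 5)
Your proposal is correct and is essentially the paper's own proof: the paper likewise writes the inner twisted trace as the twisted theta lift of Theorem~\ref{theorem twisted theta lift}, evaluates the outer non-twisted primitive trace with the first case of Theorem~\ref{theorem splitting theta twisted modified Siegel}, and then repeats the raising/Rankin--Cohen/Stokes bookkeeping of Theorem~\ref{theorem evaluation Green's function}, now with the unequal weights $k=3/2$, $\ell=1/2$. Your normalizations also check out, including the identity $\binom{n-1/2}{n}=4^{-n}\binom{2n}{n}$, the prefactor $\sqrt{|D|/2}$ coming from Theorem~\ref{theorem twisted theta lift}, and the use of $L_{1/2}\widetilde{\vartheta}_{N(D)^-,\chi_D}=\tfrac{1}{2}v^{3/2}\overline{\vartheta_{N(D)^-,\chi_D}}$ together with $R_{-3/2}\bigl(\overline{\vartheta_{N(D)^-,\chi_D}}\,v^{3/2}\bigr)=0$ in the Rankin--Cohen step.
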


\begin{proof}
	The proof is again very similar to the proof of Theorem~\ref{theorem evaluation Green's function}. However, this time we first write the $m|D|$-th twisted trace of $G_{2n+1}$ as a twisted theta lift using Theorem~\ref{theorem twisted theta lift}, and then compute the non-twisted trace in the second variable using Theorem~\ref{theorem splitting theta twisted modified Siegel}. For brevity, we omit the details of the computation.
\end{proof}

\begin{theorem}\label{theorem evaluation Green's function single twist odd II}
	Let the notation be as in Theorem \ref{theorem evaluation Green's function single twist odd}, and suppose that the coefficients $a_{f}(-m)$ for $m > 0$ are rational. Then the linear combination of partially-twisted double traces
	\[
	\sum_{m > 0}m^{n}a_f(-m)\tr_{m',\mu'}^0\tr_{m|D|,\chi_D}(G_{2n+1}) 
	\]
	is a rational multiple $\pi$.
\end{theorem}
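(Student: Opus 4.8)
The plan is to argue exactly as in the proof of Theorem~\ref{theorem evaluation Green's function single twist even II}, but now tracking the half-integral normalization factors carefully, since the goal is to show that an a priori irrational quantity is in fact a rational multiple of $\pi$. The starting point is the closed evaluation of Theorem~\ref{theorem evaluation Green's function single twist odd}, whose right-hand side is
\[
\frac{4^n \pi}{\binom{2n}{n}}\tr_{m',\mu'}^0(1)\,\chi_D(X_0)\sqrt{\tfrac{|D|}{2}}\,\CT\!\left(f \cdot \left[\Theta_{P(D)}^*, \widetilde{\vartheta}_{N(D)^-,\chi_D}\right]_n^{L(D)} \cdot \psi_D \right).
\]
First I would dispose of the genuinely rational ingredients. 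Since $f \in M_{-2n}^!(\SL_2(\Z))$ has negative weight, the space of such forms admits a basis with rational Fourier coefficients (elementarily, or by the result of McGraw \cite{mcgraw} as in the even case), so the rationality of the principal part forces all $a_f(-m)$ to be rational. Likewise $\tr_{m',\mu'}^0(1) \in \Q$, $\chi_D(X_0) \in \{\pm 1\}$, the coefficients of $\psi_D$ lie in $\{0,\pm 1\}$, the binomials and index powers in the Rankin--Cohen bracket are rational, and the operator $^{L(D)}$ and the pairing with $\psi_D$ are $\Q$-linear combinations. Hence the whole constant term is a rational number times the product of one coefficient of $\Theta_{P(D)}^*$ and one coefficient of the holomorphic part $\widetilde{\vartheta}_{N(D)^-,\chi_D}^+$.

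The two half-integral inputs carry explicit square-root normalizations. By the description of $\Theta_{P(D)}^*$ following Theorem~\ref{theorem splitting theta twisted modified Siegel}, its Fourier coefficients are rational multiples of $\sqrt{2N_0/|D|}$, where $P(D) = \Z\,rX_0$ and $N_0 = \tfrac{1}{|D|}Q(rX_0)$. On the other hand, applying Theorem~\ref{theorem twisted siegel weil preimage} to $M = N(D)^-$ shows that the coefficients of $\widetilde{\vartheta}_{N(D)^-,\chi_D}^+$ are rational multiples of $1/\sqrt{2\det(N(D)^-)|D|}$. Collecting the three irrational factors, the right-hand side equals $\pi$ times a rational number times
\[
\sqrt{\tfrac{|D|}{2}}\cdot \sqrt{\tfrac{2N_0}{|D|}}\cdot \frac{1}{\sqrt{2\det(N(D)^-)|D|}} = \sqrt{\frac{N_0}{2\det(N(D)^-)\,|D|}}.
\]

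The crux of the argument --- and the step I expect to be most delicate --- is to check that this radicand is a perfect rational square; this is precisely where the rescaling by $D$ enters. With $X_0 \in L_{m',\mu'}^{+,0}$ of determinant $m'$ and $P = L \cap \Q X_0 = \Z\,d_{\mu'}X_0$, one has $P(D) = L(D) \cap \Q X_0 = \Z\,(D d_{\mu'}X_0)$, so $r = D d_{\mu'}$ and $N_0 = \tfrac{1}{|D|}Q(D d_{\mu'}X_0) = |D|\,d_{\mu'}^2\,m'$. For the ternary lattice, Remark~\ref{remark determinant N} gives $\det(N^-) = 2m'|D|$, and since rescaling $N$ by $D$ while dividing the form by $|D|$ multiplies the Gram matrix entrywise by $|D|$, the rank-$3$ determinant scales by $|D|^3$, whence $\det(N(D)^-) = 2m'|D|^4$. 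Substituting,
\[
\frac{N_0}{2\det(N(D)^-)\,|D|} = \frac{|D|\,d_{\mu'}^2\,m'}{2\cdot 2m'|D|^4\cdot |D|} = \frac{d_{\mu'}^2}{4|D|^4} = \left(\frac{d_{\mu'}}{2|D|^2}\right)^2,
\]
a rational square. Therefore all square roots cancel and the right-hand side of Theorem~\ref{theorem evaluation Green's function single twist odd} is a rational multiple of $\pi$, which yields the claim. The contrast with the even case of Theorem~\ref{theorem evaluation Green's function single twist even II}, where a factor $\sqrt{m'}$ survives, is exactly accounted for by the weight $3/2$ (rather than $1/2$) normalization of $\Theta_{P(D)}^*$ together with the explicit factor $\sqrt{|D|/2}$ appearing here.
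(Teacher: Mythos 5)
Your proposal is correct and follows essentially the same route as the paper's (much terser) proof: both arguments track the square-root normalizations of the coefficients of $\Theta_{P(D)}^*$ (rational multiples of $\sqrt{2m'}$, via $Q(X_0)=m'$) and of $\widetilde{\vartheta}_{N(D)^-,\chi_D}^+$ (controlled by $\sqrt{2\det(N(D)^-)|D|}$ with $\det(N(D)^-)=2m'|D|^4$ from Remark~\ref{remark determinant N}), and combine them with the prefactor $\sqrt{|D|/2}$ to see that all radicals cancel to a rational number. Your explicit verification that the radicand is a rational square, and your determinant-scaling computation $\det(N(D)^-)=|D|^3\det(N^-)$, are exactly the steps the paper leaves implicit; the only cosmetic slip is that "forces all $a_f(-m)$ to be rational" should read "forces all Fourier coefficients of $f$ to be rational," which is what you actually need (and use) since $\widetilde{\vartheta}^+$ can have a principal part.
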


\begin{proof}
	The coefficients of the unary theta function $\Theta_{P(D)}^*$ are rational multiples of $\sqrt{2m'}$ since $Q(X_0) = m'$. Moreover, by Theorem~\ref{theorem twisted siegel weil preimage} we can choose $\widetilde{\vartheta}_{N(D)^-,\chi_D}^+$ such that its coefficients are rational multiples of $\sqrt{2\det(N(D)^-)|D|}$. Now $\det(N(D)^-) = 2m'|D|^4$ by Remark~\ref{remark determinant N}. This implies the stated result.
\end{proof}

We obtain the following corollary.

\begin{corollary}\label{corollary that implies theorem 1.4(2) n odd}
	Let~$\{\lambda(t)\}_{t\in \N}$ be a rational relation for~$S_{2+2n}(\SL_2(\Z))$, and let $m' \in \N$ be such that $\lambda (m'r^2/|D|) = 0$ for all integers~$r\geq 1$. Then the linear combination of partially-twisted double traces
	\[
	\sum_{m > 0}m^{n}\lambda(m)\tr_{m'}^0\tr_{m|D|,\chi_D}(G_{2n+1}) 
	\]
	is a rational multiple $\pi$.
\end{corollary}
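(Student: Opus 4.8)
The plan is to deduce this corollary from Theorem~\ref{theorem evaluation Green's function single twist odd II} by converting the rational relation $\{\lambda(t)\}$ into a weakly holomorphic modular form, exactly as Corollary~\ref{corollary that implies theorem 1.4(2) n even} is obtained from Theorem~\ref{theorem evaluation Green's function single twist even II}. All of the analytic input, and in particular the twisted Siegel--Weil formula of Snitz \cite{snitz}, has already been absorbed into Theorem~\ref{theorem evaluation Green's function single twist odd}, so the corollary should be a purely formal consequence of that evaluation together with a standard coefficient duality.

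First I would invoke the duality between rational relations and weakly holomorphic forms in its scalar-valued form for the full modular group, which is the analogue of the statement recalled in Section~\ref{sec rational relations} for the vector-valued spaces $S_{k,L}$. Since $\{\lambda(t)\}_{t\in\N}$ is a rational relation for $S_{2+2n}(\SL_2(\Z))$, there is a weakly holomorphic modular form $f = \sum_{m}a_f(m)q^m \in M_{-2n}^!(\SL_2(\Z))$ with $a_f(-t) = \lambda(t)$ for all $t > 0$; note that $-2n = 2-(2+2n)$ is the weight dual to $2+2n$. Because $\lambda$ is rational-valued, $f$ has rational principal part, hence rational Fourier coefficients (as in the proof of Theorem~\ref{theorem evaluation Green's function single twist odd II}, using \cite{mcgraw}). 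The hypothesis $\lambda(m'r^2/|D|) = 0$ for all $r\geq 1$ then reads $a_f(-m'r^2/|D|) = 0$ for all $r\geq 1$, which is precisely the non-degeneracy condition required in Theorem~\ref{theorem evaluation Green's function single twist odd} in the case $\mu' = 0$.

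Next I would expand the primitive trace $\tr_{m'}^0 = \sum_{\mu'}\tr_{m',\mu'}^0$, the sum running over the finitely many $\mu' \in L'/L$ with $Q(\mu')\in\Z$ (for $m'\in\N$ the remaining classes give an empty set $L_{m',\mu'}^{+,0}$, since $Q(X)\equiv Q(\mu')\pmod\Z$ for every $X\in L+\mu'$). For each such $\mu'$ I would apply Theorem~\ref{theorem evaluation Green's function single twist odd II} with this $f$; the vanishing hypothesis is imposed there only when $\mu' = 0$, where it is supplied by the previous paragraph, while for $\mu'\neq 0$ no condition is required. Comparing the sums, the corollary's expression equals $2\sum_{\mu'}\big(\tfrac12\sum_{m>0}m^n a_f(-m)\tr_{m',\mu'}^0\tr_{m|D|,\chi_D}(G_{2n+1})\big)$, i.e.\ twice the sum over $\mu'$ of the left-hand sides of Theorem~\ref{theorem evaluation Green's function single twist odd II}, each of which is a rational multiple of $\pi$; a finite sum of rational multiples of $\pi$ is again a rational multiple of $\pi$.

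The only points requiring care, and where I expect the bookkeeping to be most delicate, are the index and normalisation constants: that the exponent $m^n$ (rather than $m^{n-1/2}$, as in the even case of Theorem~\ref{theorem evaluation Green's function}) is the correct one in the odd-weight setting, that the factor $\tfrac12$ in the theorem is exactly absorbed by the absence of such a factor in the corollary, and that the vanishing condition is transported with the correct scaling $m'r^2/|D|$. None of these is a substantive obstacle; the genuine content lives entirely in Theorem~\ref{theorem evaluation Green's function single twist odd}, whose proof combines the twisted theta lift (Theorem~\ref{theorem twisted theta lift}) with the splitting of Theorem~\ref{theorem splitting theta twisted modified Siegel} and the rationality of the harmonic Maass form $\widetilde{\vartheta}_{N(D)^-,\chi_D}$ from Theorem~\ref{theorem twisted siegel weil preimage}.
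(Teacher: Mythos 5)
Your proposal is correct and is essentially the paper's own (implicit) argument: the corollary is meant to follow from Theorem~\ref{theorem evaluation Green's function single twist odd II} exactly by the duality between rational relations for $S_{2+2n}(\SL_2(\Z))$ and weakly holomorphic forms $f\in M^!_{-2n}(\SL_2(\Z))$ with $a_f(-t)=\lambda(t)$ (rational principal part, hence rational coefficients), the translation of the hypothesis $\lambda(m'r^2/|D|)=0$ into the $\mu'=0$ condition of Theorem~\ref{theorem evaluation Green's function single twist odd}, and the decomposition $\tr^0_{m'}=\sum_{\mu'}\tr^0_{m',\mu'}$ over the classes with $Q(\mu')\in\Z$ (for prime $D$ only $\mu'=0$ actually contributes). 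Your bookkeeping of the factor $\tfrac12$ and of the exponent $m^n$ is accurate, and harmless in any case since the conclusion is only a rationality statement.
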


Note that part~$(2)$ of Theorem~\ref{main theorem twisted} is a direct consequence of Corollaries~\ref{corollary that implies theorem 1.4(2) n even} and~\ref{corollary that implies theorem 1.4(2) n odd}.

Finally, we evaluate the doubly-twisted double trace of $G_s$ at odd integral $s = 2n+1$. 

\begin{theorem}\label{theorem evaluation Green's function double twist odd}
	Let $n \in \N$ and let 
	\[
	f = \sum_{m \in \Z}a_f(m)q^m \in M_{-2n}^!(\SL_2(\Z))
	\]
	be a weakly holomorphic modular form of weight $-2n$ for $\SL_2(\Z)$. Let $m' \in \N$ such that $a_f(-m'r^2) = 0$ for all integers~$r\geq 1$. Then we have
	\begin{align*}
		&\frac{1}{2}\sum_{m > 0}m^{n}a_f(-m)\tr_{m'|D|,\chi_D}^0 \tr_{m|D|,\chi_D}(G_{2n+1}) \\
		&\qquad = \frac{4^n \pi}{\binom{2n}{n}}\tr_{m',\mu'}^0(1)\chi_D(X_0)\sqrt{\frac{|D|}{2}}\CT\left(f \cdot \left[\Theta_{P(D)}^*, \widetilde{E}_{1/2,N(D)}\right]_n^{L(D)} \cdot \psi_D \right),
	\end{align*}
	where $\Theta_{P(D)}^*$ is the weight $3/2$ unary theta function defined in \eqref{eq weight 3half theta}, $\widetilde{E}_{1/2,N}^+ $ denotes the holomorphic part of $\widetilde{E}_{1/2,N}$, and $[\cdot,\cdot]_n$ denotes the $n$-th Rankin--Cohen bracket as defined in Section~\ref{section rankin cohen brackets}, with $k = 3/2$ and $\ell = 1/2$. Moreover, the superscript $L(D)$ denotes the map defined in Section~\ref{section modular forms}, and $\psi_D$ is the invariant vector defined in Lemma~\ref{lemma intertwiner}.
\end{theorem}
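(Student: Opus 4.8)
The plan is to run the same four-step argument as in the proof of Theorem~\ref{theorem evaluation Green's function}, but feeding in the \emph{twisted} theta lift of Theorem~\ref{theorem twisted theta lift} for the inner trace and the \emph{second} splitting of Theorem~\ref{theorem splitting theta twisted modified Siegel} for the outer trace, so that a holomorphic Eisenstein series appears in place of a cusp form. First I would write $f$ as a linear combination of the weight $-2n$ Maass Poincaré series $F_{-2n,m}$ for $\SL_2(\Z)$, and apply Theorem~\ref{theorem twisted theta lift} to identify the inner doubly-twisted trace $\tr_{m|D|,\chi_D}(G_{2n+1}(\,\cdot\,,Q))$ with the twisted regularized theta lift $\Phi_{L,\chi_D}^{*,(-2n)}(f,Q)$, up to the explicit constant $(4\pi m)^n \tfrac{2\sqrt 2}{\pi\sqrt{|D|}}$. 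Summing over $m$ against the coefficients $a_f(-m)$ reassembles $f$ inside the lift.

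Next I would apply the outer twisted trace $\tr_{m'|D|,\chi_D}^0$ in the variable $Q$, which by linearity reduces to computing $\tr_{m'|D|,\chi_D}^0(\Theta_{L,\chi_D}^*(\tau,\,\cdot\,))$. This is precisely the second formula in Theorem~\ref{theorem splitting theta twisted modified Siegel}: because we twist a trace of index $m'|D|$ with trivial coset $0$ of the already twisted modified theta function, the two copies of $\chi_D \circ \nu$ (one from the trace via Proposition~\ref{proposition character invariance}, one from the theta function itself) cancel, so the relevant adelic integral is the \emph{ordinary} Siegel--Weil integral and produces the holomorphic Eisenstein series $E_{3/2,N(D)^-}$ rather than the cusp form $\vartheta_{N(D)^-,\chi_D}$ encountered in the partially-twisted case of Theorem~\ref{theorem evaluation Green's function single twist odd}. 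This is exactly the mechanism that, downstream, yields logarithms of primes and $L$-values instead of algebraic numbers, in accordance with part~(1) of Theorem~\ref{main theorem twisted}.

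With the splitting in hand, I would transfer the iterated raising operator $R_{-2n}^n$ off of $f$ and onto the splitting factor by self-adjointness of raising under the regularized Petersson pairing, verifying that the arising boundary integrals vanish, so that $R_{3/2}^n \Theta_{P(D)}^*$ appears paired against $v^{3/2}\overline{E_{3/2,N(D)^-}}$, all carried through the $L(D)$-operator and the intertwiner $\psi_D$. Introducing the harmonic Maass preimage $\widetilde{E}_{1/2,N(D)}$ with $\xi_{1/2}\widetilde{E}_{1/2,N(D)} = \tfrac12 E_{3/2,N(D)^-}$, I would invoke Proposition~\ref{proposition rankin cohen bracket} with $k = 3/2$ and $\ell = 1/2$; since $\Theta_{P(D)}^*$ is holomorphic the term containing $L_{3/2}\Theta_{P(D)}^*$ drops out, and the integrand becomes a lowering operator $L_{2+2n}$ applied to the Rankin--Cohen bracket $[\Theta_{P(D)}^*,\widetilde{E}_{1/2,N(D)}]_n$. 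A final application of Stokes' theorem in the form of \cite[Proposition~3.5]{bruinierfunke}, using that $f$ is holomorphic on $\H$, extracts the constant term $\CT(f \cdot [\Theta_{P(D)}^*,\widetilde{E}_{1/2,N(D)}]_n^{L(D)} \cdot \psi_D)$, with the combinatorial factor $4^n\pi/\binom{2n}{n}$ and the normalization from the splitting theorem reproducing the stated constant.

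The main obstacle I anticipate is the bookkeeping of the rescaled-lattice formalism: the operator $(\,\cdot\,)^{L(D)}$, the intertwiner $\psi_D$ of Lemma~\ref{lemma intertwiner}, and the weight $3/2$ theta function $\Theta_{P(D)}^*$ (which carries a nontrivial degree $(1,0)$ harmonic polynomial) must all commute correctly with the raising and lowering operators and with the regularized pairing, so that Proposition~\ref{proposition rankin cohen bracket} applies verbatim on the rescaled side and the self-adjointness and Stokes computations transfer unchanged. Confirming that the pairing $f \cdot \psi_D$ for $\SL_2(\Z)$-forms is compatible with the bracket structure, and that no extra boundary contributions are introduced by $\psi_D$, is the delicate point; once these compatibilities are verified, the remaining steps parallel the proof of Theorem~\ref{theorem evaluation Green's function single twist odd} line by line, and I would accordingly omit the routine details.
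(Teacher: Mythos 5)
Your proposal is correct and follows essentially the same route as the paper: the inner doubly-twisted trace is written as the twisted theta lift of Theorem~\ref{theorem twisted theta lift}, the outer twisted trace is evaluated with the second formula of Theorem~\ref{theorem splitting theta twisted modified Siegel} (where, exactly as you say, the two occurrences of $\chi_D\circ\nu$ cancel so the ordinary Siegel--Weil formula produces $E_{3/2,N(D)^-}$), and the remaining computation — self-adjointness of raising, Proposition~\ref{proposition rankin cohen bracket} with $k=3/2$, $\ell=1/2$, and Stokes' theorem — is carried out in parallel with the proof of Theorem~\ref{theorem evaluation Green's function}. Your anticipated bookkeeping issues (commutation of $(\,\cdot\,)^{L(D)}$ and $\psi_D$ with the differential operators and the pairing) are indeed the only details to verify, and they go through since these are linear operations on the group-ring components independent of $\tau$.
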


\begin{proof}
	Write the twisted trace of $G_{2n+1}$ as a twisted theta lift using Theorem~\ref{theorem twisted theta lift} and evaluate the second twisted trace using Theorem~\ref{theorem splitting theta twisted modified Siegel}. The computation is analogous to the proof of Theorem~\ref{theorem evaluation Green's function}.
\end{proof}

\begin{theorem}
	Let the notation be as in Theorem \ref{theorem evaluation Green's function double twist odd}, and suppose that the coefficients $a_{f}(-m)$ for $m > 0$ are rational. Then the linear combination of twisted double traces
	\[
	\sum_{m > 0}m^{n}a_f(-m)\tr_{m'|D|,\chi_D}^0\tr_{m|D|,\chi_D}(G_{2n+1}) 
	\]
	is a rational linear combination of $\log(p)$ for some primes $p$ and $\log(\varepsilon_\Delta)/\sqrt{\Delta}$ for some fundamental discriminants $\Delta > 0$.
\end{theorem}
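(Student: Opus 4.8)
The plan is to read the result off from the explicit evaluation in Theorem~\ref{theorem evaluation Green's function double twist odd}, following verbatim the strategy used in the proof of Theorem~\ref{theorem algebraicity greens function 2} for the non-twisted double traces. The quantity in question is $2$ times the right-hand side of that formula, so the whole task reduces to determining the arithmetic nature of
\[
\CT\left(f \cdot \left[\Theta_{P(D)}^*, \widetilde{E}_{1/2,N(D)}\right]_n^{L(D)} \cdot \psi_D \right)
\]
together with its rational prefactor $\frac{4^n\pi}{\binom{2n}{n}}\tr_{m'|D|,0}^0(1)\chi_D(X_0)\sqrt{|D|/2}$. First I would invoke McGraw's theorem~\cite{mcgraw}: since $f$ has rational principal part and negative weight, all of its Fourier coefficients are rational. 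The remaining ingredients entering the constant term---the rational binomial coefficients of the Rankin--Cohen bracket, the trace operator $(\cdot)^{L(D)}$, and the pairing against $\psi_D$, whose components $\chi_D(\delta)$ take values in $\{-1,0,1\}$---are all defined over $\Q$. Consequently the constant term is a $\Q$-linear combination of products of one negative-index coefficient of $f$, one Fourier coefficient of the unary theta function $\Theta_{P(D)}^*$, and one coefficient of the holomorphic part $\widetilde{E}_{1/2,N(D)}^+$.

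The structural feature that separates this doubly-twisted case from the partially-twisted one is that the $\xi$-preimage occurring here is the harmonic Maass \emph{Eisenstein} series $\widetilde{E}_{1/2,N(D)}$, rather than the cusp-form preimage $\widetilde{\vartheta}_{N(D)^-,\chi_D}$ of Theorem~\ref{theorem evaluation Green's function single twist odd}. I would therefore apply Theorem~\ref{eisenstein series fourier expansion} and Remark~\ref{remark eisenstein series}(3), which state that the coefficients of $\widetilde{E}_{1/2,N(D)}^+$ are of the form
\[
c_{\widetilde{E}_{1/2,N(D)}}^+(n,\gamma) = \frac{\sqrt{2}}{\sqrt{|N(D)'/N(D)|}\,\pi}\times(\text{rational})\times
\begin{cases}
\log(\varepsilon_{\Delta_0})/\sqrt{\Delta_0}, & \text{if $\Delta$ is not a square},\\
\log(p), & \text{if $\Delta$ is a square},
\end{cases}
\]
with $p\mid 2\det(N(D))$ in the square case; the rationality of the $\log(p)$-coefficients, and their vanishing when two anisotropic primes are present, are guaranteed by Remark~\ref{remark eisenstein series}(2). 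Since the coefficients of $\Theta_{P(D)}^*$ from~\eqref{eq weight 3half theta} are rational multiples of a single fixed square root, and the factor $\pi$ in the denominator of $c_{\widetilde{E}_{1/2,N(D)}}^+$ cancels the $\pi$ in the prefactor exactly as in Theorem~\ref{theorem algebraicity greens function 2}, the expression becomes a $\Q$-linear combination of the numbers $\log(\varepsilon_{\Delta_0})/\sqrt{\Delta_0}$ and $\log(p)$, up to a product of square-root factors.

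The main obstacle is the bookkeeping of these surviving square-root factors. Concretely, I would have to verify that the product of $\sqrt{|D|/2}$ from the prefactor, the fixed square root carried by $\Theta_{P(D)}^*$, and $1/\sqrt{|N(D)'/N(D)|}$ from the Eisenstein coefficients is a rational number, so that the only irrationality surviving in the non-square case is the factor $1/\sqrt{\Delta_0}$, matching the desired shape $\log(\varepsilon_\Delta)/\sqrt{\Delta}$. For this I would compute the relevant determinants explicitly from Remark~\ref{remark determinant N}, carefully tracking how the rescalings $N\rightsquigarrow N(D)$ and $P\rightsquigarrow P(D)$ (for $X_0\in L_{m'|D|,0}^{+,0}$) multiply the corresponding Gram matrices by powers of $|D|$, and then confirm the cancellation. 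Once this is done, the remaining factors $\tr_{m'|D|,0}^0(1)\in\Q$ and $\chi_D(X_0)\in\{-1,1\}$ do not affect the conclusion, and the stated algebraicity follows.
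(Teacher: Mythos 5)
Your plan follows exactly the route of the paper's own proof: read the statement off from Theorem~\ref{theorem evaluation Green's function double twist odd}, use the rationality of the principal part of $f$, the rationality of the Rankin--Cohen binomials, of the trace operator $(\cdot)^{L(D)}$ and of the pairing against $\psi_D$, the fact that the coefficients of $\Theta_{P(D)}^*$ from \eqref{eq weight 3half theta} are rational multiples of one fixed square root, and the shape of the coefficients of $\widetilde{E}_{1/2,N(D)}^+$ given by Theorem~\ref{eisenstein series fourier expansion} and Remark~\ref{remark eisenstein series}(3). All of these soft ingredients are correctly identified (the appeal to McGraw is slightly more than needed, since only the principal part and the constant term of $f$ enter the constant term, but it is harmless).

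The problem is that you defer the decisive step: the explicit determinant bookkeeping is not a verification to be ``confirmed'' at the end, it \emph{is} the proof, and it is precisely where the delicacy sits. The paper settles it in two lines: since $Q(X_0)=m'|D|$, the coefficients of $\Theta_{P(D)}^*$ are rational multiples of $\sqrt{2m'|D|}$; the paper then asserts $|N(D)'/N(D)|=2m'|D|^4$, so that the coefficients of $\widetilde{E}_{1/2,N(D)}^+$ are rational multiples of $\sqrt{m'}/\pi$ times the logarithmic quantities, and the square roots cancel against the prefactor $\pi\sqrt{|D|/2}$. Moreover, when you actually carry out the computation you postpone, you will hit a genuine issue rather than a routine confirmation: with the paper's definition $N(D)=(DN,Q/|D|)$ the Gram matrix of $N(D)$ is $|D|$ times that of $N$, so $|N(D)'/N(D)|=|D|^3\,|N'/N|$; and since in the doubly-twisted setting $X_0\in L^{+,0}_{m'|D|,0}$ has $Q(X_0)=m'|D|$, Remark~\ref{remark determinant N} gives $|N'/N|=2m'|D|^2$, whence $|N(D)'/N(D)|=2m'|D|^5$. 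The value $2m'|D|^4$ invoked in the paper is the correct one for the \emph{partially}-twisted case of Theorem~\ref{theorem evaluation Green's function single twist odd II}, where $Q(X_0)=m'$. With $2m'|D|^5$ the product of square roots you propose to check comes out to a rational multiple of $\sqrt{|D|}$, not a rational number, so the clean cancellation does not occur as stated: either the discriminant used in the paper's proof, the prefactor $\sqrt{|D|/2}$ of Theorem~\ref{theorem evaluation Green's function double twist odd}, or the normalization of the statement itself must absorb a factor $\sqrt{|D|}$. In short, your proposal assumes the success of exactly the computation on which the theorem stands or falls, and performing it honestly reveals a discrepancy that the proposal (and, as written, the paper's own two-line proof) leaves unresolved.
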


\begin{proof}
	The coefficients of $\Theta_{P(D)}^*$ are rational multiples of $\sqrt{2|D|m'}$ since $Q(X_0) = m'|D|$. Note that $|N(D)'/N(D)| = 2m'|D|^4$. Hence, the coefficients of $\widetilde{E}_{1/2,N(D)}^+$ are of the form $\frac{\sqrt{m'}}{\pi}$ times a rational number times $\log(p)$ or $\log(\varepsilon_\Delta)/\sqrt{\Delta}$. This implies the claimed statement.
\end{proof}

The following corollary implies part $(1)$ of Theorem~\ref{main theorem twisted}.

\begin{corollary}\label{corollary that implies theorem 1.4(1)}
	Let~$\{\lambda(t)\}_{t\in \N}$ be a rational relation for~$S_{2+2n}(\SL_2(\Z))$, and let $m' \in \N$ be such that $\lambda (m'r^2) = 0$ for all integers~$r\geq 1$. Then the linear combination of partially-twisted double traces
	\[
	\sum_{m > 0}m^{n}\lambda(m)\tr_{m'|D|,\chi_D}^0\tr_{m|D|,\chi_D}(G_{2n+1})
	\]
	is a rational linear combination of $\log(p)$ for some primes $p$ and $\log(\varepsilon_\Delta)/\sqrt{\Delta}$ for some fundamental discriminants $\Delta > 0$. 
\end{corollary}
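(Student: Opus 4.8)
The plan is to deduce this corollary directly from the preceding algebraicity theorem for doubly-twisted double traces by translating the given rational relation into a suitable weakly holomorphic modular form. The key observation is that the evaluation in Theorem~\ref{theorem evaluation Green's function double twist odd} and the algebraicity theorem following it are indexed by forms $f \in M_{-2n}^!(\SL_2(\Z))$, and that $-2n = 2-(2+2n)$; thus weakly holomorphic forms of weight $-2n$ are exactly the objects dual to cusp forms in $S_{2+2n}(\SL_2(\Z))$.

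First I would recall, as the scalar-valued full-level analogue of the correspondence in Section~\ref{sec rational relations}, that a sequence $\{\lambda(t)\}_{t\in\N}$ with finitely many nonzero entries is a rational relation for $S_{2+2n}(\SL_2(\Z))$ if and only if there exists $f \in M_{-2n}^!(\SL_2(\Z))$ whose principal part satisfies $a_f(-t) = \lambda(t)$ for all $t > 0$. Indeed, pairing a weight $-2n$ weakly holomorphic form against a weight $2+2n$ cusp form $g$ produces the weight-$2$ weakly holomorphic form $fg$, whose associated differential $fg\,d\tau$ on $X(1)$ has vanishing total residue; the only pole being at the cusp, this says $\sum_{m>0} a_f(-m)a_g(m)=0$, which is precisely the defining condition of a rational relation. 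To guarantee that the \emph{whole} form $f$, and not merely its principal part, has rational Fourier coefficients, I would invoke McGraw's theorem \cite{mcgraw} that $M_{-2n}^!(\SL_2(\Z))$ admits a basis of forms with rational coefficients; combined with the rationality of the $\lambda(t)$, this yields a choice of $f$ with $a_f(m) \in \Q$ for all $m$.

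With such an $f$ in hand, the hypothesis $\lambda(m'r^2) = 0$ for all $r \geq 1$ becomes $a_f(-m'r^2) = 0$ for all $r \geq 1$, which is exactly the condition in Theorem~\ref{theorem evaluation Green's function double twist odd} ensuring that the Green's function is never evaluated along its diagonal singularity. I would then apply the preceding algebraicity theorem to this $f$: since $a_f(-m) = \lambda(m)$ for all $m > 0$, the linear combination
\[
\sum_{m > 0} m^{n}\lambda(m)\,\tr_{m'|D|,\chi_D}^0 \tr_{m|D|,\chi_D}(G_{2n+1})
\]
coincides with $\sum_{m>0} m^{n} a_f(-m)\,\tr_{m'|D|,\chi_D}^0 \tr_{m|D|,\chi_D}(G_{2n+1})$, which that theorem shows to be a rational linear combination of $\log(p)$ for some primes $p$ and of $\log(\varepsilon_\Delta)/\sqrt{\Delta}$ for some fundamental discriminants $\Delta > 0$. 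This is exactly the assertion, and establishes part~$(1)$ of Theorem~\ref{main theorem twisted}.

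The argument is short because the analytic heart has already been carried out in the evaluation theorem: expressing the doubly-twisted double trace as a regularized Petersson pairing, applying Snitz's twisted Siegel--Weil formula (Theorem~\ref{twisted siegel weil formula}), and reading off the coefficients of $\widetilde{E}_{1/2,N(D)}$ via Theorem~\ref{eisenstein series fourier expansion}. The only genuine point requiring care is the passage from an abstract rational relation to a single weakly holomorphic form with globally rational coefficients; this is where McGraw's rationality result is essential, and it is the step I would treat most carefully. Everything else is a direct substitution.
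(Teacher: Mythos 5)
Your proposal is correct and follows essentially the same route as the paper: the paper also deduces this corollary by translating the rational relation for $S_{2+2n}(\SL_2(\Z))$ into a weakly holomorphic form $f \in M_{-2n}^!(\SL_2(\Z))$ with $a_f(-m)=\lambda(m)$ (rational everywhere by McGraw, since negative-weight forms are determined by their principal parts), checking that $\lambda(m'r^2)=0$ gives the diagonal-avoidance hypothesis, and applying the algebraicity theorem following Theorem~\ref{theorem evaluation Green's function double twist odd}. The only cosmetic difference is that you sketch one direction of the rational-relation/principal-part correspondence via residues, whereas the paper simply cites the known duality (\cite[Theorem~1.17]{bruinier}); the direction actually needed (relation $\Rightarrow$ existence of $f$) is exactly that cited result.
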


\subsection{Example: The value of the Green's function at an individual special point}\label{section example}
In some special cases it is possible to combine our formulas for twisted and non-twisted double traces to obtain values of Green's functions at individual points. We demonstrate this in the following example. We consider the field $\Q(i)$, and take $n = 1$, that is, we evaluate the Green's function over $\Q(i)$ at $s = 2$. We have $S_{3,L} =\{ 0\}$, so we can take $f = F_{-1,m,\mu}$ as a Maass Poincar\'e series in Theorem~\ref{theorem evaluation Green's function}. Recall that $L'/L \cong \Z/2\Z \times \Z/2\Z$.

	We take the determinants $m = 1$ and $m' = 4$, and $\mu =\mu'= (0,0)$ in both cases. We first compute the non-twisted double trace, using Theorem~\ref{theorem evaluation Green's function}. The computation is similar to Example~\ref{example 1}. We need to simplify \eqref{example 1 step 1}. Now $\tr_{m',\mu'}^0$ has two summands, corresponding to
	\[
	X_0 = \begin{pmatrix}4 & 0 \\ 0 & 1 \end{pmatrix}, \qquad X_1 = \begin{pmatrix}3 & 1+i \\ 1-i & 2 \end{pmatrix},
	\]
	with stabilizers of size $2$ each, and corresponding points $P_0 = 2j$ and $P_1 = \frac{1+i}{2}+j$. Hence we have $\tr_{m',\mu'}^0(1) = 1$, and \eqref{example 1 step 1} becomes
	\begin{align*}\label{example 2 step 2}
	\frac{1}{8}G_2\left(j,2j \right) + \frac{1}{8}G_2\left(j,\frac{1+i}{2}+j \right) = 2\pi \CT\left(f_{P \oplus N} \cdot \left[\Theta_{P},\widetilde{E}_{1/2,N}^+\right]_1 \right).
	\end{align*}
	Note that the factor $8 = 4\cdot 2$ in the denominator on the left is the product of the orders of the stabilizers of $j$ and $2j$, respectively.
	
	The lattices $P$ and $N$ are given by
	\begin{align*}
	P &= L \cap (\Q X_0) = \left\{n\begin{pmatrix}4 & 0 \\ 0 & 1 \end{pmatrix}: n \in \Z \right\}, \\
	N &= L \cap (\Q X_0)^\perp = \left\{\begin{pmatrix}4a & b \\ \overline{b} & -a \end{pmatrix}: a \in \Z, b \in \Z[i] \right\},
	\end{align*}
	with dual lattices
	\begin{align*}
	P' &= \left\{\frac{n}{8}\begin{pmatrix}4 & 0 \\ 0 & 1 \end{pmatrix}: n \in \Z\right\}, \\
	N' &= \left\{\frac{1}{8}\begin{pmatrix}4a & 4b \\ 4\overline{b} & -a \end{pmatrix}: a \in \Z, b \in \Z[i]\right\}.
\end{align*}
	For $\alpha \in P'$ and $\beta \in N'$ in the form above, we have $\alpha + \beta \in L'$ if and only if $a \equiv n \pmod 8$. We have $P'/P \cong \Z/8\Z$ and $N'/N \cong \Z/8\Z \times (\Z/2\Z)^2$, so we will write the elements of $(P \oplus N)'/(P \oplus N)$ as $(n,(a,b_1,b_2))$ with $n,a \in \Z/8\Z$ and $b_1,b_2 \in \Z/2\Z$.
	
	Since we take $f = F_{-1,1,(0,0)} = 2q^{-1}\e_0 + \dots$, we have
	\[
	\CT\left(f_{P \oplus N} \cdot \left[\Theta_{P},\widetilde{E}_{1/2,N}^+\right]_1 \right) = 2\sum_{a \!\!\!\!\! \pmod 8}c_{\left[\Theta_{P},\widetilde{E}_{1/2,N}^+\right]_1}\big(1,(a,(a,0,0))\big).
	\]
	We have
	\[
	\Theta_P(\tau) = \sum_{a\!\!\!\!\! \pmod 8}\sum_{\substack{n \in \Z \\ n \equiv a \!\!\!\!\! \pmod 8}}q^{n^2/16}\e_a.
	\]
	Note that the coefficient $c_{\left[\Theta_{P},\widetilde{E}_{1/2,N}^+\right]_1}\big(1,(a,(a,0,0))\big)$  is invariant under $a \mapsto -a \pmod 8$, so we get
	\begin{eqnarray*}
	& &\CT\left(f_{P \oplus N} \cdot \left[\Theta_{P},\widetilde{E}_{1/2,N}^+\right]_1 \right)\\
 &= & 2\bigg(c_{\left[\Theta_{P},\widetilde{E}_{1/2,N}^+\right]_1}\big(1,(0,(0,0,0))\big) +2\sum_{a = 1}^3c_{\left[\Theta_{P},\widetilde{E}_{1/2,N}^+\right]_1}\big(1,(a,(a,0,0))\big) \\
 & & + c_{\left[\Theta_{P},\widetilde{E}_{1/2,N}^+\right]_1}\big(1,(4,(4,0,0))\bigg) \\
	&=& 2\bigg(-\frac{1}{2}c_{\widetilde{E}_{1/2,N}^+}\big(1,(0,0,0)\big)  +2  \bigg(  -\frac{14}{32}c_{\widetilde{E}_{1/2,N}^+}\left(\frac{15}{16},(1,0,0)\right)   -\frac{1}{4}c_{\widetilde{E}_{1/2,N}^+}\left(\frac{3}{4},(2,0,0)\right) \\
 & & +\frac{1}{16}c_{\widetilde{E}_{1/2,N}^+}\left(\frac{7}{16},(3,0,0)\right)\bigg)  + c_{\widetilde{E}_{1/2,N}^+}\left(0,(4,0,0)\right) \bigg).
	\end{eqnarray*}
	The relevant coefficients of the Eisenstein series $\widetilde{E}_{1/2,N}^+$ can be computed using Theorem~\ref{eisenstein series fourier expansion}, and are given by
	\begin{align*}
	c_{\widetilde{E}_{1/2,N}^+}\big(1,(0,0,0)\big) &= -\frac{2\log(2)}{\pi}, \\
	c_{\widetilde{E}_{1/2,N}^+}\left(\frac{15}{16},(1,0,0)\right) &= -\frac{2L(\chi_{60},1)}{\pi} = -\frac{8\log(4+\sqrt{15})}{\sqrt{60}\pi},\\
	c_{\widetilde{E}_{1/2,N}^+}\left(\frac{3}{4},(2,0,0)\right) &= -\frac{2L(\chi_{12},1)}{\pi} = -\frac{2\log(7+2\sqrt{12})}{\sqrt{12}\pi},\\
	c_{\widetilde{E}_{1/2,N}^+}\left(\frac{7}{16},(3,0,0)\right) &= -\frac{2L(\chi_{28},1)}{\pi} = -\frac{2\log(127+24\sqrt{28})}{\sqrt{28}\pi}, \\
	c_{\widetilde{E}_{1/2,N}^+}\left(0,(4,0,0)\right) &= -\frac{\log(2)}{\pi}.
	\end{align*}
	Taking everything together, we find that
	\[
	G_2\left(j,2j \right) + G_2\left(j,\frac{1+i}{2}+j \right) =  32 L(\chi_{12},1) - 8 L(\chi_{28},1) + 56L(\chi_{60},1).
	\]
 
	  Note that the real quadratic discriminants that appear, namely $\Delta=60, 12, 28$, are exactly the discriminants of the quadratic fields $\Q(\sqrt{(4mm'-r^2)|D|})$ for $r=1,2,3$, respectively.	

 Next, we compute the partially-twisted double trace using Theorem~\ref{theorem evaluation Green's function single twist even}. From the theorem we get, similarly as in the non-twisted case discussed above, that
	\begin{align}\label{eq twisted evaluation}
	\frac{1}{8}G_2\left(j,2j \right) - \frac{1}{8}G_2\left(j,\frac{1+i}{2}+j \right) = 2\pi \CT \left(f_{P \oplus N} \cdot \left[\Theta_P,\widetilde{\vartheta}_{N^-,\chi_D}^+\right]_1 \right),
	\end{align}
	where $f = F_{-1,1,(0,0)}$ as before, and the lattice $P$ and $N$ are as above. Here $\widetilde{\vartheta}_{N^-,\chi_D}$ is a harmonic Maass form of weight $1/2$ which maps to the cusp form $\frac{1}{2}\vartheta_{N^-,\chi_D}$ (see Theorem~\ref{theorem twisted siegel weil preimage}). In this case, $\vartheta_{N^-,\chi_D}$ is precisely the cusp form given in Example~\ref{twisted siegel weil example}. An explicit $\xi$-preimage can be constructed using \cite[Proposition~6.1]{lischwagenscheidt}. It is given by
	\begin{eqnarray*}
	\widetilde{\vartheta}_{N^-,\chi_D} &=&  \widetilde{\theta}_{4,1}^*\left(\e_{(\frac{1}{8},0,0)}+\e_{(\frac{7}{8},0,0)}-\e_{(\frac{5}{8},\frac{1}{2},\frac{1}{2})}-\e_{(\frac{7}{8},\frac{1}{2},\frac{1}{2})}\right) \\
	& &+ \widetilde{\theta}_{4,2}^*\left(\e_{(0,\frac{1}{2},0)}+\e_{(0,0,\frac{1}{2})}-\e_{(\frac{1}{2},\frac{1}{2},0)}-\e_{(\frac{1}{2},0,\frac{1}{2})}\right) \\
	& &+ \widetilde{\theta}_{4,3}^* \left(\e_{(\frac{1}{8},\frac{1}{2},\frac{1}{2})}+\e_{(\frac{7}{8},\frac{1}{2},\frac{1}{2})}-\e_{(\frac{3}{8},0,0)}-\e_{(\frac{5}{8},0,0)}\right),
 	\end{eqnarray*}
	where the weight $1/2$ scalar-valued harmonic Maass forms $\widetilde{\theta}_{4,\rho}^*$ have holomorphic parts
	\begin{align*}
	\widetilde{\theta}_{4,1}^{*,+} &= \frac{1}{4}q^{-\frac{1}{16}} - \frac{7}{4}q^\frac{15}{16} - \frac{21}{4}q^\frac{31}{16} - \frac{43}{4}q^{\frac{47}{16}} - \frac{47}{2}q^{\frac{63}{16}} - 42q^{\frac{79}{16}} - 77q^{\frac{95}{16}} + \dots , \\
	\widetilde{\theta}_{4,2}^{*,+} &= -2q^{\frac{3}{4}} - 6q^{\frac{7}{4}} - 14q^{\frac{11}{4}} - 28q^{\frac{15}{4}} - 54q^{\frac{19}{4}} - 98q^{\frac{23}{4}} + \dots , \\
	\widetilde{\theta}_{4,3}^{*,+} &= -\frac{3}{4}q^{\frac{7}{16}} - \frac{7}{2}q^{\frac{23}{16}} - 7q^{\frac{39}{16}} - \frac{69}{4}q^{\frac{55}{16}} - \frac{119}{4}q^{\frac{71}{16}} - \frac{239}{4}q^{\frac{87}{16}} + \dots .
	\end{align*}
	We computed these expansions numerically in sage, using \cite[Proposition~6.1(2)]{lischwagenscheidt}, with $N = 4$ and $\varepsilon_4 =6\sqrt{8}+17$.
	
	 The constant term $\CT(\cdot)$ appearing on the right-hand side of \eqref{eq twisted evaluation} can be computed similarly as in the non-twisted case above, replacing $\widetilde{E}_{1/2,N}^+$ by $\widetilde{\vartheta}_{N^-,\chi_D}$, so we do not repeat this computation here. Putting in the coefficients of $\widetilde{\theta}_{4,\rho}^{*,+}$ above, we obtain from \eqref{eq twisted evaluation} that
	\[
	G_2\left(j,2j \right) - G_2\left(j,\frac{1+i}{2}+j \right) = -4\pi.
	\]
	
	Combining the evaluation of the non-twisted and partially-twisted double trace yields
	\[
	G_{2}\left(j,2j \right) = 16 L(\chi_{12},1) -4 L(\chi_{28},1) + 28 L(\chi_{60},1)-2\pi.
	\]
	In particular, the ``individual'' value $G_2\left(j,2j \right)$ is not just a rational linear combination of $L$-values.
	
	For $n = 2$ we have $S_{5,L} = \{0\}$, so we can compute in a similar way that
	\[
	G_4\left(j,2j \right) + G_4\left(j,\frac{1+i}{2}+j \right) = -64 \log(2) + 32L(\chi_{12},1)+62L(\chi_{28},1)- 34L(\chi_{60},1),
	\]
	and
	\[
	G_4\left(j,2j \right) - G_4\left(j,\frac{1+i}{2}+j \right) = -\pi.
	\]
	This yields
	\[
	G_4\left(j,2j \right) = -32 \log(2) + 16L(\chi_{12},1)+31L(\chi_{28},1)- 17L(\chi_{60},1) - \frac{\pi}{2}.
	\]

\end{document}